\newcommand{\sfmod}{{\text{-}\mathsf{mod}}}
\newcommand{\sfMod}{{\text{-}\mathsf{Mod}}}
\newcommand{\sfProj}{{\text{-}\mathsf{Proj}}}
\newcommand{\sfproj}{{\text{-}\mathsf{proj}}}
\newcommand{\sfInj}{{\text{-}\mathsf{Inj}}}
\newcommand{\sfstab}{{\text{-}\mathsf{stab}}}
\newcommand{\sfperf}{{\text{-}\mathsf{perf}}}
\newcommand{\sfperm}{{\text{-}\mathsf{perm}}}
\newcommand{\sfD}{\mathsf{D}}
\newcommand{\sfHo}{\mathsf{Ho}}
\newcommand{\sfC}{\mathsf{C}}
\newcommand{\bfX}{\mathbb{X}}
\newcommand{\bfY}{\mathbb{Y}}
\newcommand{\bfT}{\mathbb{T}}
\newcommand{\bfB}{\mathbb{B}}
\newcommand{\bfG}{\mathbb{G}}
\newcommand{\bfU}{\mathbb{U}}
\newcommand{\bfV}{\mathbb{V}}
\newcommand{\bfL}{\mathbb{L}}
\newcommand{\bfP}{\mathbb{P}}
\newcommand{\calO}{\mathcal{O}}
\newcommand{\calA}{\mathcal{A}}
\newcommand{\calT}{\mathcal{T}}
\newcommand{\simto}{\mathop{\longrightarrow}\limits^\sim}
\newcommand{\testleftlong}{\longleftarrow\!\shortmid}
\def\trait{-\hskip-2mm - \hskip-2mm -}
\theoremstyle{plain}
\newtheorem{theorem}{Theorem}[section]
\newtheorem{lem}[theorem]{Lemma}
\newtheorem{prop}[theorem]{Proposition}
\newtheorem{conj}[theorem]{Conjecture}
\newtheorem{cor}[theorem]{Corollary}
\numberwithin{equation}{section}
\theoremstyle{definition}
\newtheorem{definition}[theorem]{Definition}
\theoremstyle{remark}
\newtheorem{rmk}[theorem]{Remark}
\newtheorem{example}[theorem]{Example}
\newtheorem{exercise}[theorem]{Exercise}
\author{Olivier Dudas}
\date{CIB, July 2016}
\title{Lectures on modular Deligne--Lusztig theory}
\begin{document}

\maketitle

\begin{abstract}
These notes are based on a series of lectures given by the author at the Centre Bernoulli (EPFL) in July 2016. They aim at illustrating the importance of the mod-$\ell$ cohomology of Deligne--Lusztig varieties in the modular representation theory of finite reductive groups. 
\end{abstract}

\section*{Introduction}
In order to construct and study the complex representations of finite reductive groups $G(q)$ (such as $\mathrm{GL}_n(q)$, $\mathrm{Sp}_{2n}(q)$,\ldots) Deligne and Lusztig introduced in 1976 a family of algebraic varieties acted on by $G(q)$ \cite{DelLus}. The subsequent work of Lusztig on the cohomology of these \emph{Deligne--Lusztig varieties} led to a complete classification of the irreducible characters of finite reductive groups \cite{Lus84}. 

\smallskip

The purpose of these lectures is to present a generalization of the theory of Deligne--Lusztig to the modular setting, that is, for representations over fields of positive characteristic. This originated in the work of Brou\'e \cite{Bro89} and Bonnaf\'e--Rouquier \cite{BonRou03}. 

\smallskip

In the ordinary case (in characteristic zero), the representation theory is controlled by the simple objects, which are in turn determined by a numerical datum, their characters. The situation is far more complicated for representations in positive characteristic; several classes of indecomposable objects are of particular interest, and more information is needed to understand the representations, namely:
\begin{itemize}
 \item information of numerical nature: characters of projective modules, multiplicities of simple modules in a given ordinary character, all of which are encoded in the so-called decomposition matrix;
 \item information of homological nature: extensions between simple modules, Loewy series of projective modules, projective resolutions of simple objects.
\end{itemize}
The alternating sum of the cohomology groups of Deligne-Lusztig varieties produces a virtual character \--- an element of the Grothendieck group of the category of representations. In the modular framework, this object does not contain enough information, and one should consider each individual cohomology group, or rather the cohomology complex of the variety. This object now lives in the derived category of representations, and it encodes many aspects of the modular representation theory of the group. One crucial incarnation of this phenomenon is the geometric version of Brou\'e's abelian defect group conjecture, which predicts that the cohomology complex of a suitably chosen Deligne--Lusztig variety induces a derived equivalence between the principal block of a finite reductive group and its Brauer correspondent. Not so many cases of this conjecture are known to hold but a lot of numerical evidence and many partial results have been obtained in that direction.

\smallskip

The first part of these lectures aims at introducing the mod-$\ell$ cohomology of Deligne--Lusztig varieties using the modern language of derived and homotopy categories. Unlike most of the textbooks on \'etale and $\ell$-adic cohomology, we avoid the definition and focus on the properties of the cohomology complexes of varieties acted on by a finite group (such as perfectness), with particular attention on how one can compute such complexes (using decompositions, quotients or fixed points). In the second part we present several recent results obtained using this approach. They include the computation of decomposition numbers in \S\ref{chap:dec} (a joint work with G. Malle) and the determination of Brauer trees in \S\ref{chap:brauer} (a joint work with D. Craven and R. Rouquier). This illustrates how powerful the geometric methods are for solving representation theoretic problems for finite reductive groups. There is a converse to that statement, and we explain in a final chapter how to use representation theory to show that the cohomology of a particular Deligne--Lusztig variety is torsion-free.

\tableofcontents

\section{Introduction to derived categories}

Throughout this chapter, $A$ will denote a ring with unit. The category of left $A$-modules
(resp. finitely generated left $A$-modules) will be denoted by $A\sfMod$ (resp. $A\sfmod$). 

\smallskip

The purpose of this first chapter is to introduce two categories, the homotopy category $\sfHo(A\sfMod)$
and the derived category $\sfD(A\sfMod)$. Here is a non-exhaustive list of reasons why we are
going to work in this framework, instead of working with
$A$-modules or complexes of $A$-modules:
\begin{itemize}
 \item to get rid of (split) exact sequences;
 \item to have uniqueness of projective or injective resolutions;
 \item to have a good notion of duality (e.g. over $\mathbb{Z}$);
 \item to work with non-exact functors.
\end{itemize}

Several steps are needed to understand the construction of the homotopy and derived categories of
$A$-modules:
\begin{equation*} A\sfMod \rightsquigarrow \underbrace{\sfC(A\sfMod)}_{\begin{subarray}{c}\text{complexes of}\\ 
\text{$A$-modules} \end{subarray}} \tikzmark{eq-homotopy}\rightsquigarrow  \sfHo(A\sfMod) 
\tikzmark{eq-derived}\rightsquigarrow \sfD(A\sfMod)\end{equation*}

\begin{tikzpicture}[remember picture,overlay]
\draw[->,>=latex]
  ([shift={(8pt,-5pt)}]pic cs:eq-homotopy) |- 
  ++(10pt,-40pt) 
  node[right,text width=3cm] 
    {\footnotesize getting rid of \emph{split}\\[-4pt] exact sequences
    };
\draw[->,>=latex]
  ([shift={(8pt,-5pt)}]pic cs:eq-derived) |- 
  ++(10pt,-15pt) 
  node[right,text width=3cm] 
    {\footnotesize inverting \\[-4pt] quasi-isomorphisms
    };
\end{tikzpicture}

\bigskip\bigskip

\noindent Note however that this chapter is not intended to provide a detailed account on this construction. For further reading on the subject we recommend for example the excellent textbooks by Gelfand--Manin \cite{GelMan} and Neeman \cite{Nee}.

\subsection{Complexes of $A$-modules}
A \emph{complex of $A$-modules} is 
$$C_\bullet = (\cdots \longrightarrow C_n \overset{d_{n}}{\longrightarrow} C_{n+1} \overset{d_{n+1}}{\longrightarrow}  C_{n+2} \longrightarrow \cdots) $$
where, for each $n$, $C_n$ is a left $A$-module and $d_n : C_n \longrightarrow C_{n+1}$ is a morphism of $A$-modules (the \emph{differential}) satisfying $d_{n+1} \circ d_n = 0$. 

\smallskip

A morphism between two complexes $f = (C_\bullet,d) \longrightarrow (D_\bullet,\partial)$ is given by a family of morphisms of $A$-modules $f_n : C_n \longrightarrow D_n$ making the following diagram commute
 $$\xymatrix@C=2em@R=3em{\cdots \ar[r] & C_n \ar[r]^{d_n}\ar[d]^{f_n} & C_{n+1} \ar[r]^{d_{n+1}} \ar[d]^{f_{n+1}}  & C_{n+2} \ar[r]\ar[d]^{f_{n+2}} &  \cdots \\
\cdots \ar[r] & D_n \ar[r]^{\partial_{n}} & D_{n+1} \ar[r]^{\partial_{n+1}}  & D_{n+2} \ar[r] & \cdots  }$$
Here is an example of a morphism between two complexes of $\mathbb{Z}$-modules (we will see later that it is a quasi-isomorphism)
$$\xymatrix@C=2em@R=2em{\cdots \ar[r] & 0 \ar[r]\ar[d] & \mathbb{Z} \ar[r]^m \ar[d]  & \mathbb{Z} \ar[r]\ar@{->>}[d] & 0 \ar[r]\ar[d] & \cdots \\
\cdots \ar[r] & 0 \ar[r] & 0 \ar[r]  & \mathbb{Z}/m\mathbb{Z} \ar[r] & 0 \ar[r]& \cdots  }$$

The category of complexes of $A$-modules will be denoted $\sfC(A\sfMod)$. It is an abelian category. This can be seen by considering the ring $A[X]/X^2 =: A(d) \simeq A \oplus Ad$ with $d^2 = 0$. Then the functor
$$ \begin{array}{rcl} \sfC(A\sfMod) & \overset{\sim}{\longrightarrow} & A(d)\sfMod \\[5pt] 
												C_\bullet & \longmapsto & \displaystyle\bigoplus_{n \in \mathbb{Z}} C_n \text{ with } d_{|C_n} = d_n \end{array}$$
is an equivalence of categories. The abelian structure on $\sfC(A\sfMod)$ is obtained via the equivalence from the abelian structure of $A(d)\sfMod$.  As a consequence, we can consider kernels and cokernels of morphisms between complexes, as well as exact sequences of complexes. 
 
 \smallskip

We say that a complex $C_\bullet$ is \emph{bounded above} (resp. \emph{bounded below}, resp. \emph{bounded}) if $C_n = 0$ for $n \gg 0$ (resp. $n \ll 0$, resp. $|n|\gg0$). The corresponding full subcategory of $\sfC(A\sfMod)$ will be denoted by $C^-(A\sfMod)$ (resp.  $C^+(A\sfMod)$, resp. $C^b(A\sfMod)$). 

\smallskip

Given $k \in \mathbb{Z}$ and $C_\bullet$ a complex of $A$-modules, we define the \emph{$k$-th shift of $C_\bullet$}, denoted by $C_\bullet[k]$, to be the complex with terms $(C_\bullet[k])_n = C_{n+k}$ and differential $d_{C_\bullet[k]} = (-1)^k d_{C_\bullet}$. If $M$ is an $A$-module, the notation $M[k]$ stands for the complex with zero terms outside the degree $-k$ and $M$ in the degree $-k$. The functor
$$ \begin{array}{rcl} A\sfMod & \longrightarrow & \sfC(A\sfMod) \\[5pt] 
												M & \longmapsto & M[0]
\end{array}$$
is fully faithful. In other words, $A\sfMod$ can be identified with complexes with zero terms outside the degree $0$.

\smallskip

Since $d_{n} \circ d_{n-1} = 0$, $\mathrm{Im}\,{d_{n-1}}$ is a submodule of $\mathrm{Ker}\, d_n$. The quotient 
$H^n(C_\bullet) = \mathrm{Ker}\, d_n / \mathrm{Im}\,{d_{n-1}}$ is an $A$-module, called the \emph{degree $n$ cohomology group} of $C_\bullet$.  We write
$$ H^\bullet(C_\bullet) := \bigoplus_{n \in \mathbb{Z}} H^n(C_\bullet) = \mathrm{Ker}\, d / \mathrm{Im}\,d.$$
We say that a complex $C_\bullet$ is \emph{exact} or \emph{acyclic} if $H^n(C_\bullet) = 0$ for all $n \in \mathbb{Z}$.
The compatibility of maps between complexes and the respective differentials ensures that any morphism of complexes
$f = (C_\bullet,d) \longrightarrow (D_\bullet,\partial)$ induces a family of morphisms of $A$-modules 
$H^n(f) : H^n(C_\bullet) \longrightarrow H^n(D_\bullet)$. 

\smallskip

From now on, we will omit the subscript ${}_\bullet$ in the notation of complexes, as well as the reference to the differentials for morphisms of complexes. 

\begin{prop} Let $0 \longrightarrow C \overset{\iota}{\longrightarrow} C' \overset{\pi}{\longrightarrow} C'' \longrightarrow 0$ be a short exact
sequence of complexes of $A$-modules. Then there are boundary maps $\delta_n : 
H^n(C'') \longrightarrow H^{n+1}(C)$ for all $n \in \mathbb{Z}$ yielding a long exact sequence of $A$-modules
$$ \cdots \longrightarrow  H^n(C) \overset{H^n(\iota)}{\longrightarrow} H^n(C') \overset{H^n(\pi)}{\longrightarrow} H^n(C'') \overset{\delta_n}{\longrightarrow} H^{n+1}(C) \longrightarrow \cdots $$
\end{prop}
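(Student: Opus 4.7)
The plan is to construct each boundary map $\delta_n$ by the standard snake-lemma diagram chase, then verify exactness at each of the three positions $H^n(C)$, $H^n(C')$, $H^n(C'')$. At every degree the sequence $0 \to C_n \overset{\iota_n}{\longrightarrow} C'_n \overset{\pi_n}{\longrightarrow} C''_n \to 0$ is exact, so $\iota_n$ is injective, $\pi_n$ is surjective, and $\mathrm{Ker}\,\pi_n = \mathrm{Im}\,\iota_n$. Three of the six required inclusions are immediate: $H^n(\pi) \circ H^n(\iota) = H^n(\pi \circ \iota) = 0$, and both $\delta_n \circ H^n(\pi) = 0$ and $H^{n+1}(\iota) \circ \delta_n = 0$ will fall out of the construction below.

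For the construction of $\delta_n$: given a cocycle $z'' \in C''_n$, use surjectivity of $\pi_n$ to pick a lift $z' \in C'_n$. Then $\pi_{n+1}(\partial_n z') = d_n z'' = 0$, so $\partial_n z' = \iota_{n+1}(z)$ for a unique $z \in C_{n+1}$, and $\partial^2 = 0$ together with injectivity of $\iota_{n+2}$ forces $d_{n+1} z = 0$. I set $\delta_n([z'']) := [z]$. The two independence checks --- that replacing the lift $z'$ by $z' + \iota_n(w)$ only alters $z$ by the coboundary $d_n w$, and that replacing $z''$ by $z'' + d_{n-1}y''$ also alters $z$ by a coboundary --- are short diagram chases using only exactness of the columns. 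Linearity is automatic because every choice was $A$-linear.

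The three reverse inclusions are then handled by analogous chases. For $\mathrm{Ker}\,H^n(\pi) \subseteq \mathrm{Im}\,H^n(\iota)$: given a cocycle $z' \in C'_n$ with $\pi_n(z') = d_{n-1}y''$, lift $y''$ to some $y' \in C'_{n-1}$ and subtract $\partial_{n-1}(y')$ from $z'$ to land in $\mathrm{Ker}\,\pi_n = \mathrm{Im}\,\iota_n$, producing a cocycle of $C$ in the same class. For $\mathrm{Ker}\,\delta_n \subseteq \mathrm{Im}\,H^n(\pi)$: if $z = d_n y$ in the construction, then $z' - \iota_n(y)$ is a genuine cocycle of $C'$ whose class maps onto $[z'']$ under $H^n(\pi)$. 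For $\mathrm{Ker}\,H^{n+1}(\iota) \subseteq \mathrm{Im}\,\delta_n$: if $\iota_{n+1}(z) = \partial_n z'$ is a coboundary, then $\pi_n(z')$ is automatically a cocycle and $\delta_n([\pi_n(z')]) = [z]$ by the very definition of $\delta_n$.

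The main obstacle is really just the bookkeeping for the well-definedness of $\delta_n$; conceptually this is the snake lemma applied in each degree, but one must track carefully how the choices interact with the differentials so that the resulting cohomology class is unambiguous. Once $\delta_n$ is in place, the six inclusions for exactness are each a one- or two-line diagram chase of the type indicated above.
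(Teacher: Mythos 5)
Your proposal is correct and follows essentially the same approach as the paper: the paper's own sketch of proof constructs $\delta_n$ by exactly the same diagram chase (lift a cocycle of $C''$ along $\pi_n$, apply the differential, observe the result lands in $\mathrm{Ker}\,\pi_{n+1}=\mathrm{Im}\,\iota_{n+1}$, and pull back along $\iota_{n+1}$), and then leaves the well-definedness of $\delta_n$ and the verification of exactness as an exercise, which you have sketched in more detail.
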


\begin{proof}[Sketch of proof]
Let $c'' \in \mathrm{Ker}\,d_n''$, which we write $c'' = \pi_n(c')$ for some $c' \in C_n'$. 
Since $\pi$ is a morphism of complexes, $\pi_{n+1}(d_n'(c)) = d_n''(\pi_n(c')) = 0$, hence $d_n'(c') \in
\mathrm{Ker}\, \pi_{n+1} = \mathrm{Im}\, \iota_{n+1}$. Now write $d_n'(c') = \iota_{n+1}(c)$ and set
$\delta_n(c'') := c$. 
\end{proof}

\begin{exercise}
Check that $\delta_n$ is well-defined, and that it induces the long exact sequence stated in the proposition.
\end{exercise}

In terms of $A(d)$-modules, the proposition shows the existence of a morphism of $A(d)$-modules 
$\delta : H^\bullet(C'') \longrightarrow H^\bullet(C)[1]$ which fits in a triangle
$$\xymatrix@C=0.8em{& H^\bullet(C'') \ar[ld]_{[1]}^\delta & \\ H^\bullet(C) \ar[rr] & & H^\bullet(C') \ar[lu] } $$

\subsection{The homotopy category}
A morphism $f : (C,d) \longrightarrow (D,\partial)$ between complexes of $A$-modules is said to be \emph{null-homotopic} if
it is of the form $f = s\circ d + \partial \circ s$ for some map $s : C \longrightarrow D[-1]$ (not necessarily a morphism
of complexes). We illustrate this with the following diagram:
$$\xymatrix@C=3em@R=3em{\cdots \ar[r] & C_n \ar[r]^{d_n}\ar[d]_{+} \ar[ld] & C_{n+1} \ar[r]^{d_{n+1}} \ar[d]_{+}
\ar[ld]_{s_{n}}  & C_{n+2} \ar[r]\ar[d]_+ \ar[ld]_{s_{n+1}} &  \cdots \ar[ld]_{s_{n+2}} \\
\cdots \ar[r] & D_n \ar[r]^{\partial_{n}} & D_{n+1} \ar[r]^{\partial_{n+1}}  & D_{n+2} \ar[r] & \cdots  }$$   
Each vertical map $f_n : C_n \longrightarrow D_n$ satisfies $f_n = s_n \circ d_n + \partial_{n-1} \circ s_{n-1}.$
 
\smallskip

Given two morphisms of complexes $f,f' : C \longrightarrow D$ we write $f \sim f'$ if $f-f'$ is null-homotopic. 
This is an equivalence relation, compatible with the sum and the composition of morphisms. 
A complex $C$ is \emph{null-homotopic} if the identity map $1_C$ is null-homotopic (\emph{i.e.} $1_C \sim 0$). 
We say that $f$ is a \emph{homotopy equivalence} if there exists a morphism $g : D \longrightarrow C$ such that
$f \circ g \sim 1_D$ and $g \circ f \sim 1_C$.

\begin{definition}The \emph{homotopy category of $A$-modules}, denoted by $\sfHo(A\sfMod)$, is the category with
 \begin{itemize}
  \item objects: complexes of $A$-modules (same as $\sfC(A\sfMod)$),
  \item morphisms: $\mathrm{Hom}_{\sfHo(A\sfMod)}(C,D) := \mathrm{Hom}_{\sfC(A\sfMod)}(C,D)/ \sim$.
 \end{itemize}
\end{definition}
It is an additive category (but non-abelian in general). The isomorphisms in the homotopy category
are exactly the classes of the homotopy equivalences.

\begin{exercise}\label{exo:split}
Let $0 \longrightarrow L \longrightarrow M \longrightarrow N \longrightarrow 0$ be a short
exact sequence of $A$-modules, and $C$ be the complex associated to this sequence, with $L$ in degree $0$.
Show that $C$ is null-homotopic if and only if the exact sequence splits.
\end{exercise} 

A complex of the form $(\cdots \longrightarrow 0 \longrightarrow M \overset{f}{\longrightarrow} M \longrightarrow 0 \longrightarrow \cdots)$ is null-homotopic if and only if $f$ is an isomorphism. More generally, a complex 
$C$ is null-homotopic if and only if it decomposes as a direct sum of complexes of the form
$$(\cdots \longrightarrow 0 \longrightarrow M \overset{\sim}{\longrightarrow} M \longrightarrow 0 \longrightarrow \cdots).$$ 
See also Exercise \ref{exo:split}. When working in $\sfHo(A\sfMod)$ we will often consider reduced complexes where all the null-homotopic direct summands are removed. 

\smallskip

When $f$ is null-homotopic, the corresponding morphism on cohomology groups is zero. As a consequence, if $f \sim g$ then $H^\bullet(f) = H^\bullet(g)$. Also, if $f$ is a homotopy equivalence then $H^\bullet(f)$ is an isomorphism. In particular, if $C \simeq D$ in $\sfHo(A\sfMod)$ then $H^\bullet(C) \simeq H^\bullet(D)$.

\smallskip

We mentioned earlier that projective and injective resolutions are unique in the homotopy and derived categories. Recall that a projective resolution $P$ of an $A$-module $M$ is a complex of projective $A$-modules
$$ \cdots \longrightarrow P_{-n} \longrightarrow P_{-n+1} \longrightarrow \cdots \longrightarrow P_{-1} \longrightarrow P_0 \longrightarrow 0 \longrightarrow \cdots$$
such that $H^\bullet(P) \simeq M[0]$. In other words, there is a surjective map $P_0 \twoheadrightarrow M = H^0(P)$ which fits in the following long exact sequence
$$ \cdots \longrightarrow P_{-n} \longrightarrow P_{-n+1} \longrightarrow \cdots \longrightarrow P_{-1} \longrightarrow P_0 \twoheadrightarrow M \longrightarrow 0 \longrightarrow \cdots.$$
The idea of projective resolutions is to replace $M$ by a complex with the same cohomology ($M$ in degree $0$) but whose terms are `nicer'. 

\begin{prop} If $P$ and $Q$ are two projective resolutions of $M$ then
$P \simeq Q$ in $\sfHo(A\sfMod)$.
\end{prop}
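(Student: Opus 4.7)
The plan is to produce mutually inverse (up to homotopy) chain maps $f : P \to Q$ and $g : Q \to P$ lifting the identity on $M$, then to invoke a uniqueness-up-to-homotopy statement for such lifts. The central observation is a standard \emph{comparison lemma}: given any map $\varphi : M \to N$ between $A$-modules, any projective resolution $P$ of $M$, and any resolution $R$ of $N$ (even by non-projective terms), there exists a chain map $\tilde{\varphi} : P \to R$ lifting $\varphi$ (via the augmentations $P_0 \to M$ and $R_0 \to N$), and any two such lifts are homotopic. Once this lemma is in hand, the proposition follows immediately: apply it to $\varphi = \mathrm{id}_M$ with the roles of $(P,R)$ being $(P,Q)$ and then $(Q,P)$ to obtain $f$ and $g$; the compositions $g \circ f$ and $\mathrm{id}_P$ both lift $\mathrm{id}_M$ along $P \to M$, hence are homotopic by the uniqueness part, and similarly for $f \circ g$.

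To prove the comparison lemma I would proceed by induction on the (negative) degree. First, lift $\varphi$ composed with the augmentation of $P$ along the surjection $R_0 \twoheadrightarrow N$ using projectivity of $P_0$; this defines $\tilde{\varphi}_0$. Inductively, assume $\tilde{\varphi}_{-k}$ has been constructed for $k < n$ making the diagram commute up to degree $-n+1$. Then the composite $\tilde{\varphi}_{-n+1} \circ d_{-n}^P$ lands in $\mathrm{Ker}\,d_{-n+1}^R = \mathrm{Im}\,d_{-n}^R$ (using acyclicity of $R$ in negative degrees), so projectivity of $P_{-n}$ lets us lift through the surjection $R_{-n} \twoheadrightarrow \mathrm{Im}\,d_{-n}^R$ to define $\tilde{\varphi}_{-n}$.

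For the uniqueness part, suppose $\tilde{\varphi}$ and $\tilde{\varphi}'$ are two such lifts; set $h = \tilde{\varphi} - \tilde{\varphi}'$, which is a chain map lifting the zero map $M \to N$. We must construct $s_{-n} : P_{-n} \to R_{-n-1}$ with $h = ds + sd$. Again we proceed by induction: the image of $h_0$ composed with $R_0 \to N$ is zero, so $h_0$ lands in $\mathrm{Ker}(R_0 \to N) = \mathrm{Im}\,d_{-1}^R$, and projectivity of $P_0$ yields $s_0 : P_0 \to R_{-1}$ with $d_{-1}^R \circ s_0 = h_0$. At stage $-n$, the map $h_{-n} - s_{-n+1} \circ d_{-n}^P$ lands in $\mathrm{Ker}\,d_{-n}^R = \mathrm{Im}\,d_{-n-1}^R$ (a direct verification using the chain-map property of $h$ and the inductive hypothesis), and projectivity of $P_{-n}$ produces the desired $s_{-n}$.

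The main obstacle is verifying at each inductive stage that the map we want to lift actually factors through the relevant image, i.e.\ checking the cocycle condition $(h_{-n} - s_{-n+1} \circ d_{-n}^P)$ lies in $\mathrm{Ker}\,d_{-n}^R$; this is a short but fiddly diagram chase relying on the chain-map identity for $h$, the induction hypothesis $h_{-n+1} = d_{-n}^R \circ s_{-n+1} + s_{-n+2} \circ d_{-n+1}^P$, and the relation $d^2 = 0$. Everything else is a formal consequence of the lifting property of projectives against surjections.
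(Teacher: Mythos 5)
Your proof is correct and follows essentially the same approach as the paper: both construct the chain maps $f$ and $g$ by repeatedly lifting along surjections using projectivity, and both rely (explicitly in your case, as an exercise in the paper's) on the fact that any two lifts of a map $M \to N$ to chain maps between a projective resolution and a resolution are homotopic. Your proposal additionally fills in the homotopy construction that the paper leaves as an exercise, and correctly isolates the key check at each inductive stage (that the cocycle condition holds via the chain-map identity, the inductive hypothesis, and $d^2 = 0$).
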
 

\begin{proof}[Sketch of proof]
We only show how to construct the morphisms $f : P \longrightarrow  Q$ and $g : Q \longrightarrow  P$ which will be mutually inverse in the homotopy category.

\smallskip

Let us denote by $d$ (resp. $\partial$) the differential of the complex $P$ (resp. $Q$) and
by $d_0 : P_0 \twoheadrightarrow M$ (resp. $\partial_0 : Q_0 \twoheadrightarrow M$) the respective projections. 
Since $\partial_0$ is surjective and $P_0$ is projective, the map $d_0 : P_0 \longrightarrow M$ factors through $Q_0 \twoheadrightarrow M$. In other words, there exists $f_0 : P_0 \longrightarrow Q_0$ such that
$d_0 = \partial_0 \circ f_0$.

\smallskip

Since $\partial_0 f_0 d_{-1} = d_0 d_{-1} = 0$, we have $\mathrm{Im}\, (f_0 d_{-1}) \subset \mathrm{Ker}\, \partial_0 = \mathrm{Im}\, \partial_{-1}$. Therefore $f_0 d_{-1}$ can be seen as a map from $P_{-1}$ to $\mathrm{Im}\, \partial_{-1}$. Since $P_{-1}$ is projective, it should factor through the surjective map $\partial_{-1} : Q_{-1} \twoheadrightarrow \mathrm{Im}\, \partial_{-1}$. In other words, there exists $f_{-1} : P_{-1} \longrightarrow Q_{-1}$ such that
$\partial_{-1} f_{-1} = f_0 d_{-1}$. By iterating this construction, we obtain, for all $ n < 0$, maps $f_n : P_{n} \longrightarrow Q_{n}$ such that $\partial_{n} f_{n} = f_{n+1} d_{n}$. This means that $f = (f_n)_{n \in \mathbb{Z}} : P \longrightarrow Q$ is a morphism of complexes. The construction of $g$ is similar.
\end{proof}
 
\begin{exercise} Show that $g\circ f \sim 1_{P}$ and $f \circ g \sim 1_{Q}$.
\end{exercise} 

More generaly, if $C$ is a complex of $A$-modules, a \emph{projective resolution} $P$ of $C$ is a bounded above complex of projective modules together with a morphism $s : P \longrightarrow C$ such that $H^n(s)$ is an isomorphism for all $n \in\mathbb{Z}$ (a \emph{quasi-isomorphism}, see \S\ref{sec:derived}). We shall see in Proposition \ref{prop:morphismsind} that $P$ is uniquely determined by $C$ up to homotopy equivalence, which generalizes the previous proposition.

\begin{exercise}
Spell out the case where $C = M[0]$ has only one non-zero term, say $M$ in degree $0$.
\end{exercise}

\begin{definition} The \emph{mapping cone} of a morphism of complexes $f : (C,d) \longrightarrow (D,\partial)$ is the complex $\mathrm{Cone}(f) = C[1] \oplus D$ with differential
$$ d_{\mathrm{Cone}(f)} = \left[\begin{array}{cc} d[1] & 0 \\ f[1] & \partial \end{array}\right].$$
\end{definition} 
 
If $f = 0$, the mapping cone is just the direct sum of $C[1]$ and $D$ in the category of complexes. However, if $f$ is non-zero, it encodes more information. For example, if $C=M[0]$ and $D=N[0]$ are complexes concentrated in degree $0$, then $f$ is induced by a morphism of $A$-modules $f_0 : M \longrightarrow N$. In this case the complex $\mathrm{Cone}(f)$ has only two non-zero terms (in degree $-1$ and $0$) and its cohomology is $H^{-1}(\mathrm{Cone}(f)) = \mathrm{Ker}\, f $ and $H^{0}(\mathrm{Cone}(f)) = \mathrm{CoKer}\, f$. We deduce the existence of a long exact sequence
of $A$-modules
$$ 0 \longrightarrow \underbrace{H^{-1}(\mathrm{Cone}(f))}_{\mathrm{Ker}\, f} \longrightarrow M \overset{f}{\longrightarrow} N \longrightarrow \underbrace{H^{0}(\mathrm{Cone}(f))}_{\mathrm{CoKer}\, f} \longrightarrow 0.$$
This generalizes to morphisms of complexes as follows.

\begin{prop}\label{prop:coneseq}
Any morphism of complexes $f : C \longrightarrow D$ induces a long exact sequence
$$ \cdots \longrightarrow  H^n(C) \overset{}{\longrightarrow} H^n(D) \overset{}{\longrightarrow} H^n(\mathrm{Cone}(f)) \overset{}{\longrightarrow} H^{n+1}(C) \longrightarrow \cdots $$
\end{prop}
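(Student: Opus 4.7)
The plan is to exhibit $\mathrm{Cone}(f)$ as the middle term of a natural short exact sequence of complexes and then invoke the long exact cohomology sequence established earlier.

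Concretely, since $\mathrm{Cone}(f) = C[1] \oplus D$ as a graded module, we have the obvious inclusion $\iota : D \hookrightarrow \mathrm{Cone}(f)$, $y \mapsto (0,y)$, and the projection $\pi : \mathrm{Cone}(f) \twoheadrightarrow C[1]$, $(x,y) \mapsto x$. A direct check against the block form
$$d_{\mathrm{Cone}(f)} = \left[\begin{array}{cc} d[1] & 0 \\ f[1] & \partial \end{array}\right]$$
shows that $\iota$ and $\pi$ are morphisms of complexes: $\iota$ uses only the lower-right block $\partial$, and $\pi$ kills the second coordinate so only the upper-left block $d[1]$ is seen. Thus
$$0 \longrightarrow D \overset{\iota}{\longrightarrow} \mathrm{Cone}(f) \overset{\pi}{\longrightarrow} C[1] \longrightarrow 0$$
is a short exact sequence in $\sfC(A\sfMod)$. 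Applying the preceding proposition yields a long exact sequence
$$\cdots \longrightarrow H^n(D) \longrightarrow H^n(\mathrm{Cone}(f)) \longrightarrow H^n(C[1]) \overset{\delta_n}{\longrightarrow} H^{n+1}(D) \longrightarrow \cdots$$
Using the identification $H^n(C[1]) = H^{n+1}(C)$, this has exactly the shape claimed, provided we can identify the connecting map $\delta_n$ with (possibly $\pm$) $H^{n+1}(f)$.

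To identify $\delta_n$, I would run the recipe from the sketch of the previous proposition. Pick a cocycle $c \in H^n(C[1])$, i.e. $c \in C_{n+1}$ with $d_{C,n+1}(c) = 0$. Lift it to $(c,0) \in \mathrm{Cone}(f)_n$; applying $d_{\mathrm{Cone}(f)}$ gives
$$\left(d[1]_n c,\, f_{n+1}(c) + \partial_n(0)\right) = (0,\, f_{n+1}(c)),$$
which lies in the image of $\iota_{n+1}$. Therefore $\delta_n(c) = f_{n+1}(c) = H^{n+1}(f)(c)$, which is the desired map (one can absorb any sign by replacing $\pi$ with $-\pi$ without affecting exactness).

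The only real obstacle is the bookkeeping in this last step: keeping the shift conventions and the sign in $d[1] = -d$ consistent while running the snake. Everything else is a mechanical verification that $\iota$ and $\pi$ are chain maps, together with the basic identity $H^n(C[1]) = H^{n+1}(C)$.
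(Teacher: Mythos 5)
The paper gives no written proof of this proposition; but the discussion immediately following it (defining $\iota : D \to \mathrm{Cone}(f)$ and $\pi : \mathrm{Cone}(f) \to C[1]$ and noting that these induce the two rightward maps in the sequence) makes clear that the intended argument is exactly the one you carry out: apply the long-exact-sequence proposition to the short exact sequence of complexes $0 \to D \to \mathrm{Cone}(f) \to C[1] \to 0$ and identify the connecting homomorphism with $H^{n+1}(f)$. Your computation of the connecting map via the snake recipe is correct, and your remark about absorbing the sign is the right way to handle the $d[1] = -d$ convention; the proof is complete and matches the paper's approach.
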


The maps $H^{n}(D) \longrightarrow H^n(\mathrm{Cone}(f))$ and $H^n(\mathrm{Cone}(f)) \longrightarrow H^{n+1}(C)$ are induced by the natural morphisms of complexes $\iota : D \longrightarrow \mathrm{Cone}(f)$ and $\pi : \mathrm{Cone}(f) \longrightarrow C[1]$. Note that $\pi \circ \iota = 0$. The maps $ f \circ \pi$ and $\iota \circ f$ are non-zero in general, and only null-homotopic. This ensures that we have a triangle 
$$\xymatrix@C=0.8em{& \mathrm{Cone}(f) \ar[ld]_{[1]}^\pi & \\ C \ar[rr]^f & & D\ar[lu]^\iota } $$
in the homotopy category $\sfHo(A\sfMod)$. 

\begin{exercise}
Show that $f \circ \pi$ and $\iota \circ f$ are null-homotopic.
\end{exercise}

More generally, a \emph{triangle} in $\sfHo(A\sfMod)$ is $C \overset{f}{\longrightarrow} D \overset{g}{\longrightarrow} E \overset{h}{\longrightarrow} C[1]$ such that $g\circ f$, $h \circ g$ and $f[1] \circ h$ are null-homotopic. We represent it as $C {\longrightarrow} D {\longrightarrow} E {\rightsquigarrow}$ or as a triangular diagram 
$$\xymatrix@C=0.8em{& E \ar[ld]_{[1]} & \\ C \ar[rr] & & D\ar[lu] } $$
A morphism of triangles between $C {\longrightarrow} D {\longrightarrow} E {\rightsquigarrow}$ and
$C' {\longrightarrow} D' {\longrightarrow} E' {\rightsquigarrow}$
is the data of morphisms of complexes $u : C \longrightarrow C'$, $v : D \longrightarrow D'$  and
$w : E \longrightarrow E'$ making the following diagram commute
$$ \xymatrix{C \ar[r]^f \ar[d]^u & D \ar[r]^g \ar[d]^v & E \ar[r]^h \ar[d]^w & C[1] \ar[d]^{u[1]} \\ 
C' \ar[r]^{f'}  & D' \ar[r]^{g'} & E' \ar[r]^{h'}  & C'[1] }$$
A triangle is \emph{distinguished} if it is isomorphic to a triangle
$C \overset{f}{\longrightarrow} D {\longrightarrow} \mathrm{Cone}(f) {\rightsquigarrow}$ for some morphism 
of complexes $f : C \longrightarrow D$. In particular, by Proposition \ref{prop:coneseq} any distinguished triangle $C {\longrightarrow} D {\longrightarrow} E {\rightsquigarrow}$ yields a long exact sequence in cohomology
$$ \cdots \longrightarrow  H^n(C) \overset{}{\longrightarrow} H^n(D) \overset{}{\longrightarrow} H^n(E) \overset{}{\longrightarrow} H^{n+1}(C) \longrightarrow \cdots $$
The category $\sfHo(A\sfMod)$ together with the suspension $[1]$ and the collection of distinguished triangles is a \emph{triangulated category} (see for example \cite[Chap. 1]{Nee} for the list of axioms of triangulated categories).

\smallskip

Morphisms in the homotopy category can be expressed in terms of the cohomology of the total Hom complex. Given $C$ and $D$
two complexes of $A$-modules, the total Hom complex, denoted by $\mathrm{Hom}_A^\bullet(C,D)$, is defined by
$$ \mathrm{Hom}_A^n(C,D) = \prod_{j-i = n} \mathrm{Hom}_{A} (C_i,D_j)$$
with the differential given by
$$ \delta(f_{i,j}) = \partial_j \circ f_{i,j} - (-1)^{j-i} f_{i,j} \circ d_{i-1}$$
for every $f_{i,j} \in  \mathrm{Hom}_{A} (C_i,D_j)$. One can readily check that $\mathrm{Ker}\, \delta_n$ consists of the morphisms of complexes from $C$ to $D[n]$ whereas  $\mathrm{Im}\, \delta_{n-1}$ is the subgroup of null-homotopic morphisms. Consequently,
\begin{equation}\label{eq:HnHom}
 H^n(\mathrm{Hom}_A^\bullet(C,D)) = \mathrm{Hom}_{\sfHo(A\sfMod)}(C,D[n]).
\end{equation}

\subsection{The derived category of $A$-$\mathsf{Mod}$}\label{sec:derived}
A morphism of complexes $f: C \longrightarrow D$ is a \emph{quasi-isomorphism} if the maps $H^n(f) : H^n(C) \longrightarrow H^n(D)$ induced on the cohomology groups are isomorphisms for all $n \in \mathbb{Z}$. By Proposition \ref{prop:coneseq} this is equivalent to the complex $\mathrm{Cone}(f)$ being acyclic.

\smallskip

The derived category is obtained from the homotopy category by formally inverting the quasi-isomorphisms (equivalently, by taking the quotient by the acyclic complexes). This is analogous to the construction of the fraction field of a domain. More precisely, given two pairs of morphisms in $\sfHo(A\sfMod)$, say 
$(s,f)$ and $(t,g)$ with $s$ and $t$ being quasi-isomorphisms, we write $(s,f) \equiv (t,g)$ if there exists a commutative diagram in $\sfHo(A\sfMod)$
  $$  \xymatrix@R=1.5em@C=2.5em{  & X \ar[ld]_s \ar[rd]^f & \\
  C & Z \ar[l]_r \ar[u]_a \ar[d]^b \ar[r]^h& D \\
  & Y \ar[lu]^t \ar[ru]_g & } $$
  with $r$ a quasi-isomorphism. This represents the relation ``$f s^{-1} = fa (sa)^{-1} = gb (tb)^{-1} = gt^{-1}$''.
  
\begin{definition} 
The \emph{derived category of $A$-modules}, denoted by $\sfD(A\sfMod)$, is the category with
 \begin{itemize}
  \item objects: complexes of $A$-modules (same as $\sfC(A\sfMod)$ and $\sfHo(A\sfMod)$),
  \item morphisms: 
  $$\mathrm{Hom}_{\sfD(A\sfMod)}(C,D) := \left\{  
  \vcenter{\vbox{\xymatrix@R=0.2em@C=1.5em{ & X \ar[ld]_s \ar[rd]^f & \\ C & &  D}}} \left| \begin{array}[c]{l} 
  X \text{ a complex of } A\text{-modules} \\ f \in \mathrm{Hom}_{\sfHo(A\sfMod)}(X,D) \\  s \in \mathrm{Hom}_{\sfHo(A\sfMod)}(X,C) \\ s \text{ a quasi-isomorphism} \end{array} \right.\right\}/ \equiv.$$ \end{itemize}
\end{definition}
The natural functor $\sfHo(A\sfMod) \longrightarrow \sfD(A\sfMod)$ (sending a morphism $f$ to the class of
$(1,f)$) sends quasi-isomophisms to isomorphisms. It is universal for this property. A distinguished triangle
in $\sfD(A\sfMod)$ will be by definition the image of a distinguished triangle in $\sfHo(A\sfMod)$. This endows
the derived category with a structure of triangulated category. We can think of distinguished triangles in
 $\sfD(A\sfMod)$ as analogues of short exact sequences of complexes. 

\pagebreak
 \begin{prop}\leavevmode \begin{itemize} 
 \item[$\mathrm{(i)}$] The functor $H^\bullet$ is well-defined on $\sfD(A\sfMod)$.
 \item[$\mathrm{(ii)}$] Any short exact sequence of complexes $0 \longrightarrow C \longrightarrow D \longrightarrow E \longrightarrow 0$
 in $\sfC(A\sfMod)$ yields a distinguished triangle $C \longrightarrow D \longrightarrow E \rightsquigarrow$ in
 $\sfD(A\sfMod)$.
 \item[$\mathrm{(iii)}$] The functor $A \longrightarrow \sfD(A\sfMod)$ sending a module $M$ to the complex $M[0]$ 
 is fully faithful.
\end{itemize}
\end{prop}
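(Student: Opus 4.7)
My plan is to treat the three parts in order: part (i) will follow from the universal property of $\sfD(A\sfMod)$, part (ii) from producing an explicit quasi-isomorphism to replace the cone, and part (iii) from computing morphisms in $\sfD(A\sfMod)$ via a projective resolution.

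For (i), I first observe that $H^\bullet$ descends to $\sfHo(A\sfMod)$, as already noted in the text: homotopic morphisms induce the same map on cohomology. By definition, a quasi-isomorphism in $\sfHo(A\sfMod)$ is sent to an isomorphism by $H^\bullet$. The universal property of $\sfD(A\sfMod)$, as the localization of $\sfHo(A\sfMod)$ at the class of quasi-isomorphisms, then yields the required factorization.

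For (ii), starting from $0 \longrightarrow C \overset{\iota}{\longrightarrow} D \overset{\pi}{\longrightarrow} E \longrightarrow 0$, I define a chain map $\varphi \colon \mathrm{Cone}(\iota) = C[1] \oplus D \longrightarrow E$ by $(c,d) \longmapsto \pi(d)$; the identity $\pi \circ \iota = 0$ makes $\varphi$ compatible with the cone differential. The triangle $C \longrightarrow D \longrightarrow \mathrm{Cone}(\iota) \rightsquigarrow$ is distinguished by definition, so once $\varphi$ is shown to be a quasi-isomorphism it becomes isomorphic in $\sfD(A\sfMod)$ to $C \longrightarrow D \longrightarrow E \rightsquigarrow$, which is the desired distinguished triangle. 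To verify that $\varphi$ is a quasi-isomorphism, I stack the long exact sequence of the cone (Proposition \ref{prop:coneseq}) above the long exact sequence of the short exact sequence, producing a ladder diagram in which the vertical maps on the $H^n(C)$ and $H^n(D)$ columns are identities. After checking that the two connecting maps agree modulo $H^n(\varphi)$ by unwinding both boundary constructions on a lift of a class $[\pi(d)]$, the 5-lemma closes the argument.

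For (iii), the injectivity of $\mathrm{Hom}_A(M,N) \longrightarrow \mathrm{Hom}_{\sfD(A\sfMod)}(M[0],N[0])$ is transparent, since applying $H^0$ recovers the original morphism. For surjectivity I invoke the forthcoming Proposition \ref{prop:morphismsind}: letting $P$ be a projective resolution of $M$, we have $\mathrm{Hom}_{\sfD(A\sfMod)}(M[0],N[0]) \simeq \mathrm{Hom}_{\sfHo(A\sfMod)}(P,N[0])$. A chain map $P \longrightarrow N[0]$ is just a morphism $P_0 \longrightarrow N$ vanishing on $\mathrm{Im}\, d_{-1}$, hence factoring uniquely through $M = H^0(P)$. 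The candidate homotopies $s_n \colon P_n \longrightarrow N[0]_{n-1}$ can only be non-zero in degree $n=1$, where $P_1 = 0$, so there are no non-trivial null-homotopies; consequently $\mathrm{Hom}_{\sfHo(A\sfMod)}(P,N[0]) \simeq \mathrm{Hom}_A(M,N)$. The main obstacle of the whole proof lies here: reducing derived-category Homs out of $M[0]$ to chain maps out of a projective resolution is the substantive step and genuinely rests on the calculus of fractions encoded in Proposition \ref{prop:morphismsind}.
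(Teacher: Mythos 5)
The paper states this proposition without supplying a proof, so there is no official argument to compare against; your three-part argument is correct. Two small refinements are worth noting. For (ii), rather than stacking the two long exact sequences and invoking the 5-lemma (which, as you acknowledge, requires verifying that the ladder commutes, at least up to sign), it is quicker to observe that your chain map $\varphi\colon \mathrm{Cone}(\iota)\to E$ is surjective with kernel isomorphic as a complex to $\mathrm{Cone}(\mathrm{id}_C)$, hence contractible, so the long exact cohomology sequence of $0 \to \ker\varphi \to \mathrm{Cone}(\iota) \to E \to 0$ directly gives that $\varphi$ is a quasi-isomorphism without any commutativity check. For (iii), having identified $\mathrm{Hom}_{\sfD(A\sfMod)}(M[0],N[0]) \simeq \mathrm{Hom}_{\sfHo(A\sfMod)}(P,N[0]) \simeq \mathrm{Hom}_A(M,N)$, you should also record that this chain of bijections is compatible with the canonical map whose surjectivity you want; this is immediate once one notes that the chain map $P \to N[0]$ attached to $f \in \mathrm{Hom}_A(M,N)$ is precisely $f[0]\circ s$, where $s\colon P\to M[0]$ is the resolution quasi-isomorphism, so the composite $\mathrm{Hom}_A(M,N)\to\mathrm{Hom}_{\sfHo(A\sfMod)}(P,N[0])\to\mathrm{Hom}_{\sfD(A\sfMod)}(M[0],N[0])$ is exactly the canonical one. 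With these two points made explicit, the argument is complete.
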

 
Note however that in general $H^\bullet(C) \simeq H^\bullet(D)$ does not imply $C \simeq D$ in $\sfD(A\sfMod)$. For example  $0 \longrightarrow \mathbb{C} \overset{0}{\longrightarrow}  \mathbb{C}\longrightarrow 0$ and $0 \longrightarrow  \mathbb{C}[x]/x^2 \overset{x}{\longrightarrow}  \mathbb{C}[x]/x^2\longrightarrow 0$ are not isomorphic in $\sfD( \mathbb{C}[x]/x^2\sfmod)$.

 \subsection{Morphisms in $D(A\sfMod)$}

By definition, morphisms in the derived category are equivalence classes of pairs of morphisms in the homotopy category $C \mathop{\longleftarrow}\limits^s X \mathop{\longrightarrow}\limits^f D$, representing ``$f\circ s^{-1}$''. We explain here how replacing $C$ and $D$ by projective or injective resolutions helps finding nice representatives for these morphisms.   

\smallskip

We will denote by $A\sfProj$ the full subcategory of $A\sfMod$ whose objects are the projective $A$-modules. The corresponding categories $\sfC(A\sfProj)$ and $\sfHo(A\sfProj)$ correspond to the full subcategories of  
$\sfC(A\sfMod)$ and $\sfHo(A\sfMod)$ respectively, whose objets are complexes of projective $A$-modules.
Similarly, $A\sfInj$ will refer to the additive category of injective $A$-modules, and $\sfC(A\sfInj)$ (resp. $\sfHo(A\sfInj)$) to the corresponding category of complexes (resp. the homotopy category).

\begin{lem}\label{lem:acyclictoproj}
Let $P \in C^-(A\sfProj)$ be a bounded above complex of projective $A$-modules, and $X,Y$ be two complexes of $A$-modules.
\begin{itemize}
\item[$\mathrm{(i)}$] If $X$ is an acyclic complex, any morphism $P \longrightarrow X$ is null-homotopic.
\item[$\mathrm{(ii)}$] Any quasi-isomorphism $Y \longrightarrow P$ splits in $\sfHo(A\sfMod)$.
\end{itemize}
\end{lem}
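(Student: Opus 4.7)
For part (i), I would build the homotopy $s : P \to X[-1]$ by descending induction, exploiting the two hypotheses in the only natural way. Since $P$ is bounded above, fix $N$ with $P_n = 0$ for $n > N$ and set $s_n = 0$ there. Assume inductively that $s_{n+1}, s_{n+2}, \ldots$ have been constructed so that the homotopy identity $f_k = \partial_{k-1} s_k + s_{k+1} d_k$ holds for all $k > n$. To produce $s_n$, consider $g_n := f_n - s_{n+1} d_n : P_n \to X_n$. The point is that $g_n$ should land in $\mathrm{Im}\,\partial_{n-1}$. Indeed, using that $f$ is a chain map and the inductive identity at level $n+1$,
$$\partial_n g_n = \partial_n f_n - \partial_n s_{n+1} d_n = f_{n+1} d_n - (f_{n+1} - s_{n+2} d_{n+1}) d_n = s_{n+2}\, d_{n+1} d_n = 0.$$
Since $X$ is acyclic, $\mathrm{Ker}\,\partial_n = \mathrm{Im}\,\partial_{n-1}$, so $g_n$ factors through the surjection $\partial_{n-1} : X_{n-1} \twoheadrightarrow \mathrm{Im}\,\partial_{n-1}$. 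Projectivity of $P_n$ lifts it to $s_n : P_n \to X_{n-1}$ with $\partial_{n-1} s_n = g_n$, which is precisely the required relation.

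For part (ii), the natural move is to apply (i) to a carefully chosen morphism out of $P$. Let $q : Y \to P$ be the given quasi-isomorphism and consider the mapping cone $\mathrm{Cone}(q) = Y[1] \oplus P$ together with the canonical inclusion $\iota : P \to \mathrm{Cone}(q)$, $p \mapsto (0,p)$. By Proposition \ref{prop:coneseq}, $q$ being a quasi-isomorphism forces $\mathrm{Cone}(q)$ to be acyclic. Since $P \in C^-(A\sfProj)$, part (i) applies and gives a null-homotopy of $\iota$, i.e.\ maps $s_n : P_n \to \mathrm{Cone}(q)_{n-1} = Y_n \oplus P_{n-1}$ with $\iota_n = s_{n+1} d_n + d_{\mathrm{Cone}(q)} s_n$.

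Decompose $s_n = (\sigma_n, \tau_n)$ and expand the homotopy identity using the explicit cone differential. The $Y$-component reads $0 = \sigma_{n+1} d_n - d_Y \sigma_n$, so $\sigma = (\sigma_n) : P \to Y$ is already a genuine morphism of complexes; the $P$-component reads $1_{P_n} = q_n \sigma_n + d_P \tau_n + \tau_{n+1} d_n$, which is exactly the statement that $\tau$ is a homotopy between $q\sigma$ and $1_P$. Hence $q \sigma \sim 1_P$, so $q$ admits a section in $\sfHo(A\sfMod)$ and therefore splits.

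The main conceptual step is recognizing that (ii) reduces to (i) via the cone construction; the main technical obstacle is the sign bookkeeping in the cone differential, which has to be tracked precisely in order to see that the two coordinates of the null-homotopy encode, respectively, a chain map $\sigma$ and a homotopy $\tau$ realizing the desired splitting.
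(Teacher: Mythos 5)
Your proof is correct and follows essentially the same route as the paper's: for (i), a descending induction that uses the chain-map identity, the inductive homotopy relation, acyclicity of $X$, and projectivity of $P_n$ to construct the homotopy; for (ii), passing to the mapping cone of the quasi-isomorphism, applying (i) to the canonical inclusion $P \to \mathrm{Cone}(q)$, and reading off a chain map and a homotopy from the two components of the resulting null-homotopy. The paper leaves the component-by-component verification in (ii) as an exercise; you have supplied it, with the signs handled correctly.
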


\begin{proof} Let $X$ be an acyclic complex and $f : P \longrightarrow X$ be a morphism. We can assume without loss of generality that $P_i = 0$ for $i > 0$.
We construct a homotopy $h : P \longrightarrow X[-1]$ as follows:
$$\xymatrix@C=3em@R=3em{\cdots \ar[r] & P_{-2} \ar[d] \ar[r]^{ } & P_{-1} \ar[r]^{d_{-1}} \ar[d]_{f_{-1}}
\ar[ld]_{h_{-1}}  & P_{0} \ar[r]\ar[d]_{f_0} \ar[ld]_{h_{0}} & 0 \ar[d]\ar[r] & \cdots \\
\cdots \ar[r] & X_{-2} \ar[r]^{} & X_{-1} \ar[r]^{\partial_{-1}}  & X_{0} \ar[r]^{\partial_0} & X_1 \ar[r] & \cdots  }$$ 
Since $f$ is a morphism of complexes, then $\partial_0 f_0 = 0$ hence $\mathrm{Im}\, \partial_{-1} = \mathrm{Ker}\, \partial_0 \supset \mathrm{Im}\, f_0$. Therefore since $P_0$ is projective there exists $h_0 : P_0 \longrightarrow X_{-1}$ such that $f_0 = \partial_{-1} h_0$. 

\smallskip
Similarly, using the fact that $\partial_{-1} (f_{-1}-h_0d_{-1}) = \partial_{-1} f_{-1}-\partial_{-1}h_0d_{-1} = \partial_{-1} f_{-1}-f_0 d_{-1} = 0$, there exists $h_{-1} : P_{-1} \longrightarrow X_{-2}$ such that $f_{-1}-h_0d_{-1} = \partial_{-2} h_{-1}$. We iterate this construction to get (i).

\smallskip
For (ii) we consider the distinguished triangle $Y \mathop{\longrightarrow}\limits^s P \longrightarrow \mathrm{Cone}(s) \rightsquigarrow$. By (i) the morphism $P \longrightarrow \mathrm{Cone}(s)$ is null-homotopic, yielding a map $h : P \longrightarrow \mathrm{Cone}(s)[-1] = P[-1]\oplus Y$. If $t : P \longrightarrow Y$ denotes the second projection of this map then one can show that $t$ is a morphism of complexes and $st \sim 1_P$.
\end{proof}

\begin{exercise} Check that $t$ is a morphism of complexes satisfying $st \sim 1_P$.
\end{exercise} 

\begin{prop}\label{prop:morphismsind}
Given $P \in \sfC^-(A\sfProj)$ and $C \in \sfC(A\sfMod)$ there is a natural isomorphism of $\mathbb{Z}$-modules
$$ \begin{array}{rcl} 
\mathrm{Hom}_{\sfHo(A\sfMod)}(P,C) & \mathop{\longrightarrow}\limits^\sim & \mathrm{Hom}_{\sfD(A\sfMod)}(P,C)\\ f & {\longmapsto} & (1,f) \end{array}$$
\end{prop}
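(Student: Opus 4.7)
The strategy is to use Lemma \ref{lem:acyclictoproj}(ii) as the key tool: since $P$ is a bounded above complex of projectives, every quasi-isomorphism with target $P$ admits a section in $\sfHo(A\sfMod)$. This splitting property is precisely what allows roofs over $P$ to be collapsed, in both the surjectivity and the injectivity arguments.

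For surjectivity, I start with an arbitrary class in $\mathrm{Hom}_{\sfD(A\sfMod)}(P,C)$ represented by a roof $P \xleftarrow{s} X \xrightarrow{f} C$ with $s$ a quasi-isomorphism. Applying Lemma \ref{lem:acyclictoproj}(ii) to $s$ produces a morphism $t : P \longrightarrow X$ in $\sfHo(A\sfMod)$ with $s \circ t \sim 1_P$. I then verify that $(s, f) \equiv (1_P, f \circ t)$ by exhibiting the common refinement with $Z = P$, the map to the top apex being $t : P \to X$ and the map to the bottom apex being $1_P : P \to P$. The leftmost arm of the resulting diamond is $s \circ t \sim 1_P$, visibly a quasi-isomorphism, and the remaining commutativity reduces to the identities $s \circ t \sim 1_P$ and $f \circ t = f \circ t$. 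This identifies the original class with the image of $f \circ t \in \mathrm{Hom}_{\sfHo(A\sfMod)}(P,C)$.

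For injectivity, suppose $f_1, f_2 : P \longrightarrow C$ induce the same element of $\mathrm{Hom}_{\sfD(A\sfMod)}(P,C)$, i.e.\ $(1_P, f_1) \equiv (1_P, f_2)$. Unwinding the definition of $\equiv$ yields a complex $Z$, a quasi-isomorphism $r : Z \longrightarrow P$ (forced to equal both top-to-top composites, as the left legs of both roofs are $1_P$), and a morphism $h : Z \longrightarrow C$ with $f_1 \circ r \sim h \sim f_2 \circ r$ in $\sfHo(A\sfMod)$. Applying Lemma \ref{lem:acyclictoproj}(ii) to $r$ now produces $u : P \longrightarrow Z$ with $r \circ u \sim 1_P$, whence
$$ f_1 \;\sim\; f_1 \circ r \circ u \;\sim\; f_2 \circ r \circ u \;\sim\; f_2 $$
in $\sfHo(A\sfMod)$, as required.

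There is no substantial obstacle once Lemma \ref{lem:acyclictoproj}(ii) is in hand: the entire proposition is a formal manipulation of roofs, and the only point requiring care is the choice of common refinement — namely $Z = P$ with $a = t$ and $b = 1_P$ — in the surjectivity step. All the real content sits in the splitting lemma, which in turn was built out of part (i) applied to the cone of the quasi-isomorphism.
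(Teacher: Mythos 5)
Your proof is correct and takes essentially the same approach as the paper: both arguments hinge on Lemma \ref{lem:acyclictoproj}(ii), splitting the quasi-isomorphism with target $P$ to collapse the roof in the surjectivity step and to cancel the common refinement in the injectivity step. The only cosmetic difference is that you spell out the common-refinement diagram $Z = P$, $a = t$, $b = 1_P$ explicitly, whereas the paper presents it as a picture.
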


\begin{proof}
For the injectivity of the map, let $f,g \in \mathrm{Hom}_{\sfHo(A\sfMod)}(P,C)$ be such that $(1,f) \equiv (1,g)$. This means that there is a commutative diagram
  $$  \xymatrix@R=1.5em@C=2.5em{  & P \ar@{=}[ld] \ar[rd]^f & \\
  P & Z \ar[l]_s \ar[u] \ar[d] \ar[r]^h&  C \\
  & P \ar@{=}[lu] \ar[ru]_g & } $$
with $s$ a quasi-isomorphism. By Lemma \ref{lem:acyclictoproj}, there exists $t : P \longrightarrow Z$ such that 
$st \sim 1_P$. From $fs = h = gs$ we get $f= fst = gst = g$ in $\sfHo(A\sfMod)$.

\smallskip
The surjectivity is another application of Lemma \ref{lem:acyclictoproj}. Indeed, any pair of morphisms $P \mathop{\longleftarrow}\limits^s X \mathop{\longrightarrow}\limits^f C$ can be completed in a commutative diagram
  $$  \xymatrix@R=1.5em@C=2.5em{  & X \ar[ld]_s \ar[rd]^f & \\
  P & P \ar[l]_{st} \ar[u]^t \ar[r]^{ft} &  C}$$
with $t$ satisfying $st \sim 1_P$. 
\end{proof}

For $? \in \{b,+,-\}$ we will denote by $\sfHo^?$ and $\sfD^?$ the essential images of the categories
of bounded/bounded below/bounded above complexes of $A$-modules.
The following theorem follows from Proposition \ref{prop:morphismsind} and the existence of projective (or injective) resolutions. The proof is left as an exercise.

\begin{theorem}\label{thm:hoandd}
The functor $\sfHo(A\sfMod) \longrightarrow \sfD(A\sfMod)$ induces equivalences
$$\begin{aligned}
 \sfHo^-(A\sfProj) & \mathop{\longrightarrow}^\sim\sfD^-(A\sfMod) \\
\text{and} \qquad  \sfHo^+(A\sfInj) & \mathop{\longrightarrow}^\sim\sfD^+(A\sfMod). 
\end{aligned}$$
\end{theorem}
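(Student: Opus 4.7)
The plan is to verify the first equivalence $\sfHo^-(A\sfProj) \mathop{\to}\limits^\sim \sfD^-(A\sfMod)$ by checking full faithfulness and essential surjectivity separately; the equivalence $\sfHo^+(A\sfInj) \mathop{\to}\limits^\sim \sfD^+(A\sfMod)$ will then follow by a formally dual argument in the opposite category (reversing all arrows turns bounded above complexes of projectives into bounded below complexes of injectives).

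For full faithfulness, I would invoke Proposition \ref{prop:morphismsind} directly. Given any two objects $P, Q \in \sfHo^-(A\sfProj)$, the source $P$ is a bounded above complex of projectives and $Q$ is in particular an object of $\sfC(A\sfMod)$, so the proposition applies and gives
$$ \mathrm{Hom}_{\sfHo(A\sfMod)}(P,Q) \mathop{\longrightarrow}\limits^\sim \mathrm{Hom}_{\sfD(A\sfMod)}(P,Q),$$
which is exactly what full faithfulness requires.

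For essential surjectivity, the task is to show that every bounded above complex $C \in \sfC^-(A\sfMod)$ admits a quasi-isomorphism $s : P \to C$ with $P \in \sfC^-(A\sfProj)$; that is, a projective resolution of the complex $C$ in the sense introduced earlier. Using that $A\sfMod$ has enough projectives, I would carry out a downward induction on degree: pick $N$ with $C_n = 0$ for $n > N$, set $P_n = 0$ for $n > N$, choose a projective $P_N$ surjecting onto $C_N$, and at each subsequent step choose $P_{n-1}$ together with a differential $P_{n-1} \to P_n$ and a map $P_{n-1} \to C_{n-1}$ designed so that the pair $(s_{n-1}, d^P_{n-1})$ lifts a surjection onto the pullback $P_n \times_{C_n} C_{n-1}$, the projectivity of $P_{n-1}$ being what permits this lift. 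A routine check shows the resulting $s$ induces isomorphisms on every $H^n$; boundedness above of $P$ is automatic from the starting point of the induction.

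The main obstacle is the inductive construction itself: one has to choose the degree-$(n-1)$ term of $P$ so as to simultaneously hit all cycles in $C_{n-1}$ (to make $H^{n-1}(s)$ surjective) and kill every spurious class in $H^{n}(P)$ not already in the image of $P_n$ (to make $H^{n}(s)$ injective). Packaging these two requirements into a single surjection onto the fibre product above is the cleanest way to make the projectivity of $P_{n-1}$ do all the work at once. Once both pieces are in place, Proposition \ref{prop:morphismsind} also immediately yields a quasi-inverse on morphisms, completing the equivalence; the injective statement is then proved by running the whole argument in $(A\sfMod)^{\mathrm{op}}$.
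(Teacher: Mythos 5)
Your reduction of the theorem to Proposition \ref{prop:morphismsind} plus the existence of projective resolutions of bounded above complexes, with the injective case obtained by duality, is exactly the route the paper has in mind: the text states that the theorem follows from these two ingredients and leaves the verification as an exercise. Your treatment of full faithfulness is clean (for $P,Q \in \sfHo^-(A\sfProj)$ apply the proposition with $C=Q$, noting that the hom-sets in the full subcategories $\sfHo^-(A\sfProj)$ and $\sfD^-(A\sfMod)$ agree with those in $\sfHo(A\sfMod)$ and $\sfD(A\sfMod)$).

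There is, however, a real gap in your inductive construction of the resolution. Surjecting $P_{n-1}$ onto the pullback $P_n \times_{C_n} C_{n-1}$ gives the chain-map identity $s_n\circ d^P_{n-1}=d^C_{n-1}\circ s_{n-1}$ by definition of the fibre product, but it does not force $d^P_n\circ d^P_{n-1}=0$, so what you build need not even be a complex. The fix is to take the pullback over the cycles of $P_n$: choose $P_{n-1}$ projective with a surjection onto $\mathrm{Ker}(d^P_n)\times_{C_n}C_{n-1}$, the fibre product of $s_n$ restricted to $\mathrm{Ker}\,d^P_n$ along $d^C_{n-1}$. Then $\mathrm{Im}\,d^P_{n-1}\subset\mathrm{Ker}\,d^P_n$, so $d^2=0$, and the same surjection delivers what you want cohomologically: it propagates down the induction the invariant that $\mathrm{Ker}\,d^P_n\to\mathrm{Ker}\,d^C_n$ is onto (giving surjectivity of each $H^n(s)$), and it factors any pair $(p,c)$ with $p\in\mathrm{Ker}\,d^P_n$ and $s_n(p)=d^C_{n-1}(c)$ through $P_{n-1}$ (giving injectivity of $H^n(s)$). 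With that substitution your argument goes through.
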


\begin{rmk}
More generally, if one works with another abelian category $\mathcal{A}$ instead of $A\sfMod$, then the first isomorphism in Theorem \ref{thm:hoandd} (resp. the second isomorphism) remains true if $\mathcal{A}$ has enough projective objects (resp. enough injective objects). 
In these notes we shall often use that $A\sfmod$, the category of finitely generated $A$-modules has enough projectives. This guarantees the existence of projective resolutions of bounded above complexes. 
\end{rmk}

 \subsection{Derived functors}
Let $B$ be another ring with unit.
Given an additive functor $F : A\sfMod \longrightarrow B\sfMod$ we can form the triangulated functors 
$$\begin{aligned}
LF & : \sfD^-(A\sfMod) \simeq \sfHo^-(A\sfProj) \mathop{\longrightarrow}^F \sfHo^-(B\sfMod) \longrightarrow \sfD^-(B\sfMod) \\
RF & : \sfD^+(A\sfMod) \simeq \sfHo^+(A\sfInj) \mathop{\longrightarrow}^F \sfHo^+(B\sfMod) \longrightarrow \sfD^+(B\sfMod) \end{aligned}$$
where the first equivalences are quasi-inverses of the ones given in Theorem \ref{thm:hoandd}. 

\smallskip

When $F$ is right exact and $M \in A\sfMod$,  $LF(M)$ is a complex whose terms in positive degrees are zero and which satisfies $H^0(LF(M)) \simeq F(M)$. In that case we refer to $LF$ as the \emph{left derived functor} of $F$. Similarly, when $F$ is left exact, $RF$ is the \emph{right derived functor} and it satisfies $H^0(RF(M)) \simeq F(M)$ for every $A$-module $M$. Historically, only the lower (resp. higher) cohomology groups of $LF(M)$ (resp. $RF(M)$) were considered, not the complex itself. They yield additive functors between the module categories which we will denote by $L^n F := H^{-n} \circ LF$ and
$R^n F = H^n \circ LF$.  Note that if $F$ is exact then $LF \simeq RF \simeq F$.

\smallskip

\begin{example}\label{ex:extandtor}
(a) Given an $A$-module $M$, the functor $\mathrm{Hom}_A(M,-)$ is an additive, covariant, left exact functor from $A\sfMod$ to $\mathbb{Z}\sfMod$. It yields a right derived functor
$$ R\mathrm{Hom}_A(M,-) : \sfD^+(A\sfMod) \longrightarrow \sfD^+(\mathbb{Z}\sfMod).$$
Given another $A$-module $N$, the group of degree $n$ extensions between $M$ and $N$ is by definition 
$\mathrm{Ext}_A^n(M,N) := R^n\mathrm{Hom}_A(M,N)$. Therefore we have
$$\begin{aligned}
\mathrm{Ext}_A^n(M,N) & = H^n\big(R\mathrm{Hom}_A(M,N)\big) \\
& = H^n\big(\mathrm{Hom}_A^\bullet(M,I)\big) \qquad \text{\quad \ \ \ for $I$ an injective resolution of $N$} \\
& = \mathrm{Hom}_{\sfHo(A\sfMod)}(M,I[n]) \qquad \text{by \eqref{eq:HnHom}} \\
& = \mathrm{Hom}_{\sfD(A\sfMod)}(M,N[n]) \qquad \text{by Proposition \ref{prop:morphismsind}}. 
\end{aligned}$$
Note that more generally, the definition of $R\mathrm{Hom}_A(M,-)$ makes sense whenever $M$ is a bounded above complex of $A$-modules.

\smallskip
\noindent
(b) Given a right $A$-module $M$, the functor $M \otimes_A - : A \sfMod \longrightarrow \mathbb{Z}\sfMod$ is a right exact functor (not exact if $M$ is not flat). The corresponding left derived functor is denoted
by 
$$M {\mathop{\otimes}\limits^L}_A - : \sfD^-(A\sfMod) \longrightarrow \sfD^-(\mathbb{Z}\sfMod).$$
It is defined by $M {\mathop{\otimes}\limits^L}_A N := M \otimes_A P$ where $P$ is any projective resolution of $N$. This left derived functor yields  the torsion groups
$\mathrm{Tor}^A_n(M,N) = H^{-n}(M\otimes_A P)$ for any non-negative integer $n$.
\end{example}

\subsection{Truncation and applications}\label{sec:truncation}
Given a complex of $A$-modules $C$, one can consider the following truncations of $C$:
$$ \xymatrix@R=1mm@C=5mm{
\tau_{\geq n} (C) \, = \,  \cdots \ar[r] & 0 \ar[r]  & 0 \ar[r]  & \mathrm{CoKer}\, d_{n-1} \ar[r] & C_{n+1} \ar[r] & C_{n+2} \ar[r] &  \cdots \\
\widetilde{\tau}_{\geq n} (C) \, = \,  \cdots \ar[r] &  0 \ar[r] &  \mathrm{Im} \, d_{n-1} \ar[r] &    C_n \ar[r] & C_{n+1} \ar[r]& C_{n+2} \ar[r] &  \cdots  \\
\tau_{\leq n} (C)  \, = \, \cdots \ar[r] & C_{n-2} \ar[r] &  C_{n-1}  \ar[r] &  \mathrm{Ker}\, d_n \ar[r] &  0 \ar[r] &  0 \ar[r] &  \cdots \\\widetilde{\tau}_{\leq n} (C) \, = \,  \cdots \ar[r] & C_{n-2} \ar[r] &  C_{n-1}  \ar[r] & C_n \ar[r] &  \mathrm{Im}\, d_{n} 
\ar[r] & 0 \ar[r] &  \cdots }$$
The truncated complexes are constructed so that they have the same cohomology up to (or starting from) a given degree. For example, 
$$H^k(\tau_{\geq n}(C)) = H^k(\widetilde{\tau}_{\geq n}(C)) = \left\{\hskip-1.3mm\begin{array}{ll} H^k(C) & \text{if $k \geq n$,}\\ 0 & \text{otherwise.} \end{array}\right.$$
The following proposition summarizes the relations and properties of truncation operations.

\begin{prop}\label{prop:truncation}
Let $C$ be a complex of $A$-modules.
\begin{itemize}  
\item[$\mathrm{(i)}$] The natural map $\widetilde{\tau}_{\leq n}(C) \longrightarrow \tau_{\leq n}(C)$ is a quasi-isomorphism. 
\item[$\mathrm{(ii)}$] There are short exact sequences of complexes
$$\begin{aligned}
& 0 \longrightarrow \widetilde{\tau}_{< n}(C) \longrightarrow \tau_{\leq n}(C) \longrightarrow H^n(C)[-n] \longrightarrow 0 \\
& 0 \longrightarrow {\tau}_{\leq n}(C) \longrightarrow C \longrightarrow \widetilde{\tau}_{> n}(C) \longrightarrow 0 \\
\end{aligned}$$
\item[$\mathrm{(iii)}$] The truncation operations are functorial. They preserve the class of acyclic complexes, null-homotopic complexes, null-homotopic morphisms and quasi-isomorphisms.
\end{itemize}
\end{prop}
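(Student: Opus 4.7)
The plan is to verify all three parts by direct, degree-by-degree inspection of the explicit formulas for the four truncation operations. No abstract machinery is needed; everything reduces to elementary diagram chases.

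For (i), I will compute the cohomology of $\widetilde{\tau}_{\leq n}(C)$. In degrees below $n$, it visibly agrees with $H^\bullet(C)$. In degree $n$, the relevant outgoing differential is the surjection $C_n \twoheadrightarrow \mathrm{Im}\, d_n$, whose kernel is $\mathrm{Ker}\, d_n$, so the cohomology is $\mathrm{Ker}\, d_n/\mathrm{Im}\, d_{n-1} = H^n(C)$. In degree $n+1$, the incoming surjection forces the cohomology to vanish. This matches $\tau_{\leq n}(C)$ exactly, and the natural chain map between the two identifies these cohomology groups, so it is a quasi-isomorphism.

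For (ii), I will write down the maps explicitly and check exactness at each degree. For the first sequence, the map $\widetilde{\tau}_{<n}(C) \to \tau_{\leq n}(C)$ is the identity below degree $n$ and the canonical inclusion $\mathrm{Im}\, d_{n-1} \hookrightarrow \mathrm{Ker}\, d_n$ in degree $n$; the termwise cokernel is $\mathrm{Ker}\, d_n/\mathrm{Im}\, d_{n-1} = H^n(C)$ concentrated in degree $n$, i.e.\ $H^n(C)[-n]$. For the second sequence, $\tau_{\leq n}(C) \to C$ is the identity below degree $n$ and the inclusion $\mathrm{Ker}\, d_n \hookrightarrow C_n$ in degree $n$; the termwise cokernel is zero below degree $n$, equals $C_n/\mathrm{Ker}\, d_n \cong \mathrm{Im}\, d_n$ in degree $n$, and equals $C_k$ for $k>n$, which is exactly $\widetilde{\tau}_{>n}(C) = \widetilde{\tau}_{\geq n+1}(C)$. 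Compatibility of the maps with the differentials is a direct verification.

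For (iii), functoriality is immediate because each truncation is built from kernels, images, and cokernels, all of which are functorial on morphisms of complexes. Preservation of acyclicity is clear since $H^k$ of a truncation either equals $H^k(C)$ or vanishes. For a quasi-isomorphism $f$, the induced map on the truncated complexes acts on each surviving cohomology group as $H^k(f)$ did on $C$, hence is again an isomorphism. The substantive point is preservation of null-homotopic morphisms: given $f = sd + \partial s$, I will construct a truncated homotopy $\tilde s$ by taking $\tilde s_k = s_k$ away from the truncation boundary and, at the boundary degree, restricting $s_n$ to $\mathrm{Ker}\, d_n$ or descending it to $\mathrm{CoKer}\, d_{n-1}$ depending on the variant, verifying that the resulting identities follow from the original homotopy relation together with the vanishing of the truncated differentials where needed. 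Null-homotopy of a complex, being null-homotopy of its identity, is then an immediate consequence.

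The main obstacle is the bookkeeping in (iii) near the truncation boundary: I must check that the induced homotopy $\tilde s$ lands in the correct submodule or factors through the correct quotient in each of the four truncation variants, and that the homotopy identity holds for the truncated differentials. Each verification is short and follows from the original relation $f = sd + \partial s$, but the four cases must be treated separately to ensure consistency across parts (i) and (ii).
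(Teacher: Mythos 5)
The paper states Proposition \ref{prop:truncation} without proof, so there is no argument to compare against; your direct degree-by-degree verification is the natural approach and is correct. One small caution: the \emph{natural} chain map in part (i) is the inclusion $\tau_{\leq n}(C) \hookrightarrow \widetilde{\tau}_{\leq n}(C)$ (given in degree $n$ by $\mathrm{Ker}\, d_n \hookrightarrow C_n$), not a map in the direction printed in the statement, and you should name it explicitly rather than appeal to ``the natural chain map between the two''---your cohomology computation then shows immediately that it induces isomorphisms in all degrees. For part (iii), your outline of the boundary bookkeeping is right; the one place where something genuine happens is the quotient-type truncation (e.g.\ $\tau_{\geq n}$), where the induced homotopy $\pi'\circ s_n : C_{n+1}\to \mathrm{CoKer}\,\partial_{n-1}$ satisfies the required relation precisely because $\pi'\circ\partial_{n-1}=0$ kills the unwanted term of $f_n = s_n d_n + \partial_{n-1}s_{n-1}$ after passing to the cokernel, and it is worth writing this line out rather than leaving it as ``a direct verification.''
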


\noindent 
Note that similar statements are obtained by reversing the arrows and swapping $\tau_{\leq n}$ and $\widetilde{\tau}_{\leq n}$ with $\tau_{\geq n}$ and  $\widetilde{\tau}_{\geq n}$ respectively.

\smallskip
We deduce that the truncation functors induce functors at the level of the homotopy and derived categories. The short exact sequences in (ii) together with the quasi-isomorphism in (i) yield distinguished triangles in $\sfD(A\sfMod)$ 
\begin{equation}\label{eq:dttruncation}
\begin{aligned}
&  {\tau}_{< n}(C) \longrightarrow \tau_{\leq n}(C) \longrightarrow H^n(C)[-n] \rightsquigarrow \\
&  {\tau}_{\leq n}(C) \longrightarrow C \longrightarrow {\tau}_{> n}(C) \rightsquigarrow 0 
\end{aligned}
\end{equation}
Consequently, a complex $C$ whose homology vanishes outside the degrees $n,n+1,\ldots,m$ is quasi-isomorphic to its truncation $\tau_{\geq m} \tau_{\leq n} (C)$, hence to a bounded complex whose terms are zero outside the degrees $n,n+1,\ldots,m$. In particular a complex with a unique non-zero homology group is quasi-isomorphic to a module shifted in that degree. Another consequence is that the category $\sfD^b(A\sfMod)$ is the full subcategory of $\sfD(A\sfMod)$ with objects satisfying $H^i(C) = 0$ for $|i| \gg 0$. 

\smallskip

Another example that will often appear in these notes is the case of a complex $C$ with only two non-zero homology groups, say $H^0(C)$ and $H^n(C)$. Then $C$ fits into a distinguished triangle
$$ H^0(C)[0] \longrightarrow C \longrightarrow H^n(C)[-n] \rightsquigarrow$$
which means that $C$ is quasi-isomorphic to the cone of the map $H^n(C)[-n] \longrightarrow H^0(C)[1]$.  This implies that $C$ is determined by $H^0(C)$, $H^n(C)$ and by an element of $\mathrm{Ext}_A^{n+1}(H^n(C),H^0(C)) = \mathrm{Hom}_{\sfD(A\sfMod)}(H^n(C),H^0(C)[n+1])$ (see Example \ref{ex:extandtor}). 

\subsection{Examples of derived categories}\label{sec:derex}
When $A$ is a semisimple algebra, every injective or surjective map between modules splits. Consequently one can easily show that every complex of $A$-modules is quasi-isomorphic to the complex formed by its cohomology groups (with zero differential). In other words, the functor  $C \longmapsto H^\bullet(C)$ induces an equivalence between $\sfD(A\sfMod)$ and the category of $\mathbb{Z}$-graded modules. This is in particular the case for the derived category of  $k$-vector spaces $D(k\sfMod)$ when $k$ is a field, or more generally for the derived category of $kG$-modules $D(kG\sfMod)$ when $G$ is a finite group whose order is invertible in $k$.

\smallskip

A ring $A$ is said to be \emph{hereditary} if $\mathrm{Ext}_A^n(-,-) = 0$ for all $n \geq 2$. In that case, every bounded complex is again quasi-isomorphic to its cohomology, but not in a canonical way, and the functor $C \longmapsto H^\bullet(C)$ is not faithful in general. This is for example the case for the bounded derived category of abelian groups  $D^b(\mathbb{Z}\sfMod)$. Another example is  the bounded derived category of $\mathbb{Z}_\ell G$-modules $D^b(\mathbb{Z}_\ell G\sfMod)$ when $G$ is a finite group whose order is prime to $\ell$.

\begin{exercise} When $A$ is hereditary, show using \eqref{eq:dttruncation} and Example \ref{ex:extandtor}.a that every bounded complex is quasi-isomorphic to its cohomology.
\end{exercise} 

\subsection{The stable category $A\sfstab$}

Let $k$ be a field. 
Throughout this section we will assume that $A$ is a finite dimensional $k$-algebra.
In particular every finite dimensional $A$-module $M$ has a projective cover, which we will denote by $P_M$ ($A$ is said to be \emph{semiperfect}).
We shall also assume that  $A$ is \emph{symmetric} (\emph{i.e.} $A$ is isomorphic to its dual $A^* = \mathrm{Hom}_k(A,k)$ as an $(A,A)$-bimodule).  In that case $A$-modules are projective if and only if they are injective (see for example \cite[\S1.6]{Ben}). Consequently any finite dimensional $A$-module admits an injective resolution. A typical example of such an algebra in these lectures is the group algebra $kG$ of a finite group $G$. 

\begin{definition}The \emph{stable category of finitely generated $A$-modules}, denoted by $A\sfstab$, is the category with
 \begin{itemize}
  \item objects: finitely generated $A$-modules (same as $A\sfmod$),
  \item morphisms: $\underline{\mathrm{Hom}}_{A}(M,N) := \mathrm{Hom}_{A}(M,N)/\hskip-1.3mm\approx$ where $f \approx g$ if and only if $f-g$ factors through a projective module.
 \end{itemize}
\end{definition}

\noindent
In particular, in the stable category any projective module is isomorphic to zero. 
There is a canonical additive (in fact $k$-linear) functor $A\sfmod \longrightarrow A\sfstab$, making $A\sfstab$ into an additive ($k$-linear) category. This category has an additional triangulated structure, as we will see below. 

\smallskip

Given a finite dimensional $A$-module $M$, we define the \emph{Heller operator} $\Omega$ by
$$\Omega M = \mathrm{Ker}\, (P_M \twoheadrightarrow M).$$
We then define inductively $\Omega^n(M) = \Omega(\Omega^{n-1}(M))$ for $n \geq 1$ 
with the convention that $\Omega^0(M)$ is the minimal submodule of $M$ such that $M/\Omega^0(M)$ is projective. 

\begin{exercise}
Check that $\Omega^n(M)$ is well defined up to isomorphism. Show that $\Omega$ is functorial in $A\sfstab$ (but not in $A\sfmod$).
\end{exercise}

Similarly, we set $\Omega^{-1} M = \mathrm{Coker}\, (M \hookrightarrow I_M)$ where $I_M$ is an injective hull of $M$. One can readily check that $(\Omega^{-1} M)^* \simeq \Omega M^*$ as right $A$-modules and more generally that $(\Omega^{-n} M)^* \simeq \Omega^n M^*$ for all $n \in \mathbb{Z}$. 

\begin{prop}\label{prop:stab}
Let $M$ and $N$ be finitely generated $A$-modules.
\begin{itemize}
  \item[$\mathrm{(i)}$] $M \simeq N$ in $A\sfstab$ if and only if there exist finitely generated projective modules $P$ and $Q$ such that $M \oplus P \simeq N \oplus Q$ in $A\sfmod$.
  \item[$\mathrm{(ii)}$] If $M$ and $N$ are indecomposable non-projective modules with $M$ or $N$ being simple, then $\mathrm{Hom}_A(M,N) \mathop{\longrightarrow}\limits^\sim \underline{\mathrm{Hom}}_A(M,N)$.
  \item[$\mathrm{(iii)}$] If $n > 0$ then 
  $$ \underline{\mathrm{Hom}}_A(\Omega^n M,N) \simeq \underline{\mathrm{Hom}}_A(M,\Omega^{-n} N) \simeq \mathrm{Ext}_A^n(M,N).$$
\end{itemize}
\end{prop}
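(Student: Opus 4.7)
The plan is to treat the three parts separately, since each appeals to a different tool in the theory of finite-dimensional symmetric $k$-algebras.

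For part (i), the implication $(\Leftarrow)$ is immediate: projectives are isomorphic to zero in $A\sfstab$, so an isomorphism $M \oplus P \simeq N \oplus Q$ in $A\sfmod$ yields $M \simeq N$ in $A\sfstab$. For $(\Rightarrow)$ I would first use Krull--Schmidt (valid for $A$ finite-dimensional over $k$) to strip off the maximal projective summands of $M$ and of $N$, reducing to the case where neither module has a projective summand. Lifting the stable isomorphism then yields honest morphisms $f \colon M \to N$ and $g \colon N \to M$ with $gf - 1_M = \alpha\beta$ factoring through a projective $P$, and dually $fg - 1_N$ factoring through a projective $Q$. The column $\binom{f}{\beta} \colon M \to N \oplus P$ is a split monomorphism with retract $(g, -\alpha)$, producing a decomposition $N \oplus P \simeq M \oplus K$ in $A\sfmod$; symmetrically one obtains $M \oplus Q \simeq N \oplus L$. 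Combining the two relations and applying Krull--Schmidt cancellation on the non-projective parts (which agree by stable isomorphism) forces both $K$ and $L$ to be projective, and re-attaching the projective summands set aside at the start delivers the desired isomorphism.

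For part (ii), the map $\mathrm{Hom}_A(M,N) \to \underline{\mathrm{Hom}}_A(M,N)$ is tautologically surjective, so the content lies in its injectivity: any $f = \alpha\beta \colon M \to Q \to N$ factoring through a projective $Q$ must be zero. Assume $M$ is simple (the case where $N$ is simple is dual). If $f \neq 0$, then $\beta \neq 0$, and since $M$ is simple, $\beta$ is injective, with $\beta(M) \subseteq \mathrm{soc}(Q)$. Pick an indecomposable projective summand $P_M$ of $Q$ on which $\beta$ has nonzero component; since $A$ is symmetric, $\mathrm{soc}(P_M)$ is simple and isomorphic to $M$. The restriction $\alpha|_{P_M}$ is then nonzero on the essential simple socle $M$ of $P_M$, and because every nonzero submodule of $P_M$ contains this socle, $\mathrm{Ker}\, \alpha|_{P_M} = 0$. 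Thus $\alpha|_{P_M} \colon P_M \hookrightarrow N$; as $A$ is symmetric $P_M$ is also injective, so this embedding splits and $P_M$ is a summand of $N$, contradicting the hypothesis that $N$ is indecomposable non-projective. When $N$ is simple, the dual argument applies: $\alpha$ is either zero (forcing $f=0$) or surjective, in which case one reduces to $Q = P_N$ with $\alpha$ the projective cover map, and either $\beta(M) \subseteq \mathrm{rad}(P_N) = \mathrm{Ker}\, \alpha$ (giving $f=0$) or Nakayama's lemma makes $\beta$ surjective and hence split, exhibiting $P_N$ as a summand of $M$, which is absurd.

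Part (iii) is a standard dimension-shifting argument. Applying $\mathrm{Hom}_A(-,N)$ to the defining short exact sequence $0 \to \Omega M \to P_M \to M \to 0$ and using the vanishing $\mathrm{Ext}^{\geq 1}_A(P_M,-) = 0$ gives the four-term exact sequence
\[
0 \to \mathrm{Hom}_A(M,N) \to \mathrm{Hom}_A(P_M,N) \to \mathrm{Hom}_A(\Omega M,N) \to \mathrm{Ext}^1_A(M,N) \to 0.
\]
The cokernel of the middle arrow is identified with $\underline{\mathrm{Hom}}_A(\Omega M, N)$ via the key observation that a morphism $g \colon \Omega M \to N$ factors through some projective if and only if it extends to $P_M$. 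The non-trivial direction uses that in a symmetric algebra every projective is injective, so any factorization $g = \alpha'\beta'$ through a projective $Q'$ allows $\beta' \colon \Omega M \to Q'$ to be extended along the inclusion $\Omega M \hookrightarrow P_M$, whence $\alpha'$ composed with this extension is a lift of $g$ to $P_M$. This yields $\mathrm{Ext}^1_A(M,N) \simeq \underline{\mathrm{Hom}}_A(\Omega M, N)$, and iterating the construction with $\Omega^{n-1} M$ in place of $M$ handles general $n \geq 1$. The symmetric identification $\underline{\mathrm{Hom}}_A(M, \Omega^{-n} N) \simeq \mathrm{Ext}^n_A(M,N)$ follows from running the dual argument with the short exact sequence $0 \to N \to I_N \to \Omega^{-1} N \to 0$, again exploiting that injectives are projective. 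The principal obstacle across the three parts is the cancellation book-keeping in (i): the projectivity of the complement $K$ is not syntactically visible from the splitting alone and must be extracted from Krull--Schmidt, relying critically on having eliminated the projective summands of $M$ and $N$ at the outset.
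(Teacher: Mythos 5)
Your proof is correct and follows essentially the same route as the paper's for all three parts: Krull--Schmidt plus the split-monomorphism trick for (i), the socle argument exploiting projective = injective for (ii), and dimension shifting via the minimal projective (resp. injective) resolution together with the extendability of maps into projective-injective modules for (iii). Your packaging of (iii) through the four-term exact sequence iterated one step at a time, and your explicit stripping of projective summands in (i), add detail to steps the paper leaves terse, but the underlying ideas coincide.
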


\begin{proof} (i) If $M \simeq N$ in $A\sfstab$ then there exists morphisms $M 
\mathop{\longrightarrow}\limits^f N \mathop{\longrightarrow}\limits^g M$ and projective modules $R$ and 
$T$ such that $gf-1_M$ and $fg-1_N$ factor  through $R$ and $T$ respectively. Write $gf-1_M = \Psi \circ \Phi$ with $\Phi : M \longrightarrow R$ and $\Psi : R \longrightarrow M$ and consider the following morphisms
$$ \xymatrix{ M \ar@/^1pc/[r]^{(f,\Phi)} & \ar@/^1pc/[l]^{g-\Psi} N \oplus R }$$
which satisfy $(g-\Psi)\circ(f,\Phi) = 1_M$. This shows that $M$ is a direct summand of $N \oplus R$. Similarly, $N$ is a direct summand of $M \oplus T$ and we can invoke Krull-Schmidt Theorem to conclude.

\smallskip

For (ii) it is enough to see that no injective (resp. surjective) morphism can factor through a projective (hence injective) module if both $M$ and $N$ have no non-trivial projective summands. 

\smallskip

For (iii) we start with a minimal projective resolution of $M$, given by
$$ \cdots \longrightarrow P_{\Omega^n M} \longrightarrow \cdots \longrightarrow P_{\Omega M} \longrightarrow P_M \twoheadrightarrow M.$$
By definition (see Example \ref{ex:extandtor}), the degree $n$ extension group $\mathrm{Ext}_A^n(M,N)$ is the quotient of  
the subgroup of maps $f$ in $\mathrm{Hom}_A(P_{\Omega^n M}, N)$ such that $\Omega^{n+1} M \subset \mathrm{Ker}\, f$ by the maps which factor through $P_{\Omega^{n} M} \longrightarrow P_{\Omega^{n-1} M}$.
In particular we have a well defined surjective map
$$ \begin{array}{rcl} \mathrm{Ext}_A^n(M,N) & {\longrightarrow} & \underline{\mathrm{Hom}}_A(\Omega^n M,N) \\[5pt] f & {\longmapsto} & \overline{f} : \underbrace{P_{\Omega^n M}/\Omega^{n+1}M}_{\Omega^n M}  \longrightarrow N \end{array}$$
Now if $\overline{f} = 0$ there exists a projective (hence injective) module $P$ such that $\overline{f}$ factors through $\Omega^n M \longrightarrow P \longrightarrow N$. Since $P$ is injective and $\Omega^n M$ is a submodule of $P_{\Omega^{n-1} M}$, the map
$\Omega^n M \longrightarrow P$ can be extended to a map $P_{\Omega^{n-1} M} \longrightarrow P$ in the following commutative diagram
$$\xymatrix@R=2mm@C=5mm{ \Omega^n M \ar[rr]^{\overline{f}} \ar@{^(->}[dd] \ar@{-->}[rd]& & N \\ & P \ar@{-->}[ru] & \\ P_{\Omega^{n-1} M} \ar@{-->}[ru]  }$$
Therefore $f$ factors through  $P_{\Omega^{n-1} M}$ and hence it is zero in $\mathrm{Ext}_A^n(M,N)$. 
The case of $\Omega^{-n} N$ is similar.
\end{proof}

Consequently, any short exact sequence $0 \longrightarrow U \longrightarrow V \longrightarrow W \longrightarrow 0$ in $A\sfmod$ yields an exact sequence $ U \longrightarrow V \longrightarrow W \longrightarrow \Omega^{-1} U$ in $A\sfstab$. This, in turn, endows $A\sfstab$ with a structure of triangulated category, with suspension functor (or shift) $\Omega^{-1}$, such that the images of short exact sequences of $A\sfmod$ are distinguished triangles in $A\sfstab$. 

\smallskip

The triangulated structure appears in a more natural way from the bounded derived category of $A\sfmod$. We say that a complex of $A$-modules $C$ is \emph{perfect} if it is quasi-isomorphic to a bounded complex of finitely generated projective modules. We denote by $A\sfperf$ the full subcategory of $\sfD^b(A\sfmod)$ of perfect complexes (it is a thick subcategory, \emph{i.e.} stable under direct summands and cones). 

\begin{theorem}[Rickard \cite{Ric89}]\label{thm:derstab}
The natural functor $A\sfmod \longrightarrow \sfD^b(A\sfmod)$ induces an equivalence of triangulated categories
$$ A\sfstab \mathop{\longrightarrow}\limits^\sim \sfD^b(A\sfmod) \tikzmark{eq-perf}/ A\sfperf.$$
\end{theorem}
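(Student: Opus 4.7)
The plan is to construct a $k$-linear functor $\bar F\colon A\sfstab\to\sfD^b(A\sfmod)/A\sfperf$ induced by $M\mapsto M[0]$, identify the two suspension functors so that $\bar F$ becomes triangulated, and then verify essential surjectivity and full faithfulness separately.

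\textbf{Step 1 (construction and shifts).} The composition $A\sfmod\hookrightarrow\sfD^b(A\sfmod)\twoheadrightarrow\sfD^b(A\sfmod)/A\sfperf$ sends a finitely generated projective $P$ to $P[0]$, which is a bounded complex of projectives and hence zero in the quotient. Thus morphisms that factor through a projective are killed, so the functor descends to $\bar F$. To identify the shifts, I apply the functor to the short exact sequence $0\to M\to I_M\to\Omega^{-1}M\to 0$, where $I_M$ is an injective hull; since $A$ is symmetric, $I_M$ is also projective, so $I_M[0]$ is perfect and the associated distinguished triangle forces a natural isomorphism $\Omega^{-1}M\simeq M[1]$ (equivalently $M[-n]\simeq\Omega^n M$ for every $n\in\mathbb Z$). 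Since short exact sequences of $A$-modules give rise to distinguished triangles both in $A\sfstab$ (by construction) and in $\sfD^b(A\sfmod)$, the functor $\bar F$ is exact.

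\textbf{Step 2 (essential surjectivity).} For $C\in\sfD^b(A\sfmod)$, choose a bounded above projective resolution $P\to C$ and an integer $n$ strictly below the cohomological support of $C$. The truncation $\tau_{\ge n}(P)$ is quasi-isomorphic to $C$ and has the form
$$0\to M\to P_{n+1}\to P_{n+2}\to\cdots\to P_N\to 0,$$
where $M=\mathrm{Coker}(d_{n-1})\in A\sfmod$. The brutal subcomplex $(0\to P_{n+1}\to\cdots\to P_N\to 0)$ of $\tau_{\ge n}(P)$ is a bounded complex of projectives, hence perfect, and the quotient complex is $M[-n]$. The associated short exact sequence of complexes becomes, in $\sfD^b(A\sfmod)/A\sfperf$, an isomorphism $C\simeq M[-n]\simeq\Omega^n M$, exhibiting $C$ as the image of a module.

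\textbf{Step 3 (full faithfulness).} For $M,N\in A\sfmod$, a morphism $\bar F(M)\to\bar F(N)$ is a roof $M\xleftarrow{s}X\xrightarrow{f}N$ with $\mathrm{cone}(s)$ perfect. Using Step~2 to replace $X$ by a quasi-isomorphic module, one reduces to the case where $s\colon X\to M$ is a map of $A$-modules whose kernel and cokernel both have finite projective dimension. For a symmetric algebra, the Heller operator is an auto-equivalence of the stable category, and so finite projective dimension forces a module to be projective; hence $\ker(s)$ and $\mathrm{Coker}(s)$ are projective, and the resulting short exact sequences $0\to\ker(s)\to X\to\mathrm{Im}(s)\to 0$ and $0\to\mathrm{Im}(s)\to M\to\mathrm{Coker}(s)\to 0$ both split. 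The composition $\mathrm{Im}(s)\hookrightarrow X\xrightarrow{f}N$, viewed through the stable isomorphism $\mathrm{Im}(s)\simeq M$ of Proposition~\ref{prop:stab}(i), produces a well-defined element of $\underline{\mathrm{Hom}}_A(M,N)$, and this construction inverts the map induced by $\bar F$. At higher shifts, the identification from Step~1 matches $\mathrm{Ext}^n_A(M,N)$ with $\underline{\mathrm{Hom}}_A(M,\Omega^{-n}N)$, providing a consistency check with Proposition~\ref{prop:stab}(iii).

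\textbf{Main obstacle.} The most delicate step is the reduction in Step~3: passing from an arbitrary roof $M\xleftarrow{s}X\xrightarrow{f}N$ to one where $X$ is a module and $s$ is a module morphism requires combining Step~2 with the roof calculus of the Verdier quotient, and one must verify that the equivalence relation on roofs then reduces to the relation ``factors through a projective'' on module maps. Once this reduction is in place, the rigidity of self-injective algebras (finite projective dimension equals zero) turns the rest into a direct calculation.
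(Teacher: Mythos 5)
The paper does not actually prove this theorem: it cites Rickard and supplies only the remark after the statement, which checks the compatibility of the two suspension functors. That check is exactly your Step~1, so you are in agreement with the paper where the paper says anything. Your Step~2 is also correct and is a clean argument for essential surjectivity.

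The gap is in Step~3, and it is not merely ``delicate'' but contains a genuine error. You invoke ``Step~2 to replace $X$ by a quasi-isomorphic module.'' But Step~2 only produces an isomorphism $X\simeq\Omega^nV[0]$ in the Verdier quotient $\sfD^b(A\sfmod)/A\sfperf$, obtained by factoring out a perfect subcomplex of a truncated projective resolution; it does not produce a quasi-isomorphism in $\sfD^b(A\sfmod)$, so the apex of a roof cannot simply be swapped out this way. Concretely, from $\tau_{\geq n}(P)$ the map that isolates the module goes \emph{from} the apex to $M[-n]$, which is the wrong direction to refine a roof $M[0]\leftarrow X\to N[0]$. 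Furthermore, even after a reduction to a module apex the next claim fails: for $s\colon X\to M$ a module map whose two-term complex $(X\xrightarrow{s}M)$ is perfect, the kernel and cokernel of $s$ need \emph{not} have finite projective dimension. For $A=k[x]/x^2$ the perfect complex $A\xrightarrow{x}A$ has $\mathrm{Ker}=\mathrm{Coker}\cong k$, of infinite projective dimension. So ``$\mathrm{cone}(s)$ perfect'' does not yield the hypothesis your splitting argument needs, and some genuine work (Rickard's, or Keller--Vossieck's) is required to produce roofs of the special form you want. The observation that finite projective dimension forces projectivity over a self-injective algebra is correct and is indeed the key rigidity fact, but the argument never legitimately gets to the point where it can be applied.
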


\begin{tikzpicture}[remember picture,overlay]
\draw[->,>=latex]
  ([shift={(2pt,-5pt)}]pic cs:eq-perf) |- 
  ++(10pt,-15pt) 
  node[right,text width=4cm] 
    {\footnotesize inverting maps whose cone \\[-4pt] is a perfect complex
    };
\end{tikzpicture}

\vskip 15pt

\begin{rmk}
Let us consider a projective resolution of $M[-n]$, truncated in degrees above zero 
$$C = (\cdots 0  \longrightarrow P_{\Omega^{n-1} M} \longrightarrow \cdots \longrightarrow P_{\Omega M} \longrightarrow P_M \twoheadrightarrow M \longrightarrow 0 \cdots)$$
Then $C \simeq \Omega^n M[0]$ in $\sfD^b(A\sfmod)$. On the other hand, since all the terms of $C$ are projective modules except the term in degree $n$ we have $C \simeq M[-n]$ in $\sfD^b(A\sfmod)/A\sfperf$. This shows that 
$$ M[-n] \simeq \Omega^n M \quad \text{ in } \sfD^b(A\sfmod)/A\sfperf$$
and proves the compatibility of the suspension functors ($\Omega^{-1}$ and $[1]$) under the equivalence given by Rickard's theorem.
\end{rmk}

\section{Varieties and cohomology}

The aim of this chapter is to introduce the geometric tools that we will need to construct the representations of finite reductive groups. They will be obtained from linear invariants (cohomology groups or cohomology complexes) of algebraic varieties acted on by finite groups:
$$\begin{minipage}[t]{.25\textwidth}
Algebraic variety $\bfX$ \\
+ action of\\
a finite group $G$\end{minipage}
\hfill
\noindent
\begin{minipage}[t]{.20\textwidth}
$ \mathrm{ }$ \\
$\underrightarrow{\scriptsize\text{ \quad cohomology \quad }}$
\end{minipage}\hfill
\noindent
\begin{minipage}[t]{.45\textwidth}
Family of vector spaces $H^i(\bfX)$\\
or complex of vector spaces $R\Gamma(\bfX)$ \\
+ linear action of $G$.
\end{minipage} $$

\smallskip

\noindent
For example, if $\bfX$ is a finite set acted on by $G$, then we can form the permutation module $\Lambda\bfX$ over any ring $\Lambda$.

\smallskip

Although this construction makes sense for any abstract finite group $G$, it will be particularly suited for finite reductive groups, since in that case the algebraic variety $\bfX$ will be constructed from the underlying algebraic group (see \S\ref{chap:dlvarieties} for the definition of Deligne--Lusztig varieties).

\smallskip

Since we will be interested in modular representations (with coefficients in fields of positive characteristic) the language of derived categories and derived functors introduced in the previous chapter will be particularly suited for our purpose:
\vskip 8pt \hskip-0.3cm
\begin{tabular}{>{\centering}m{2.7cm}@{$\ \ \rightsquigarrow\ \ $}>{\centering}m{2.4cm}@{$\ \ \rightsquigarrow\ \ $}>{\centering}m{2.2cm}@{$\ \ \rightsquigarrow\ \ $}>{\centering}m{2.3cm}} 
$A$-modules $M$, $N$ &  $P$ a projective resolution of $N$ & 
 $\left\{ \hskip-1.3mm \begin{array}{l} P \otimes_A N \\ \mathrm{Hom}_A^\bullet(P,N) \end{array} \right.$ &  $\left\{\hskip-1.3mm  \begin{array}{l} \mathrm{Tor}_i^A(M,N) \\ \mathrm{Ext}_A^i(M,N) \end{array} \right. $ 
 \tabularnewline 
 \multicolumn{4}{c}{ }
 \tabularnewline
$\mathcal{F}$ a sheaf on  $\bfX$ & $\mathcal{P}$ a flabby resolution of $\mathcal{F}$ & 
 $\Gamma(\mathcal{P})$\tikzmark{eq-der} &  $H^i(\bfX,\mathcal{F})$\tikzmark{eq-std} 
  \tabularnewline 
 \end{tabular}

\begin{tikzpicture}[remember picture,overlay]
\draw[->,>=latex]
  ([shift={(-10pt,-10pt)}]pic cs:eq-der) |- 
  ++(-10pt,-20pt) 
  node[left,text width=1cm] 
    {\footnotesize derived\\[-4pt] setting
    };
\draw[->,>=latex]
  ([shift={(-45pt,-10pt)}]pic cs:eq-der)  rectangle
  ([shift={(25pt,55pt)}]pic cs:eq-der) ;
\draw[->,>=latex]
  ([shift={(-10pt,-10pt)}]pic cs:eq-std) |- 
  ++(-10pt,-20pt) 
  node[left,text width=1.2cm] 
    {\footnotesize standard\\[-4pt] setting
    };
\draw[->,>=latex]
  ([shift={(-55pt,-10pt)}]pic cs:eq-std)  rectangle
  ([shift={(15pt,55pt)}]pic cs:eq-std) ;
\end{tikzpicture}

\vskip 1.1cm

The definition of \'etale or $\ell$-adic cohomology would go far beyond the scope of these notes. For the reader interested in the topic we recommend reading Deligne's notes \cite{SGA45}, or the excellent textbook by Milne \cite{Mil}. For a more representation-theoretic perspective, most of the properties listed in this chapter are also addressed in \cite[Appendix A3]{CabEng} and in \cite[Appendix A]{Bon}.

\subsection{Definition and first properties}\label{sec:cohodef}
Let $\bfX$ be a quasi-projective variety over $\overline{\mathbb{F}}_p$ and $G$ be a finite group acting on $\bfX$.
We fix a prime number $\ell \neq p$ and an $\ell$-modular system  $(K,\calO,k)$ such that $K$ is a finite extension of $\mathbb{Q}_\ell$. Finally, we denote by $\Lambda$ any ring among $K$, $\calO$ and $k$. We will be interested in representations of $G$ over $\Lambda$. 

\smallskip

The theory of \'etale cohomology of sheaves on $\bfX$ produces two complexes of $\calO G$-modules $R \Gamma(\bfX,\calO)$ and $R \Gamma_c(\bfX,\calO)$, unique up to quasi-isomorphism, called the \emph{cohomology complex} of $\bfX$ and the \emph{cohomology complex with compact support} of $\bfX$. By extension of scalars, we also have complexes 
$$R\Gamma(\bfX,\Lambda) := R\Gamma(\bfX,\calO) {\mathop{\otimes}\limits^L}_\calO \Lambda \quad \text{and} 
 \quad R\Gamma_c(\bfX,\Lambda) := R\Gamma_c(\bfX,\calO) {\mathop{\otimes}\limits^L}_\calO \Lambda.$$
When $\Lambda = K$ (resp. $\Lambda=k$) we will refer to these complexes as the $\ell$-adic cohomology complexes (resp. the mod-$\ell$ cohomology complexes). The groups
$$H^i(\bfX,\Lambda) := H^i\big(R\Gamma(\bfX,\Lambda)\big) \quad \text{and} 
 \quad H_c^i(\bfX,\Lambda) := H^i\big(R\Gamma_c(\bfX,\Lambda)\big) $$
are the cohomology groups (or cohomology groups with compact support) with coefficients in $\Lambda$.

\smallskip
The cohomology is functorial: if $f : \bfY \longrightarrow \bfX$ is a $G$-equivariant morphism of algebraic varieties
then it induces a morphism in $\sfD(\Lambda G\sfMod)$ 
$$ f^* : R\Gamma(\bfX,\Lambda) \longrightarrow R\Gamma(\bfY,\Lambda)$$
between the cohomology complexes of $\bfX$ and $\bfY$. If in addition $f$ is proper (e.g. $f$ is a finite morphism), the same holds for the cohomology complexes with compact support. 

\smallskip

The cohomology complexes are ``small'': the $\Lambda G$-modules  $H^i(\bfX,\Lambda)$ and 
$H_c^i(\bfX,\Lambda)$ are finitely generated over $\Lambda$. Moreover, they vanish for $i <0$ and $i>2\dim\bfX$.
Consequently, $R\Gamma(\bfX,\Lambda)$ and $R\Gamma_c(\bfX,\Lambda)$ are quasi-isomorphic to complexes of ($\Lambda$-free) finitely generated $\Lambda G$-modules with terms in degrees $0,1,\ldots,2\dim \bfX$.

\begin{rmk} The $\Lambda$-modules $H^i(\bfX,\calO)$ are not free in general, but $H^0(\bfX,\calO)$ is. If $C$ is a $\Lambda$-free resolution of 
a given representative of $R\Gamma(\bfX,\Lambda)$, then $C$ is quasi-isomorphic to $\tau_{\geq 0} (\tau_{\leq 2\dim\bfX}(C))$ and the terms of the latter complex are $\Lambda$-free since $H^0(\bfX,\Lambda)$ is (see also Proposition \ref{prop:middle-torsionfree}). 
\end{rmk}

The following theorem, due to Rickard \cite{Ric94} (see also \cite{Rou02}), gives the most satisfactory representative for the cohomology complex of $\bfX$ from a representation-theoretic perspective.

\begin{theorem}[Rickard \cite{Ric94}]\label{thm:rickardlperm}
$R\Gamma(\bfX,\Lambda)$ and $R\Gamma_c(\bfX, \Lambda)$ are quasi-isomorphic
to bounded complexes whose terms are direct summands of finite sums of permutation modules $\Lambda G/\mathrm{Stab}_G(x)$ for $x \in \bfX$.
\end{theorem}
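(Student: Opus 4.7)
The plan is to produce an explicit quasi-isomorphic representative of $R\Gamma(\bfX,\Lambda)$ whose terms are visibly $\ell$-permutation modules (trivial source modules), by combining equivariant geometric reductions with a Brauer-quotient characterization of trivial-source complexes.

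First, I would make three preliminary reductions. Since extension of scalars preserves the class of direct summands of permutation modules, it suffices to treat the universal case $\Lambda=\calO$. Next, using a Sumihiro/Nagata-type $G$-equivariant compactification $\bfX \hookrightarrow \overline{\bfX}$ into a projective $G$-variety (available because $\bfX$ is quasi-projective), the open/closed distinguished triangle
$$R\Gamma_c(\bfX,\calO) \longrightarrow R\Gamma(\overline{\bfX},\calO) \longrightarrow R\Gamma(\overline{\bfX}\setminus \bfX,\calO) \rightsquigarrow$$
expresses $R\Gamma_c(\bfX,\calO)$ as a cone of cohomology complexes of proper $G$-varieties, on which $R\Gamma_c$ and $R\Gamma$ coincide. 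Since the class of $\ell$-permutation complexes in $\sfD^b(\calO G\sfmod)$ is stable under cones and direct summands (a thick triangulated subcategory), it is enough to establish the result for $R\Gamma(\bfY,\calO)$ with $\bfY$ a proper $G$-variety.

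Second, I would induct on the pair $(\dim\bfY,|G|)$ via a $G$-equivariant stratification $\bfY=\bigsqcup_\alpha (G\cdot S_\alpha)$ into smooth locally closed $G$-orbits of strata. Iterated excision triangles present $R\Gamma(\bfY,\calO)$ as an iterated extension of the terms $R\Gamma_c(G\cdot S_\alpha,\calO)\simeq \mathrm{Ind}^G_{\mathrm{Stab}_G(S_\alpha)}R\Gamma_c(S_\alpha,\calO)$. When each $S_\alpha$ can be taken to be an affine space on which $\mathrm{Stab}_G(S_\alpha)$ acts trivially on cohomology, $R\Gamma_c(S_\alpha,\calO)$ is a shift of $\calO$ with trivial stabilizer action, so the induced module is literally the permutation module $\calO[G/\mathrm{Stab}_G(S_\alpha)]$ in a single degree, giving the desired representative term-by-term.

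The main obstacle is that a sufficiently fine $G$-equivariant cellular stratification need not exist, and that stabilizers may act nontrivially on stratum cohomologies, producing torsion trouble precisely when $\ell$ divides $|\mathrm{Stab}_G(S_\alpha)|$. Rickard's original workaround in \cite{Ric94} is to bypass an explicit cellular decomposition and instead invoke a Brauer-quotient characterization of trivial-source complexes: a bounded complex of $\calO$-free $\calO G$-modules is quasi-isomorphic to a complex of $\ell$-permutation modules if and only if, for every $\ell$-subgroup $P\leq G$, its restriction to $P$ is so. The geometric input that makes the induction step work is then the fixed-point theorem of Deligne--Lusztig for $\ell$-groups acting on $\overline{\mathbb{F}}_p$-varieties with $\ell\neq p$: the $P$-Brauer quotient of $R\Gamma(\bfX,k)$ is computed by $R\Gamma(\bfX^P,k)$, and $\bfX^P$ is again a quasi-projective variety of strictly smaller complexity, on which the normalizer $N_G(P)/P$ acts. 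Iterating this descent along a chain of $\ell$-subgroups, and repeatedly invoking the thick-subcategory property at each assembly step, closes the argument.
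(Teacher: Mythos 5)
Your approach is genuinely different from the paper's, which works directly with the Godement resolution: its terms are products $\prod_{x\in\bfX}\mathcal{F}_x$, hence (possibly infinite) products of modules of the form $\Lambda G/\mathrm{Stab}_G(x)$, and the remaining work is a finiteness argument to trade these for finitely generated $\ell$-permutation modules in bounded degrees. No equivariant stratification or Brauer-quotient descent appears in that proof.

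More importantly, your fallback route is circular. The fixed-point theorem you invoke --- that the $P$-Brauer quotient of $R\Gamma_c(\bfX,\Lambda)$ is $R\Gamma_c(\bfX^P,k)$ --- is Theorem~\ref{thm:brauerfunctor} of this paper, and its very statement applies $\mathrm{Br}_P$ to $\widetilde{R}\Gamma_c(\bfX,\Lambda)$, the representative in $\sfHo^b(\calO G\sfperm)$. The existence of that representative is exactly what Theorem~\ref{thm:rickardlperm} asserts; the fixed-point theorem is a \emph{consequence} of it, not a tool available for its proof, since $\mathrm{Br}_P$ as an additive functor on the homotopy category is only defined on $\ell$-permutation modules. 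You also conflate two different tests: the module-level fact that trivial source is detected by restriction to a Sylow $\ell$-subgroup, and the Brauer quotient, which is not restriction. A ``restriction to $\ell$-subgroups'' criterion at the level of complexes (rather than modules) is neither obvious nor stated anywhere in the paper and would itself require a proof.

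Two smaller points. The claim that the objects of $\sfD^b(\calO G\sfmod)$ quasi-isomorphic to bounded complexes of $\ell$-permutation modules form a thick triangulated subcategory is not innocuous: unlike perfect complexes, the functor $\sfHo^b(\calO G\sfperm)\to\sfD^b(\calO G\sfmod)$ is not fully faithful (e.g.\ $\mathrm{Hom}_{\sfD^b}(k,k[n])\simeq H^n(G,k)$ is typically nonzero for $n>0$), so a morphism in $\sfD^b$ between two $\ell$-permutation complexes need not lift to a chain map between those representatives, and closure under cones is not automatic. Finally, the stratification route you sketch first has the defect you yourself identify (no $G$-equivariant affine paving in general, and stabilizers acting nontrivially on stratum cohomology), so it cannot carry the argument either; the point of the Godement approach is precisely that it avoids choosing any decomposition of $\bfX$.
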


\begin{proof}[Idea of proof]
Given a sheaf $\mathcal{F}$ on $\bfX$, we can construct the complex $R\Gamma(\bfX,\mathcal{F})$ from the global sections of the Godement resolution. This resolution involves $\Lambda$-modules of the form $\prod_{x \in \bfX} \mathcal{F}_x$ on which $G$ acts naturally. This suggests that $R\Gamma(\bfX,\Lambda)$ has a representative in the category generated by the permutation modules $\Lambda G/\mathrm{Stab}_G(x)$ and we can invoke the finiteness property of the cohomology groups to conclude.
\end{proof}

For $\Lambda =\calO$ or $k$, we denote by $\Lambda G\sfperm$ the category of finitely generated $\ell$-permutation $\Lambda G$-modules. This is the smallest full subcategory of $\Lambda G\sfmod$ closed under direct summands and containing the permutation modules. As a consequence of Theorem \ref{thm:rickardlperm}, there exist (unique up to isomorphism) bounded complexes $\widetilde{R}\Gamma(\bfX,\calO)$ and $\widetilde{R}\Gamma_c(\bfX,\calO)$ in $\sfHo^b(\calO G\sfperm)$ of finitely generated $\ell$-permutation modules which are quasi-isomorphic to  ${R}\Gamma(\bfX,\calO)$ and ${R}\Gamma_c(\bfX,\calO)$ respectively. The following particular case will be intensively used in these notes.

\begin{cor}\label{cor:perfect}
Assume that for all $x \in \bfX$ the order of the group $\mathrm{Stab}_G(x)$ is invertible in $\Lambda$. Then 
${R}\Gamma(\bfX,\Lambda)$ and ${R}\Gamma_c(\bfX,\Lambda)$ are perfect complexes.
\end{cor}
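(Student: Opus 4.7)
The plan is to combine Rickard's theorem (Theorem \ref{thm:rickardlperm}) with a Maschke-type argument: the hypothesis on stabilizers makes every permutation module appearing in the complex projective, so the complex itself is a bounded complex of finitely generated projective modules, hence perfect by definition.

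More concretely, I would first invoke Theorem \ref{thm:rickardlperm} to replace $R\Gamma(\bfX,\Lambda)$ (resp. $R\Gamma_c(\bfX,\Lambda)$) by a quasi-isomorphic bounded complex $\widetilde{C}$ whose terms are direct summands of finite sums of permutation modules of the form $\Lambda G/\mathrm{Stab}_G(x)$ with $x \in \bfX$. Since direct summands of projective modules are projective, it suffices to show that each $\Lambda G/\mathrm{Stab}_G(x)$ is a projective $\Lambda G$-module under our hypothesis.

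For this, write $H = \mathrm{Stab}_G(x)$ and observe that $\Lambda G/H \simeq \mathrm{Ind}_H^G \Lambda = \Lambda G \otimes_{\Lambda H} \Lambda$. By assumption, $|H|$ is invertible in $\Lambda$, so Maschke's theorem applies to $\Lambda H$: every $\Lambda H$-module is projective, in particular the trivial module $\Lambda$. Induction is a left adjoint to restriction and preserves projectives (it sends $\Lambda H$ to $\Lambda G$, which is projective, and hence projective $\Lambda H$-modules, being summands of free ones, to projective $\Lambda G$-modules). Therefore $\Lambda G/H$ is a projective $\Lambda G$-module.

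Consequently every term of $\widetilde{C}$ is a finitely generated projective $\Lambda G$-module, and $\widetilde{C}$ is bounded, so $\widetilde{C}$ is by definition a perfect complex; hence so are $R\Gamma(\bfX,\Lambda)$ and $R\Gamma_c(\bfX,\Lambda)$. The only delicate point is really keeping track of the coefficients: when $\Lambda = \calO$, ``$|H|$ invertible in $\calO$'' forces $\ell \nmid |H|$, which is the natural $\ell$-local condition, and since $\calO$-free finitely generated projectives reduce and extend well, the same conclusion transfers to $k$ and $K$; no additional obstacle arises.
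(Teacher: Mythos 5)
Your proof is correct and is essentially the argument the paper has in mind: it states the corollary as a direct consequence of Theorem \ref{thm:rickardlperm} without further elaboration, and you supply exactly the expected details, namely that $\Lambda G/\mathrm{Stab}_G(x) \simeq \mathrm{Ind}_{\mathrm{Stab}_G(x)}^G \Lambda$ is projective when $|\mathrm{Stab}_G(x)|$ is invertible in $\Lambda$, by Maschke plus the fact that induction preserves projectives. The closing remark about transferring between $\calO$, $k$, and $K$ is unnecessary (the Maschke argument works uniformly for whichever $\Lambda$ is fixed) but harmless.
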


For affine varieties, the vanishing property of cohomology groups can be refined. If $\bfX$ is an affine variety of pure dimension (\emph{i.e.} all the irreducible components have the same dimension) then 
\begin{itemize}
 \item[$\bullet$] $H^i(\bfX,\Lambda) = 0$ if $i > \dim\bfX$;
 \item[$\bullet$] $H_c^i(\bfX,\Lambda) = 0$ if $i < \dim\bfX$.
\end{itemize}
Consequently, $R\Gamma(\bfX,\Lambda)$ (resp. $R\Gamma_c(\bfX,\Lambda)$) has a representative with terms in degrees $0,\ldots,\dim\bfX$ (resp. $\dim\bfX,\ldots,2\dim\bfX$). 
 
\smallskip

We conclude this section by the relation between the compact and non-compact versions of the cohomology complexes. There is a natural map $R\Gamma_c(\bfX,\Lambda) \longrightarrow R\Gamma(\bfX,\Lambda)$ which is an isomorphism when $\bfX$ is a projective variety (compare with the case of affine varieties above). In addition, the cohomology complexes of smooth varieties are mutually dual.

\begin{theorem}[Poincar\'e-Verdier \cite{SGA45}]\label{thm:poincareverdier}
Assume that $\bfX$ is smooth of pure dimension~$d$. Then
$$ R\Gamma(\bfX,\Lambda)[2d]  \simeq  \tikzmark{eqverdier}R\mathrm{Hom}_\Lambda(R\Gamma_c(\bfX,\Lambda),\Lambda)$$
\begin{tikzpicture}[remember picture,overlay]
\draw[->,>=latex]
  ([shift={(5pt,-5pt)}]pic cs:eqverdier) |- 
  ++(10pt,-18pt) 
  node[right,text width=6.5cm] 
    {\footnotesize not necessary to right derive if one works\\[-4pt] with a representative of $R\Gamma_c(\bfX,\Lambda)$\\[-4pt] with $\Lambda$-free terms 
    };
\end{tikzpicture}
in $\sfD^b (\Lambda G\sfmod)$. 
\end{theorem}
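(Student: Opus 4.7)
The plan is to derive the statement from the six-functor formalism applied to the structure morphism $f : \bfX \longrightarrow \mathrm{Spec}(\overline{\mathbb{F}}_p)$. Verdier's theorem provides a right adjoint $f^!$ to the compactly supported direct image $Rf_!$, together with a natural isomorphism
$$R\mathrm{Hom}(Rf_! \mathcal{F}, \mathcal{G}) \simeq Rf_* R\mathcal{H}om(\mathcal{F}, f^! \mathcal{G})$$
in the derived category of $\Lambda$-sheaves. Applying this with $\mathcal{F} = \Lambda_\bfX$ the constant sheaf and $\mathcal{G} = \Lambda$ on the point, the left-hand side becomes $R\mathrm{Hom}_\Lambda(R\Gamma_c(\bfX,\Lambda), \Lambda)$, by the very definition $Rf_! \Lambda_\bfX = R\Gamma_c(\bfX,\Lambda)$.

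The next step is to compute $f^! \Lambda$. This is where the smoothness and pure-dimensionality hypotheses enter: the Poincaré duality theorem for smooth morphisms identifies $f^! \Lambda \simeq \Lambda[2d]$ (up to a Tate twist $(d)$ that does not affect the underlying $\Lambda$-module structure used here). Substituting this in, and using that $R\mathcal{H}om(\Lambda_\bfX, \Lambda_\bfX[2d]) \simeq \Lambda_\bfX[2d]$ trivially, the right-hand side reduces to $Rf_* \Lambda_\bfX[2d] = R\Gamma(\bfX,\Lambda)[2d]$. Combining both sides yields the desired quasi-isomorphism. The remark in the statement about not having to right-derive $\mathrm{Hom}_\Lambda$ when one uses a $\Lambda$-free representative of $R\Gamma_c(\bfX,\Lambda)$ reflects the fact that $\mathrm{Hom}_\Lambda(-,\Lambda)$ is exact on $\Lambda$-free modules, so the naive Hom-complex already computes the derived functor.

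It remains to check that the resulting isomorphism is $G$-equivariant, i.e.\ an isomorphism in $\sfD^b(\Lambda G \sfmod)$ and not merely in $\sfD^b(\Lambda \sfmod)$. This follows from the naturality of each ingredient with respect to automorphisms of $\bfX$: the $G$-action on $\bfX$ is by morphisms commuting with $f$, so it induces compatible actions on $R\Gamma_c(\bfX,\Lambda)$ and $R\Gamma(\bfX,\Lambda)$, and Verdier duality, being a statement about adjoint functors, is natural with respect to such automorphisms.

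The main obstacle is of course the identification $f^! \Lambda \simeq \Lambda[2d]$ and the very existence of the functor $f^!$ together with the adjunction $Rf_! \dashv f^!$; both are deep theorems of \'etale cohomology (trace morphism for smooth proper maps, absolute cohomological purity, construction of the dualizing complex) whose proofs go well beyond the scope of the lectures. In practice one invokes them as black boxes from \cite{SGA45}, which is consistent with the approach advertised in the introduction of this chapter.
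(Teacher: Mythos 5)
The paper does not actually prove this theorem: it is stated with a citation to Deligne's SGA~$4\frac12$ and invoked as a black box, consistent with the announced philosophy of avoiding the construction of \'etale cohomology and only using its formal properties. Your sketch is a correct outline of the standard derivation via the six-functor formalism, and you have identified exactly the deep inputs the paper implicitly relies on: the existence of $f^!$ and the adjunction $Rf_! \dashv f^!$, and the purity/trace-morphism computation $f^!\Lambda \simeq \Lambda_\bfX[2d](d)$ for $f$ smooth of relative dimension $d$. Your handling of $G$-equivariance by naturality is the right remark, and your observation that $R\mathrm{Hom}_\Lambda(-,\Lambda)$ is underived when the input has $\Lambda$-free terms explains the paper's marginal note correctly.

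One subtlety worth flagging, since the paper glosses over it too: Verdier duality as stated in SGA~$4\frac12$ is for torsion (or finite) coefficient rings, whereas here $\Lambda$ ranges over $K$, $\calO$ and $k$. For $\Lambda = k$ the statement is directly in the torsion setting; for $\calO$ one obtains the duality by passing to the inverse limit over $\mathbb{Z}/\ell^n$-coefficients and using the finiteness of the cohomology groups, and for $K$ by further inverting $\ell$. This limit argument is what actually makes $R\Gamma_c(\bfX,\calO)$ a perfect object over $\calO$ and the duality well-behaved at the level of $\calO G$-modules. You need not prove it in a sketch at this level, but since you explicitly list the black boxes, this $\ell$-adic passage deserves to appear in the list alongside the existence of $f^!$ and the purity theorem.
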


\smallskip

\subsection{Tools for computing $R\Gamma_c(\mathbb{X},\Lambda)$}\label{sec:cohomologytools}
Unless otherwise stated, all the isomorphisms considered in this section are in the category $\sfD^b(\Lambda G\sfmod)$ for $\Lambda$ a ring among $K$, $\calO$ and $k$.

\begin{theorem}[K\"unneth formula]\label{thm:Kunneth}
The cohomology of a product of varieties is given by
$$ R\Gamma_c(\bfX \times \bfY,\Lambda) \simeq R\Gamma_c(\bfX,\Lambda) \mathop{\otimes}\limits^L R\Gamma_c(\bfY,\Lambda).$$
\end{theorem}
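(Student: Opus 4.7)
The strategy is the standard geometric one: factor the structure morphism $\bfX \times \bfY \to \mathrm{pt}$ through one of the factors, and combine proper base change with the projection formula. The $G$-equivariance comes along for free since everything is compatible with the diagonal action.

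First, I would replace $R\Gamma_c(\bfX,\Lambda)$ and $R\Gamma_c(\bfY,\Lambda)$ by honest bounded complexes $C^\bullet$ and $D^\bullet$ of $\Lambda$-free finitely generated $\Lambda G$-modules. This is possible by Theorem \ref{thm:rickardlperm}: for $\Lambda=\calO$ or $k$, the permutation modules $\Lambda G/H$ are $\Lambda$-free, and a $\Lambda$-free direct summand of a $\Lambda$-free module is $\Lambda$-free; the case $\Lambda=K$ reduces to $\calO$ by extension of scalars. With such representatives the derived tensor product coincides with the ordinary total tensor complex: $R\Gamma_c(\bfX,\Lambda) \otimes^L R\Gamma_c(\bfY,\Lambda) \simeq C^\bullet \otimes_\Lambda D^\bullet$ in $\sfD^b(\Lambda[G \times G]\sfmod)$, and restricting along the diagonal $G \hookrightarrow G \times G$ gives the desired object.

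Next, consider the second projection $p : \bfX \times \bfY \to \bfY$, which is $G$-equivariant for the diagonal action on the source. I would apply proper base change to the Cartesian square
$$\xymatrix@R=2em@C=3em{ \bfX \times \bfY \ar[r]^-{p} \ar[d]_-{q} & \bfY \ar[d] \\ \bfX \ar[r] & \mathrm{pt} }$$
which yields a $G$-equivariant isomorphism $Rp_!\,\Lambda_{\bfX\times\bfY} \simeq R\Gamma_c(\bfX,\Lambda) \otimes_\Lambda^L \Lambda_\bfY$ of complexes of constructible sheaves on $\bfY$. Then I would use composition of derived functors to write $R\Gamma_c(\bfX \times \bfY,\Lambda) \simeq R\Gamma_c(\bfY, Rp_!\,\Lambda_{\bfX\times\bfY})$, and finally invoke the projection formula: for any bounded complex $M^\bullet$ of $\Lambda$-modules viewed as a complex of constant sheaves,
$$R\Gamma_c(\bfY, M^\bullet \otimes_\Lambda^L \Lambda_\bfY) \simeq M^\bullet \otimes_\Lambda^L R\Gamma_c(\bfY,\Lambda).$$
Applied to $M^\bullet = R\Gamma_c(\bfX,\Lambda)$, this produces the required quasi-isomorphism, and chasing through the construction shows that $G$ acts diagonally.

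The step requiring the most care is not any single isomorphism above \--- each is a black-box result from \'etale cohomology \--- but rather the verification that everything is functorial for the $G$-action, so that the isomorphism genuinely lives in $\sfD^b(\Lambda G\sfmod)$ and not merely in $\sfD^b(\Lambda\sfmod)$. In the lecture-notes spirit this reduces to noting that proper base change and the projection formula are statements in the equivariant derived category (equivalently, they are isomorphisms of $G$-equivariant complexes of sheaves), so one can simply run the above argument in the $G$-equivariant \'etale site. A secondary subtlety is that $p$ is not proper, which is why one must work with $Rp_!$ rather than $Rp_*$; this is however exactly what makes the compact-support version of Künneth work, whereas the analogous statement for $R\Gamma$ requires properness of the factors.
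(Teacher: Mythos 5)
The paper gives no proof of the K\"unneth formula: it is quoted as one of the black-box inputs from \'etale cohomology at the start of this chapter, with proofs deferred to the standard references \cite{SGA45,Mil}, so there is nothing in the paper to compare your argument against. Your outline is the standard textbook proof and is correct. Two small remarks. The replacement of $R\Gamma_c(\bfX,\Lambda)$ and $R\Gamma_c(\bfY,\Lambda)$ by $\Lambda$-free bounded representatives (via Theorem \ref{thm:rickardlperm}, or simply the remark preceding it) is convenient for realizing $\otimes^L$ as an ordinary total tensor complex, but it is not logically needed for the isomorphism in $\sfD^b(\Lambda G\sfmod)$, where the derived tensor product is defined independently of any such choice. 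And the fact you invoke under the name ``proper base change'' is really the unconditional base-change theorem for $Rf_!$ --- the name records its proof, which first treats proper $f$ (where $Rf_!=Rf_*$) and then extends to arbitrary compactifiable $f$ by the very definition of $Rf_!$; it is exactly this unrestricted base change that makes the compactly supported K\"unneth formula hold with no properness hypothesis on $\bfX$ or $\bfY$, as you note at the end. Your treatment of $G$-equivariance is also sound: base change, the projection formula, and compatibility of $Rf_!$ with composition are all natural isomorphisms of functors on sheaves, hence compatible with any finite group action on the geometric data, so the identifications descend to $\sfD^b(\Lambda G\sfmod)$ without further work --- the same observation underlying the sketch of Theorem \ref{thm:rickardlperm}.
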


\begin{theorem}[Open-closed situation]\label{thm:openclose}
Let $\mathbb{U} \subset \bfX$ be an open $G$-stable subvariety of $\bfX$, and $\mathbb{Z} = \bfX\smallsetminus \mathbb{U}$ be the closed complement. There is a distinguished triangle
$$R\Gamma_c(\mathbb{U},\Lambda) \longrightarrow R\Gamma_c(\bfX,\Lambda) \longrightarrow R\Gamma_c(\mathbb{Z},\Lambda) \rightsquigarrow$$
in $\sfD^b(\Lambda G\sfmod)$ which splits if $\mathbb{U}$ is also closed.
\end{theorem}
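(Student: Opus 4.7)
The plan is to obtain the triangle from the standard adjunction short exact sequence of sheaves attached to an open/closed decomposition, and then to identify each term via the projection formulas for $j_!$ and $i_*$. Let $j : \mathbb{U} \hookrightarrow \bfX$ denote the open immersion and $i : \mathbb{Z} \hookrightarrow \bfX$ the closed complementary immersion. Both are $G$-equivariant since $\mathbb{U}$ is $G$-stable. Writing $\Lambda_\bfX$ for the constant sheaf on $\bfX$, the adjunction units/counits assemble into the classical short exact sequence of $G$-equivariant étale sheaves
$$ 0 \longrightarrow j_! j^* \Lambda_\bfX \longrightarrow \Lambda_\bfX \longrightarrow i_* i^* \Lambda_\bfX \longrightarrow 0, $$
where exactness is checked stalkwise: on points of $\mathbb{U}$ the sequence reduces to $0 \to \Lambda \to \Lambda \to 0 \to 0$, and on points of $\mathbb{Z}$ to $0 \to 0 \to \Lambda \to \Lambda \to 0$.

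Next I would apply the derived global sections with compact support $R\Gamma_c(\bfX,-)$. Since this is a triangulated functor, the short exact sequence above yields a distinguished triangle
$$ R\Gamma_c(\bfX, j_! j^* \Lambda_\bfX) \longrightarrow R\Gamma_c(\bfX, \Lambda_\bfX) \longrightarrow R\Gamma_c(\bfX, i_* i^* \Lambda_\bfX) \rightsquigarrow $$
in $\sfD^b(\Lambda G\sfmod)$. To recognize the left and right terms, I would invoke the two basic compatibilities: for the open immersion $j$, extension by zero commutes with compactly supported cohomology, giving $R\Gamma_c(\bfX, j_! \Lambda_\mathbb{U}) \simeq R\Gamma_c(\mathbb{U}, \Lambda)$; for the closed immersion $i$, properness of $i$ implies $R i_* = R i_!$ and hence $R\Gamma_c(\bfX, i_* \Lambda_\mathbb{Z}) \simeq R\Gamma_c(\mathbb{Z}, \Lambda)$. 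Substituting these identifications produces exactly the desired triangle.

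For the $G$-equivariance I would note that all three sheaves above are $G$-equivariant on $\bfX$, so one can either work in the derived category of $G$-equivariant sheaves throughout, or equivalently use a Godement-type resolution by $\ell$-permutation modules as in Theorem~\ref{thm:rickardlperm} to get a representative of the triangle already in $\sfHo^b(\Lambda G\sfperm)$; functoriality of the adjunction triangle then guarantees the $G$-action on the boundary map.

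Finally, if $\mathbb{U}$ is also closed then $\bfX = \mathbb{U} \sqcup \mathbb{Z}$ is a disjoint union of clopen $G$-stable subvarieties. The sheaf $\Lambda_\bfX$ decomposes $G$-equivariantly as $j_! \Lambda_\mathbb{U} \oplus i_* \Lambda_\mathbb{Z}$, so the short exact sequence above splits as sheaves and the triangle splits in $\sfD^b(\Lambda G\sfmod)$. I do not expect any serious obstacle here beyond taking the étale machinery (the six-functor formalism, proper base change for $i$, and the adjunction $(j_!, j^*)$) as a black box; the only subtle point is ensuring that the splitting in the clopen case is $G$-equivariant, which is automatic from the $G$-stability of the two pieces.
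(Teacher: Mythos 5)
Your proof is correct. The paper itself does not prove this theorem: like the K\"unneth formula and the cohomology of affine space stated alongside it, it is presented as a black-box fact from the theory of \'etale cohomology, with the reader pointed to SGA~4$\frac{1}{2}$ and Milne's textbook. Your argument, via the short exact sequence of sheaves $0 \to j_!\,j^*\Lambda_\bfX \to \Lambda_\bfX \to i_*\,i^*\Lambda_\bfX \to 0$, applying the triangulated functor $R\Gamma_c(\bfX,-)$, and then identifying the outer terms through $R\Gamma_c(\bfX, j_!\,-) \simeq R\Gamma_c(\mathbb{U},-)$ and $i_* = i_!$ (properness of the closed immersion), is exactly the standard derivation one finds in those references, so there is nothing to reconcile. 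Two small remarks on presentation: exactness of the sheaf sequence is indeed checked on stalks as you say, but it may be worth noting explicitly that $i_*$ is exact (so the sequence is exact as complexes of sheaves, not merely a distinguished triangle); and your handling of the $G$-equivariance --- either working equivariantly throughout or passing to a Godement/permutation-module representative --- is the step the paper quietly leaves implicit, so flagging it as you did is appropriate. The clopen splitting argument is correct and $G$-equivariance there is automatic, as you observe.
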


Taking the cohomology of this distinguished triangle yields a long exact sequence of cohomology groups
$$ \cdots \longrightarrow H_c^i(\mathbb{U}) \longrightarrow H_c^i(\bfX) \longrightarrow 
H_c^i(\mathbb{Z}) \longrightarrow H_c^{i+1}(\mathbb{U}) \longrightarrow \cdots$$

\begin{theorem}\label{thm:cohomologyA}
The cohomology of the affine space of dimension $n$ is given by
$$ R\Gamma_c(\mathbb{A}_n,\Lambda) \simeq \Lambda[-2n].$$
\end{theorem}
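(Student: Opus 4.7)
The plan is to induct on $n$, reducing everything to the case $n=1$ via the Künneth formula. The base of induction $n=0$ is trivial: $\mathbb{A}_0$ is a point, so $R\Gamma_c(\mathbb{A}_0,\Lambda)\simeq \Lambda[0]=\Lambda[-2\cdot 0]$. For $n\geq 2$, apply Theorem \ref{thm:Kunneth} to the decomposition $\mathbb{A}_n\simeq \mathbb{A}_1\times \mathbb{A}_{n-1}$:
$$R\Gamma_c(\mathbb{A}_n,\Lambda)\;\simeq\; R\Gamma_c(\mathbb{A}_1,\Lambda)\mathop{\otimes}\limits^L_\Lambda R\Gamma_c(\mathbb{A}_{n-1},\Lambda).$$
Granting the case $n=1$, this becomes $\Lambda[-2]\otimes^L_\Lambda \Lambda[-2(n-1)]$. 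Since $\Lambda[-2]$ is represented by a $\Lambda$-flat complex (the free module $\Lambda$ placed in a single degree), the derived tensor product collapses to the ordinary one and yields $\Lambda[-2n]$.

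The heart of the argument is therefore $n=1$. I would use the open--closed decomposition $\mathbb{P}_1=\mathbb{A}_1\sqcup\{\infty\}$ and apply Theorem \ref{thm:openclose} to obtain the distinguished triangle
$$R\Gamma_c(\mathbb{A}_1,\Lambda)\longrightarrow R\Gamma_c(\mathbb{P}_1,\Lambda)\longrightarrow R\Gamma_c(\{\infty\},\Lambda)\rightsquigarrow.$$
Both $\mathbb{P}_1$ and the point are projective, so $R\Gamma_c$ coincides with $R\Gamma$ there. Thus $R\Gamma_c(\{\infty\},\Lambda)\simeq \Lambda[0]$, and $R\Gamma(\mathbb{P}_1,\Lambda)$ has cohomology $\Lambda$ in degrees $0$ and $2$ and zero elsewhere. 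The restriction map on $H^0$ is an isomorphism (both sides equal $\Lambda$ and the pullback is the identity on constants), so the long exact sequence of the triangle degenerates to the identification $H^i_c(\mathbb{A}_1,\Lambda)=0$ for $i\neq 2$ and $H^2_c(\mathbb{A}_1,\Lambda)\simeq \Lambda$. Since $R\Gamma_c(\mathbb{A}_1,\Lambda)$ then has a unique non-vanishing cohomology group, Proposition \ref{prop:truncation} (and the discussion following it in \S\ref{sec:truncation}) forces it to be quasi-isomorphic to that group placed in the corresponding degree, namely $\Lambda[-2]$.

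The main obstacle is the input $R\Gamma(\mathbb{P}_1,\Lambda)\simeq \Lambda[0]\oplus \Lambda[-2]$: within the framework of these notes, which black-boxes étale cohomology, this is the one genuinely geometric fact that must be imported. One can derive it either from the Mayer--Vietoris covering $\mathbb{P}_1=\mathbb{A}_1\cup\mathbb{A}_1$ combined with knowledge of $R\Gamma(\mathbb{G}_m,\Lambda)$, or, more elegantly, bypass the $n=1$ argument entirely: granting the étale homotopy invariance $R\Gamma(\mathbb{A}_n,\Lambda)\simeq \Lambda[0]$, Poincaré--Verdier duality (Theorem \ref{thm:poincareverdier}) gives
$$R\Gamma_c(\mathbb{A}_n,\Lambda)\simeq R\mathrm{Hom}_\Lambda\!\bigl(R\Gamma(\mathbb{A}_n,\Lambda),\Lambda\bigr)[-2n]\simeq \Lambda[-2n]$$
in one line. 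Either way, once the appropriate piece of foundational étale cohomology is cited, the rest is formal manipulation in the triangulated category $\sfD^b(\Lambda\sfmod)$.
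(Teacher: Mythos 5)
The paper gives no proof of this statement: it is one of the three foundational inputs imported wholesale from \'etale cohomology (alongside the K\"unneth formula and the open--closed triangle), and the introduction to \S\ref{sec:cohodef} makes clear that the construction of \'etale cohomology is deliberately black-boxed in these notes. So there is no internal proof to compare against; the real question is whether your argument genuinely derives the statement or merely trades it for an equivalent foundational import.

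Your K\"unneth reduction to $n=1$ is fine, and using truncation to upgrade a computation of cohomology groups to an identification of $R\Gamma_c$ in the derived category is the right move. But your $n=1$ argument has a circularity problem relative to the paper's own logic: the example immediately following the three theorems derives $R\Gamma_c(\mathbb{P}_1,\Lambda)\simeq\Lambda[0]\oplus\Lambda[-2]$ \emph{from} Theorems \ref{thm:openclose} and \ref{thm:cohomologyA}, so invoking the cohomology of $\mathbb{P}_1$ as the input for $\mathbb{A}_1$ reverses the intended dependency. You flag this yourself, correctly identifying $R\Gamma(\mathbb{P}_1,\Lambda)$ as the fact that must be imported. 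Your alternative route --- homotopy invariance $R\Gamma(\mathbb{A}_n,\Lambda)\simeq\Lambda[0]$ combined with Poincar\'e--Verdier duality (Theorem \ref{thm:poincareverdier}) --- is cleaner and avoids the internal circularity, but it too simply swaps one axiomatized \'etale fact for another of comparable depth (homotopy invariance for torsion coefficients prime to $p$ is itself a nontrivial theorem). That is a perfectly reasonable way to organize the foundations, but no elementary derivation is available from what the notes set up, which is exactly why the paper states Theorem \ref{thm:cohomologyA} without proof.
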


These three results are enough to compute the cohomology of a large class of varieties. We give below the examples of projective spaces and tori.

\begin{example}
(a) Write $\mathbb{P}_1 = \mathbb{A}_1 \sqcup \{pt\}$. Theorems \ref{thm:openclose} and \ref{thm:cohomologyA} yield a distinguished triangle
$$ \begin{aligned}
R\Gamma_c(\mathbb{A}_1,\Lambda) \longrightarrow & \, R\Gamma_c(\mathbb{P}_1,\Lambda) \longrightarrow R\Gamma_c(\{pt\},\Lambda) \rightsquigarrow \\
\Lambda[-2] \longrightarrow & \, R\Gamma_c(\mathbb{P}_1,\Lambda) \longrightarrow \Lambda[0] \tikzmark{eq-p1}\longrightarrow \Lambda[-1]
\end{aligned}$$ 
\begin{tikzpicture}[remember picture,overlay]
\draw[->,>=latex]
  ([shift={(8pt,-5pt)}]pic cs:eq-p1) |- 
  ++(10pt,-15pt) 
  node[right,text width=3cm] 
    {\footnotesize zero map since\\[-3pt]$\mathrm{Ext}_{\Lambda G}^{-1}(\Lambda,\Lambda) = 0$
    };
\end{tikzpicture}

\vskip 0.5cm
\noindent This shows that $R\Gamma_c(\mathbb{P}_1,\Lambda) \simeq \Lambda[0] \oplus\Lambda[-2]$.

\smallskip
More generally, $R\Gamma_c(\mathbb{P}_n,\Lambda) \simeq \Lambda[0] \oplus\Lambda[-2] \oplus \cdots \oplus \Lambda[-2n]$. This method works for any variety paved by affine spaces, e.g. the flag varieties. 

\smallskip

\noindent (b) Let $\mathbb{G}_m$ be the one-dimensional torus $\mathbb{G}_m = \mathbb{A}_1 \smallsetminus \{0\}$ acted on by multiplication by the group $\mu_n$ of $n$-th roots of unity in $\overline{\mathbb{F}}_p$. Again, we have a distinguished triangle 
$$ \begin{aligned}
R\Gamma_c(\mathbb{G}_m,\Lambda) \longrightarrow &\,  R\Gamma_c(\mathbb{A}_1,\Lambda) \longrightarrow R\Gamma_c(\{pt\},\Lambda) \rightsquigarrow \\
R\Gamma_c(\mathbb{G}_m,\Lambda) \longrightarrow &\,  \Lambda[-2]  \tikzmark{eq-gm} \longrightarrow \Lambda[0] \rightsquigarrow
\end{aligned}$$ 
\begin{tikzpicture}[remember picture,overlay]
\draw[->,>=latex]
  ([shift={(8pt,-5pt)}]pic cs:eq-gm) |- 
  ++(10pt,-12pt) 
  node[right,text width=6cm] 
    {\footnotesize element of  $\mathrm{Ext}_{\Lambda \mu_n}^{2}(\Lambda,\Lambda) \simeq H^2(\mu_n,\Lambda)$
    };
\end{tikzpicture}
\vskip 0.5cm
\noindent The long exact sequence in cohomology gives $H_c^\bullet(\mathbb{G}_m,\Lambda) \simeq \Lambda[-1] \oplus \Lambda[-2]$ but we need more information to compute the cohomology complex.

\smallskip

Since $\mu_n$ acts freely, the complex $R\Gamma_c(\mathbb{G}_m,\calO)$ is perfect by Corollary \ref{cor:perfect}. Therefore it is quasi-isomorphic to $0 \longrightarrow P \mathop{\longrightarrow}\limits^d Q \longrightarrow 0$ with $P$ and $Q$ being two finitely-generated projective modules in degrees $1$ and $2$ respectively. From the previous computation we deduce that $\mathrm{Ker}\, d \simeq \mathrm{CoKer}\, d \simeq \calO$. Consequently, the trivial module $k$ is in the head of $Q$ and the projective cover $P_k$ of $k$ is a direct summand of $Q$. In other words, $R\Gamma_c(\mathbb{G}_m,\calO)$ has a representative of the form
$$0 \longrightarrow P \mathop{\longrightarrow}\limits^d Q'\oplus P_k \longrightarrow 0$$
with $Q' \subset \mathrm{Im}\, d$. This implies that the composition $P \mathop{\longrightarrow}\limits^d Q'\oplus P_k \longrightarrow Q'$ is surjective. Since $Q'$ is projective, it must split and we can write $P \simeq P' \oplus Q'$ such that the restriction of $d$ to $Q'$ is the identity. This shows that the previous complex is homotopy equivalent to 
$$0 \longrightarrow P' \mathop{\longrightarrow}\limits^{d_{|P'}}  P_k \longrightarrow 0.$$
To determine $P'$ we can either use the kernel of $d$ or the fact that in the Grothendieck group 
$[R\Gamma_c(\mathbb{G}_m,\calO)] = [P_k]-[P'] = \sum (-1)^i [H_c^i(\mathbb{G}_m,\calO)] = 0$ which forces $P' \simeq P_k$. Finally,
$$R\Gamma_c(\mathbb{G}_m,\calO) \simeq (\cdots 0 \longrightarrow P_k \longrightarrow  P_k \longrightarrow 0 \cdots) .$$
Using Theorem \ref{thm:Kunneth} we can also compute the cohomology of a higher dimensional torus by $R\Gamma_c((\mathbb{G}_m)^r,\calO) \simeq R\Gamma_c(\mathbb{G}_m,\calO)^{{\mathop{\otimes}\limits^L}r}$.
\end{example}

\begin{exercise}
Let $\zeta$ be a primitive $n$-th root of $1$. Show that 
$$ R\Gamma_c(\mathbb{G}_m,\Lambda) \simeq (\cdots 0 \longrightarrow \Lambda \mu_n \mathop{\longrightarrow}\limits^{\zeta -1} \Lambda \mu_n \longrightarrow 0 \cdots).$$
\end{exercise}

\subsection{Group action}
Recall that $G$ is a finite group acting on the quasi-projective variety $\bfX$. In this section we discuss the relation between the cohomology complexes of $\bfX$, $G \backslash \bfX$ and $\bfX^G$.
Further details can be found in \cite{Ric94} or \cite{Rou02}.

\begin{theorem}\label{thm:cohomologyinv}
Assume that for all $x \in \bfX$, the order of the group $\mathrm{Stab}_G(x)$ is invertible in $\Lambda$ (in particular $R\Gamma_c(\bfX, \Lambda)$ is perfect). Then 
 $$ R\Gamma_c(G \backslash \bfX, \Lambda) \simeq \Lambda \, {\mathop{\otimes}\limits^L}_{\Lambda G}\, R\Gamma_c(\bfX,\Lambda)$$
in $\sfD^b(\Lambda\sfmod)$.
\end{theorem}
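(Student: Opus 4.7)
My plan is to combine Rickard's permutation-module representative (Theorem \ref{thm:rickardlperm}) with the geometric behavior of the finite quotient map $\pi \colon \bfX \to G\backslash\bfX$. The stabilizer hypothesis plays a double role: on the algebraic side it turns permutation modules into projectives so that $\Lambda\otimes^L_{\Lambda G}$ computes on the nose, and on the geometric side it ensures that the $G$-coinvariants of the equivariant cohomology sheaves on $\bfX$ recover the constant sheaf on $G\backslash\bfX$.

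First I would pick a representative $C$ of $R\Gamma_c(\bfX,\Lambda)$ as in Theorem \ref{thm:rickardlperm}, so that $C \in \sfHo^b(\Lambda G\sfperm)$ with each term a summand of a direct sum of permutation modules $\Lambda[G/\mathrm{Stab}_G(x)]$. Under the assumption that $|\mathrm{Stab}_G(x)|$ is invertible in $\Lambda$, the element $e_H = \tfrac{1}{|H|}\sum_{h\in H}h$ is an idempotent of $\Lambda G$ with $\Lambda G \cdot e_H \simeq \Lambda[G/H]$, so each such permutation module is a finitely generated projective $\Lambda G$-module and hence $C$ is a bounded complex of projective $\Lambda G$-modules (this is essentially Corollary \ref{cor:perfect}). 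Consequently, no further replacement is needed to compute the derived tensor product:
$$
\Lambda \mathop{\otimes}\limits^L_{\Lambda G} R\Gamma_c(\bfX,\Lambda) \simeq \Lambda \otimes_{\Lambda G} C,
$$
and termwise $\Lambda\otimes_{\Lambda G}\Lambda[G/H]\simeq \Lambda$, one copy for each $G$-orbit.

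Next I would identify $\Lambda \otimes_{\Lambda G} C$ with $R\Gamma_c(G\backslash\bfX,\Lambda)$. The natural candidate for the isomorphism comes from the finite quotient map $\pi\colon\bfX\to G\backslash\bfX$: since $\pi$ is finite, $R\pi_!=\pi_*$ is exact, and $\pi_*\Lambda_\bfX$ is an equivariant sheaf on $G\backslash\bfX$ whose stalk at $G\cdot x$ is $\Lambda[G/\mathrm{Stab}_G(x)]$. Under the stabilizer hypothesis, $\Lambda\otimes^L_{\Lambda G}\pi_*\Lambda_\bfX \simeq \Lambda_{G\backslash\bfX}$ stalkwise, and commuting $\Lambda\otimes^L_{\Lambda G}$ past $R\Gamma_c$ (which is legitimate because we have already replaced $R\Gamma_c(\bfX,\Lambda)=R\Gamma_c(G\backslash\bfX,\pi_*\Lambda_\bfX)$ by a complex of projectives) would then give the desired quasi-isomorphism. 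Concretely, one can realize this at the level of representatives by choosing the Rickard representative of $R\Gamma_c(\bfX,\Lambda)$ from an equivariant Godement-type resolution on $\bfX$ and noting that its $G$-coinvariants form a corresponding representative for $R\Gamma_c(G\backslash\bfX,\Lambda)$.

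The hard part is the identification in the previous step: the notes deliberately avoid the étale-sheaf machinery needed to make the geometric reduction $R\Gamma_c(\bfX,\Lambda)\simeq R\Gamma_c(G\backslash\bfX,\pi_*\Lambda)$ and the stalkwise computation $\Lambda\otimes^L_{\Lambda G}\pi_*\Lambda\simeq\Lambda$ entirely rigorous, and one must check that the termwise coinvariants of the chosen $\ell$-permutation representative truly compute $R\Gamma_c$ of the quotient rather than just something with the right cohomology groups. In practice this is where the references \cite{Ric94} and \cite{Rou02} are invoked; the role of the stabilizer hypothesis is precisely to remove the higher $\mathrm{Tor}$-obstructions that would appear stalkwise, so that invariants, coinvariants and $\Lambda\otimes^L_{\Lambda G}$ all agree on $\pi_*\Lambda_\bfX$ and the pushforward commutes with the derived base change.
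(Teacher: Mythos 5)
Your argument is essentially the same as the paper's: both hinge on the fact that the finite quotient $\pi\colon\bfX\to G\backslash\bfX$ satisfies $\pi_!=\pi_*$, so $R\Gamma_c(\bfX,\Lambda)\simeq R\Gamma_c(G\backslash\bfX,\pi_*\Lambda)$, that the projection formula commutes $\Lambda\otimes^L_{\Lambda G}$ past $R\Gamma_c$, and that the stalks of $\pi_*\Lambda$ are the permutation modules $\Lambda[G/\mathrm{Stab}_G(x)]$, which the stabilizer hypothesis makes projective so the stalkwise comparison $\Lambda\otimes^L_{\Lambda G}\pi_*\Lambda\simeq\Lambda$ reduces to an ordinary tensor product. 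You merely open with the Rickard $\ell$-permutation representative (to make the derived tensor concrete on the algebraic side) before passing to the sheaf-theoretic identification, whereas the paper runs the geometric reduction first; the content and the role of the hypothesis are the same, and the step you flag as needing care is precisely the step the paper also leaves at the level of a sketch.
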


\begin{proof}[Sketch of proof]
Let $\pi_*\Lambda$ be the push-forward of the constant sheaf $\Lambda$ along the quotient map $\pi : \bfX \longrightarrow G\backslash \bfX$. Since $\pi$ is finite we have $\pi_* = \pi_{!}$ and therefore
$$ R\Gamma_c(\bfX, \Lambda) \simeq R\Gamma_c(G\backslash \bfX,\pi_*\Lambda).$$
Taking the coinvariants we get, using the projection formula
$$ \Lambda \, {\mathop{\otimes}\limits^L}_{\Lambda G}\, R\Gamma_c(\bfX,\Lambda) 
\simeq \Lambda \, {\mathop{\otimes}\limits^L}_{\Lambda G}\, R\Gamma_c(G \backslash \bfX,\pi_*\Lambda)
\simeq  R\Gamma_c(G \backslash \bfX,\Lambda \, {\mathop{\otimes}\limits^L}_{\Lambda G}\, \pi_*\Lambda).$$
It remains to check that the natural map $\Lambda {\mathop{\otimes}\limits^L}_{\Lambda G}\, \pi_* \Lambda  \longrightarrow \Lambda$ is an isomorphim of sheaves. The stalk of $\pi_* \Lambda$ at a point $x$ is the permutation module $\Lambda G/\mathrm{Stab}_G(x)$, which is projective by assumption on the order of $\mathrm{Stab}_G(x)$. Therefore the fact that the previous map is an isomorphism can be checked on the stalks with the usual tensor product.
\end{proof}

When $\Lambda$ is a field and $\ell \nmid |G|$ then $\Lambda G$ is a semisimple algebra and complexes of $\Lambda G$-modules are quasi-isomorphic to their cohomology (see \S\ref{sec:derex}). Furthermore, invariants and coinvariants are isomorphic as $\Lambda$-modules in that case and the previous theorem shows that 
$$ H_c^i(G\backslash \bfX,\Lambda) \simeq H_c^i(\bfX,\Lambda)^G \simeq  \Lambda \otimes_{\Lambda G} H_c^i(\bfX,\Lambda).$$

\smallskip

Assume until the end of this section that $\Lambda$ is either $\calO$ or $k$. Given $P \subset G$ an $\ell$-subgroup of $G$ and $V$ an $\ell$-permutation module, we denote by $\mathrm{Br}_P(V)$ the image of the invariants $V^P$ in the coinvariants $k \otimes_{\Lambda P} V$. It induces an additive functor on the homotopy category of $\ell$-permutation modules, which we will still denote by $\mathrm{Br}_P$. We refer to \cite[\S 27]{The} for basic results on $\ell$-permutation modules and the Brauer functor. 

\begin{theorem}[Rickard]\label{thm:brauerfunctor}
The inclusion $\bfX^P \hookrightarrow \bfX$ induces an isomorphism
$$ \mathrm{Br}_P\big(\underbrace{\widetilde{R}\Gamma_c(\bfX,\Lambda)}\big) \tikzmark{eq-brauer} \, \mathop{\longrightarrow}\limits^\sim
\, R\Gamma_c(\bfX^P,k)$$
in $\sfD^b(kN_G(P)\sfmod)$
\end{theorem}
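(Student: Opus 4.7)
The plan is to use the open–closed decomposition of $\bfX$ along the closed subvariety $\bfX^P$ and then apply the Brauer functor $\mathrm{Br}_P$ to the resulting distinguished triangle of $\ell$-permutation complexes. Write $\mathbb{U} = \bfX \smallsetminus \bfX^P$. This is an open $N_G(P)$-stable subvariety with complement $\bfX^P$, so by Theorem \ref{thm:openclose} there is a distinguished triangle
$$R\Gamma_c(\mathbb{U},\Lambda) \longrightarrow R\Gamma_c(\bfX,\Lambda) \longrightarrow R\Gamma_c(\bfX^P,\Lambda) \rightsquigarrow$$
in $\sfD^b(\Lambda N_G(P)\sfmod)$. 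The first task is to lift this to a distinguished triangle in $\sfHo^b(\calO N_G(P)\sfperm)$ between the $\ell$-permutation representatives $\widetilde R\Gamma_c(\mathbb{U},\Lambda)$, $\widetilde R\Gamma_c(\bfX,\Lambda)$ and $\widetilde R\Gamma_c(\bfX^P,\Lambda)$ provided by Rickard's Theorem \ref{thm:rickardlperm}; this lifting is possible because $\ell$-permutation modules form a full additive subcategory closed under direct summands and the mapping cone construction transports through quasi-isomorphisms.

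Now apply the Brauer functor $\mathrm{Br}_P$, which is additive on $\sfHo^b(\calO N_G(P)\sfperm)$ and hence triangulated. The proof then reduces to two computations. First, I would show that $\mathrm{Br}_P\big(\widetilde R\Gamma_c(\mathbb{U},\Lambda)\big) = 0$. By Rickard's theorem, the terms of $\widetilde R\Gamma_c(\mathbb{U},\Lambda)$ are direct summands of finite sums of permutation modules $\Lambda[N_G(P)/\mathrm{Stab}_{N_G(P)}(x)]$ with $x \in \mathbb{U}$. Since $P$ is normal in $N_G(P)$, the condition $P \subset \mathrm{Stab}_{N_G(P)}(x)$ is equivalent to $x \in \bfX^P$, and fails for every $x \in \mathbb{U}$. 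The classical vanishing property of the Brauer functor on $\ell$-permutation modules (see \cite[\S27]{The}) then gives $\mathrm{Br}_P\big(\Lambda[N_G(P)/H]\big) = 0$ whenever $P \not\subset H$, hence the desired vanishing on each term, and therefore on the whole complex.

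Second, I would identify $\mathrm{Br}_P\big(\widetilde R\Gamma_c(\bfX^P,\Lambda)\big)$ with $R\Gamma_c(\bfX^P,k)$. Since $P$ acts trivially on $\bfX^P$, every stabilizer $\mathrm{Stab}_{N_G(P)}(x)$ for $x \in \bfX^P$ contains $P$, so the terms of $\widetilde R\Gamma_c(\bfX^P,\Lambda)$ can be chosen with trivial $P$-action. On such an $\ell$-permutation module $V$ the invariants are the whole module, while the trace $\mathrm{tr}_Q^P = [P:Q]\cdot \mathrm{id}$ for any proper subgroup $Q \subsetneq P$ is divisible by $\ell$; hence $\mathrm{Br}_P(V) = k \otimes_{\calO} V$. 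Applied termwise, this yields
$$\mathrm{Br}_P\big(\widetilde R\Gamma_c(\bfX^P,\Lambda)\big) \,\simeq\, k \otimes_\calO \widetilde R\Gamma_c(\bfX^P,\calO) \,\simeq\, R\Gamma_c(\bfX^P,k)$$
in $\sfD^b(k N_G(P)\sfmod)$. Combined with the vanishing above and the triangulated triangle obtained by applying $\mathrm{Br}_P$, this gives the isomorphism $\mathrm{Br}_P\big(\widetilde R\Gamma_c(\bfX,\Lambda)\big) \simeq R\Gamma_c(\bfX^P,k)$, and one checks that it is induced by the closed immersion $\bfX^P \hookrightarrow \bfX$.

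The main obstacle is the first step: genuinely lifting the open–closed distinguished triangle from $\sfD^b$ into the homotopy category $\sfHo^b(\calO N_G(P)\sfperm)$ so that the Brauer functor can be applied termwise. The vanishing on $\mathbb{U}$ and the identification on $\bfX^P$ are then rather formal consequences of standard properties of $\mathrm{Br}_P$ on $\ell$-permutation modules, but a careful treatment needs one to know that $\widetilde R\Gamma_c$ is functorial enough to realize the maps $\mathbb{U} \hookrightarrow \bfX$ and $\bfX^P \hookrightarrow \bfX$ by honest morphisms of $\ell$-permutation complexes whose mapping cone is the expected one. Once this is in place, the rest of the argument is essentially formal.
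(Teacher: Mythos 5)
Your approach is correct in outline and takes a genuinely different route from the paper's sketch. The paper restricts first to the special case $P \simeq \mathbb{Z}/\ell\mathbb{Z}$ a Sylow $\ell$-subgroup, works with the $G$-stable closed subvariety $\bfX_\ell = \{x \in \bfX \mid \ell \text{ divides } |\mathrm{Stab}_G(x)|\}$, shows $R\Gamma_c(\bfX \smallsetminus \bfX_\ell,\Lambda)$ is \emph{perfect} (via Corollary~\ref{cor:perfect}) and hence killed by $\mathrm{Br}_P$, identifies $\bfX_\ell \simeq G \times_{N_G(P)} \bfX^P$, and remarks that the general case follows by filtering $\bfX$ by the $\ell$-part of stabilizers. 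You instead decompose $\bfX$ along the $N_G(P)$-stable closed subvariety $\bfX^P$ directly, restrict to $N_G(P)$ from the start, and use the weaker but sufficient vanishing property that $\mathrm{Br}_P$ kills $\ell$-permutation modules whose stabilizers in $N_G(P)$ do not contain $P$ (here it matters that $P \lhd N_G(P)$, so subconjugacy reduces to containment). Your route is cleaner in two respects: it treats arbitrary $\ell$-subgroups $P$ uniformly without the filtration, and it does not need the induction isomorphism $R\Gamma_c(\bfX_\ell,\Lambda) \simeq \mathrm{Ind}_{N_G(P)}^G R\Gamma_c(\bfX^P,\Lambda)$, which is specific to the Sylow cyclic case. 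The cost is that the open subvariety $\mathbb{U} = \bfX \smallsetminus \bfX^P$ is not $G$-stable, so you must restrict scalars earlier and work in $\sfHo^b(\Lambda N_G(P)\sfperm)$ throughout.

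You correctly flag the genuine technical point, which the paper's sketch also glosses over: realizing the open--closed morphism $R\Gamma_c(\mathbb{U},\Lambda) \to R\Gamma_c(\bfX,\Lambda)$ by an honest chain map of $\ell$-permutation complexes, so that the cone in $\sfHo^b(\Lambda N_G(P)\sfperm)$ computes $\widetilde R\Gamma_c(\bfX^P,\Lambda)$. This does not follow formally from the uniqueness of the permutation representatives (a morphism in $\sfD^b$ between two complexes of $\ell$-permutation modules need not lift to $\sfHo^b$); Rickard obtains it by building the permutation representatives functorially from a Godement-type resolution and then truncating, so that open and closed immersions induce actual chain maps. Once that construction is in place, your computation of $\mathrm{Br}_P$ on the two ends of the triangle --- vanishing on $\mathbb{U}$ because no stabilizer contains $P$, and $\mathrm{Br}_P(V) = k \otimes_\Lambda V$ when $P$ acts trivially on $V$ --- is correct with the paper's definition of $\mathrm{Br}_P$ as the image of $V^P$ in $k\otimes_{\Lambda P} V$, and gives the theorem.
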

\begin{tikzpicture}[remember picture,overlay]
\draw[->,>=latex]
  ([shift={(-29pt,-10pt)}]pic cs:eq-brauer) |- 
  ++(10pt,-10pt) 
  node[right,text width=6cm] 
    {\footnotesize representative in $\sfHo^b(\Lambda G\sfperm)$ (see \S\ref{sec:cohodef})
    };
\end{tikzpicture}

\begin{proof}[Sketch of proof]
Assume for simplicity that $P$ is a Sylow subgroup of $G$ and that $P \simeq \mathbb{Z}/\ell \mathbb{Z}$. We consider the closed subvariety of $\bfX$ defined by
$$ \bfX_\ell = \{x \in \bfX \text{ such that } \ell \mid |\mathrm{Stab}_G(x)|\}.$$
Then $\bfX_\ell \simeq G \times_{N_G(P)} \bfX^P$, and hence $R\Gamma_c(\bfX_\ell) \simeq \mathrm{Ind}_{N_G(P)}^G R\Gamma_c( \bfX^P,\Lambda)$. Now by Corollary \ref{cor:perfect}, the cohomology complex of $\bfX \smallsetminus \bfX_\ell$ is perfect therefore its image by the Brauer functor is zero. Using the distinguished triangle in Theorem \ref{thm:openclose} we deduce that 
$\mathrm{Br}_P R\Gamma_c(\bfX,\Lambda) \simeq \mathrm{Br}_P R\Gamma_c(\bfX_\ell,\Lambda)$ which in turn is isomorphic to $R\Gamma_c(\bfX^P,k)$. 

\smallskip
One can generalize this argument to any $\ell$-subgroup $P$ of $G$ by considering a filtration of $\bfX$ by subvarieties with respect to the size of the $\ell$-part of the stabilizer of points.
\end{proof}

\subsection{Trace formula}
Assume now that the quasi-projective variety $\bfX$ is defined over $\mathbb{F}_q$, and denote by $F : \bfX \longrightarrow \bfX$ the corresponding Frobenius endomorphism so that in particular $\bfX(\mathbb{F}_q) = \bfX^F$.
The Frobenius induces a quasi-isomorphism on the complexes $R\Gamma(\bfX,\Lambda)$ and $R\Gamma_c(\bfX,\Lambda)$. All the quasi-isomorphisms and triangles listed in \S\ref{sec:cohomologytools} are compatible with $F$.
The corresponding action on the $\ell$-adic cohomology groups can be computed partially from the number of $\mathbb{F}_q$-points of $\bfX$ as follows.

\begin{theorem}[Lefschetz trace formula \cite{SGA45}]\label{thm:lefschetz}
$$ \#\bfX(\mathbb{F}_q) = \sum_{i\in\mathbb{Z}} (-1)^i \mathrm{Tr}\big(F,H_c^i(\bfX, K)\big).$$
\end{theorem}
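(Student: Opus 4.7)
The plan is to show both sides of the identity define the same additive, multiplicative invariant on quasi-projective $\mathbb{F}_q$-varieties, then check it on a base case. Throughout, set $\chi(\bfX) := \sum_{i} (-1)^i \mathrm{Tr}(F, H^i_c(\bfX, K))$; I aim to prove $\chi(\bfX) = \#\bfX(\mathbb{F}_q)$.

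First I would establish \emph{additivity under open--closed decompositions}. Given an $F$-stable open $\mathbb{U}\subset \bfX$ with closed complement $\mathbb{Z}$, the obvious partition $\bfX(\mathbb{F}_q) = \mathbb{U}(\mathbb{F}_q)\sqcup \mathbb{Z}(\mathbb{F}_q)$ gives additivity of the left side. For the right side, Theorem \ref{thm:openclose} yields an $F$-equivariant distinguished triangle $R\Gamma_c(\mathbb{U},K)\to R\Gamma_c(\bfX,K)\to R\Gamma_c(\mathbb{Z},K) \rightsquigarrow$, and the resulting long exact sequence of $F$-modules, combined with the standard fact that the alternating trace is additive across short exact sequences of finite-dimensional vector spaces with endomorphisms, delivers $\chi(\bfX) = \chi(\mathbb{U}) + \chi(\mathbb{Z})$.

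Next I would establish \emph{multiplicativity under products}. The Frobenius acts coordinatewise, so $\#(\bfX\times\bfY)(\mathbb{F}_q) = \#\bfX(\mathbb{F}_q)\cdot\#\bfY(\mathbb{F}_q)$. On the other side, the Künneth isomorphism (Theorem \ref{thm:Kunneth}) is $F\otimes F$-equivariant, and since $\mathrm{Tr}(F\otimes F,\, V\otimes W) = \mathrm{Tr}(F,V)\cdot\mathrm{Tr}(F,W)$, passing to the alternating sum gives $\chi(\bfX\times\bfY) = \chi(\bfX)\cdot\chi(\bfY)$. Combined with additivity, this gives an equality on any variety built by iterated open--closed decompositions and products, starting from a collection of base cases.

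For the \emph{base cases} I would handle a point (both sides equal $1$, trivially) and the affine space. By Theorem \ref{thm:cohomologyA}, $R\Gamma_c(\mathbb{A}_n,K)\simeq K[-2n]$, so only $H^{2n}_c$ contributes. The crucial input is that Frobenius acts on the one-dimensional space $H^{2n}_c(\mathbb{A}_n,K)$ by the scalar $q^n$ (the Tate twist); granting this, $\chi(\mathbb{A}_n) = (-1)^{2n} q^n = q^n = \#\mathbb{A}_n(\mathbb{F}_q)$. From here, any variety paved by affine spaces (e.g., projective spaces, flag varieties, and \--- as the next chapter will exploit \--- many Deligne--Lusztig varieties with Schubert-like stratifications) is immediately covered by additivity.

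The hard part is passing from this paved-by-affines situation to an \emph{arbitrary} quasi-projective $\bfX$, for which no such stratification need exist. The standard route is a Noetherian induction on $\dim \bfX$: after compactifying and possibly alterating/resolving, one removes an $F$-stable smooth open subvariety whose cohomology one can control via Poincar\'e--Verdier duality (Theorem \ref{thm:poincareverdier}), and the closed complement has strictly smaller dimension, so the induction hypothesis applies. Equivalently, one invokes Grothendieck's general fixed-point formula for a proper correspondence and uses that the graph of $F$ meets the diagonal transversally (the derivative of $F$ vanishes), so every local term equals $1$; the sum of local terms is then $\#\bfX^F = \#\bfX(\mathbb{F}_q)$. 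This transversality/local-term computation, together with a careful compactification-plus-duality argument, is really where the proof lives; the devissage above only records that once these ingredients are in place, the formula propagates through all the constructions introduced in this chapter.
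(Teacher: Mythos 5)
The paper does not prove this statement: it is recorded as a citation to Deligne's SGA $4\tfrac{1}{2}$ and used as a black box, so there is no in-text proof to compare your argument against. That said, a few remarks on your sketch. The d\'evissage you lay out (additivity across $F$-stable open--closed decompositions via Theorem \ref{thm:openclose}, multiplicativity via the $F$-equivariant K\"unneth formula, the base case $\chi(\mathbb{A}_n) = q^n$ from Theorem \ref{thm:cohomologyA}) is correct and does establish the formula for any variety built from affine spaces by these operations, e.g.\ anything paved by affines. But you are right to flag that this is a genuinely narrow class, and your own concluding sentence concedes that the actual content lives elsewhere: one needs either Grothendieck's trace formula for cohomological correspondences plus the computation that, since $dF = 0$, the graph of $F$ meets the diagonal transversally and every local term is $1$, or a reduction to curves. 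Without one of those inputs, the d\'evissage is a reduction, not a proof.

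One caution on the route you propose for the general case: ``Noetherian induction after compactifying and possibly alterating/resolving, using Poincar\'e--Verdier duality'' is not really how the classical argument goes and would be hard to make precise, since resolution of singularities is unavailable in characteristic $p$ and alterations (de Jong) only produce a proper generically finite cover, which does not interact simply with fixed-point counts. The standard proofs (SGA~5, SGA~$4\tfrac{1}{2}$) either fiber $\bfX$ over a curve and use the Leray spectral sequence to reduce to the one-dimensional case, where one computes directly, or develop the full Verdier local-term formalism for correspondences. Since the paper itself treats the theorem as an imported result, the expectation here was to cite it rather than reprove it; your instinct to defer the local-term computation is the right one, but the intermediate resolution-based induction you insert in between is the weak link.
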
 

From this theorem we can for example derive a formula for the Euler characteristic of $\bfX$, given by 
\begin{equation}\label{eq:euler}
 \sum_{i\in\mathbb{Z}} (-1)^i \dim H_c^i(\bfX, K) = - \lim_{t \to \infty} \sum_{n=1}^\infty \# \bfX(\mathbb{F}_{q^n}) t^n.
\end{equation}

\begin{exercise} Show the latter formula (hint: use the eigenvalues of $F$ (and $F^n)$ on $H_c^i(\bfX)$ and Theorem \ref{thm:lefschetz} for $F^n$).
\end{exercise}

\begin{example}
(a) For the affine space of dimension $n$, we have $H_c^\bullet(\mathbb{A}_n,K) \simeq K[-2n]$ and $\#\mathbb{A}_n(\mathbb{F}_q) = q^n$. Therefore $F$ acts on the cohomology of $\mathbb{A}_n$ by multiplication by $q^n$ (the same actually holds for $H_c^\bullet(\mathbb{A}_n,\Lambda)$).

\smallskip 

To take the action of $F$ into account, we will write $R\Gamma_c(\mathbb{A}_n,\Lambda) \simeq \Lambda[-2n](n)$, and $(n)$ will be referred to as a \emph{Tate twist}. With this notation, the $F$-equivariant form of
Poincar\'e-Verdier duality (see Theorem \ref{thm:poincareverdier}) is 
$$R\Gamma(\bfX,\Lambda)[2d](-d) \simeq \tikzmark{eq-verdier} R\mathrm{Hom}_\Lambda(R\Gamma_c(\bfX,\Lambda),\Lambda)$$
for $\bfX$ a smooth variety of pure dimension $d$.

\smallskip

\noindent (b) $H_c^\bullet(\mathbb{P}_1,\Lambda) \simeq \Lambda[0] \oplus \Lambda[-2](1)$ and $\#\mathbb{P}_1(\mathbb{F}_q) = 1+q$.

\smallskip

\noindent (c) $H_c^\bullet(\mathbb{G}_m,\Lambda) \simeq \Lambda[-1] \oplus \Lambda[-2](1)$ and $\#\mathbb{G}_m(\mathbb{F}_q) = -1+q$.
\end{example}

Formula \eqref{eq:euler} can be extended to the case of a group action. Assume that the action of $G$ on $\bfX$ is $F$-equivariant. Then the virtual character of the representation of $G$ afforded by the $\ell$-adic cohomology groups is
\begin{equation}\label{eq:eulerg}
 \sum (-1)^i \mathrm{Tr}\big(g, H_c^i(\bfX, K)\big) = - \lim_{t \to \infty} \sum_{n=1}^\infty \# \bfX^{gF^n} t^n.
\end{equation}
Since this value is both an algebraic integer (left-hand side) and a rational number independent of $\ell$, this shows in particular that it is an integer independent of $\ell$. Note however that the individual cohomology groups could depend on $\ell$, but it was proved recently that this is not the case for Deligne--Lusztig varieties (see \cite{Rou17}).

\section{Deligne--Lusztig varieties and their cohomology}\label{chap:dlvarieties}

This chapter presents the construction by Deligne--Lusztig of algebraic varieties acted on by finite reductive groups \cite{DelLus}. We discuss several properties of the cohomology complexes of these varieties which we will use to deduce representation-theoretic results for finite reductive groups in the following chapters.

\smallskip

Starting from this chapter, $\bfG$ will denote a connected reductive algebraic group defined over $\overline{\mathbb{F}}_p$, and $F : \bfG \longrightarrow \bfG$ a Frobenius endomorphism defining an $\mathbb{F}_q$-structure on $\bfG$. The group of fixed points $\mathbb{G}(\mathbb{F}_q) :=  \bfG^F$ is a \emph{finite reductive group}. 
Given a closed $F$-stable subgroup $\mathbb{H}$ of $\bfG$, the corresponding finite group will be denoted by $H := \mathbb{H}(\mathbb{F}_q) = \mathbb{H}^F$. We refer to Meinolf Geck's lecture notes \cite{Gec17} for more on these finite groups (see also the textbooks \cite{Car,DigMic}).

\smallskip

The first section of this chapter serves as a motivation for the introduction of Deligne--Lusztig varieties as a generalization of Harish-Chandra induction. It is intentionally  very sketchy and will not be used in the rest of these notes. 

\subsection{Generalizing Harish-Chandra induction}\label{sec:hcdl}
The Harish-Chandra (or parabolic) induction and restriction functors provide an inductive approach to the construction of representations of finite reductive groups. Let $\bfP$ be an $F$-stable parabolic subgroup of $\bfG$. It has a Levi decomposition $\bfP = \bfL \ltimes \bfV$ where $\bfL$ is an $F$-stable Levi complement and $\bfV$ is the unipotent radical of $\bfP$. The finite set $(\bfG/\bfV)^F = G/V$ is endowed with a left action of $G$ by left multiplication, and a right action of $L$ by right multiplication (since $L$ normalizes $V$). We can therefore consider the adjoint pair of exact functors
$$R_{L\subset P}^G =  \Lambda G/V \otimes_{\Lambda L} - \quad \text{and} \quad
{}^*R_{L\subset P}^G =  \mathrm{Hom}_{\Lambda G }(\Lambda G/V , -) $$
between the categories  $\Lambda L\sfmod$ and  $\Lambda G\sfmod$. Howlett--Lehrer showed in \cite{HowLeh94} that these functors depend only on $L$ and not on $P$, up to isomorphism. Therefore they will be simply denoted by $R_L^G$ and ${}^*R_L^G$. 

\smallskip

There are two issues when working with these functors. The first one is that not every representation occurs in a representation induced from a proper Levi subgroup (unlike the usual induction). The second problem is that an $F$-stable Levi subgroup $\bfL$ of $\bfG$ in not necessarily a Levi complement of an $F$-stable parabolic subgroup. Even though $L$ exists, the finite set $G/V$ might not. However, the variety $\bfG/\bfV$ does and one can consider the following subvariety 
$$\bfY_\bfV := \{g\bfV \in \bfG/\bfV \, \mid \, g^{-1} F(g) \in \bfV \cdot F(\bfV)\}$$
called the \emph{parabolic Deligne--Lusztig variety} associated with $\bfV$. As in the case of the set $G/V$, it has a left action of $G$ by left multiplication, and a right action of $L$ by right multiplication. Consequently, the cohomology complex $R\Gamma_c(\bfY_\bfV,\Lambda)$ is a bounded complex of $(G,L)$-bimodules and we can consider the triangulated functors 
$$R_{\bfL\subset \bfP}^\bfG =  R\Gamma_c(\bfY_\bfV,\Lambda)\, {\mathop{\otimes}\limits^L}_{\Lambda L}\,  - \quad \text{and} \quad
{}^*R_{\bfL\subset \bfP}^\bfG =  R\mathrm{Hom}_{\Lambda G }(R\Gamma_c(\bfY_\bfV,\Lambda) , -) $$
between the derived categories  $\sfD^b(\Lambda L\sfmod)$ and  $\sfD^b(\Lambda G\sfmod)$. They are called Deligne--Lusztig induction and restriction functors. When $F(\bfV) = \bfV$ (\emph{i.e.} when $\bfP$ is $F$-stable) then $\bfY_\bfV$ is just the finite set $(\bfG/\bfV)^F \simeq G/V$ and these functors coincide with Harish-Chandra induction and restriction functors. 
 
\smallskip

In these notes we will focus on the case where $\bfL$ is a torus. In that case $\bfL$ is $\bfG$-conjugate to a quasi-split torus $\bfT$ and $\bfV$ is determined by an element $\dot w$ in $N_\bfG(\bfT)$. The corresponding variety will be denoted by $\bfY(\dot w)$ and studied in the next sections. This case corresponds to the original definition of Deligne--Lusztig \cite{DelLus} which was later generalized to the parabolic case in \cite{Lus76}.  

\subsection{Bruhat decomposition}

We fix a pair $(\bfT,\bfB)$ where $\bfT$ is a maximal torus of $\bfG$ contained in a Borel subgroup $\bfB$. We assume that both $\bfT$ and $\bfB$ are $F$-stable (such pairs always exist and form a single $G$-conjugacy class). Such a torus is said to be \emph{quasi-split}. In that case $F$ acts on the Weyl group $W = N_\bfG(\bfT)/\bfT$ of $\bfG$. 

\smallskip

Given $w \in W$ we define
\begin{itemize}
 \item[$\bullet$] $\ell(w) = \dim \bfB w\bfB - \dim \bfB = \dim \bfB w \bfB/\bfB$, the \emph{length of $w$} with respect to $\bfB$,
 \item[$\bullet$] $S = \{w \in W \mid \ell(w) = 1\}$ the set of \emph{simple reflections} of $W$ with respect to~$\bfB$.
\end{itemize}
 
\begin{theorem}[Bruhat decomposition {\cite[\S1]{DigMic}}]\leavevmode
\begin{itemize}
 \item[$\mathrm{(i)}$] $S$ generates $W$ and $(W,S)$ is a Coxeter system.
 \item[$\mathrm{(ii)}$] $\bfG$ decomposes as the disjoint union of \emph{Bruhat cells}
  $$\bfG = \displaystyle \bigsqcup_{w \in W} \bfB w \bfB.$$
 \item[$\mathrm{(iii)}$] $\bfB s \bfB w \bfB = \left\{\hskip-1.3mm \begin{array}{l} \bfB sw \bfB \text{ if } \ell(sw) > \ell(w) \\
 									\bfB w \bfB \sqcup \bfB sw\bfB \text{ otherwise.} \end{array}\right.$
 \item[$\mathrm{(iv)}$] The \emph{Schubert cell} $\bfB w \bfB / \bfB$ is isomorphic to $\mathbb{A}_{\ell(w)}$ (the affine space of dimension $\ell(w)$).
\end{itemize}
\end{theorem}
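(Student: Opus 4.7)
The plan is to establish the geometric statements (ii) and (iv) first, then derive the multiplication rules (iii), and finally obtain the Coxeter presentation (i) as a formal consequence. For (ii), I would fix an opposite Borel $\bfB^-$ and a generic cocharacter $\lambda : \mathbb{G}_m \to \bfT$ evaluating positively on every root of $\bfB$, and apply the Bialynicki-Birula decomposition to the flag variety $\bfG/\bfB$. The $\bfT$-fixed points identify with $W$: by the Borel fixed-point theorem each $\bfT$-fixed coset lies in $N_\bfG(\bfT) \cdot \bfB / \bfB$, and the equality $N_\bfG(\bfB) = \bfB$ ensures distinct Weyl elements give distinct cosets. The attracting cell of $\dot w \bfB$ under $\lambda$ is $\bfB$-stable and coincides with the orbit $\bfB \dot w \bfB / \bfB$; pulling back along the projection $\bfG \to \bfG/\bfB$ yields (ii).

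For (iv), I would parametrize each Schubert cell by $\bfU_w := \bfU \cap \dot w \bfU^- \dot w^{-1}$, which is the ordered product of root subgroups $\bfU_\alpha$ for $\alpha \in \Phi^+$ with $w^{-1}(\alpha) \in \Phi^-$. The map $\bfU_w \to \bfB \dot w \bfB / \bfB$, $u \mapsto u \dot w \bfB$, is an isomorphism of varieties: injectivity follows from $\bfU_w \cap \dot w \bfB \dot w^{-1} = \{1\}$, and surjectivity from the factorization $\bfU = \bfU_w \cdot (\bfU \cap \dot w \bfU \dot w^{-1})$; since each $\bfU_\alpha \simeq \mathbb{G}_a$, the ordered product yields $\bfU_w \simeq \mathbb{A}_{\ell(w)}$. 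For (iii), a rank-one computation inside the $\mathrm{SL}_2$-subgroup attached to $\alpha$ with $s = s_\alpha$, using $\bfB s \bfB = \bfB \cdot \bfU_\alpha^* \dot s$ and $\dot s \bfU_\beta \dot s^{-1} = \bfU_{s(\beta)}$, analyzes $\bfB s \bfB \cdot \bfB w \bfB$ according to the sign of $w^{-1}(\alpha)$: when $\ell(sw) > \ell(w)$ one lands in a single cell $\bfB sw \bfB$, while when $\ell(sw) < \ell(w)$ the identity $\bfU_\alpha^* \dot s \bfU_\alpha = \bfB \sqcup \bfU_\alpha^* \dot s \bfT$ inside the relevant $\mathrm{SL}_2$ contributes the additional $\bfB w \bfB$ term.

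Finally, (i) is a formal consequence of (iii): the second branch of the dichotomy shows that any $w \neq 1$ admits some $s \in S$ with $\ell(sw) < \ell(w)$, so by induction every element of $W$ has a reduced expression in $S$, proving that $S$ generates $W$; the exchange condition, which is exactly the cancellation content of the $\ell(sw) < \ell(w)$ case, is a classical characterization of Coxeter systems and yields the presentation. The main obstacle is part (ii): identifying the $\bfT$-fixed points on $\bfG/\bfB$ with $W$ and matching the Bialynicki-Birula attracting cells with the $\bfB$-orbits requires the Borel fixed-point theorem together with a careful analysis of the unipotent flow toward each $\dot w \bfB$; once (ii) and (iv) are established, everything else reduces to root-system bookkeeping inside rank-one subgroups.
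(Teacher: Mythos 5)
The paper states this theorem without proof, referring to \cite[\S1]{DigMic}, so there is no in-text argument to compare against; the cited reference establishes the result via the axiomatics of $(B,N)$-pairs, whereas you propose a direct geometric route. Your plan is correct in outline, and it is a genuinely different approach: Bialynicki--Birula for (ii), explicit root-subgroup parametrization for (iv), rank-one $\mathrm{SL}_2$-computations for (iii), and the exchange condition for (i). The one step you flag but do not close is identifying the attracting cell $C_w$ of $\dot w \bfB$ with the orbit $\bfB \dot w \bfB/\bfB$. This is best finished by a dimension count: $\bfU_w \dot w \bfB/\bfB \subset C_w$ since $\bfU_w \subset \bfU$ is contracted by $\lambda$ and $\dot w\bfB$ is fixed, while Bialynicki--Birula gives $\dim C_w$ as the number of positive $\lambda$-weights on $T_{\dot w \bfB}(\bfG/\bfB) \simeq \mathfrak{g}/\mathrm{Ad}(\dot w)\mathfrak{b}$, i.e.\ $\#\{\alpha \in \Phi^+ : w^{-1}\alpha \in \Phi^-\} = \dim \bfU_w$; both being irreducible, locally closed, and of equal dimension, they coincide, and (iv) falls out at the same time. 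The remaining pieces are standard. The trade-off with the $(B,N)$-pair route is that the latter is purely axiomatic and applies to abstract Tits systems, while your approach produces the affine-space structure of Schubert cells directly, which is exactly what the paper exploits later for paving and cohomology computations.
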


Note that $\ell(w)$ coincides with the length corresponding to the Coxeter system $(W,S)$. Indeed, $\dim (\bfB s \bfB w \bfB) 
\leq \dim (\bfB s \bfB \times_\bfB \bfB w \bfB) = \ell(s) + \ell(w) + \dim \bfB$, therefore the inequality $\ell(sw) > \ell(w)$ forces $\ell(sw) = \ell(w) + 1$ by (iii). More generally $\ell(w)$ is the smallest integer $r$ such that $w = s_1 s_2 \cdots s_r$ with $s_i \in S$.
 
\smallskip

The closure $\overline{\bfB w \bfB}$ in $\bfG$ of a Bruhat cell is a closed subvariety of $\bfG$ stable by left and right multiplication by $\bfB$. Therefore by (ii) it must be a finite union of Bruhat cells. We consider a partial order on $W$, called the \emph{Bruhat order}, defined by  $v \leq w$ if $\overline{\bfB v \bfB} \subset \overline{\bfB w \bfB}$  (or equivalently $\bfB v \bfB \subset\overline{\bfB w \bfB}$). Then by (ii) we have
$$\overline{\bfB w \bfB} = \bigsqcup_{v \leq w} \bfB v \bfB.$$
The singularities of these varieties are of considerable interest for the study of representations of semisimple Lie algebras and reductive groups, not only for finite reductive groups.

\begin{example}
(a) For the trivial element of $W$ we have $\bfB 1 \bfB = \bfB$, which is a closed subvariety of $\bfG$. Therefore $1$ is the unique minimal element for the Bruhat order.
\smallskip

\noindent (b) The variety $\bfG$ is irreducible, therefore there exists a unique $w_0 \in W$ such that $\overline{\bfB w_0 \bfB} = \bfG$. The element $w_0$ is the unique element of maximal length in $W$, and its length equals the dimension of the flag variety $\bfG/\bfB$, which is the number of positive roots of $W$. The element $w_0$ is also the unique maximal element for the Bruhat order.
\end{example} 

\begin{example} 
For $\bfG = \mathrm{GL}_2(\overline{\mathbb{F}}_p)$ we have $\bfG = \bfB \sqcup \bfB s \bfB$, hence 
 $$\bfB s \bfB = \bfG \smallsetminus \bfB = \left\{ \left(\begin{array}{cc} * & * \\ \lambda & * \end{array}\right) \mid \lambda \neq 0 \right\}.$$
\end{example} 

 \subsection{Deligne--Lusztig varieties}
Let $\bfU = R_u(\bfB)$ be the unipotent radical of the Borel subgroup $\bfB$.
We fix a set $\{\dot w\}_{w\in W}$ of representatives of $W$ in $N_\bfG(\bfT)$. The \emph{Deligne--Lusztig varieties} attached to $w$ are 
$$\begin{aligned} 
& \bfY_\bfG(\dot w) = \bfY(\dot w) = \{g\bfU \in \bfG/\bfU \, \mid \, g^{-1}F(g) \in \bfU \dot w \bfU\}, \\
& \bfX_\bfG( w) = \bfX( w) = \{g\bfB \in \bfG/\bfB \, \mid \, g^{-1}F(g) \in \bfB  w \bfB\}. \end{aligned}$$
The finite group $G = \bfG^F$ acts by left multiplication on both $\bfX(w)$ and $\bfY(\dot w)$. Furthermore, $\bfT^{\dot w F}$ acts by right multiplication on $\bfY(\dot w)$. Indeed, if $g^{-1}F(g) \in \bfU \dot w \bfU$ then using that $\bfT$ normalizes $\bfU$ we have, for every $t\in \bfT$ 
$$(gt)^{-1} F(gt) = t^{-1}g^{-1} F(g) F(t) \in \bfU t^{-1} \dot w F(t) \bfU.$$
Now $t \in \bfT^{\dot wF}$ if and only if ${}^{\dot wF}t = t$ which we can rewrite as $t^{-1} \dot w F(t) = \dot w$. 

\smallskip

Using properties of Schubert cells, one can prove that both $\bfX(w)$ and $\bfY(\dot w)$ are smooth quasi-projective varieties of pure dimension $\ell(w)$. Furthermore, the canonical projection $\pi : \bfG/\bfU \longrightarrow \bfG/\bfB$ induces a $G$-equivariant isomorphism
\begin{equation}
\label{eq:xandy} 
\bfY(\dot w) /\bfT^{\dot w F} \mathop{\longrightarrow}\limits^\sim \bfX(w).
\end{equation}
As in the previous chapter we can consider the cohomology complexes attached to these varieties. The complex $R\Gamma_c(\bfY(\dot w),\Lambda)$ is a bounded complex of finitely generated $(\bfG,\bfT^{\dot w F})$-bimodules, and $R\Gamma_c(\bfX( w),\Lambda)$ is a bounded complex of finitely generated $\bfG$-modules. Since $\bfT^{\dot w F}$ acts freely on $\bfY(\dot w)$, we deduce from Theorem \ref{thm:cohomologyinv} and \eqref{eq:xandy} that
\begin{equation}\label{eq:cohytox}
 R\Gamma_c(\bfX(w),\Lambda) \simeq R\Gamma_c(\bfY(\dot w),\Lambda) {\mathop{\otimes}\limits^L}_{\Lambda \bfT^{\dot w F}} \Lambda .
\end{equation}

\begin{example}\label{ex:x1}
If $\dot w = w =1$ then $\bfY(1) = \{g \bfU \, \mid \, g^{-1}F(g) \in \bfU\} = (\bfG/\bfU)^F$. The latter is just a finite set isomorphic to $G/U$. Similarly, $\bfX(1) \simeq G/B$ and therefore the cohomology complexes of $\bfY(1)$ and $\bfX(1)$ are given by a single permutation module in degree $0$, namely
$$ R\Gamma_c(\bfY(1),\Lambda) \simeq  \Lambda G/U[0] \quad \text{and} \quad R\Gamma_c(\bfX(1),\Lambda) \simeq  \Lambda G/B[0].$$
\end{example}

\begin{example}\label{ex:xs}
Let $\bfG = \mathrm{SL}_2(\overline{\mathbb{F}}_p)$ and $F$ be the \emph{standard} Frobenius of $\bfG$, raising the entries of a $2\times2$ matrix to the $q$th power, so that $G = \mathrm{SL}_2(q)$. The usual subgroups of $\bfG$ can be chosen as follows: 
$$ \bfT =  \left\{ \left(\begin{array}{cc} \lambda & 0 \\ 0 & \lambda^{-1} \end{array}\right) \right\} \subset \bfB = 
\left\{ \left(\begin{array}{cc} \lambda & * \\ 0 & \lambda^{-1} \end{array}\right) \right\}, \ 
\bfU = \left\{ \left(\begin{array}{cc} 1 & * \\ 0 & 1 \end{array}\right) \right\}, \ \dot s = \left(\begin{array}{cc} 0 & -1 \\ 1 & 0 \end{array}\right)$$
\noindent Then the varieties $\bfG/\bfU$ and $\bfG/\bfB$ are given explicitly by
$$ \begin{array}{rcl} \mathbb{A}_2\smallsetminus \{(0,0)\} & \simto & \bfG/\bfU \\
				(x,y) & {\longmapsto} &  \left(\begin{array}{cc} x &* \\ y & *  \end{array}\right)\bfU \\ 
  \end{array}
  \qquad \text{and} \qquad
  \begin{array}{rcl} \mathbb{P}_1 & \simto & \bfG/\bfB \\
				{[}x:y{]} & {\longmapsto} &  \left(\begin{array}{cc} x &* \\ y & *  \end{array}\right)\bfB \\ 
  \end{array}$$
and the cosets $\bfU \dot s \bfU$ and $\bfB s \bfB$ by
$$ \bfU \dot s \bfU = \left\{\left(\begin{array}{cc} * &* \\ 1 & *  \end{array}\right)\right\} \cap \mathrm{SL}_2
\quad \text{and} \quad \bfB \dot s \bfB = \left\{\left(\begin{array}{cc} * &* \\ \lambda & *  \end{array}\right) \mid \lambda\neq 0\right\} \cap \mathrm{SL}_2.$$
Finally, the element $g^{-1}F(g)$ is given by
$$ \left(\begin{array}{cc} x &* \\ y & *  \end{array}\right)^{-1} F\left(\begin{array}{cc} x &* \\ y & *  \end{array}\right)
 = \left(\begin{array}{cc} * &* \\ -y & x  \end{array}\right)\left(\begin{array}{cc} x^q &* \\ y^q & *  \end{array}\right)
 = \left(\begin{array}{cc} * &* \\ xy^q-yx^q & *  \end{array}\right).$$
We deduce the following explicit descriptions of the varieties $\bfY(\dot s)$ and $\bfX(s)$.
$$\begin{aligned}
\bfY(\dot s)& \, \simeq \{(x,y) \in \mathbb{A}_2 \, \mid \, xy^q-yx^q = 1\}, \\
\bfX( s)& \, \simeq \{[x,y] \in \mathbb{P}_1 \, \mid \, xy^q-yx^q \neq 0\} = \mathbb{P}_1 \smallsetminus \mathbb{P}_1(\mathbb{F}_q) .
\end{aligned}$$
The variety $\bfY(\dot s)$ is the famous \emph{Drinfeld curve}, discovered and studied by Drinfeld in \cite{Dri}.
\end{example}

Since $\mathrm{char}(K) = 0$, the algebra $KG$ is semisimple and the complex $R\Gamma_c(\bfY(\dot w),K)$ is quasi-isomophic to its cohomology $ \bigoplus H_c^i(\bfY(\dot w),K)[-i]$ as a complex of $(G,\bfT^{\dot w F})$-bimodules. Given an irreducible character $\theta$ of $\bfT^{\dot w F}$, we can consider the $\theta$-isotypic part of each individual cohomology groups $ H_c^i(\bfY(\dot w),K)_\theta := \mathrm{Hom}_{\bfT^{\dot w F}} (\theta, H_c^i(\bfY(\dot w),K))$. Since the actions of $G$ and $\bfT^{\dot w F}$ commute, $H_c^i(\bfY(\dot w),K)_\theta$ is a $G$-module. Given $g \in G$ we set
$$R_w(\theta)(g) = \sum_{i \in \mathbb{Z}} (-1)^i \mathrm{Tr}\big(g,H_c^i(\bfY(\dot w),K)_\theta\big).$$
The function $R_w(\theta)$ is the character of the virtual module $\sum (-1)^i H_c^i(\bfY(\dot w),K)_\theta$ (or equivalently of the complex $R\Gamma_c(\bfY(\dot w),K)_\theta$). It is called a \emph{Deligne--Lusztig character}. A particular interesting case is when $\theta = 1$. We have
$$R_w := R_w(1) = \sum_{i \in \mathbb{Z}} (-1)^i [H_c^i(\bfX( w),K)] = [R\Gamma_c(\bfX( w),K)]$$
since in that case the isotypic component of $ H_c^i(\bfY(\dot w),K)$ corresponding to the trivial representation of $\bfT^{\dot w F}$ is the invariant part under $\bfT^{\dot w F}$ which by \eqref{eq:cohytox} is isomorphic to $H_c^i(\bfX( w),K)$.

 \subsection{Properties of $R\Gamma_c(\mathbb{Y}(\dot w),\Lambda)$ and $R\Gamma_c(\mathbb{X}(w),\Lambda)$}

In the examples \ref{ex:x1} and \ref{ex:xs} the varieties $\bfY(\dot w)$ and $\bfX(w)$ are affine. This was proven in general by Deligne and Lusztig \cite{DelLus} when $q \geq h$ (the Coxeter number of $W$) and it is conjectured to hold unconditionaly (see for example \cite{BonRou08} for further examples). In any case, the consequences on the vanishing of the cohomology groups (see \S\ref{sec:cohodef}) hold.

\begin{theorem}[Lusztig \cite{Lus78}] 
$H_c^i(\bfY(\dot w),\Lambda) = H_c^i(\bfX(w),\Lambda) = 0$ for $i < \ell(w)$.
\end{theorem}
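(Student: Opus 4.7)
The plan is to reduce the statement to the vanishing principle for affine varieties of pure dimension recorded in Section~2.1: for an affine variety of pure dimension $d$, the compactly supported cohomology vanishes below degree $d$. Both $\bfY(\dot w)$ and $\bfX(w)$ are smooth of pure dimension $\ell(w)$, so the task becomes to exhibit them as affine.

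The two vanishings, for $\bfY(\dot w)$ and for $\bfX(w)$, are equivalent. The map $\bfY(\dot w)\to\bfX(w)$ is a finite étale $\bfT^{\dot w F}$-Galois cover by \eqref{eq:xandy}: finite morphisms are affine (so $\bfX(w)$ affine forces $\bfY(\dot w)$ affine), and conversely finite-group quotients of affine varieties are affine. Alternatively, \eqref{eq:cohytox} realises $R\Gamma_c(\bfX(w),\Lambda)$ as $R\Gamma_c(\bfY(\dot w),\Lambda)\mathop{\otimes}\limits^L_{\Lambda \bfT^{\dot w F}}\Lambda$, from which the vanishing transfers directly once a representative of the $\bfY$-side with $\Lambda$-free terms supported in degrees $\geq \ell(w)$ is chosen.

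To prove affineness of $\bfY(\dot w)$, I would use the Lang map $L\colon\bfG\to\bfG$, $g\mapsto g^{-1}F(g)$. By Lang--Steinberg this is a finite étale Galois cover (Galois group $G$), hence $L^{-1}(\bfU\dot w\bfU)$ is affine, since the Bruhat cell $\bfU\dot w\bfU$ is itself affine (isomorphic as a variety to an affine space of dimension $\ell(w)$ times $\dot w\bfU$ via the Bruhat decomposition). The variety $\bfY(\dot w)$ is the free quotient of this affine preimage under the right action of $\bfU$. When this quotient can be formed in the category of affine varieties, applying the Section~2.1 vanishing to $\bfY(\dot w)$ (equivalently to $\bfX(w)$) yields the theorem.

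The main obstacle is precisely this last step: controlling the geometric $\bfU$-quotient. It is here that the Deligne--Lusztig condition $q\geq h$ on the Coxeter number of $W$ enters in their original argument, guaranteeing that the quotient is an affine variety. For the unconditional theorem the argument just sketched does not quite go through, and Lusztig's 1978 proof instead bypasses affineness by induction on $\ell(w)$: the base case $\ell(w)=0$ is trivial since $\bfY(1)\cong G/U$ is a finite set, and the inductive step decomposes $\bfY(\dot w)$ into strata controlled by shorter elements using the open-closed triangle of Theorem \ref{thm:openclose} and the Künneth formula of Theorem \ref{thm:Kunneth}, with the Drinfeld curve of Example \ref{ex:xs} furnishing the one-dimensional atomic piece.
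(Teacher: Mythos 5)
The paper itself does not prove this theorem: it states it with a citation to Lusztig \cite{Lus78}, immediately after explaining that affineness of $\bfY(\dot w)$ and $\bfX(w)$ (which would give the vanishing directly via the affine vanishing in \S\ref{sec:cohodef}) is only known unconditionally for $q\geq h$. So there is no in-paper argument to compare against, only the literature.

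Your affineness reduction is correct as far as it goes, and you isolate the genuine gap accurately: the Lang map does realize $\bfY(\dot w)$ as a free $\bfU$-quotient of an affine variety $L^{-1}(\bfU\dot w\bfU)$, but free quotients by unipotent groups of affine varieties need not be affine, and this is exactly where the Deligne--Lusztig proof for $q\geq h$ uses a different device (a finite morphism to affine space) rather than the quotient description. The equivalence of the $\bfY$ and $\bfX$ statements via \eqref{eq:xandy} and \eqref{eq:cohytox} is also fine.

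Where the sketch of the unconditional argument goes astray is in the geometry. The stratification $\overline{\bfX(w)}=\bigsqcup_{v\leq w}\bfX(v)$ does not help: in the long exact sequence for $\bfX(w)\hookrightarrow\overline{\bfX(w)}$ the lower strata contribute through $H_c^{i-1}\big(\bigsqcup_{v<w}\bfX(v)\big)$, which is already non-zero in degree $0$ (it contains $H_c^0(\bfX(1))$), so induction gives no control in low degrees; and $\overline{\bfX(w)}$ is projective, so it has no low-degree vanishing either. The tool actually used is the transitivity/shift lemma of Deligne--Lusztig and Lusztig: writing $w$ as a reduced word $s_1\cdots s_r$ one has a universal homeomorphism $\bfX(\dot s_1,\ldots,\dot s_r)\sim\bfX(\dot s_2,\ldots,\dot s_r,F(\dot s_1))$, and once the shifted word fails to be reduced one gets an open-closed decomposition of $\bfX(w)$ itself into a $\mathbb{G}_m$-fibration over a Deligne--Lusztig variety of dimension $\ell(w)-1$ and an $\mathbb{A}_1$-fibration over one of dimension $\ell(w)-2$. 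The vanishing then propagates through the open-closed triangle and K\"unneth/homotopy invariance because $H_c^i(\mathbb{G}_m)=0$ for $i<1$ and $H_c^i(\mathbb{A}_1)=0$ for $i<2$. Thus the one-dimensional atomic piece is $\mathbb{G}_m$, not the Drinfeld curve. Finally, the shift process stalls precisely when $w$ has minimal length in its $F$-conjugacy class (the shifted word stays reduced for ever), so those cases require a separate input; affineness there is the content of \cite{Lus76bis} for Coxeter elements and of \cite{BonRou08} more generally. An expanded proposal would have to supply this fibration lemma and handle the minimal-length base case explicitly.
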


Consequently, the complexes $R\Gamma_c(\bfY(\dot w),\Lambda)$ and $R\Gamma_c(\bfX(w),\Lambda)$ can be represented by complexes with ($\Lambda$-free) terms in degrees $\ell(w),\ell(w)+1,\ldots,2\ell(w)$. In addition, one can compute the stabilizer of any point in $\bfY(\dot w)$ under the action of $G$ and invoke Corollary \ref{cor:perfect} to show the following additional properties.

\begin{prop}\label{prop:ywperfect}
The complex $\mathrm{Res}_G^{G\times \bfT^{\dot w F}} R\Gamma_c(\bfY(\dot w),\Lambda)$ is perfect. Furthermore,  if the order of $\bfT^{\dot w F}$ is invertible in $\Lambda$ then $R\Gamma_c(\bfX( w),\Lambda)$ is perfect as well.
\end{prop}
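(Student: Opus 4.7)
The plan is to derive both statements from Corollary~\ref{cor:perfect}, which reduces perfectness of $R\Gamma_c(\bfY,\Lambda)$ to a pointwise check on the orders of the $G$-stabilizers.

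For the first claim I would compute the stabilizer of a point $g\bfU \in \bfY(\dot w)$ under the left $G$-action: the condition $hg\bfU = g\bfU$ is equivalent to $g^{-1}hg \in \bfU$, so that $\mathrm{Stab}_G(g\bfU) = G \cap g\bfU g^{-1}$. The crucial observation is that $\bfU$ consists of unipotent elements and so does the conjugate $g\bfU g^{-1}$; consequently the finite group $G \cap g\bfU g^{-1}$ is a $p$-group. Since $\ell \neq p$ and $\Lambda \in \{K,\calO,k\}$ is part of an $\ell$-modular system, its order is a unit in $\Lambda$. Corollary~\ref{cor:perfect} then applies and yields that $\mathrm{Res}_G^{G\times\bfT^{\dot w F}} R\Gamma_c(\bfY(\dot w),\Lambda)$ is perfect.

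For the second claim I would use \eqref{eq:cohytox}, which identifies $R\Gamma_c(\bfX(w),\Lambda)$ with $R\Gamma_c(\bfY(\dot w),\Lambda) {\mathop{\otimes}\limits^L}_{\Lambda\bfT^{\dot w F}} \Lambda$. Under the hypothesis that $|\bfT^{\dot w F}|$ is invertible in $\Lambda$, the averaging idempotent $e = |\bfT^{\dot w F}|^{-1}\sum_{t\in \bfT^{\dot w F}} t$ is well-defined and exhibits the trivial $\Lambda\bfT^{\dot w F}$-module $\Lambda$ as a direct summand of the regular module $\Lambda\bfT^{\dot w F}$. Tensoring with $R\Gamma_c(\bfY(\dot w),\Lambda)$ on the right, and noting that the $\bfT^{\dot w F}$-action commutes with the $G$-action so that multiplication by $e$ is $G$-equivariant, this presents $R\Gamma_c(\bfX(w),\Lambda)$ as a direct summand of $R\Gamma_c(\bfY(\dot w),\Lambda)$ in $\sfD^b(\Lambda G\sfmod)$. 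The first part together with the fact that perfect complexes form a thick subcategory (stable in particular under direct summands) yields the claim. The only genuinely geometric step is the identification of $\mathrm{Stab}_G(g\bfU)$ with a $p$-group in the first paragraph; the rest is a formal manipulation of derived tensor products and thick subcategories.
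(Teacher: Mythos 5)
Your proof is correct and follows the same route the paper sketches: compute the $G$-stabilizer of a point $g\bfU$ as $G\cap g\bfU g^{-1}$, observe it is a finite $p$-group (hence of order invertible in $\Lambda$), and invoke Corollary~\ref{cor:perfect}; then deduce the second statement from \eqref{eq:cohytox} by presenting $R\Gamma_c(\bfX(w),\Lambda)$ as a direct summand of $R\Gamma_c(\bfY(\dot w),\Lambda)$ via the averaging idempotent, using thickness of $\Lambda G\sfperf$. Both steps are exactly what the paper's brief justification intends.
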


More generally, one can show that if  $Q$ in an $\ell$-subgroup of $G\times \bfT^{\dot w F}$ such that $\bfY(\dot w)^Q \neq \emptyset$, then $Q$ is necessarily conjugate to a diagonal subgroup of $G\times \bfT^{\dot w F}$. 

\begin{example}
Let $\bfG = \mathrm{SL}_2(\overline{\mathbb{F}}_p)$ and $\dot w = \dot s = \left(\begin{array}{cc} 0 & -1 \\ 1 & 0 \end{array}\right)$ so that
$$ \bfT^{\dot s F} \simeq \left\{ \left.\left(\begin{array}{cc} \lambda & 0 \\ 0 & \lambda^{-1} \end{array}\right) \right| \lambda^{q+1} =1 \right\} \simeq \mu_{q+1}(\overline{\mathbb{F}}_p).$$
If $(x,y) \in \bfY(\dot s)$ (see Example \ref{ex:xs}) then $g \cdot (x,y) \cdot \mathrm{diag}(\lambda,\lambda^{-1}) = (x,y)$ if and only if $(x,y)$ is an eigenvector of $g$ with eigenvalue $\lambda^{-1}$. In particular, either $\pm g$ is unipotent (with eigenvalues $\pm 1$) or $g$ is conjugate to $\mathrm{diag}(\lambda,\lambda^{-1})$.
\end{example}

If $|\bfT^{\dot w F}|$ is invertible in $\Lambda$ then there is no non-trivial $\ell$-subgroup $G\times \bfT^{\dot w F}$ such that $\bfY(\dot w)^Q \neq \emptyset$. In particular $R\Gamma_c(\bfY(\dot w),\Lambda)$ is perfect as a complex of bimodules in that case. Otherwise we have the following result (see \cite[\S9]{Alp} for basic results on vertices and sources). 

\begin{prop}
Let $C$ be a representative of $R\Gamma_c(\bfY(\dot w),k)$ as a complex of \linebreak$\ell$-permutation modules with no null-homotopic direct summand. Then the vertices of the terms of $C$ are contained in $\Delta \bfT^{\dot wF}$. 
\end{prop}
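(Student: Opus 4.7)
The plan is to argue by contradiction using the Brauer functor (Theorem \ref{thm:brauerfunctor}) together with the stabilizer computation stated just above the proposition. Recall that any $\ell$-subgroup $Q \leq G\times \bfT^{\dot wF}$ with $\bfY(\dot w)^Q \neq \emptyset$ is $(G\times \bfT^{\dot wF})$-conjugate to a subgroup of $\Delta \bfT^{\dot wF}$. Equivalently, for any \emph{bad} $\ell$-subgroup $Q_0$ (meaning $Q_0$ is not conjugate to a subgroup of $\Delta \bfT^{\dot wF}$) the fixed-point set is empty, whence
$$\mathrm{Br}_{Q_0}(C) \;\simeq\; R\Gamma_c\bigl(\bfY(\dot w)^{Q_0},\, k\bigr) \;\simeq\; 0$$
in $\sfD^b(k[N_{G\times \bfT^{\dot wF}}(Q_0)/Q_0]\sfmod)$.

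Suppose, for contradiction, that some indecomposable summand of some term of $C$ has a bad vertex, and choose a bad vertex $Q_0$ of maximal order. The key preliminary observation is that any vertex $P$ of a summand of $C$ satisfying $Q_0 \leq_{G\times \bfT^{\dot wF}} P$ must itself be bad (otherwise a conjugate of $P$, and hence of $Q_0$, would sit inside $\Delta \bfT^{\dot wF}$), and therefore by the maximality of $|Q_0|$ among bad vertices, such a $P$ must be conjugate to $Q_0$. Consequently, in the complex $\mathrm{Br}_{Q_0}(C)$ every indecomposable summand of every $C^i$ either is annihilated (when its vertex does not contain $Q_0$ up to conjugacy) or is sent to a projective indecomposable $k[N_{G\times \bfT^{\dot wF}}(Q_0)/Q_0]$-module (when its vertex is conjugate to $Q_0$; this is the classical property of $\mathrm{Br}_{Q_0}$ at a vertex). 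Thus $\mathrm{Br}_{Q_0}(C)$ is a bounded complex of projective $k[N/Q_0]$-modules which is acyclic by the display above, hence null-homotopic.

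The last, and main technical, step is to conclude that $C$ itself must then have a non-trivial null-homotopic direct summand, contradicting the standing hypothesis. For this I would invoke the standard fact from Broué's theory of $\ell$-permutation modules (see e.g.\ \cite[\S 27]{The}) that $\mathrm{Br}_{Q_0}$ induces an equivalence between the additive category of $\ell$-permutation modules whose indecomposable summands all have vertex $Q_0$, taken modulo morphisms factoring through modules with strictly smaller vertex, and the category of projective $k[N_{G\times \bfT^{\dot wF}}(Q_0)/Q_0]$-modules. Combined with the maximality observation (which ensures that no summand with vertex strictly larger than $Q_0$ appears, so no larger vertex interferes when extracting the vertex-$Q_0$ piece of $C$), this equivalence lets one lift the null-homotopy of $\mathrm{Br}_{Q_0}(C)$ to a null-homotopic direct summand of $C$ concentrated on the vertex-$Q_0$ part, producing the desired contradiction.

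The geometric input (vanishing of $\bfY(\dot w)^{Q_0}$) together with Theorem \ref{thm:brauerfunctor} is the easy half; the genuine obstacle is the last lifting step, where one must carefully argue that a null-homotopy seen through $\mathrm{Br}_{Q_0}$ actually corresponds to a null-homotopic direct summand of $C$. This is where the maximality of $Q_0$ and the full strength of the Broué correspondence at a vertex are used.
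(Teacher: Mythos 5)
Your opening moves coincide with the paper's: use the vanishing of $\bfY(\dot w)^{Q_0}$ together with Theorem~\ref{thm:brauerfunctor} to see that $\mathrm{Br}_{Q_0}(C)$ is acyclic, choose a bad subgroup maximal among those with nonzero Brauer construction on $C$, and observe (via the containment-of-vertices argument, which is correct since any overgroup of a bad subgroup is bad) that $\mathrm{Br}_{Q_0}(C)$ is then a bounded acyclic complex of projective $k[N/Q_0]$-modules. Up to here you and the paper are in lockstep.

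The divergence, and the genuine gap, is the final step. You want to lift the null-homotopy of $\mathrm{Br}_{Q_0}(C)$ back to a null-homotopic direct summand of $C$ by appealing to the Brou\'e correspondence as an equivalence of additive categories. But that equivalence is between the category of vertex-$Q_0$ modules \emph{modulo morphisms factoring through smaller vertices} and projectives over $N/Q_0$; so lifting the homotopy only produces maps on the vertex-$Q_0$ part of $C$ satisfying the homotopy identities modulo smaller-vertex ideals. Turning that into an honest null-homotopic direct summand of $C$ requires further argument (one has to show a morphism congruent to the identity modulo the smaller-vertex ideal is an isomorphism, and one has to control the interference of the smaller-vertex pieces of $C$), and you do not supply it — you explicitly flag it as the ``genuine obstacle'' but leave it unresolved. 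The paper avoids this global lifting entirely by working at a single degree: taking $i$ maximal with $\mathrm{Br}_{Q_0}(C_i)\neq 0$, acyclicity forces $\mathrm{Br}_{Q_0}(C_{i-1})\twoheadrightarrow \mathrm{Br}_{Q_0}(M)$ (a split surjection of projectives), and then \cite[Lem.\ A.1]{BonDatRou16} lifts this single split surjection to a split surjection $C_{i-1}\twoheadrightarrow M$, which immediately contradicts the minimality of $C$. You should either reduce to this top-degree argument or make precise the ``Fitting lemma modulo smaller vertices'' step you are implicitly relying on.
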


\begin{proof}
Let $P$ be an $\ell$-subgroup of $G$ which is not conjugate to a subgroup of $\Delta \bfT^{\dot w F}$. Then $\mathbf{Y}(\dot w)^P  =\emptyset$ and therefore by Theorem \ref{thm:brauerfunctor} we have $\mathrm{Br}_P(C) \simeq R\Gamma_c(\bfY(\dot w),k)^P \simeq 0$.

\smallskip
 
Now assume that there is an indecomposable direct summand $M$ in $C_i$ such that $\mathrm{Br}_P(M) \neq 0$. Without loss of generality we can assume that $P$ is maximal for this property. Then for any other direct summand $N$ of the terms of $C$, $\mathrm{Br}_P(N)$ is either zero or projective. Consequently $\mathrm{Br}_P(C)$ is an acyclic complex with projective terms.  Take $M$ to be in the largest degree $i$ of $C$ so that $\mathrm{Br}_P(C_j) = 0 $ for $j >i$ and $ \mathrm{Br}_P(C_{i-1}) \twoheadrightarrow \mathrm{Br}_P(M)$. Then $C_{i-1} \longrightarrow M$ is a split surjection (by \cite[Lem. A.1]{BonDatRou16}), which contradicts the minimality of $C$.
\end{proof}

\subsection{Applications}
The following result will be intensively used in the rest of these notes. It was first proved by Lusztig in the case where $\Lambda = K$ \cite{Lus78} and then extended by Bonnaf\'e--Rouquier to the modular setting. 

\begin{theorem}[Bonnaf\'e--Rouquier \cite{BonRou03}]\label{thm:mainBR}
Let $M$ be a simple $\Lambda G$-module and $w \in W$ be minimal for the Bruhat order such that 
$R\mathrm{Hom}_{\Lambda G}\big(R\Gamma_c(\bfY(\dot w),\Lambda), M\big)\neq 0.$ Then there exists a representative $0 \longrightarrow P_0 \longrightarrow \cdots \longrightarrow P_{\ell(w)} \longrightarrow 0$ of $R\Gamma_c(\bfY(\dot w),\Lambda)$ such that 
\begin{itemize}
\item[$\bullet$] each $P_i$ is a finitely generated projective $\Lambda G$-module (in degree $\ell(w)+i$),
\item[$\bullet$] $P_M$ is a direct summand of $P_i$ for $i = 0$ only (middle degree).
\end{itemize}
\end{theorem}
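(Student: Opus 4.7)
The plan is to recast the theorem as a concentration property for an $\mathrm{Ext}$ complex, and then to combine the minimality hypothesis on $w$ with Poincar\'e--Verdier duality via a smooth projective compactification of $\bfY(\dot w)$.

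First I reformulate the target. By Proposition~\ref{prop:ywperfect} the complex $R\Gamma_c(\bfY(\dot w),\Lambda)$ is perfect over $\Lambda G$, and by the Lusztig vanishing theorem its cohomology lies in degrees $[\ell(w),2\ell(w)]$. Consequently it admits a representative $C=(0\to P_0\to P_1\to \cdots\to P_{\ell(w)}\to 0)$ by finitely generated projective $\Lambda G$-modules with $P_i$ in cohomological degree $\ell(w)+i$. Dropping null-homotopic summands, I may take $C$ to be minimal, so that every differential has image in the radical of the next term. Any $\Lambda G$-linear map from $P_i$ to the simple module $M$ factors through the head and so vanishes on the image of the preceding differential; consequently $\mathrm{Hom}_{\Lambda G}^\bullet(C,M)$ has zero differential, giving
\[
\mathrm{Ext}_{\Lambda G}^{-(\ell(w)+i)}\!\bigl(R\Gamma_c(\bfY(\dot w),\Lambda),M\bigr)\;\simeq\;\mathrm{Hom}_{\Lambda G}(P_i,M).
\]
Since $\mathrm{Hom}_{\Lambda G}(P,M)\neq 0$ exactly when $P_M$ is a summand of a projective $P$, the theorem is equivalent to showing that $R\mathrm{Hom}_{\Lambda G}\bigl(R\Gamma_c(\bfY(\dot w),\Lambda),M\bigr)$ is concentrated in the single cohomological degree $-\ell(w)$.

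To deploy the minimality assumption, I would embed $\bfY(\dot w)$ as a $G$-stable open subvariety of a smooth projective variety $\widetilde{\bfY}$ of pure dimension $\ell(w)$ whose boundary admits a $G$-stable filtration with successive strata that are disjoint unions of smaller Deligne--Lusztig varieties $\bfY(\dot v)$ with $v<w$; the Bott--Samelson-type variety attached to a reduced expression for $w$ is the natural candidate. Iterated applications of the open--closed distinguished triangle (Theorem~\ref{thm:openclose}) then present $R\Gamma_c(\widetilde{\bfY}\smallsetminus \bfY(\dot w),\Lambda)$ as an iterated extension of the complexes $R\Gamma_c(\bfY(\dot v),\Lambda)$ with $v<w$. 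Applying $R\mathrm{Hom}_{\Lambda G}(-,M)$ and invoking the minimality hypothesis kills every boundary contribution, whence
\[
R\mathrm{Hom}_{\Lambda G}\bigl(R\Gamma_c(\widetilde{\bfY},\Lambda),M\bigr)\;\simeq\;R\mathrm{Hom}_{\Lambda G}\bigl(R\Gamma_c(\bfY(\dot w),\Lambda),M\bigr).
\]

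The final step is a two-sided bound on the cohomological degrees of this common complex. The range $[\ell(w),2\ell(w)]$ for $R\Gamma_c(\bfY(\dot w),\Lambda)$ constrains it to degrees $[-2\ell(w),-\ell(w)]$. For the opposite inequality, $\widetilde{\bfY}$ is smooth projective of pure dimension $\ell(w)$, so $R\Gamma_c(\widetilde{\bfY},\Lambda)=R\Gamma(\widetilde{\bfY},\Lambda)$ and Poincar\'e--Verdier duality (Theorem~\ref{thm:poincareverdier}) provides a self-duality that transports $R\mathrm{Hom}_{\Lambda G}(R\Gamma_c(\widetilde{\bfY},\Lambda),M)$ to a shifted version of $R\mathrm{Hom}_{\Lambda G}(R\Gamma(\bfY(\dot w),\Lambda),M)$ (applying minimality once more in the dual direction). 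Since $\bfY(\dot w)$ is affine of pure dimension $\ell(w)$, its non-compact cohomology $R\Gamma(\bfY(\dot w),\Lambda)$ is concentrated in degrees $[0,\ell(w)]$, restricting the same $R\mathrm{Hom}$ complex to degrees $[-\ell(w),0]$. The intersection of the two ranges is $\{-\ell(w)\}$, yielding the desired concentration.

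The main obstacle is the geometric input in the second step: producing the compactification $\widetilde{\bfY}$ and identifying its boundary strata with the smaller Deligne--Lusztig varieties in a $G$-equivariant, Bruhat-order-compatible way. The Bott--Samelson resolution does this, but its boundary is a normal crossings divisor whose combinatorics (indexed by subwords of a reduced expression) must be carefully matched with the Bruhat order on $W$. Bonnaf\'e--Rouquier's original argument sidesteps the explicit compactification by inducting on $\ell(w)$ via distinguished triangles relating $R\Gamma_c(\bfY(\dot w),\Lambda)$ to $R\Gamma_c(\bfY(\dot{sw}),\Lambda)$ for a simple reflection $s$ with $\ell(sw)<\ell(w)$, while exploiting the same interplay between minimality and Poincar\'e--Verdier duality.
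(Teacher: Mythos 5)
Your reduction of the theorem to a concentration statement --- that $R\mathrm{Hom}_{\Lambda G}(R\Gamma_c(\bfY(\dot w),\Lambda),M)$ lives in a single cohomological degree --- is correct, and the route through a minimal complex of projectives (differentials land in radicals, maps to the simple $M$ factor through heads, hence $\mathrm{Hom}^\bullet(C,M)$ has zero differential and $\mathrm{Ext}^{-(\ell(w)+i)}\simeq\mathrm{Hom}(P_i,M)$) is arguably cleaner than the contradiction argument the paper runs at the end of its sketch. The closing degree-intersection step, $[-2\ell(w),-\ell(w)]$ from the amplitude of $R\Gamma_c$ against $[-\ell(w),0]$ from the amplitude of $R\Gamma$, is exactly what the paper does.

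The gap is in the middle. The paper quotes the Bonnaf\'e--Rouquier lemma as a black box: the cone of the canonical map $R\Gamma_c(\bfY(\dot w),\Lambda)\to R\Gamma(\bfY(\dot w),\Lambda)$ lies in the thick subcategory generated by the complexes $R\Gamma_c(\bfY(\dot v),\Lambda)$ with $v<w$, so minimality of $w$ gives $R\mathrm{Hom}(R\Gamma,M)\simeq R\mathrm{Hom}(R\Gamma_c,M)$ in one line. You try instead to reprove this via a smooth compactification $\widetilde{\bfY}$. The compactly supported half, $R\mathrm{Hom}(R\Gamma_c(\bfY(\dot w)),M)\simeq R\mathrm{Hom}(R\Gamma_c(\widetilde{\bfY}),M)$, is sound in spirit, modulo the boundary-stratification subtleties of the Bott--Samelson variety you already flag. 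But the ``dual direction'' is where the work lives, and you do not carry it out. The triangle relating $R\Gamma(\widetilde{\bfY})$ to $R\Gamma(\bfY(\dot w))$ has third term $R\Gamma_Z(\widetilde{\bfY})$, cohomology with supports on the boundary $Z$; applying Verdier duality stratum by stratum shows this is built from $R\Gamma(\bfY(\dot v))$ shifted by twice the codimension, \emph{not} from $R\Gamma_c(\bfY(\dot v))$. To kill the boundary on this side you therefore need $R\mathrm{Hom}_{\Lambda G}(R\Gamma(\bfY(\dot v)),M)=0$ for all $v<w$, while the minimality hypothesis only supplies vanishing for the compactly supported versions. Upgrading the latter to the former for each $v<w$ is exactly the Bonnaf\'e--Rouquier lemma applied to the smaller elements --- the very statement you are trying to reprove --- so an honest argument requires an induction on $\ell(w)$ that you do not set up. Note also that a genuinely nonzero shift in your ``shifted version'' of $R\mathrm{Hom}(R\Gamma(\bfY(\dot w)),M)$ would destroy the final degree intersection; the comparison isomorphism must be unshifted, which is not clear from what you write.
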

\noindent In other words, the terms of that complex satisfy $\mathrm{Hom}_{\Lambda G}(P_i,M) \neq 0 \iff i=0$. 

\begin{proof}[Sketch of proof]
The key property shown by Bonnaf\'e--Rouquier is that the cone of the natural map $R\Gamma_c(\bfY(\dot w),\Lambda) \longrightarrow  R\Gamma(\bfY(\dot w),\Lambda)$ lies in the thick subcategory of $\sfD^b(\Lambda G\sfmod)$ generated by the complexes $R\Gamma_c(\bfY(\dot v),\Lambda)$ for $v < w$. In particular, the minimality of $w$ show that this map induces an isomorphism
$$ R\mathrm{Hom}_{\Lambda G}(R\Gamma(\bfY(\dot w),\Lambda),M)\tikzmark{eq-rg} \, \simto \, R\mathrm{Hom}_{\Lambda G}(R\Gamma_c(\bfY(\dot w),\Lambda),M)\tikzmark{eq-rgc}.$$
\begin{tikzpicture}[remember picture,overlay]
\draw[->,>=latex]
  ([shift={(-50pt,-7pt)}]pic cs:eq-rg) |- 
  ++(10pt,-18pt) 
  node[right,text width=3cm] 
    {\footnotesize terms in degrees\\[-4pt]$0,1,\ldots,\ell(w)$
    };
\draw[->,>=latex]
  ([shift={(-50pt,-7pt)}]pic cs:eq-rgc) |- 
  ++(10pt,-18pt) 
  node[right,text width=3cm] 
    {\footnotesize terms in degrees\\[-4pt]$\ell(w),\ldots,2\ell(w)$
    };
\end{tikzpicture}
\vskip0.7cm

\noindent
Consequently, the cohomology of $R\mathrm{Hom}_{\Lambda G}(R\Gamma_c(\bfY(\dot w),\Lambda),M)$ vanishes outside the degree $\ell(w)$. In other words, $\mathrm{Hom}_{\sfHo^b(\Lambda G\sfmod)}(R\Gamma_c(\bfY(\dot w),\Lambda),M[-i]) = 0$ for $i \neq \ell(w)$.

\smallskip

Now let $0 \longrightarrow P_0 \longrightarrow \cdots \longrightarrow P_{\ell(w)} \longrightarrow 0$ be a representative of $R\Gamma_c(\bfY(\dot w),\Lambda)$ with projective terms and with no null-homotopic direct summand. Let $i$ be maximal for the property that $P_M$ is a direct summand of $P_i$, and assume that $i > 0$. If the composition $P_{i-1} \longrightarrow P_i \twoheadrightarrow M$ is zero, then the map $P_i \longrightarrow M$ induces a morphism between the complexes $R\Gamma_c(\bfY(\dot w),\Lambda)$ and $M[-i-\ell(w)]$. Therefore it must be null-homotopic by the above argument, which is impossible since $P_M$ is not a direct summand of $C_{i+1}$. Therefore the composition $P_{i-1} \longrightarrow P_i \twoheadrightarrow M$ must be non-zero, hence surjective, which shows that $P_M \simto P_M$ is a (null-homotopic) direct summand of $C$. By assumption of $C$, this is again impossible.  
\end{proof}

\begin{cor}[Bonnaf\'e--Rouquier \cite{BonRou03}]\label{cor:generation}
The triangulated category of perfect complexes $\Lambda G\sfperf$ is generated by the cohomology complexes $R\Gamma_c(\bfY(\dot w), \Lambda)$ for $w \in W$.
\end{cor}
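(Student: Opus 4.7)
\emph{Plan.} The easier inclusion $\calT := \langle R\Gamma_c(\bfY(\dot w),\Lambda) : w \in W\rangle \subseteq \Lambda G\sfperf$ follows immediately from Proposition \ref{prop:ywperfect}, since $\Lambda G\sfperf$ is a thick subcategory of $\sfD^b(\Lambda G\sfmod)$. For the reverse inclusion I use that $\Lambda G\sfperf$ is generated, as a thick subcategory, by the projective covers $P_M$ of the simple $\Lambda G$-modules (equivalently by the regular module $\Lambda G$), so it suffices to prove $P_M \in \calT$ for every simple $\Lambda G$-module $M$.

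Fix such an $M$. As a preliminary step I check that the hypothesis of Theorem \ref{thm:mainBR} is met, namely that $R\mathrm{Hom}_{\Lambda G}\big(R\Gamma_c(\bfY(\dot w),\Lambda),M\big)\neq 0$ for at least one $w \in W$. One way is to invoke the character-theoretic fact that the virtual characters $R_w$ span the space of class functions on $G$ (in characteristic zero), which by projective lifting already suffices over $\Lambda$; another is to observe that a Rickard $\ell$-permutation representative of $\widetilde R\Gamma_c(\bfY(\dot w_0),\Lambda)$ has rich enough summands to detect $P_M$. Granted this, pick $w$ minimal in the Bruhat order with the non-vanishing property. Theorem \ref{thm:mainBR} produces a representative
$$ C \,:\, 0 \longrightarrow P_0 \longrightarrow P_1 \longrightarrow \cdots \longrightarrow P_{\ell(w)} \longrightarrow 0 $$
of $R\Gamma_c(\bfY(\dot w),\Lambda)$, with $P_0$ in degree $\ell(w)$ and $P_M$ a direct summand of $P_0$ only. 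Writing $T$ for the subcomplex $0 \to P_1 \to \cdots \to P_{\ell(w)}$, the stupid truncation short exact sequence
$$ 0 \longrightarrow T \longrightarrow C \longrightarrow P_0[-\ell(w)] \longrightarrow 0 $$
is a distinguished triangle in $\sfD^b(\Lambda G\sfmod)$. Since $\calT$ is thick and $P_M$ is a direct summand of $P_0$, it follows that $P_M \in \calT$ as soon as both $T$ and the complementary summand $Q_0$ of $P_0=P_M\oplus Q_0$ lie in $\calT$.

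The main obstacle is therefore the descent: the terms of $T$ and the module $Q_0$ decompose as sums of indecomposable projectives $P_{M'}$ with $M'\ne M$, and nothing is a priori telling us that these are ``simpler'' than $M$. To close the loop I would run an induction on $\ell(w_{\min})$, where $w_{\min}(M)$ is a minimal witness as above: the base case $w_{\min}=1$ is immediate since by Example \ref{ex:x1} one has $R\Gamma_c(\bfY(1),\Lambda) \simeq \Lambda G/U[0]$, already a projective $\Lambda G$-module whose indecomposable summands therefore all lie in $\calT$. For the inductive step the key tool is the intermediate statement used in the proof of Theorem \ref{thm:mainBR}, namely that the cone of $R\Gamma_c(\bfY(\dot w),\Lambda) \to R\Gamma(\bfY(\dot w),\Lambda)$ lies in the thick subcategory generated by the $R\Gamma_c(\bfY(\dot v),\Lambda)$ with $v < w$; combined with Poincar\'e--Verdier duality (Theorem \ref{thm:poincareverdier}), this allows the contributions of $T$ and $Q_0$ to be replaced by objects of $\calT$ coming from strictly smaller Bruhat cells, so that the length induction goes through and every $P_M$ is put in $\calT$.
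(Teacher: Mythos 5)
Your plan has the right skeleton---reduce to PIMs, pick a minimal $w$, use the stupid-truncation triangle, induct on length---but two steps as written are not sound. First, the ``main obstacle'' you identify about $Q_0$ is illusory, and trying to recurse on $Q_0$ would in fact fail: a summand $P_{M'}$ of $Q_0 = C_{\ell(w)}/P_M$ may well have $w$ itself as a minimal witness, so $\ell(w_{\min}(M'))$ need not decrease there. The fix is that you never need $Q_0$ separately. Once $T\in\calT$ is established (and $C\in\calT$ holds by definition of $\calT$), the distinguished triangle $T\to C\to P_0[-\ell(w)]\rightsquigarrow$ puts all of $P_0[-\ell(w)]$ in $\calT$, and thickness of $\calT$ then hands you $P_M$ (and $Q_0$) as direct summands for free.

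Second, the real content---that $T\in\calT$---is left essentially unargued. Appealing to the cone of $R\Gamma_c\to R\Gamma$ and Poincar\'e--Verdier duality is a detour: that is the input to the \emph{proof} of Theorem~\ref{thm:mainBR}, not how one applies it. The direct argument, which is the paper's, is: for $C$ a minimal representative and $P_{M'}$ an indecomposable summand of $C_i$ with $i>\ell(w)$, one has $R\mathrm{Hom}_{\Lambda G}(R\Gamma_c(\bfY(\dot w),\Lambda),M')\neq 0$, and Theorem~\ref{thm:mainBR} applied to $M'$ shows $w$ cannot be Bruhat-minimal for $M'$ (otherwise $P_{M'}$ would occur only in middle degree); hence some $v<w$ is a witness for $M'$, $\ell(v)<\ell(w)$, and the length induction puts $P_{M'}$ in $\calT$. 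One last correction: the $R_w$ do \emph{not} span the space of class functions on $G$---they span only the unipotent uniform functions. The fact actually needed for your preliminary step is that the regular representation of $G$ is uniform (a linear combination of the $R_w(\theta)$), which is what guarantees that every $[P_M]$ appears in some $[R\Gamma_c(\bfY(\dot w),k)]$.
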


\begin{proof}
We show by induction on the length of $w$  that  for every simple $\Lambda G$-module $M$, if $R\mathrm{Hom}_{\Lambda G}(R\Gamma_c(\bfY(\dot w),\Lambda),M) \neq 0$ then $P_M$ lies in the thick subcategory of $\Lambda G\sfperf$ generated by the complexes $R\Gamma_c(\bfY(\dot v),\Lambda)$ for $v \leq w$. This is true for $w = 1$ since $R\Gamma_c(\bfY(1),\Lambda) \simeq \Lambda G/U[0]$ (by definition a thick subcategory is stable under direct summands).

\smallskip

Let $w \in W$, and consider a minimal representative $C$ of $R\Gamma_c(\bfY(\dot w),\Lambda)$ as a bounded complex of projective modules. It follows from Theorem \ref{thm:mainBR} that the indecomposable direct summands of $C_i$ for $i > \ell(w)$ already appear in the cohomology complexes $R\Gamma_c(\bfY(\dot v),\Lambda)$ for $v < w$. By induction $C_i$ lies in the thick subcategory of $\Lambda G\sfperf$ generated by the complexes $R\Gamma_c(\bfY(\dot v),\Lambda)$ for $v < w$. But the term in middle degree can be written as
$$C_{\ell(w)} = \mathrm{Cone}\big((C_{\ell(w)+1} \longrightarrow \cdots \longrightarrow C_{2\ell(w)}) \longrightarrow R\Gamma_c(\bfY(\dot w),\Lambda)[\ell(w)+1]\big)$$
which proves that $C_{\ell(w)}$ lies in the category generated by $R\Gamma_c(\bfY(\dot v),\Lambda)$ for $v \leq w$.
\smallskip

To conclude, it remains to show that any projective indecomposable module appears as a direct summand of a minimal representative of $R\Gamma_c(\bfY(\dot w),\Lambda)$ for some $w \in W$. It is enough to show it at the level of characters. This follows from the fact that the regular representation of $G$ is \emph{uniform}, which means that it is a linear combination of Deligne--Lusztig characters $R_w(\theta)$.
\end{proof}

One can invoke Corollary \ref{cor:generation} to see that a morphism $f$ of bounded complexes of $\Lambda G$-modules is a quasi-isomorphism if and only if $\mathrm{Cone}(f) \otimes_{\Lambda G} R\Gamma_c(\bfY(\dot w),\Lambda) = 0$ for all $w \in W$. This was a key step in Bonnaf\'e--Rouquier's proof of the Jordan decomposition as a Morita equivalence (see \cite{BonRou03}).

\smallskip

The analogue of Corollary \ref{cor:generation} for general bounded complexes (whose terms can have non-trivial vertices) was proved recently by Bonnaf\'e--Dat--Rouquier in \cite{BonDatRou16}. This again was proven essential to show that the Jordan decomposition is a splendid Rickard equivalence. Recall that $\widetilde{R}\Gamma_c(\bfY(\dot w),k)$ denotes the (unique up to homotopy equivalence) representative of $R\Gamma_c(\bfY(\dot w),k)$ as a complex of $\ell$-permutation modules (see \S\ref{sec:cohodef}). 

\smallskip

We say that the prime number $\ell$ is \emph{very good} for $\bfG$ if $\ell$ is good for every simple component of $\bfG$ and $\ell \nmid m+1$ for every component of $\bfG$ of type $A_m$. A sufficient condition for $\ell$ to be very good is $\ell > h$ where $h$ is the Coxeter number of $\bfG$. 

\begin{theorem}[Bonnaf\'e--Dat--Rouquier \cite{BonDatRou16}] Assume that $\ell$ is very good. Let $\mathcal{X}$ be the set of complexes $\widetilde{R}\Gamma_c(\bfY(\dot w),k)\otimes_{k Q} \theta$ where $Q$ runs over the $\ell$-subgroups of $\bfT^{\dot w F}$, $\theta \in \mathrm{Irr}kQ$ and $w \in W$. Then
\begin{itemize}
 \item[$\mathrm{(i)}$]  The thick subcategory of $\sfHo^b(kG\sfmod)$ generated by $\mathcal{X}$ coincide with  $\sfHo^b(\mathcal{B})$, where $\mathcal{B}$ is the additive category generated by the indecomposable modules with one-dimensional sources and abelian vertices.
 \item[$\mathrm{(ii)}$] The image of $\mathcal{X}$ in $\sfD^b(kG\sfmod)$ generates $\sfD^b(kG\sfmod)$ as a triangulated category closed under direct summands.
\end{itemize}
\end{theorem}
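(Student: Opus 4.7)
The proof naturally splits into verifying the two inclusions in (i) at the level of the homotopy category; part (ii) then follows by passing to the derived category and invoking Rickard's theorem.

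For $\mathcal{X}\subseteq\sfHo^b(\mathcal{B})$, the proposition preceding the theorem tells us that a minimal representative of $\widetilde{R}\Gamma_c(\bfY(\dot w),k)$ has indecomposable summands with vertices contained in $\Delta\bfT^{\dot wF}$, which is an abelian group, and with one-dimensional sources (being summands of permutation $k[G\times\bfT^{\dot wF}]$-modules). Tensoring $-\otimes_{kQ}\theta$ over an $\ell$-subgroup $Q\subseteq\bfT^{\dot wF}$ with a one-dimensional character $\theta$ preserves the abelianness of vertices (they sit inside the image of $\Delta\bfT^{\dot wF}\cap(G\times Q)$ projected to $G$) and keeps the source one-dimensional (now twisted by $\theta$). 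Hence every term of every complex in $\mathcal{X}$ lies in $\mathcal{B}$.

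For the reverse inclusion, the strategy is to produce every indecomposable $M\in\mathcal{B}$, with abelian vertex $P$ and one-dimensional source $\theta_0$, as a summand of a term of some element of $\mathcal{X}$. The key structural input is that when $\ell$ is very good every abelian $\ell$-subgroup of $G$ is, up to $G$-conjugacy, contained in some $\bfT^{\dot wF}$: semisimple $\ell$-elements lie in $F$-stable maximal tori, and these are classified up to $G$-conjugacy by $F$-conjugacy classes in $W$. Choosing such an embedding $P\subseteq Q\subseteq\bfT^{\dot wF}$, Theorem \ref{thm:brauerfunctor} identifies $\mathrm{Br}_P\bigl(\widetilde{R}\Gamma_c(\bfY(\dot w),k)\bigr)\simeq R\Gamma_c(\bfY(\dot w)^P,k)$ with its compatible $N_G(P)/P\times\bfT^{\dot wF}/P$-action. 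Extracting the correct $\theta$-isotypic component and invoking Green correspondence between $G$ and $N_G(P)$ then realizes $M$ as a summand of a term of $\widetilde{R}\Gamma_c(\bfY(\dot w),k)\otimes_{kQ}\theta$ for an appropriate extension of $\theta_0$ to $Q$.

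Part (ii) follows by combining Corollary \ref{cor:generation} (which handles the perfect part, with $Q=1$) and Rickard's theorem (Theorem \ref{thm:derstab}): the question reduces to showing that the image of $\mathcal{X}$ in the stable quotient $\sfD^b(kG\sfmod)/kG\sfperf\simeq kG\sfstab$ generates it as a thick triangulated subcategory. By (i), this image contains the class of every $M\in\mathcal{B}$; under the very-good hypothesis, every simple $kG$-module has an abelian vertex and a one-dimensional source (a known structural result on the local representation theory of finite reductive groups), so it lies in $\mathcal{B}$ itself, and thick closure already recovers every simple, hence all of $kG\sfstab$.

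The main obstacle is the reverse inclusion in (i): matching the local invariants (vertex, source) of an abstract $M\in\mathcal{B}$ with the arithmetic of $\ell$-tori in $\bfG^F$, and controlling the fixed-point subvariety $\bfY(\dot w)^P$ finely enough to extract the desired summand after a one-dimensional twist. The very-good assumption enters precisely here, ensuring that centralizers of $\ell$-tori are Levi subgroups and that every abelian $\ell$-subgroup embeds in an $F$-stable torus, which is what makes the Brauer-functor analysis tractable and the Green correspondent of $M$ computable from $R\Gamma_c(\bfY(\dot w)^P,k)$.
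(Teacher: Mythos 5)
The survey itself states this theorem only as a citation to \cite{BonDatRou16} and gives no proof, so I evaluate your proposal on its own merits rather than against an in-text argument.

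Your treatment of (ii) has a genuine gap: the claim that under the \emph{very good} hypothesis every simple $kG$-module has an abelian vertex and a one-dimensional source is false. The trivial module $k$ has the Sylow $\ell$-subgroup of $G$ as vertex, and Sylow $\ell$-subgroups of finite reductive groups are non-abelian well inside the very-good range. For instance take $G = \mathrm{GL}_5(q)$ with $\ell = 3 \mid q-1$: here $\ell = 3$ is very good (good for $A_4$ and $3 \nmid 5$), yet the Sylow $3$-subgroup is $(\mathbb{Z}/3^a)^5 \rtimes \mathbb{Z}/3$, a non-abelian wreath-type group. Hence $k \notin \mathcal{B}$, the simple modules are not in general objects of $\mathcal{B}$, and the dévissage through $kG\sfstab$ cannot conclude the way you state. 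What (ii) actually requires is to show that the trivial-source permutation modules $kG/P$ with $P$ an abelian $\ell$-subgroup (all of which do lie in $\mathcal{B}$) generate $\sfD^b(kG\sfmod)$ as a thick triangulated subcategory; this is a non-trivial resolution statement and is where the real work in \cite{BonDatRou16} goes, not a consequence of the simples lying in $\mathcal{B}$.

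There is also a looseness in the hard direction of (i). You argue that every indecomposable $M \in \mathcal{B}$ should be realized as a direct summand of a \emph{term} of some complex in $\mathcal{X}$, but appearing as a summand of a single term does not by itself place $M[0]$ in the thick triangulated subcategory generated by the whole complex. Compare the proof of Corollary \ref{cor:generation}: one needs a genuine dévissage (there, on the length of $w$, using the middle-degree concentration from Theorem \ref{thm:mainBR}) to peel off the terms one at a time. An analogous inductive stripping argument — with the induction parameter suited to the non-projective setting, e.g. on the size of the vertex in addition to $\ell(w)$ — is what is needed here; merely locating $M$ inside some $C_i$ is not enough. The Green correspondence and fixed-point (Brauer functor) ingredients you identify are the right local tools, but they must be organized into such an induction.
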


\section{Decomposition numbers from Deligne--Lusztig characters}
\label{chap:dec}

Recall that $(K,\calO,k)$ denotes an $\ell$-modular system with $K$ being a finite extension of $\mathbb{Q}_\ell$. We will furthermore assume from this chapter on that $K$ and $k$ are big enough for all the groups considered (all the group algebras over $K$ and $k$ will split).

\smallskip

The purpose of this chapter is to explain how one can compute $\ell$-decomposition numbers for finite reductive groups using the Deligne--Lusztig characters $R_w(\theta)$ introduced in the previous chapter.
We start by recalling elementary results on decomposition matrices (see for example \cite[\S 14]{Ser}) before explaining the case of finite reductive groups. 

\subsection{Grothendieck groups and dualities}\label{sec:dualities}
Let $\mathcal{A}$ be an abelian (resp. additive) category. The \emph{Grothendieck group} of $\calA$, denoted by $K_0(\calA)$ (or sometimes $[\calA]$) is the abelian group generated by the isomorphism classes of objects of $\calA$ subject to the relations $[M] = [L]+[N]$ for every short exact sequence (resp. split short exact sequence) $0 \longrightarrow L \longrightarrow M\longrightarrow N \longrightarrow 0$.
Note that any abelian category is additive. When there is a risk of confusion, the Grothendieck group of $\calA$ as an additive category will be referred to as the \emph{split Grothendieck group}. 
If $\calA$ is an abelian category in which every object has finite composition length, then $K_0(\calA)$ can be identified with the free abelian group with basis $\mathrm{Irr}\, \calA$. Similarly, the split Grothendieck group of a Krull-Schmidt category is the free abelian group generated by the isomorphism classes of the indecomposable objects.
 
\smallskip

Let $\calT$ be a triangulated category. The \emph{Grothendieck group} $K_0(\calT)$ of $\calT$ is the abelian group generated by the isomorphism classes of objects of $\calT$ subject to the relations $[M] = [L]+[N]$ for every distinguished triangle $ L \longrightarrow M\longrightarrow N\rightsquigarrow $.
\smallskip

Given an abelian category $\calA$, the fully faithful functor $\calA \longrightarrow \sfD^b(\calA)$ induces an isomorphism $K_0(\calA) \simto K_0(\sfD^b(\calA))$. Under this identification, the class of a bounded complex $C$ is 
$$[C] = \sum_{i \in \mathbb{Z}} (-1)^i[C_i] = \sum_{i \in \mathbb{Z}} (-1)^i [H^i(C)].$$
Similarly, for any additive category $\calA$ the functor $\calA \longrightarrow \sfHo^b(\calA)$ induces an isomorphism between the corresponding Grothendieck groups.

\smallskip

The Grothendieck groups we will be interested in in this chapter are:\begin{tikzpicture}[remember picture,overlay]
\draw [decoration={brace,amplitude=0.5em},decorate,black]
  ([shift={(0pt,7pt)}]pic cs:item-groth) |- 
  ++(5pt,-17pt);
\draw ([shift={(7pt,-4pt)}]pic cs:item-groth) node[right,text width=6cm] 
    {\footnotesize$\mathbb{Z}$-basis given by simple modules
    }
;
\end{tikzpicture}
\begin{itemize} 
 \item[$\bullet$] $K_0(KG\sfmod)$ \tikzmark{item-groth}
 \item[$\bullet$] $K_0(kG\sfmod)$
 \item[$\bullet$] $K_0(kG\sfproj)  \, \leftarrow$\, {\footnotesize basis given by projective indecomposable modules (PIMs)}
\end{itemize}
\noindent Since exact sequences split in $KG\sfmod$ and $kG\sfproj$ then for $M$, $N$ in $KG\sfmod$ (resp. 
$kG\sfproj$) $[M] = [N]$ if and only if $M\simeq N$. This is not true in general in $kG\sfmod$ since $kG$-modules can have non-trivial extensions when $\ell$ divides the order of $|G|$ (which is the interesting case from our point of view).

\smallskip

In addition, there are perfect pairings 
 $$ \begin{aligned}
 \langle - ; - \rangle_K & : K_0(KG\sfmod) \times K_0(KG\sfmod) \longrightarrow \mathbb{Z} \\
 \langle - ; - \rangle_k & : K_0(kG\sfproj) \times K_0(kG\sfmod) \longrightarrow \mathbb{Z}
 \end{aligned}$$
defined by $ \langle [P]; [M] \rangle_\Lambda = \dim_\Lambda \mathrm{Hom}_{\Lambda G}(P,M)$ when $P$ and $M$ are actual modules and $\Lambda$ is the field $K$ or $k$. Then $\mathrm{Irr}_K G$ is a self-dual basis for the pairing $\langle - ; - \rangle_K$, whereas a dual basis of $\mathrm{Irr}_kG$ for $\langle - ; - \rangle_k$ is given by the classes of projective indecomposable modules.

\subsection{Lifting projective modules} 
We say that a $kG$-module $M$ \emph{lifts to characteristic zero} if there exists an $\calO G$-lattice $\widetilde M$ (an $\calO G$-module which is free as an $\calO$-module) such that $k\widetilde M \simeq M$ as $kG$-modules. Not every $kG$-module can be lifted to characteristic zero in general, unless $\ell \nmid |G|$ in which case $kG$ is semisimple. This holds nevertheless for projective modules. Indeed, given a finitely generated projective $kG$-module $P$, we can consider the projective cover $P_m$ of $P$ as an $\calO/\ell^m \calO$-module. Then $kP_m \simeq P$ and the $\calO G$-module $\widetilde P :=\mathop{\lim}\limits_{\longleftarrow} P_m$ is an $\calO G$-module lifting $P$. In addition, it is projective and it is $-$ up to isomorphism $-$ the unique projective $\calO G$-module lifting $P$. Note that if $M$ is a simple $kG$-module, then $\widetilde P_M$, together with the composition $\widetilde P_M \twoheadrightarrow P_M \twoheadrightarrow M$ is a projective cover of $M$, viewed as a simple $\calO G$-module.

\smallskip

Given $P\in kG\sfproj$ and its lift $\widetilde P$ to characteristic zero, we can form the $KG$-module $K\widetilde P$. Its character (or rather its image in the Grothendieck group) will be denoted $e([P])$. This defines a group homomorphism 
$$e : K_0(kG\sfproj) \longrightarrow K_0(KG\sfmod).$$

\subsection{Decomposing ordinary characters}
Let $M$ be a $KG$-module. One can choose an $\calO$-free $\calO$-submodule $\widetilde M$ such that $K\widetilde M \simeq M$. By saturating by the action of $G$ one can assume that $\widetilde M$ is stable by $G$, so that $\widetilde M$ is an $\calO G$-lattice such that $K\widetilde M \simeq M$. Then one can form the $kG$-module $k\widetilde M$ and consider its image in $K_0(kG\sfmod)$, which we will denote by $d([M])$. 

\begin{prop}\label{prop:brauerrep}\leavevmode
\begin{itemize}
  \item[$\mathrm{(i)}$] $d$ is well-defined and extends to a group homomorphism
  $$d : K_0(KG\sfmod) \longrightarrow K_0(kG\sfmod)$$
called the \emph{decomposition map}.
  \item[$\mathrm{(ii)}$] \emph{(Brauer reciprocity)} $d$ is the transpose of $e$ for the pairings defined in \S\ref{sec:dualities}. In other words
   $$ \langle - ; d(-)\rangle_k = \langle e(-) ; -\rangle_K.$$
\end{itemize}
\end{prop}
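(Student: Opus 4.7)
\medskip

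\noindent\textbf{Proof plan.}
The plan is to treat the two parts in sequence: first establish that the reduction mod~$\ell$ of a lattice gives a class in $K_0(kG\sfmod)$ that depends only on the $KG$-module it spans, and then derive Brauer reciprocity from a Hom calculation over $\calO$.

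For (i), the core issue is well-definedness: if $\widetilde{M}$ and $\widetilde{M}'$ are two $\calO G$-lattices in $M$, I need to show $[k\widetilde{M}]=[k\widetilde{M}']$ in $K_0(kG\sfmod)$. First I would reduce to the case $\widetilde{M}\subset\widetilde{M}'$ by rescaling $\widetilde{M}$ by a power of $\ell$ (this does not change $k\widetilde{M}$ up to isomorphism, since multiplication by $\ell^n$ is an isomorphism of $\calO G$-modules). The quotient $Q=\widetilde{M}'/\widetilde{M}$ is then a finite $\calO G$-module, and I would pick a Jordan--H\"older filtration of $Q$ as an $\calO G$-module; each simple factor is annihilated by $\ell$ and thus is a simple $kG$-module. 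Pulling back gives a chain $\widetilde{M}=\widetilde{M}_0\subset\widetilde{M}_1\subset\cdots\subset\widetilde{M}_n=\widetilde{M}'$ of lattices with $\widetilde{M}_{i+1}/\widetilde{M}_i\simeq S_i$ a simple $kG$-module. The short exact sequence $0\to\widetilde{M}_i\to\widetilde{M}_{i+1}\to S_i\to 0$ of $\calO G$-modules, tensored with $k$ over $\calO$, yields the four-term exact sequence
$$0\longrightarrow \mathrm{Tor}_1^\calO(k,S_i)\longrightarrow k\widetilde{M}_i\longrightarrow k\widetilde{M}_{i+1}\longrightarrow k\otimes_\calO S_i\longrightarrow 0.$$
Since $\ell$ annihilates $S_i$, both $\mathrm{Tor}_1^\calO(k,S_i)$ and $k\otimes_\calO S_i$ are canonically isomorphic to $S_i$, so $[k\widetilde{M}_i]=[k\widetilde{M}_{i+1}]$ in $K_0(kG\sfmod)$. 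Iterating shows $[k\widetilde{M}]=[k\widetilde{M}']$.

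Still for (i), I would then verify additivity of $d$ on short exact sequences $0\to L\to M\to N\to 0$ of $KG$-modules: pick a lattice $\widetilde{M}$ and set $\widetilde{L}:=\widetilde{M}\cap L$, $\widetilde{N}:=\widetilde{M}/\widetilde{L}$. These are lattices in $L$ and $N$, and the resulting sequence of $\calO G$-modules has flat terms, so reduction mod~$\ell$ remains exact, giving $d([M])=d([L])+d([N])$. Combined with well-definedness, this extends $d$ to a group homomorphism on $K_0(KG\sfmod)$.

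For (ii), the calculation is formal once one has a projective cover $\widetilde{P}$ of $P$ over $\calO$ (constructed as in the preceding paragraph of the text). Given a lattice $\widetilde{M}$ in $M$, consider the $\calO$-module $H:=\mathrm{Hom}_{\calO G}(\widetilde{P},\widetilde{M})$. Since $\widetilde{P}$ is $\calO G$-projective and $\widetilde{M}$ is $\calO$-free, $H$ is $\calO$-free of finite rank. Extension of scalars to $K$ gives
$$K\otimes_\calO H\;\simeq\;\mathrm{Hom}_{KG}(K\widetilde{P},K\widetilde{M}),$$
so $\langle e([P]);[M]\rangle_K=\mathrm{rank}_\calO H$. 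Reduction mod $\ell$ gives
$$k\otimes_\calO H\;\simeq\;\mathrm{Hom}_{kG}(k\widetilde{P},k\widetilde{M})\;=\;\mathrm{Hom}_{kG}(P,k\widetilde{M}),$$
using projectivity of $\widetilde{P}$, so $\langle[P];d([M])\rangle_k=\mathrm{rank}_\calO H$ as well. The two sides agree, proving Brauer reciprocity after extending bilinearly.

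The main obstacle is the well-definedness of $d$ in (i); everything else is a bookkeeping consequence. In particular, the crucial input is that $\mathrm{Tor}_1^\calO(k,S)\simeq S\simeq k\otimes_\calO S$ for a simple $kG$-module $S$, which cancels in the Grothendieck group and lets lattice comparison pass to the mod-$\ell$ world without loss.
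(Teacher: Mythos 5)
Your argument is correct, but it takes a genuinely different route from the paper for part (i). The paper dispatches (i) in a single remark (``it is enough to prove (ii)''): once the formula $\langle[P];[k\widetilde M]\rangle_k = \langle e([P]);[M]\rangle_K$ is established for an arbitrary choice of lattice $\widetilde M$, the right-hand side is manifestly independent of $\widetilde M$, and perfectness of the pairing $\langle-;-\rangle_k$ from \S\ref{sec:dualities} then forces $[k\widetilde M]$ to be independent of the lattice and additive in $[M]$. You instead prove well-definedness head-on: scale so that $\widetilde M\subset\widetilde M'$, take a Jordan--H\"older filtration of the finite-length torsion $\calO G$-module $\widetilde M'/\widetilde M$ whose graded pieces are simple $kG$-modules, and observe for each link $S$ that $\mathrm{Tor}_1^\calO(k,S)\simeq S\simeq k\otimes_\calO S$ as $kG$-modules, so the two contributions cancel in $K_0(kG\sfmod)$; additivity then comes from the saturated sublattice $\widetilde L=\widetilde M\cap L$. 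Your route is more self-contained --- it does not presuppose that PIMs and simples are dual bases, which is exactly what the paper's appeal to perfectness uses --- and it matches the classical development in Serre's book that the paper cites; the trade-off is that it is longer and repeats reasoning that the paper has already packaged into the pairing. Your proof of (ii) is essentially identical to the paper's: pass through the $\calO$-free module $\mathrm{Hom}_{\calO G}(\widetilde P,\widetilde M)$ and use compatibility of $\mathrm{Hom}$ from a finitely generated projective with base change to $K$ and to $k$.
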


\begin{proof}
It is enough to prove (ii). Let $M$ be a $KG$-module and $P$ be a projective $kG$-module. We constructed $\calO G$-lattices $\widetilde M$ and $\widetilde P$ such that $K\widetilde M \simeq M$ and $k\widetilde P \simeq P$. Then 
$$\begin{aligned}
 \langle [P];d([M])\rangle_k & = \dim_k \mathrm{Hom}_{kG}(P,k\widetilde M) = \mathrm{rk}_\calO \mathrm{Hom}_{\calO G}(\widetilde P,\widetilde M) \\ & = \dim_K \mathrm{Hom}_{KG}(K\widetilde P,K\widetilde M) = 
\langle e([P]);[M]\rangle_K. \end{aligned}$$
\end{proof}

The \emph{decomposition matrix} $D$ (or $D_\ell$) is the matrix with entries
 $$d_{\chi,S} = \langle [P_S];d(\chi)\rangle_k = \langle e([P_S]);\chi \rangle_K $$
for $\chi \in \mathrm{Irr}_K G$ and $S \in \mathrm{Irr}_k G$. With this notation we have
$$ d(\chi) = \sum_{S \in \mathrm{Irr}_k G} d_{\chi,S} [S] \quad \text{and} \quad e([P_S])
= \sum_{\chi \in \mathrm{Irr}_K G} d_{\chi,S} \chi.$$

\begin{example}
(a) If $\ell \nmid |G|$ then every simple $kG$-module is projective, and lifts to an irreducible ordinary character. Consequently $D$ is the identity matrix up to reordering.
\smallskip

\noindent (b) If $G$ is an $\ell$-group, then the only irreducible $kG$-module is the trivial representation.  Since the decomposition map preserves the dimension, we deduce that $d(\chi) = (\dim \chi) [k]$ for every irreducible ordinary character $\chi$ of $K$. The decomposition matrix in that case is a column encoding the dimensions of the irreducible $KG$-modules.

\smallskip

Dually, the projective cover of the trivial representation is the regular representation $P_k = kG$, which lifts to characteristic zero as $\calO G$, and whose character is $[KG] = \sum (\dim \chi) \chi$. This is an example of Brauer reciprocity as stated in Proposition \ref{prop:brauerrep}.ii.

\smallskip
\noindent (c) Let us consider the particular case of $G =\mathfrak{S}_3$ and $\ell=3$. There are three irreducible representations over $K$: the trivial representation $K$, the sign $\varepsilon$ and the reflection representation, of dimension $2$. The latter has an integral version given by $M = \{(x_1,x_2,x_3) \in \calO^3 \, \mid \, x_1 + x_2 +x_3 = 0\}.$ The representations $K$ and $\varepsilon$ yield two non-isomorphic representations over $k$ by $\ell$-reduction, the trivial representation $k$ and the sign modulo $3$, which we still denote by $\varepsilon$. There is a short exact sequence 
 $$ 0 \longrightarrow \hskip -1.3mm \begin{array}[t]{l} k \longrightarrow kM \\ x \longmapsto (x,x,x) \end{array} \hskip -8mm \longrightarrow \varepsilon \longrightarrow 0$$
which shows that $[kM] = [k] + [\varepsilon]$. We deduce that the decomposition matrix in that case is 
$$ D = \left[ \begin{array}{cc} 1 & \cdot \\ \cdot & 1\\ 1 & 1 \\\end{array}\right]\cdot$$
Consequently, the two PIMs have characters $[K] + [KM]$ and $[\varepsilon] + [KM]$.
\end{example}

\subsection{Basic sets of characters}
We mentioned in a previous section that not every $kG$-modules can be lifted to characteristic zero. This is however true at the level of the Grothendieck groups: the class of any $kG$-module is the $\ell$-reduction of a virtual character.

\begin{theorem}The decomposition map $d : K_0(KG\sfmod) \longrightarrow K_0(kG\sfmod)$ is surjective.
\end{theorem}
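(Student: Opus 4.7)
The plan is to prove surjectivity by reducing, via Brauer's induction theorem, to $\ell$-elementary subgroups where the statement is essentially trivial. The whole argument pivots on the fact that $d$ is a natural transformation of Mackey functors, so the image of $d$ is a sub-Green-functor, and Brauer's theorem exhibits the trivial character as a $\mathbb{Z}$-combination of characters induced from $\ell$-elementary subgroups.

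First I would verify naturality of $d$ with respect to induction. If $H \leq G$ and $M$ is a $KH$-module with $\calO$-lattice $\widetilde M$, then $\calO G \otimes_{\calO H}\widetilde M$ is an $\calO G$-lattice in $\mathrm{Ind}_H^G M$ whose reduction mod $\ell$ is $\mathrm{Ind}_H^G(k \otimes_{\calO} \widetilde M)$, since induction is exact and preserves $\calO$-freeness. Hence $d_G \circ \mathrm{Ind}_H^G = \mathrm{Ind}_H^G \circ d_H$. The analogous statement holds for restriction, and for tensor products one has the projection formula in the Grothendieck ring $[S] \cdot \mathrm{Ind}_H^G(x) = \mathrm{Ind}_H^G(\mathrm{Res}_H^G[S] \cdot x)$, which I will use in the last step.

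Next I would check that the theorem is trivial for $\ell$-elementary subgroups. If $H = P \times H'$ with $P$ an $\ell$-group and $\ell \nmid |H'|$, then every simple $kH$-module is of the form $k \boxtimes V$ for $V$ a simple $kH'$-module, because the only simple $kP$-module is the trivial one. Since $kH'$ is semisimple, $V$ lifts uniquely to a simple $KH'$-module $\widetilde V$, and $d_H([K \boxtimes \widetilde V]) = [k \boxtimes V]$. So $d_H$ is surjective.

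Now I would invoke Brauer's induction theorem in the following form: the class of the trivial $KG$-module admits a decomposition
\[
[K] = \sum_i n_i \,\mathrm{Ind}_{H_i}^G [\chi_i]
\]
in $K_0(KG\sfmod)$, where the $H_i$ run over a finite family of $\ell$-elementary subgroups of $G$, $\chi_i \in K_0(KH_i\sfmod)$ and $n_i \in \mathbb{Z}$. Applying $d_G$ and using naturality gives
\[
[k] = \sum_i n_i \,\mathrm{Ind}_{H_i}^G \bigl(d_{H_i}([\chi_i])\bigr).
\]
Finally, for any $[S] \in K_0(kG\sfmod)$, multiply this identity by $[S]$ and apply the projection formula:
\[
[S] = \sum_i n_i\, \mathrm{Ind}_{H_i}^G \bigl(\mathrm{Res}_{H_i}^G[S] \cdot d_{H_i}([\chi_i])\bigr).
\]
By the previous step, each factor $\mathrm{Res}_{H_i}^G[S] \cdot d_{H_i}([\chi_i]) \in K_0(kH_i\sfmod)$ is in the image of $d_{H_i}$, say equal to $d_{H_i}(\psi_i)$. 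Applying naturality once more,
\[
[S] = d_G\Bigl(\sum_i n_i\, \mathrm{Ind}_{H_i}^G \psi_i\Bigr),
\]
which proves that every simple class, and hence every class, lies in the image of $d$.

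The main obstacle is the appeal to Brauer's induction theorem: although classical (see Serre, \emph{Linear Representations of Finite Groups}, Ch. 10), it is non-trivial and lies outside the circle of ideas developed so far. Everything else — naturality of $d$, the projection formula, and the trivial $\ell$-elementary case — is routine.
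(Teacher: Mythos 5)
The paper states this theorem without proof, leaving it to the reference \cite{Ser} cited at the start of the chapter, so there is no in-text argument to compare against. Your proof is in fact the standard one (it is essentially Serre's, \S 16--17 of \cite{Ser}), and the packaging via naturality of $d$ plus the projection formula is clean and correct. The one place that deserves a clarifying sentence is the appeal to Brauer's induction theorem: as usually stated, the decomposition $[K]=\sum_i n_i\,\mathrm{Ind}_{H_i}^G[\chi_i]$ runs over subgroups $H_i$ that are $p$-elementary for \emph{varying} primes $p$ (a $p$-group times a cyclic $p'$-group), not only for $p=\ell$, and restricting to genuine $\ell$-elementary subgroups would only produce $|G|_{\ell'}\cdot[K]$ in the relevant ideal, which is not enough. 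Your argument nonetheless survives untouched because every Brauer-elementary subgroup $C\times P$ (with $C$ cyclic and $P$ a $p$-group) splits off its $\ell$-Sylow: writing $C=C_\ell\times C_{\ell'}$ one gets $H=C_\ell\times(C_{\ell'}\times P)$ when $p\neq\ell$, and $H=P\times C$ already has the desired shape when $p=\ell$. Thus every $H_i$ is of the form \emph{$\ell$-group times $\ell'$-group}, which is exactly the hypothesis your Step 2 needs (and indeed your Step 2 nowhere uses cyclicity of the $\ell'$-factor). Spelling this out would remove any appearance that you are invoking a nonstandard, or false, form of Brauer's theorem; with that caveat the proof is complete.
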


By Proposition \ref{prop:brauerrep}, the map $e$ is the transpose of $d$, hence it is injective. Therefore if $P$ and $Q$ are two projective $kG$-modules then $P \simeq Q$ if and only if their lifts $\widetilde P$ and $\widetilde Q$ have the same character. In other words, a projective module is determined by its character over $K$.

\smallskip

Since $d$ is surjective, it is natural to search for a set $\mathcal{B}$ of ordinary irreducible characters such that $d(\mathcal{B})$ is a $\mathbb{Z}$-basis of $K_0(kG\sfmod)$. Such a set is called a \emph{basic set} (see \cite{GecHis91}). 
If it exists, then the decomposition matrix has the following shape:
$$D = \left[ \begin{array}{c} D_\mathcal{B} \\ * \\ \end{array}\right] \quad \text{with } \ D_\mathcal{B} \in \mathrm{GL}_n(\mathbb{Z}).$$
In that case, a projective $kG$-module is determined by the projection of its character on $\mathcal{B}$.

\smallskip

Now assume that $G = \bfG^F$ is a finite reductive group. Recall that the \emph{unipotent characters} are the irreducible constituents of the virtual characters
$$ R_w = \sum_{i \in \mathbb{Z}} (-1)^i [H_c^i(\bfX(w),K)] = [R\Gamma_c(\bfX(w),K)]$$
for $w \in W$. The \emph{unipotent blocks} are the $\ell$-blocks containing at least one unipotent character,
and the irreducible characters in the union of unipotent blocks are the constituents of 
$$ R_w(\theta) = \sum_{i \in \mathbb{Z}} (-1)^i [H_c^i(\bfY(\dot w),K)_\theta] = [R\mathrm{Hom}_G\big(\theta,R\Gamma_c(\bfY(w),K)\big)]$$
for $w \in W$ and $\theta \in \mathrm{Irr}_\ell \bfT^{\dot w F}$ an irreducible $\ell$-character of $\bfT^{\dot w F}$.

\begin{exercise}\label{exo:drwtheta}
Given $\theta \in \mathrm{Irr}_\ell \bfT^{\dot w F}$, show that $d(R_w(\theta)) = R_w(d(\theta)) = d(R_w)$ in
$K_0(kG\sfmod)$.\end{exercise}

Recall that $\ell$ is said to be very good for $\bfG$ if $\ell$ is good for every simple component of $\bfG$ and $\ell \nmid m+1$ for every component of $\bfG$ of type $A_m$. A sufficient condition for $\ell$ to be very good is $\ell > h$ where $h$ is the Coxeter number of $\bfG$.

\begin{theorem}[Geck--Hiss, Geck \cite{GecHis91,Gec93}]\label{thm:basicset}
Assume that $\ell$ is very good. Then the unipotent characters form a basic set for the union  $\mathcal{E}_\ell(G,1)$ of unipotent blocks. 
\end{theorem}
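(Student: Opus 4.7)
The plan is built around two complementary inputs. The main lever is Exercise \ref{exo:drwtheta}, which gives $d(R_w(\theta))=d(R_w)$ for every $\theta\in\mathrm{Irr}_\ell \bfT^{\dot w F}$. Since $R_w=R_w(1)$ is by definition a $\mathbb{Z}$-linear combination of unipotent characters, this identity forces $d(R_w(\theta))$ to lie in the sublattice $\mathcal{L}:=\sum_{\chi\in\mathrm{Uch}(G)}\mathbb{Z}\cdot d(\chi)\subset K_0(kG\sfmod)$ for every such $\theta$. The second input is the description of $\mathcal{E}_\ell(G,1)$ recalled just before the statement: every $\chi\in\mathrm{Irr}_K G\cap\mathcal{E}_\ell(G,1)$ arises as an irreducible constituent of some $R_w(\theta)$ with $\theta\in\mathrm{Irr}_\ell \bfT^{\dot w F}$.

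First I would prove the spanning property $\mathcal{L}=K_0(kG\sfmod\,|_{\mathcal{E}_\ell(G,1)})$. Since $d$ is block-compatible and surjective, it is enough to show $d(\chi)\in\mathcal{L}$ for every $\chi\in\mathrm{Irr}_KG\cap\mathcal{E}_\ell(G,1)$. Passing to the virtual character lattice $\Lambda:=\bigoplus_{\chi\in\mathrm{Irr}_KG\cap\mathcal{E}_\ell(G,1)}\mathbb{Z}\chi$, the right framework is to prove that the sublattice $\Lambda_{\mathrm{DL}}\subset\Lambda$ generated by all $R_w(\theta)$ (for $w\in W$ and $\theta\in\mathrm{Irr}_\ell \bfT^{\dot w F}$) satisfies $d(\Lambda_{\mathrm{DL}})=d(\Lambda)$. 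Combined with Exercise \ref{exo:drwtheta}, this gives $d(\Lambda)\subset\mathcal{L}$, hence the spanning. The equality $d(\Lambda_{\mathrm{DL}})=d(\Lambda)$ is an integral refinement of the classical orthogonality/uniformity relations for Deligne--Lusztig characters.

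Second, I would establish the basis property by checking the cardinality equality $|\mathrm{Uch}(G)|=|\mathrm{IBr}_\ell(G,1)|$. The unipotent characters are parametrized by Lusztig's combinatorial data, independent of $\ell$; under the very-good-$\ell$ hypothesis, the $\ell$-modular irreducibles in unipotent blocks admit a matching parametrization coming from the local structure of unipotent blocks (Broué--Michel). This cardinality match, together with the spanning step, upgrades $d(\mathrm{Uch}(G))$ from a spanning set to a $\mathbb{Z}$-basis.

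The main obstacle I expect is precisely the integrality hidden in Step 1: over $\mathbb{Q}$, the statement that the classes of $R_w(\theta)$ cover (the $d$-image of) all irreducible characters in $\mathcal{E}_\ell(G,1)$ reduces to Deligne--Lusztig orthogonality, but upgrading to a $\mathbb{Z}$-statement requires careful control of denominators involving the orders $|\bfT^{\dot w F}|$ and the structural primes of $\bfG$. The very-good-$\ell$ assumption is exactly what guarantees that these denominators are coprime to $\ell$ after reduction, so that a $\mathbb{Q}$-spanning statement can be transported to the $\mathbb{Z}$-spanning one at the level of the decomposition map.
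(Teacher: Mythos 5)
The paper does not reprove this theorem: it is stated as a citation to Geck--Hiss and Geck. So I will evaluate your proposal on its mathematical merits rather than against a proof in the text.

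There is a genuine gap in your Step 1. The intermediate claim $d(\Lambda_{\mathrm{DL}})=d(\Lambda)$ is false in general, even under the very-good-$\ell$ hypothesis, and the $\mathbb{Z}$-integrality it requires is not merely a ``denominator'' issue that the hypothesis on $\ell$ repairs. Take $G=\mathrm{SL}_2(q)$ with $\ell\mid q+1$, $\ell$ odd (so $\ell$ is very good). Then $R_1=1+\mathrm{St}$, $R_s=1-\mathrm{St}$, and every $R_s(\theta)$ with $\theta$ an $\ell$-character has the same image under $d$ as $R_s$ by Exercise \ref{exo:drwtheta}. Using the decomposition numbers from the paper's own Example (a) of \S\ref{sec:smallrank}, namely $d(1)=[1_k]$ and $d(\mathrm{St})=[1_k]+[\mathrm{St}_k]$, one computes $d(R_1)=2[1_k]+[\mathrm{St}_k]$ and $d(R_s)=-[\mathrm{St}_k]$, so
$$ d(\Lambda_{\mathrm{DL}}) = \mathbb{Z}\cdot d(R_1)+\mathbb{Z}\cdot d(R_s) = 2\mathbb{Z}[1_k]\oplus\mathbb{Z}[\mathrm{St}_k], $$
which has index $2$ in $d(\Lambda)=\mathcal{L}=\mathbb{Z}[1_k]\oplus\mathbb{Z}[\mathrm{St}_k]$. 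The underlying reason is structural: the $\mathbb{Z}$-span of the $R_w$ is only a finite-index sublattice of the lattice spanned by unipotent characters (it is a $\mathbb{Q}$-basis via uniform projection, not a $\mathbb{Z}$-basis), and passing through $d$ does not collapse that index in general. Thus ``$d(\Lambda)\subset\mathcal{L}$'' cannot be reached by squeezing it between $d(\Lambda_{\mathrm{DL}})$ and $\mathcal{L}$.

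The Geck--Hiss proof follows a different route. For the spanning direction, one does not work with the $R_w(\theta)$ alone but rather with Lusztig's Jordan decomposition of characters: for $\chi\in\mathcal{E}(G,t)$ with $t$ a nontrivial $\ell$-element of $G^*$, one writes $\chi=\pm R_{\bfL}^{\bfG}(\widehat{t}\cdot\psi)$ with $\bfL$ a Levi in duality with $C_{\bfG^*}(t)$ and $\psi$ a unipotent character of $L$; since $\widehat{t}$ is a linear character of $\ell$-power order, $d(\widehat{t}\cdot\psi)=d(\psi)$, and compatibility of Lusztig induction with the decomposition map together with the fact that $R_{\bfL}^{\bfG}$ sends unipotent characters to $\mathbb{Z}$-combinations of unipotent characters of $G$ gives $d(\chi)\in\mathcal{L}$. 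The linear-independence direction is proved by a direct character-theoretic argument using the $\ell$-independence of Green functions, not by a counting argument. Your Step 2 via the cardinality $|\mathrm{Uch}(G)|=|\mathrm{IBr}_\ell(G,1)|$ would also be delicate to set up without circularity: the equality is true, but establishing it directly is comparable in difficulty to the basic set theorem itself, and in the original Geck--Hiss development it is a consequence rather than an input.
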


More generally, under the same assumption on $\ell$, given $s \in G^*$ a semisimple $\ell$-element, the Lusztig series $\mathcal{E}(G,s)$ is a basic set for the union of blocks $\mathcal{E}_\ell(G,s) := \bigcup \mathcal{E}(G,st)$ where $t$ runs over the set of semisimple $\ell$-elements of $C_{G^*}(s)$. 

\subsection{Decomposition numbers and Deligne--Lusztig characters}
This section is the core of this chapter and contains recent results on $\ell$-decomposition numbers for unipotent blocks when $\ell$ is not too small (see for example \cite{Dud13,DudMal15}). We start by listing the different tools and assumptions we are going to use to determine these numbers.

\smallskip

\noindent {\bf(HC)} The Harish-Chandra restriction/induction of a projective $\Lambda G$-module remains projective (this follows easily from the biadjointness and the exactness of the functors).

\smallskip

\noindent {\bf(Uni)} When $\ell$ is very good, the restriction of the decomposition matrix to the set of unipotent characters (a basic set by Theorem \ref{thm:basicset}), ordered by increasing $a$-function, has unitriangular shape. This is only conjectural (see \cite[Conj. 3.4]{GecHis96}). 

\smallskip

\noindent {\bf(Hecke)} The decomposition matrix of the Hecke algebra $\mathrm{End}_{\calO G} (\calO G/B)$ (corresponding to the unipotent principal series) embeds in the decomposition matrix of the finite group $G$. 

\smallskip

\noindent {\bf(Reg)} If $\theta \in \mathrm{Irr}_\ell \bfT^{\dot w F}$ is an ordinary irreducible $\ell$-character of $\bfT^{\dot w F}$ in general position (\emph{i.e.} $(-1)^{\ell(w)} R_w(\theta)$ is irreducible) then
 $$ \big\langle e([P]); (-1)^{\ell(w)} R_w \big\rangle_K \geq 0$$
for every projective $kG$-module $P$. This gives a non-trivial information since $(-1)^{\ell(w)}R_w$ is only a virtual unipotent character, even though $(-1)^{\ell(w)} R_w(\theta)$ is irreducible.

\begin{proof} Using the fact that $d(R_w(\theta)) = d(R_w)$ when $\theta \in \mathrm{Irr}_\ell \bfT^{\dot w F}$  (see Exercise \ref{exo:drwtheta}) and Brauer reciprocity we have
$$\begin{aligned}
\big\langle e([P]); R_w \big\rangle_K = &\,  \big\langle [P]; d(R_w) \big\rangle_k \\
 = &\, \big\langle [P]; d(R_w(\theta)) \big\rangle_k \\
 = &\, \big\langle e([P]); R_w(\theta) \big\rangle_K.
\end{aligned} $$
Since $(-1)^{\ell(w)} R_w(\theta)$ is assumed to be irreducible, the sign of this scalar product coincides with $(-1)^{\ell(w)}$. 
\end{proof}

Recall from Proposition \ref{prop:ywperfect} that the complex $R\Gamma_c(\bfY(\dot w),k)$ is perfect as a complex of $kG$-modules. Let $P_w = [R\Gamma_c(\bfY(\dot w),k)]$ denote its class in $K_0(kG\sfproj)$. Then  
\begin{equation}\label{eq:pw}
e(P_w) = [R\Gamma_c(\bfY(\dot w),K)] = \sum_{i \in \mathbb{Z}} (-1)^i [H_c^i(\bfY(\dot w),K)] = R_w +\,\text{non-unip. chars}.
\end{equation}
The following property is a character-theoretic consequence of Bonnaf\'e--Rouquier's theorem \ref{thm:mainBR}. 
It is particularly suited for determining decomposition numbers on cuspidal $kG$-modules, as we will see in the examples of the next section.
\smallskip

\noindent {\bf(DL)} Given a simple $kG$-module $S$, let $w \in W$ be minimal (for the Bruhat order) such that $\langle P_w,[S] \rangle \neq 0$ (\emph{i.e.} $[P_S]$ occurs in $P_w$). Then $\langle (-1)^{\ell(w)} P_w,[S] \rangle > 0$. 

\begin{proof} 
Let $S$ be a simple $kG$-module and $w\in W$ be such that $\langle (-1)^{\ell(w)} P_w,[S] \rangle < 0$. Take $C= 0 \longrightarrow P_0 \longrightarrow \cdots \longrightarrow P_{\ell(w)} \longrightarrow 0$ to be a reduced representative of $[R\Gamma_c(\bfY(\dot w),k)]$ with each $P_i$ projective. By assumption, there exists $i > 0$ such that $P_S$ is a direct summand of $P_i$. Taking $i$ to be maximal, we deduce that 
$$\begin{aligned}
0 \neq  &\,  \mathrm{Hom}_{\sfHo^b(kG\sfmod)} (C,S[-i-\ell(w)]) \\ 
= & \,  \mathrm{Hom}_{\sfD^b(kG\sfmod)} (R\Gamma_c(\bfY (\dot w),k),S[-i-\ell(w)]) \\
= & \,H^{i+\ell(w)} \big(R\mathrm{Hom}_{kG} (R\Gamma_c(\bfY (\dot w),k),S)\big). \end{aligned}$$
By Theorem \ref{thm:mainBR} if $v \leq w$ is minimal for the property that 
the complex \linebreak$R\mathrm{Hom}_{kG} (R\Gamma_c(\bfY (\dot v),k),S)$ is non-zero then $R\Gamma_c(\bfY (\dot v),k)$ has
a representative such that $P_S$ occurs only in middle degree. Therefore $\langle (-1)^{\ell(v)} P_v;[S] \rangle > 0$ and $w$ cannot be minimal for the property that $\langle P_w,[S] \rangle \neq 0$.
\end{proof}

\subsection{Examples in small rank}\label{sec:smallrank}
We discuss here three examples of small-rank finite reductive groups where the previous tools allow a complete determination of the (unipotent part of the) decomposition matrix. Here the assumption (Uni) was shown to hold by Geck \cite{Gec91} for finite linear and unitary groups  and  by White \cite{Whi90} for $\mathrm{Sp}_4(q)$.
\smallskip

\noindent (a) We start with $G = \mathrm{SL}_2(q)$. Its order is $|G| = q(q-1)(q+1)$. Assume that $\ell$ is an odd prime number with $\ell \nmid q$ and $\ell \mid q^2-1$. Then the principal block contains the two unipotent characters $1$ and $\mathrm{St}$, together with some non-unipotent characters. Using (Uni) and (Hecke) we have
$$ D = \left[\begin{array}{cc} 1 & \cdot \\ \alpha & 1 \\ * & * \\ \vdots & \vdots \end{array} \right]
\quad \text{with} \ \ \alpha = \left\{\hskip-1.3mm \begin{array}{l} 0 \text{ if } \ell \nmid q+1 \\
1 \text{ otherwise.} \end{array}\right.$$
Here the value of $\alpha$ can be obtained by (Hecke). Indeed, if $\ell \nmid q+1$ then $\mathrm{End}_{kG}(kG/B)$ is semisimple and $ kG/B = k \oplus M$ with $M$ a simple $kG$-module. Therefore $\alpha = 0$ in this case. Now if $\ell \mid q+1$ then $R_T^G(k) = kG/B$ is a PIM with character $1 + \mathrm{St}$, hence $\alpha = 1$. 

\smallskip

\noindent (b) Let $G = \mathrm{Sp}_4(q)$, whose order is $q^4(q^2-1)(q^4-1) = q^4 (q-1)^2(q+1)^2(q^2+1)$. 
We will denote by $s$ and $t$ the simple reflections in $W$ corresponding to the short and long simple root respectively. Assume in this example that $\ell$ is odd and $\ell \mid q+1$. Then the characters in the principal block are
$$\{\underbrace{1,\mathrm{St},\rho_1,\rho_2}\tikzmark{princ-series},\tikzmark{theta10}\theta_{10},\text{non-unipotent}\}$$
\begin{tikzpicture}[remember picture,overlay]
\draw[->,>=latex]
  ([shift={(-25pt,-12pt)}]pic cs:princ-series) |- 
  ++(-10pt,-8pt) 
  node[left,text width=3cm] 
    {\footnotesize in the principal series
    };
\draw[->,>=latex]
  ([shift={(5pt,-7pt)}]pic cs:theta10) |- 
  ++(10pt,-13pt) 
  node[right,text width=2cm] 
    {\footnotesize cuspidal
    };
\end{tikzpicture}
\vskip 3mm
There is another unipotent character, denoted by $\chi$, which under the assumptions on $\ell$ is of defect zero and this forms a block by itself. We have $R_1 = R_T^G(K) = 1 + \mathrm{St} + \rho_1 + \rho_2 + 2\chi$, giving all the unipotent characters lying in the principal series. The first approximation of the decomposition matrix is given by (Uni)
$$ D = \left[\begin{array}{ccccc} 1 & \cdot & \cdot &\cdot &\cdot  \\  
* & 1 & \cdot &\cdot &\cdot \vphantom{\rho_1}  \\
* &  * & 1 & \cdot &\cdot \vphantom{\rho_2}  \\
* & * &  * & 1 &\cdot \vphantom{\theta_{10}}  \\
* & * & * &  * & 1 \vphantom{\mathrm{St}}  \\\hline
* & * & * & * & * \\
\vdots & \vdots & \vdots & \vdots & \vdots \\
\end{array} \right] \begin{array}{l} 1  \\  
\rho_1 \\
\rho_2 \\
\theta_{10}  \\
\mathrm{St}  \\
\vphantom{*} \\
\vphantom{\vdots} \\
\end{array} $$
A (Hecke) argument shows that $R_T^G(k) = kG/B$ is indecomposable. Since it is projective, it gives the first column of the decomposition matrix. 

\smallskip

The second and third column can be obtained by (HC). First, let $L = \mathrm{GL}_2(q) \subset \mathrm{Sp}_4(q)$ and $P$ be the PIM of $\mathrm{GL}_2(q)$ such that $e([P]) = \mathrm{St}_{\mathrm{GL}_2(q)} +$ non-unipotent characters (see Example (a)). Let $b \in \calO G$ be the block idempotent corresponding to the principal block. Then $R_L^G(P)$ is projective and its character, cut by the block, is given by
$$ \begin{aligned}
e(bR_L^G([P])) = bR_L^G(e([P])) &\, = bR_L^G(\mathrm{St}_{\mathrm{GL}_2(q)}) + bR_L^G(\text{non-unipotent}) \\
&\, = \rho_1 + \mathrm{St} + \text{non-unipotent.} \end{aligned}
$$
Similarly with the Levi subgroup $L' = \mathrm{SL}_2(q) \times \mathbb{F}_q^\times \subset \mathrm{Sp}_4(q)$
we get
$$ e(bR_{L'}^G([P])) = bR_{L'}^G(e([P]))  = \rho_2 + \mathrm{St} + \text{non-unipotent}. $$
Consequently the unipotent part of the decomposition matrix is 
$$D_\text{unip} = \left[\begin{array}{ccccc} 1 & \cdot & \cdot &\cdot &\cdot  \\  
1 & 1 & \cdot &\cdot &\cdot  \\
1 &  \cdot & 1 & \cdot &\cdot  \\
\cdot & \cdot &  \cdot & 1 &\cdot \vphantom{\theta_{10}}  \\
 1 & \alpha_1 & \alpha_2 &  \beta & 1  \\
\end{array} \right]$$
with $\alpha_1,\alpha_2 \leq 1$. 

\smallskip

We use (Reg) to determine the exact value of $\alpha_1$ and $\alpha_2$. Since $|\bfT^{w_0 F}| = (q+1)^2$ there exists a non-trivial $\ell$-character $\theta \in \mathrm{Irr}_K\bfT^{w_0 F}$. Furthermore, if $(q+1)_\ell > 3$ then one can choose $\theta$ to be lying outside the reflection hyperplanes (in the reflection representation of $W$ on the group of characters of $\bfT$). In that case it is in general position, and (Reg) yields $\langle e([Q]);R_{w_0} \rangle \geq 0$ for every projective $kG$-module $Q$. Now $R_{w_0} = 1+\mathrm{St}-\rho_1-\rho_2 -2\theta_{10}$, so if we apply this to the projective indecomposable modules $Q_2$, $Q_3$ and $Q_4$ corresponding to the second, third and fourth columns of the decomposition matrix we get
$$ \begin{aligned}
& \langle e([Q_2]);R_{w_0} \rangle = \langle \rho_1 + \alpha_1 \mathrm{St}; 1+\mathrm{St}-\rho_1-\rho_2 -2\theta_{10}\rangle = -1+\alpha_1 \\
& \langle e([Q_3]);R_{w_0} \rangle = \langle \rho_2 + \alpha_2 \mathrm{St}; 1+\mathrm{St}-\rho_1-\rho_2 -2\theta_{10}\rangle = -1+\alpha_2 \\
& \langle e([Q_4]);R_{w_0} \rangle = \langle \theta_{10}+\beta \mathrm{St}; 1+\mathrm{St}-\rho_1-\rho_2 -2\theta_{10}\rangle = -2+\beta 
\end{aligned}$$
which gives $\alpha_1,\alpha_2 \geq 1$ (and hence $\alpha_1 = \alpha_2 =1$) and $\beta \geq 2$.

\smallskip

The final ingredient is (DL). To use it we decompose each virtual projective module $P_w = [R\Gamma_c(\bfY(\dot w),k)]$ on the basis of PIMs. To this end, recall from \eqref{eq:pw} that $e(P_w) = R_w + \text{non-unipotent characters}$. We have
$$\begin{aligned}
e(bP_1) = &\, 1+ \mathrm{St}+\rho_1 + \rho_2 + \text{non-unip.} = e([Q_1]) \\
e(bP_s) = &\, 1- \mathrm{St}+\rho_1 - \rho_2 + \text{non-unip.} \\
	= &\, 1+\mathrm{St}+\rho_1 + \rho_2 - 2(\rho_2 + \mathrm{St})+ \text{non-unip.} \\
	= &\, e([Q_1]-2[Q_3]) \\
e(bP_t) = &\, 1- \mathrm{St}-\rho_1 + \rho_2 + \text{non-unip.} \\
	= &\, 1+\mathrm{St}+\rho_1 + \rho_2 - 2(\rho_1 + \mathrm{St})+ \text{non-unip.} \\
	= &\, e([Q_1]-2[Q_2]) \\	
e(bP_{st}) = &\, 1+ \mathrm{St}+\theta_{10} + \text{non-unip.} \\
	= &\, (1+\mathrm{St}+\rho_1 + \rho_2) - (\rho_1 + \mathrm{St})- (\rho_2 + \mathrm{St})+ (\theta_{10}+2\mathrm{St})+ \text{non-unip.} \\
	= &\, e([Q_1]-[Q_2]-[Q_3]) + \theta_{10}+2\mathrm{St}+\text{non-unip.}  \\	
\end{aligned} $$
Since $\ell(st)=2$ we deduce from (DL) that $\theta_{10}+2\mathrm{St}+\text{non-unip.}$ must be a non-negative combination of $e([Q_4])$ and $e([Q_5])$, since $Q_4$ and $Q_5$ do not appear in the decomposition of $P_w$ for $w<st$. Writing 
$$\theta_{10}+2\mathrm{St} + \text{non-unip.} = e\big([Q_4]+(2-\beta)[Q_5]\big)$$
we deduce that $\beta \leq 2$, which forces $\beta = 2$. We conclude that the unipotent part of the $\ell$-decomposition matrix (when $\ell$ is odd and $(q+1)_\ell >3$) is given by
$$D_\text{unip} = \left[\begin{array}{ccccc} 1 & \cdot & \cdot &\cdot &\cdot  \\  
1 & 1 & \cdot &\cdot &\cdot  \\
1 &  \cdot & 1 & \cdot &\cdot  \\
\cdot & \cdot &  \cdot & 1 &\cdot \\
 1 & 1 & 1 &  2 & 1  \\
\end{array} \right].$$

\begin{exercise}
We follow the notation of \cite[\S13]{Car} for unipotent characters. 
Complete the determination of the $\ell$-decomposition matrix of $G = G_2(q)$ when $\ell \mid q+1$ and $\ell > 5$, which is given by
$$D_\text{unip} = \left[\begin{array}{cccccc} 1 & \cdot & \cdot &\cdot &\cdot &\cdot  \\  
1 & 1 & \cdot &\cdot &\cdot&\cdot \vphantom{\rho_1}  \\
1 &  \cdot & 1 & \cdot &\cdot &\cdot  \vphantom{\rho_2}\\
\cdot & \cdot &  \cdot & 1 &\cdot  &\cdot  \vphantom{G_2[1]} \\
\cdot & \cdot &  \cdot & \cdot &1  &\cdot  \vphantom{G_2[-1]} \\
 1 & 1 & 1 &  \alpha & \beta &1  \vphantom{\mathrm{St}} \\
\end{array} \right]
\begin{array}{l} 1 \\ \phi_{1,3}'' \\ \phi_{1,3}' \\ G_2[1] \\ G_2[-1]   \\ \mathrm{St}
\end{array} 
$$
with $\alpha,\beta \geq 2$ (see \cite{His89}). To this end, use (DL) with the following values of the Deligne--Lusztig characters, cut by the principal block $b$,
$$\begin{array}{l|l}
w & bR_w \\\hline
1 & 1+\phi_{1,3}'+\phi_{1,3}'' + \mathrm{St} \\
s,tst & 1-\phi_{1,3}'+\phi_{1,3}'' - \mathrm{St} \\
t,sts & 1+\phi_{1,3}'-\phi_{1,3}'' - \mathrm{St} \\
st,ts & 1 + G_2[-1] +\mathrm{St} \\
stst & 1 + G_2[1] +\mathrm{St} 
\end{array}$$
Here $s$ and $t$ denote the simple reflections in the Weyl group of type $G_2$ corresponding to the short and long simple root respectively.
\end{exercise}

\noindent (c) Let $G=\mathrm{SU}_5(q) = $``$\mathrm{SL}_5(-q)$''. Its order is given by 
\begin{tikzpicture}[remember picture,overlay]
\draw[->,>=latex]
  ([shift={(-20pt,-12pt)}]pic cs:defect) |- 
  ++(10pt,-10pt) 
  node[right,text width=3cm] 
    {\footnotesize largest defect
    };
\end{tikzpicture}$$ \begin{aligned}
|G| = &\, q^{10}((-q)^5-1)((-q)^4-1)((-q)^3-1)((-q^2)-1) \\
 = &\, q^{10}(q-1)^3 \underbrace{(q+1)^4}\tikzmark{defect}(q^2+1)(q^2-q+1)(q^4+1). \end{aligned}$$ 
\vskip1cm
We will work again in the case where $\ell \mid q+1$. In addition we will assume that $\ell > 5$ to ensure the existence of $\ell$-characters in regular position. As in the case of linear groups, the unipotent characters of $
\mathrm{SU}_n(q)$ are parametrized by partitions of $n$. Here, they are $1 = \rho_{(5)}, \rho_{(41)}, \rho_{(32)}, 
\rho_{(31^2)}, \rho_{(2^21)}, \rho_{(21^3)}, \rho_{(1^5)} = \mathrm{St}$ and are all contained in the principal $\ell$-block. By (Uni) the unipotent part of the decomposition matrix has the following shape
$$ D_{\text{unip}} = \left[ \begin{array}{ccccccc}
1 &\cdot &\cdot&\cdot&\cdot&\cdot&\cdot \\
* & 1 &\cdot &\cdot&\cdot&\cdot&\cdot \\
* & * & 1 &\cdot &\cdot&\cdot&\cdot \\
* &* &* &1 &\cdot &\cdot&\cdot \vphantom{1^2}\\
* &* &* &* &1 &\cdot &\cdot  \vphantom{2^2}\\
* &* &* &* &* &1 &\cdot  \vphantom{1^3}\\
* &* &* &* &* &* &1  \vphantom{1^5} \\ \end{array}\right]
\begin{array}{l}
5 \\ 41 \\ 32 \\ 31^2 \\ 2^21 \\ 21^3 \\ 1^5 \\ \end{array} $$
By (Hecke) the projective $kG$-module $R_T^G(k)$ decomposes as a direct sum of two PIMs. 
The corresponding decomposition of characters is $R_1 = (\rho_{(5)} + \rho_{(31^2)} + \rho_{(2^21)})+
(\rho_{(32)}+\rho_{(31^1)} + \rho_{(1^5)})$ which gives the first and third columns of the decomposition matrix. 

\smallskip

As in the previous examples, other columns can be determined by Harish-Chandra induction of projective modules of various Levi subgroups. The Levi subgroup  $L \subset \mathrm{GU}_3(q)\times \mathbb{F}_{q^2}^\times$ of type ${}^2A_2$ has two interesting PIMs $P'$ and $P''$ with respective characters
$$ \begin{aligned}
e([P']) = &\, \rho_{(21)} + 2 \rho_{(1^3)} + \text{non-unip.} \\
e([P'']) = &\, \rho_{(1^3)} + \text{non-unip.}
\end{aligned}$$ 
yielding by (HC) two projective $kG$-modules $R_L^G(P')$ and $R_L^G(P'')$ with characters
$$ \begin{aligned}
e(R_L^G([P'])) = &\, \rho_{(41)} + \rho_{(21^2)} + 2 (\rho_{(31^2)}+\rho_{(2^21)} + \rho_{(1^5)}) + \text{non-unip.} \\
e(R_L^G([P''])) = &\, \rho_{(31^2)}+\rho_{(2^21)} + \rho_{(1^5)} + \text{non-unip.}
\end{aligned}$$ 
Note that these projective modules might not be indecomposable.

\smallskip

If $\ell > 5$ there exists an $\ell$-character $\theta$ of $\bfT^{w_0 F}$ in general position and (Reg) applies. In other words, $\langle e([P]);R_{w_0}\rangle \geq 0$ for every projective $kG$-module $P$. With 
$$R_{w_0} = \rho_{(5)} +4\rho_{(41)} + 5\rho_{(32)}- 6\rho_{(31^2)} +5 \rho_{(2^21)} -4 \rho_{(21^2)} + \rho_{(1^5)}$$
we have $\langle e(R_L^G([P']));R_{w_0}\rangle = \langle e(R_L^G([P'']));R_{w_0}\rangle = 0$. Therefore 
$\langle e([P]);R_{w_0}\rangle = 0$ for every direct summand $P$ of $R_L^G(P')$ and $R_L^G(P'')$. From this we deduce that 
\begin{itemize}
\item[$\bullet$] $R_L^G(P')$ is indecomposable,
\item[$\bullet$] $R_L^G(P'') \simeq Q \oplus R_L^G(P')^{\oplus m}$ with $m =0,1,2$ and $Q$ indecomposable. But $m \neq 0$ is impossible (the PIM $R_L^G(P')$ cannot lie in the Harish-Chandra series of both $(L,P')$ and $(L,P'')$). 
\end{itemize}
This gives the second and fourth column of the decomposition matrix. As in the previous example, application of (Reg) to the fifth and sixth column of the decomposition matrix gives lower bounds for the decomposition numbers and $D_{\text{unip}}$ has the following shape
$$ D_{\text{unip}} = \left[ \begin{array}{ccccccc}
1 &\cdot &\cdot&\cdot&\cdot&\cdot&\cdot \\
\cdot & 1 &\cdot &\cdot&\cdot&\cdot&\cdot \\
\cdot & \cdot & 1 &\cdot &\cdot&\cdot&\cdot \\
1 &2 &1 &1 &\cdot &\cdot&\cdot \vphantom{1^2}\\
1 &2 &\cdot &1 &1 &\cdot &\cdot  \vphantom{2^2}\\
\cdot &1 &\cdot &\cdot &\alpha &1 &\cdot  \vphantom{1^3}\\
\cdot &2 &1 &1 &\beta &\gamma &1  \vphantom{1^5} \\ \end{array}\right] \quad \text{with} \ \gamma \geq 4 \text{ and } 5-4\alpha+\beta \geq 0.$$

The two missing columns correspond to projective covers of cuspidal simple $kG$-modules. They can be obtained using (DL) from the decomposition of each $P_w$ on the basis of PIMs. The minimal representatives of the $F$-conjugacy classes of $W \simeq \mathfrak{S}_5$ are ordered as follows under the Bruhat order: 
$1 \leq s_1,s_2 \leq s_1 s_2 ,s_2s_3s_3 \leq s_1 s_2 s_3 s_2 \leq w_0$. Let $Q_i$, $i=1,\ldots,7$, be the PIMs ordered as the columns of the decomposition matrix. Then
$$\begin{aligned}
e(P_{1}) = & \, e([Q_1]+[Q_3]) \ \text{ (already computed)} \\
e(P_{s_1}) =&\, \rho_{(5)} - \rho_{(32)} + \rho_{(2^21)} -\rho_{(1^5)} + \text{non-unip.} \\
= &\, e([Q_1]-[Q_3])  \\
e(P_{s_2}) =&\, \rho_{(5)} - \rho_{(41)} + \rho_{(32} - \rho_{(2^21)} -\rho_{(21^3)}- \rho_{(1^5)} + \text{non-unip.} \\
= &\, e([Q_1]-[Q_2]+[Q_3]).  \end{aligned}$$
Note that in the virtual module $P_{s_2}$, the module $Q_2$ appears with negative multiplicity. This is consistent with (DL) which ensures that since it does not occur in $P_1$, it must occur with a multiplicity whose sign is given by $(-1)^{\ell(s_2)}$.
$$\begin{aligned}
e(P_{s_1s_2}) = & \,  \rho_{(5)} - \rho_{(41)}  - \rho_{(2^21)} +\rho_{(21^3)}+ \rho_{(1^5)} + \text{non-unip.} \\
= &\, e\big([Q_1]-[Q_3] + [Q_5]+(2-\alpha)[Q_6] + (3-\beta-\gamma(2-\alpha))[Q_7]\big).  \\
\end{aligned}$$
Since $\ell(s_1s_2) = 2$, we must have $\alpha \leq 2$ and $3-\beta \geq \gamma(2-\alpha) \geq 4(2-\alpha)$. But recall that $5-4\alpha+\beta \geq 0$ which we can rewrite as $4(2-\alpha) \geq 3-\beta$. 
This forces $\beta = 4\alpha-5$, hence $\alpha =2$ (otherwise $\beta$ would be negative) and therefore $\beta = 3$. 
Consequently $ P_{s_1s_2} = [Q_1]-[Q_3] + [Q_5]$ and $Q_4$, $Q_5$, $Q_7$ have yet to occur. 
$$\begin{aligned}
e(P_{s_2s_3s_2}) = & \,  \rho_{(5)} +2\rho_{(41)} +\rho_{(32)} - \rho_{(2^21)} +2\rho_{(21^3)}- \rho_{(1^5)} + \text{non-unip.} \\
= &\, e([Q_1]+2[Q_2]+[Q_3] -\tikzmark{q4}6 [Q_4]) 
\begin{tikzpicture}[remember picture,overlay]
\draw[->,>=latex]
  ([shift={(-6pt,0pt)}]pic cs:q4) |- 
  ++(10pt,-13pt) 
  node[right,text width=3cm] 
    {\footnotesize$\ell(s_2s_3s_2) = 3$
    };
\end{tikzpicture} \\[15pt]
e(P_{s_1 s_2s_3s_2}) = & \,  \rho_{(5)} +\rho_{(41)} -\rho_{(32)} - \rho_{(2^21)} -\rho_{(21^3)}+ \rho_{(1^5)} + \text{non-unip.} \\
= &\, e([Q_1]+[Q_2]-[Q_3] -2 [Q_4]-2[Q_5]+2[Q_6]+(8-2\gamma)[Q_7]) 
\end{aligned}$$
which, since $\ell(s_1 s_2s_3s_2)$ is even forces $8-2\gamma \geq 0$ and therefore $\gamma =4$. We obtain finally
$$ D_{\text{unip}} = \left[ \begin{array}{ccccccc}
1 &\cdot &\cdot&\cdot&\cdot&\cdot&\cdot \\
\cdot & 1 &\cdot &\cdot&\cdot&\cdot&\cdot \\
\cdot & \cdot & 1 &\cdot &\cdot&\cdot&\cdot \\
1 &2 &1 &1 &\cdot &\cdot&\cdot \vphantom{1^2}\\
1 &2 &\cdot &1 &1 &\cdot &\cdot  \vphantom{2^2}\\
\cdot &1 &\cdot &\cdot &2 &1 &\cdot  \vphantom{1^3}\\
\cdot &2 &1 &1 &3 &4 &1  \vphantom{1^5} \\ \end{array}\right] .$$

\subsection{Example in $\mathrm{GU}_n(q)$}
The methods described in \S\ref{sec:smallrank} and used in the example of $\mathrm{SU}_5(q)$ in the previous section have shown powerful to determine completely the decomposition matrices for small-rank groups, up to the $42\times42$ decomposition matrix of $\mathrm{SU}_{10}(q)$ (see for example \cite{Dud13,DudMal15,DudMal16}). We give here a general example of a decomposition number that can be computed using Deligne--Lusztig characters. For the sake of simplicity we have chosen again the case of a group of  type ${}^2A$, although the proof can be  adapted to other classical groups.

\begin{theorem}[Dudas--Malle \cite{DudMal15}] Assume that $G=\mathrm{GU}_n(q)$, $\ell\mid q+1$ and $\ell > n$. Then $ d_{(1^n), (21^{n-2})} = n-1$.
\end{theorem}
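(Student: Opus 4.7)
Since $\ell>n$ is very good for $\bfG$ and $\ell\mid q+1$, all unipotent characters $\rho_\lambda$ ($\lambda\vdash n$) of $G=\mathrm{GU}_n(q)$ lie in the principal $\ell$-block; by Theorem~\ref{thm:basicset} they form a basic set, and (Uni) (proved for unitary groups by Geck) makes $D_{\text{unip}}$ lower-unitriangular with respect to the dominance order. Writing $Q_\mu$ for the projective cover of the simple module indexing column $\mu$, we have $e([Q_\mu])\equiv\rho_\mu+\sum_{\lambda\rhd\mu}d_{\lambda\mu}\rho_\lambda$ modulo non-unipotent characters, and because $(21^{n-2})$ is covered only by $(1^n)$ in dominance, the single unknown is
\[
  e([Q_{(21^{n-2})}]) \;\equiv\; \rho_{(21^{n-2})} + d\cdot\rho_{(1^n)} \pmod{\text{non-unip.}}, \qquad d:=d_{(1^n),(21^{n-2})}.
\]
Following \S\ref{sec:smallrank}, the plan is to sandwich $d$ between two opposite-direction inequalities, one from (Reg) at $w_0$ and one from (DL) at a carefully chosen complementary element.

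\textbf{Bound from (Reg).} As $(q+1)_\ell\geq\ell>n$, the twisted torus $\bfT^{\dot w_0 F}$ carries an $\ell$-character $\theta$ in general position, so $(-1)^{\ell(w_0)}R_{w_0}(\theta)$ is irreducible. The (twisted) Green formula for $\mathrm{GU}_n(q)$ combined with a direct Murnaghan--Nakayama computation (in agreement with the $n=5$ coefficients $1,4,5,-6,5,-4,1$ of \S\ref{sec:smallrank}) give the coefficients of $\rho_{(1^n)}$ and $\rho_{(21^{n-2})}$ in $R_{w_0}$ as $+1$ and $-(n-1)$ respectively. Substituting into $(-1)^{\ell(w_0)}\langle e([Q_{(21^{n-2})}]);R_{w_0}\rangle_K\geq 0$ yields
\[
  (-1)^{\ell(w_0)}\bigl(d-(n-1)\bigr) \;\geq\; 0,
\]
which gives one of the two inequalities ($d\geq n-1$ or $d\leq n-1$) according to the parity of $\ell(w_0)=n(n-1)/2$.

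\textbf{Matching bound from (DL).} The opposite inequality comes from applying (DL) to $P_w := [R\Gamma_c(\bfY(\dot w),k)]\in K_0(kG\sfproj)$ for a carefully chosen $w\in W=S_n$ of opposite length parity to $w_0$ --- the generalisation of the element $w=s_1s_2s_3s_2$ of length~$4$ used against $\ell(w_0)=10$ in the $\mathrm{SU}_5$ case. Expanding in the PIM basis and comparing unipotent components,
\[
  e(P_w) \;\equiv\; c\cdot e([Q_{(21^{n-2})}]) + c'\cdot e([Q_{(1^n)}]) + \sum_{\mu\,\rhd\,(21^{n-2})} c_\mu\, e([Q_\mu]) \pmod{\text{non-unip.}},
\]
where the $c_\mu$ are read off the columns of $D_{\text{unip}}$ already determined above $(21^{n-2})$. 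Then (DL) forces $(-1)^{\ell(w)}c\geq 0$; unravelling $c$ in terms of $d$ and the known $c_\mu$ produces precisely the inequality opposite to that of the previous step, pinching $d=n-1$.

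\textbf{Main obstacle.} The chief difficulty is the inductive determination of all columns $[Q_\mu]$ with $(21^{n-2})\lhd\mu$, needed both to read off the coefficients $c_\mu$ above and to verify the Bruhat-minimality of $w$ invoked in the (DL) step. These columns are accessed by Harish--Chandra induction (HC) of PIMs from proper unipotent Levi subgroups of $\mathrm{GU}_n(q)$ --- essentially products of smaller unitary groups $\mathrm{GU}_m(q)$ ($m<n$) with split tori --- relying inductively on the decomposition matrices of those smaller groups together with a careful analysis of how induced PIMs split along Harish--Chandra series. This bookkeeping, rather than the final linear algebra, constitutes the technical heart of the argument.
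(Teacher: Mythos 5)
Your outline identifies some of the same ingredients the paper uses (a general-position $\ell$-character $\theta$ of $\bfT^{w_0F}$, (Reg), (DL), unitriangularity), but it deploys them in a fundamentally different and, as written, incomplete way. You propose to sandwich $d := d_{(1^n),(21^{n-2})}$ between a (Reg) bound at $w_0$ and a matching (DL) bound at some other $w$ of opposite length parity. The (Reg) half is essentially correct (modulo a sign bookkeeping issue: in the $\mathrm{SU}_5$ worked example the inequality that comes out of (Reg) is $\gamma \geq 4 = n-1$ unconditionally, not with a parity-dependent direction, because the coefficient of $\rho_{(1^n)}$ in $R_{w_0}$ is $(-1)^{\ell(w_0)}$ rather than $+1$). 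But the matching (DL) half is never actually produced: you do not exhibit the element $w$, you do not compute the expansion of $P_w$ in the PIM basis, and you yourself flag the inductive determination of all columns $Q_\mu$ with $\mu \rhd (21^{n-2})$ as the ``technical heart'' and leave it unaddressed. That is a genuine gap, not a routine detail: in the $\mathrm{SU}_5$ computation this step required working through several $P_w$'s of increasing length in tandem with the unitriangularity data, and it does not reduce to a single cleverly chosen $w$ of the right parity.

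The paper's actual argument avoids this obstacle entirely and is therefore worth contrasting. It never builds the intermediate columns and never pinches from two sides. Instead, using that $(-1)^{\ell(w_0)}R_{w_0}(\theta)$ is irreducible, it writes $(-1)^{\ell(w_0)}d(R_{w_0}) = \sum_S m_S[S]$ with $m_S \geq 0$ and then shows that \emph{only one} simple module can occur. The mechanism is: (DL) gives the sign of $\langle P_w;[S]\rangle$ at the minimal $w$ for each $S$ in the support, while the orthogonality $\langle e(P_w); R_{w_0}\rangle_K = \langle R_w; R_{w_0}\rangle_K = 0$ for $w \neq w_0$ forces a non-negative sum with a strictly positive term to vanish, pushing the minimal $w$ all the way up to $w_0$; a codimension-one rank count on the lattice spanned by $\{P_v : v \neq w_0\}$ inside $K_0(kG\sfproj)$ then leaves room for at most one surviving simple, which a direct pairing with $[P_{S_{(1^n)}}]$ identifies as $S_{(1^n)}$. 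With $d(R_{w_0}) = (-1)^{\ell(w_0)}[S_{(1^n)}]$ in hand, (Uni) gives $e([Q_{(21^{n-2})}]) = \rho_{(21^{n-2})} + \alpha\rho_{(1^n)} + \text{non-unip.}$, and the vanishing $\langle [Q_{(21^{n-2})}]; d(R_{w_0})\rangle_k = 0$ translated via Brauer reciprocity into a $K$-pairing with $R_{w_0}$ pins $\alpha = (-1)^{\ell(w_0)+1}\langle \rho_{(21^{n-2})}; R_{w_0}\rangle = n-1$ by the hook-length formula. So the orthogonality of Deligne--Lusztig characters, which does not appear in your proposal, is precisely what replaces the unexecuted half of your pinch and makes the inductive Harish--Chandra analysis unnecessary.
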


\begin{proof}
Let $\theta$ be an irreducible $\ell$-character of $\bfT^{w_0 F}$ in general position (such a character exists since $\ell > n$). Then $(-1)^{\ell(w_0)} R_{w_0}(\theta)$ is an irreducible character, and hence
$$ (-1)^{\ell(w_0)} d(R_{w_0}(\theta)) = (-1)^{\ell(w_0)} d(R_{w_0}) = \sum_{S \in \mathrm{Irr}_k G} m_S[S]$$
with each $m_S \geq 0$. Among the simple $kG$-modules $S$ such that $m_S \neq 0$, choose $S_0$ with smallest possible $w \in W$ such that $\langle P_w; [S_0] \rangle \neq 0$. In other words, if $v < w$ then $m_S \langle P_v; [S] \rangle = 0$ for every simple $kG$-module $S$.

\smallskip

If $S$ is any simple $kG$-module such that $m_S \neq 0$, then by minimality of $w$ and (DL) we have
$(-1)^{\ell(w)} \langle P_w; [S] \rangle > 0$. In particular, $(-1)^{\ell(w)} m_S\langle P_w; [S] \rangle \geq 0$ for every simple $kG$-module $S$. Let us write
$$(-1)^{\ell(ww_0)} \langle e(P_w); R_{w_0}\rangle   = (-1)^{\ell(ww_0)} \langle P_w; d(R_{w_0}) \rangle 
 = \sum_{S \in \mathrm{Irr}_kG} (-1)^{\ell(w)}m _S\langle P_w; [S] \rangle.$$
By the orthogonality relation of Deligne--Lusztig characters, this sum is zero unless $w=w_0$. Since all the terms are non-negative, and one term is positive (for $S = S_0$) we deduce that $w=w_0$. In other words, if $[P_S]$ occurs in some $P_v$ for $v \neq w_0$ then $m_S = 0$. Now
$$\begin{aligned}
  \mathrm{rk}_\mathbb{Z} \langle P_v, v \neq w_0\rangle = &\, \#\{F\text{-conjugacy classes of $W$}\}-1 \\
  = &\, \#\{\text{unipotent characters of $G$}\}-1 \\
  = &\, \#\{\text{unipotent PIMs of $G$}\}-1  \quad \text{(by Theorem \ref{thm:basicset})}
\end{aligned}$$
Therefore the $\mathbb{Z}$-submodule of $K_0(kG\sfproj)$ spanned by the virtual projective modules $P_v$ for $v \neq w_0$ has codimension $1$. We deduce that there is at most one simple $kG$-module $S$ such that $m_{S} \neq 0$. It must be $S = S_0 = S_{(1^n)}$ since $e([P_{S_{(1^n)}}]) = \rho_{(1^n)}+\text{non-unipotent characters}$ and 
$$ \langle [P_{S_{(1^n)}}];d(R_{w_0})\rangle = \langle e([P_{S_{(1^n)}}]);R_{w_0}\rangle
 = \langle \mathrm{St};R_{w_0}\rangle = (-1)^{\ell(w_0)} \neq 0.$$
This proves that $d(R_{w_0}) = (-1)^{\ell(w_0)} [S_{(1^n)}]$. 

\smallskip

Now (Uni) shows that the character of the projective cover of $S_{(21^{n-2})}$ is of the form 
$ e([P_{S_{(21^{n-2})}}]) =  \rho_{(21^{n-2})} + \alpha \rho_{(1^n)}+\text{non-unipotent characters}$. 
Using the fact that it is orthogonal to $R_{w_0}$ we get
$$ 0 = \langle [P_{S_{(21^{n-2})}}] ; d(R_{w_0}) \rangle = \langle e([P_{S_{(21^{n-2})}}]);R_{w_0} \rangle
 =  \langle \rho_{(21^{n-2})}  ; R_{w_0}  \rangle + (-1)^{\ell(w_0)} \alpha$$
 which shows that 
$$d_{(1^n),(21^{n-2})} := \alpha = (-1)^{\ell(w_0)+1} \langle \rho_{(21^{n-2})}  ; R_{w_0}  \rangle = n-1.$$
Indeed, the scalar product $\langle\rho_\lambda  ; R_{w}\rangle$ is, up to a sign, equal to the value on $ww_0$ of the irreducible character of $\mathfrak{S}_n$ corresponding to the partition $\lambda$. So here, it equals the dimension of the representation of $\mathfrak{S}_n$ corresponding to $(21^{n-2})$ which is $n-1$ by the hook length formula.
\end{proof}

\subsection{Observations and conjectures}
The following conjectures were made by Geck in \cite{GecThesis} and Geck--Hiss in \cite{GecHis96}. 

\begin{conj}[Geck--Hiss] Assume that $\ell \neq p$ and that $\ell$ is large with respect to $|W|$.
\begin{itemize}
  \item[$\mathrm{(i)}$] The decomposition matrix has a unitriangular shape.
  \item[$\mathrm{(ii)}$] If $\rho$ is unipotent and cuspidal then $d(\rho)$ is irreducible \emph{i.e.} $d(\rho) = [S]$ for some simple $kG$-module $S$.
  \item[$\mathrm{(iii)}$] The unipotent part of the decomposition matrix is independent of $q$ (it depends only on the order of $q$ in $\mathbb{F}_\ell^\times$).
\end{itemize}
\end{conj}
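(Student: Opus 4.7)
The plan is to attack the three parts separately, since they rely on different combinations of the tools developed in the previous section, though all of them will ultimately rest on the interaction between Deligne--Lusztig characters and the Bonnaf\'e--Rouquier Theorem \ref{thm:mainBR}. Throughout, I will assume that $\ell$ is very good for $\bfG$ so that Theorem \ref{thm:basicset} applies and the unipotent characters form a basic set.

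\textbf{For (i).} The strategy would be to promote the formal unitriangularity of (Uni), which we have assumed as a working hypothesis in the examples, into a theorem. I would attach to each simple $kG$-module $S$ a canonical unipotent character $\rho_S$ and show that $d_{\rho_S,S}=1$ while $d_{\rho_S,S'}=0$ for $S'$ not ``below'' $S$. A natural choice of ordering is Lusztig's $a$-function, combined with the Harish-Chandra series of $S$: first assign to $S$ its cuspidal support $(\bfL_S,\rho_S^{\mathrm{cusp}})$ in $\Lambda=k$, then use (HC) together with the known ordinary Harish-Chandra theory to lift to the Hecke algebra side, where (Hecke) reduces us to controlling the decomposition matrix of a cyclotomic Hecke algebra. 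There, Geck's theory of canonical basic sets gives unitriangularity intrinsically.

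\textbf{For (ii).} Here the tool (DL) is decisive. Let $\rho$ be a cuspidal unipotent character; by Lusztig's classification, $\rho$ appears in some $R_w$ with controlled multiplicity, and we may realise $d(\rho)$ by a virtual projective $P_w$ via \eqref{eq:pw}. I would pick $w \in W$ minimal such that the projective cover $P_S$ of some simple summand $S$ of $d(\rho)$ occurs in $P_w$, apply (DL) to force $(-1)^{\ell(w)}\langle P_w,[S]\rangle > 0$, and then combine with the orthogonality of Deligne--Lusztig characters and (Reg) to squeeze out all but one constituent. The argument is modelled on the proof given for $d_{(1^n),(21^{n-2})}$ in $\mathrm{GU}_n(q)$: cuspidality of $\rho$ makes $\langle \rho; R_w\rangle$ vanish except for $w$ of a very specific shape (typically regular elliptic), and the resulting system of numerical constraints should collapse $d(\rho)$ onto one class $[S]$.

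\textbf{For (iii).} This part is the least geometric and would be reduced to a ``generic'' statement about the character table and the structure of Deligne--Lusztig induction. I would package all the input used in (i) and (ii) --- the basic set, the Lusztig $a$-function, the Hecke algebra decomposition matrices, the values $\langle R_w(\theta);\chi\rangle$ --- in terms of generic degrees and Weyl group combinatorics, which depend only on $W$, $F$ and the order $d$ of $q$ modulo $\ell$. The Bonnaf\'e--Rouquier triangulated generation (Corollary \ref{cor:generation}) ensures that the projective indecomposables themselves are controlled by this combinatorial data, so every entry $d_{\chi,S}$ of the unipotent part is a function of $d$ only.

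\textbf{Main obstacle.} The hard part is (i), and specifically the step where Lusztig's $a$-function is used to order the rows of the decomposition matrix: the $a$-function comes from the theory of cells, and one needs to show that the coefficients $d_{\chi,S}$ vanish on the ``strict upper triangle'' for this ordering. At present this is only known for classical groups and very good $\ell$, via Geck's induction theorem for cuspidal characters and the reduction to Hecke algebras; for exceptional types one would need a genuinely new input, presumably either a Soergel-bimodule style argument or a more refined analysis of the minimal representatives of $R\Gamma_c(\bfY(\dot w),\Lambda)$ afforded by Theorem \ref{thm:mainBR}, and this is where any actual proof would have to contribute a new idea rather than repackage existing ones.
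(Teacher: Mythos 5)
This statement is a \emph{conjecture} (attributed to Geck--Hiss), not a theorem, and the paper contains no proof of it: immediately after stating it, the author records that it was proven only in special cases (type $A$ and ${}^2A$ by Geck, classical groups for linear primes by Gruber--Hiss, and part (ii) under the extra hypothesis that $p$ is good in the recent preprint by Dudas--Malle), and remains open in general. There is therefore no ``paper's own proof'' against which your attempt can be checked. To your credit, you recognise this: your closing paragraph concedes that (i) ``is only known for classical groups and very good $\ell$'' and that a genuine proof ``would have to contribute a new idea rather than repackage existing ones.'' So what you have written is a research programme sketch rather than a proof, and you should label it as such rather than as a proof proposal.

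Substantively, your outline is a reasonable reconstruction of what a proof strategy in the spirit of the chapter would look like, but several steps are incomplete or not yet valid. For (i), the reduction to ``Geck's theory of canonical basic sets'' does not by itself give unitriangularity of the full decomposition matrix --- it gives it for the Hecke-algebra part of each Harish-Chandra series, and the hard content (not established in the paper) is precisely that the cuspidal columns fit into the triangle; you flag this, but it should not be presented as if it follows from (Hecke). For (ii), your plan is essentially the argument used to compute $d_{(1^n),(21^{n-2})}$ in $\mathrm{GU}_n(q)$, and indeed the paper notes that Dudas--Malle recently proved part (ii) (under ``$p$ good'') using methods of this type; but the orthogonality/(Reg)/(DL) squeeze you describe needs, as an input, the codimension-one argument on the span of the $P_v$ for $v \neq w_0$, and it is specific to the case where the relevant element is the longest (or a regular elliptic) element. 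Extending it to all cuspidal unipotent characters requires additional work on parabolic Deligne--Lusztig varieties, not just $\bfY(\dot w)$. For (iii), the appeal to Corollary \ref{cor:generation} does not immediately show independence of $q$: the corollary says perfect complexes are generated by the $R\Gamma_c(\bfY(\dot w),\Lambda)$, but the decomposition of a minimal representative into PIMs is a priori $q$-dependent, and making this genericity precise is exactly the content of the conjecture. In short, you have a plausible roadmap whose key steps are open problems, which is an accurate reflection of the state of the art, but you should not present it as a proof.
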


The conjecture was first proven for groups of type $A$ and ${}^2 A$ by Geck \cite{Gec91} and for classical groups when $\ell$ is \emph{linear} by Gruber--Hiss \cite{GruHis97}. In addition, all the decomposition matrices computed so far for small rank group satisfy the conjecture. In a recent preprint \cite{DudMal17}, Malle and the author proved the part (ii) of the conjecture under the extra assumption that $p$ is good. 

\smallskip

The difficulty in proving this conjecture lies in producing nice projective modules. In the case where the conjecture is known to hold, generalized Gelfand--Graev modules were used. Malle and the author proposed a different strategy in \cite{DudMal14}, using again the cohomology of Deligne--Lusztig varieties. 

\begin{conj}[Dudas--Malle]\label{conj:qwproj}
For all $w \in W$ there is a sign $\varepsilon_w  = \pm1$ such that
$$ Q_w = \varepsilon_w D_G\tikzmark{alvis}([IH^\bullet\tikzmark{ih}(\overline{\bfX(w)},k(\bfT^{wF})_\ell)])$$
\begin{tikzpicture}[remember picture,overlay]
\draw[->,>=latex]
  ([shift={(-5pt,-3pt)}]pic cs:alvis) |- 
  ++(-10pt,-13pt) 
  node[left,text width=2.7cm] 
    {\footnotesize Alvis--Curtis duality
    };
\draw[->,>=latex]
  ([shift={(-10pt,-3pt)}]pic cs:ih) |- 
  ++(10pt,-13pt) 
  node[right,text width=4cm] 
    {\footnotesize intersection cohomology
    };
\end{tikzpicture}
\vskip 2pt
\noindent is the character of a (non-virtual) projective module.
\end{conj}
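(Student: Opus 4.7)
The plan is to split the conjecture into two parts: first, produce a representative of $IH^\bullet(\overline{\bfX(w)},k(\bfT^{wF})_\ell)$ as a perfect complex of $kG$-modules; second, show that after applying Alvis--Curtis duality $D_G$ the resulting virtual projective has all its multiplicities in the basis of PIMs of the same sign $\varepsilon_w$. The first part is a perfection statement in the spirit of Proposition \ref{prop:ywperfect} and Corollary \ref{cor:perfect}; the second is where the genuine representation-theoretic content lies.

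For the first part, one would start from the Deligne--Lusztig variety $\bfY(\dot w)$ with its commuting action of $G \times \bfT^{\dot wF}$, cut by the idempotent projecting onto the $\ell'$-part of the characters of $\bfT^{\dot wF}$ (which is how I read the twisted coefficient system $k(\bfT^{wF})_\ell$), and then push forward along the compactified quotient map $\overline{\bfY(\dot w)}/\bfT^{\dot wF} \simeq \overline{\bfX(w)}$. After middle-perversity truncation this should realize the intersection cohomology complex. Because the remaining stabilizers on $\bfY(\dot w)$ under this cut are $\ell$-groups (cf.\ the analysis following Proposition \ref{prop:ywperfect}), Theorem \ref{thm:rickardlperm} combined with Corollary \ref{cor:perfect} would yield a perfect representative $C_w \in \sfHo^b(kG\sfperm)$ of this complex.

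For the second part, the strategy is to adapt the minimality argument of Theorem \ref{thm:mainBR}. Recall that for a simple $kG$-module $S$ and $v \in W$ minimal such that $R\mathrm{Hom}_{kG}(R\Gamma_c(\bfY(\dot v),k),S) \neq 0$, the module $P_S$ occurs as a direct summand of a minimal representative of $R\Gamma_c(\bfY(\dot v),k)$ only in the middle degree. The role of intersection cohomology is precisely to extend this middle-degree concentration from the minimal $v$ to all $v \leq w$ simultaneously, via the support and cosupport axioms defining the $IC$ sheaf. If one can show that $D_G([C_w])$ is concentrated in a single cohomological degree, then by perfectness it is the class of a honest projective module up to an overall sign $\varepsilon_w$, which is exactly the claim. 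The sign itself should read off as $(-1)^{\ell(w) + d}$ for $d$ the relevant shift coming from $D_G$.

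The main obstacle will be carrying out step two in positive characteristic. Over $\overline{\mathbb{Q}}_\ell$ this kind of degree-concentration is a direct consequence of the Beilinson--Bernstein--Deligne--Gabber decomposition theorem, which furnishes both purity and semisimplicity for $IC$ complexes. In characteristic $\ell$ both tools fail in general: perverse $k$-sheaves need not be semisimple, and purity only survives in weakened form. One would therefore need either a structural input specific to the varieties $\overline{\bfX(w)}$ (for instance an analog of Deodhar's cellular decomposition compatible with the modular perverse $t$-structure on $\overline{\bfX(w)}$) or a purely representation-theoretic substitute ruling out cancellations between PIMs, perhaps modeled on the regular-character argument (Reg) from the preceding section applied to $D_G(C_w)$ rather than to a Deligne--Lusztig character directly. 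Making such an argument work uniformly in $w$ is, I expect, where the conjecture's genuine difficulty lies, and is precisely why it is stated here as a conjecture rather than a theorem.
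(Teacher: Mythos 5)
The statement you are addressing is a conjecture, not a theorem: the paper offers no proof of Conjecture~\ref{conj:qwproj}, only the remark that the $Q_w$'s are virtual projective modules with characters computable via Kazhdan--Lusztig polynomials, a unitriangularity property, and the single worked example $w = w_0$ where $\overline{\bfX(w_0)} = \bfG/\bfB$ and the answer is $\pm|W|\cdot\mathrm{St}_G$ plus non-unipotent terms. You correctly recognize at the end of your write-up that no complete argument is on offer, and your assessment of where the genuine difficulty lies (modular perverse sheaves, loss of the decomposition theorem, failure of purity and semisimplicity over $k$) is sound and consistent with why this remains open.

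Two smaller points in your speculative outline are worth flagging. First, the coefficient system $k(\bfT^{wF})_\ell$ is the group algebra of the $\ell$-part of the finite torus, which corresponds to quotienting $\bfY(\dot w)$ by $(\bfT^{\dot wF})_{\ell'}$ and hence to retaining the $\ell$-characters of $\bfT^{\dot wF}$ (those trivial on the $\ell'$-part), not the $\ell'$-characters as your parenthetical reading suggests; compare the construction of $\bfY_\ell$ in the last section of the paper. Second, the intersection cohomology complex of $\overline{\bfX(w)}$ is the intermediate extension of the local system on the smooth open dense stratum $\bfX(w)$, not a perverse truncation of a pushforward from a compactified $\bfY$; the latter need not satisfy the cosupport condition. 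Neither issue affects your conclusion, which is simply that the conjecture is open, but getting these definitions straight would matter if you attempted to push the strategy further.
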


The $Q_w$'s are actually virtual projective modules whose character can be explicitly computed using Kazhdan--Lusztig polynomials. They satisfy a unitriangularity property (as we expect for the PIMs) and the multiplicities on unipotent characters do not depend on $q$. 

\begin{example}
For $w = w_0$ the variety $\bfX(w_0)$ is dense in $\bfG/\bfB$ and hence $\overline{\bfX(w_0)} = \bfG/\bfB$. It is a smooth projective variety paved by affine spaces $\bfB w \bfB/\bfB$, therefore
$$[IH^\bullet(\overline{\bfX(w)},K)] = [H^\bullet(\bfG/\bfB,K)] = |W| \cdot 1_G.$$
Consequently, $e(Q_{w_0}) = \pm |W|\cdot \mathrm{St}_G \pm\text{non-unipotent characters}$. There is indeed a PIM whose character is $\mathrm{St}_G+ \text{non-unip.}$, hence $Q_{w_0}$ is the character of a projective module as claimed in Conjecture \ref{conj:qwproj}. 
\end{example}

\section{Brauer trees of unipotent blocks}\label{chap:brauer}
 
This chapter is devoted to the study of unipotent blocks of finite reductive groups with cyclic defect groups. In that case the structure of the block is encoded in a planar embedded tree, the \emph{Brauer tree}. We explain how to use the cohomology complexes of Deligne--Lusztig varieties to get information on the characters of PIMs (which gives the tree as a graph) and extensions between simple modules (which gives the planar embedding). This is based on a recent work of Craven, Rouquier and the author \cite{CraDudRou17}. 
  
\subsection{Brauer trees}
Throughout this chapter, $G$ is any finite group and $b$ is an $\ell$-block of $\calO G$ with cyclic (and non-trivial) defect groups. We denote by $D$ a defect of $b$. The results on the structure of $b$ originate in a work of Brauer \cite{Bra41} subsequently completed by Dade \cite{Dad66} and Green \cite{Gre77}. For a self-contained treatment of blocks with cyclic defect groups we recommend \cite{Rou01} and \cite[\S V]{Alp}.
\smallskip

We denote by $\mathrm{Irr}_K b$ the set of irreducible ordinary characters of $G$ lying in $b$. There is a set $\mathrm{Exc}_K b \subset \mathrm{Irr}_K b$ called the set of \emph{exceptional characters} of $b$ such that if we define $\chi_\text{exc} := \sum_{\chi \in \mathrm{Exc}_K b} \chi$ then the character of any projective indecomposable $kG$-module $P$ in $kb$ is given by
$$ e([P]) = \chi + \chi'$$
with $\chi \neq \chi'$ and $\chi,\chi' \in \{\chi_\text{exc}\} \sqcup (\mathrm{Irr}_K b \smallsetminus \mathrm{Exc}_K b)$. In other words, a simple $kb$-module occurs in the $\ell$-reduction of either two distinct non-exceptional characters, or in one non-exceptional character and in every exceptional character. 

\smallskip

We define the \emph{Brauer graph} $\Gamma_b$ of $b$ as the graph with vertices labeled by 
$\{\chi_\text{exc}\} \sqcup (\mathrm{Irr}_K b \smallsetminus \mathrm{Exc}_K b)$ and edges $\chi \trait  \chi'$ for every PIM $P$ such that $e([P]) = \chi + \chi'$. The edges of the Brauer graph are therefore labeled by PIMs or equivalently by simple $kb$-modules (via their projective cover). 
The knowledge of the Brauer graph, together with the \emph{multiplicity} $m = \langle \chi_{\text{exc}};\chi_\text{exc} \rangle$ of the exceptional vertex, is equivalent to the knowledge of the decomposition matrix.

\begin{example}\label{ex:trees}
(a) Let $G = \mathbb{Z}/\ell \mathbb{Z}$ be the cyclic group of order $\ell$. There is only one PIM $P = kG$ which is the projective cover of the trivial module. One can write its character as
$$ e([P]) = 1_G + \sum \text{non-trivial characters} = 1_G + \chi_\text{exc}.$$ 
The convention here is to define the exceptional characters as the non-trivial characters of $G$. This gives the Brauer graph given in Figure \ref{fig:brauercyclic}. 
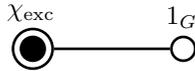
\begin{figure}[h]
\begin{center}
\begin{pspicture}(2,0.5)

  \psset{linewidth=1pt}

   \cnode[fillstyle=solid,fillcolor=black](0,0){5pt}{A2}
       \cnode(0,0){8pt}{A}
  \cnode(2,0){5pt}{B}
  \ncline[nodesep=0pt]{A}{B}\naput[npos=1.1]{$\vphantom{\big)}1_G$}\naput[npos=-0.2]{$\vphantom{\Big)}\chi_\text{exc}$}
\end{pspicture}
\end{center}
\caption{Brauer graph of $ \mathbb{Z}/\ell \mathbb{Z}$}
\label{fig:brauercyclic}
\end{figure}

\noindent
(b) Let us now consider the case of $G = \mathbb{Z}/\ell^r \mathbb{Z} \rtimes E$ where $E$ is an $\ell'$-subgroup of $\mathrm{Aut}_{\ell'}(\mathbb{Z}/\ell^r \mathbb{Z}) \simeq \mathbb{Z}/(\ell-1) \mathbb{Z}$. In particular the order $e$ of $|E|$ divides $\ell-1$. As in the previous example $\calO G$ is indecomposable and hence it forms a single block. 

\smallskip

Every simple $kG$-module $S$ has a trivial action of the $\ell$-group $\mathbb{Z}/\ell^r \mathbb{Z}$, and therefore it must be of the form $S = \mathrm{Inf}_E^G(\mathrm{Res}_E^G S)$. In addition, since $E$ is an $\ell'$-group, the restriction $\mathrm{Res}_E^G S$ is semisimple hence simple. We deduce that the simple $kG$-modules are in bijection with the irreducible representations of $E$ (over $k$ or $K$). More precisely, if we fix a generator $x$ of $E$ and $\zeta$ a primitive $e$-th root of unity in $\calO^\times$, one can consider the simple $kE$-modules $k_{\overline{\zeta}^i}$ to be the one-dimensional representation of $E$ on which $x$ acts by $\overline{\zeta}^i$. They lift to characteristic zero to one-dimensional $\calO G$-modules $\calO_{\zeta^i}$ and $KG$-modules $K_{\zeta^i}$ on which $x$ acts by $\zeta^i$. Let $T_i := \mathrm{Inf}_E^G \, k_{\zeta^i}$. Then:
\begin{itemize}
 \item[$\bullet$] The $kG$-modules $T_i = \mathrm{Inf}_E^G \, k_{\zeta^i}$ for $i =0,1,\ldots,e-1$ form a set of representatives of $\mathrm{Irr}_kG$.
 \item[$\bullet$] $T_i$ lifts to characteristic zero as $\widetilde{T}_i = \mathrm{Inf}_E^G \, \calO_{\zeta^i}$. We denote by $\theta_i$ the character of $K \widetilde{T}_i \simeq \mathrm{Inf}_E^G \, K_{\zeta^i}$.
 \item[$\bullet$] The projective cover of $T_i$ is $P_i :=  \mathrm{Ind}_E^G \, k_{\zeta^i}$, which lifts to characteristic zero as $\widetilde{P}_i = \mathrm{Ind}_E^G \, \calO_{\zeta^i}$. It has character
$$ \begin{aligned}
e([P_i]) = [\mathrm{Ind}_E^G \, K_{\zeta^i}] = [\mathrm{Inf}_E^G \, K_{\zeta^i}]  + \theta_{\text{exc}}
= \theta_i + \theta_{\text{exc}}
\end{aligned}$$
where $\theta_{\text{exc}}$ denotes the sum of the irreducible characters of $G$ which are non-trivial on $\mathbb{Z}/\ell^r\mathbb{Z}$. These are the exceptional characters of $G$.
\end{itemize}
We deduce that the Brauer graph is a star-shaped tree as shown in Figure \ref{fig:brauernormaldefect}.
\begin{figure}[h]
\begin{center}
\begin{pspicture}(4,4.5)

  \psset{linewidth=1pt}

   \cnode[fillstyle=solid,fillcolor=black](2,2){5pt}{A2}
       \cnode(2,2){8pt}{A}
  \cnode(4,2){5pt}{B}
  \cnode(3.73,3){5pt}{C}
  \cnode(3.73,1){5pt}{D}
    \cnode(3,3.73){5pt}{H}
    \cnode(3,0.27){5pt}{I}
    \cnode(2,4){5pt}{J}

  \cnode(0,2){5pt}{E}
  \cnode(0.27,3){5pt}{F}
  \cnode(0.27,1){5pt}{G}

  \ncline[nodesep=0pt]{A}{B}\ncput[npos=1.4]{$\theta_0$}
  \ncline[nodesep=0pt]{A}{C}\ncput[npos=1.45]{$\theta_1$}
  \ncline[nodesep=0pt]{A}{D}\ncput[npos=1.38]{$\hphantom{aaa}\theta_{e-1}$}
  \ncline[nodesep=0pt]{A}{E}
  \ncline[nodesep=0pt]{A}{F}
  \ncline[nodesep=0pt]{A}{G}
  \ncline[nodesep=0pt]{A}{H}\ncput[npos=1.42]{$\theta_2$}
  \ncline[nodesep=0pt]{A}{I}\ncput[npos=1.4]{$\hphantom{aaa}\theta_{e-2}$}
  \ncline[nodesep=0pt]{A}{J}\ncput[npos=1.4]{$\theta_3$}

\psellipticarc[linestyle=dotted,linewidth=1.5pt](2,2)(2,2){103}{135}
\psellipticarc[linestyle=dotted,linewidth=1.5pt](2,2)(2,2){225}{287}

\end{pspicture}
\end{center}
\caption{Brauer graph of $ \mathbb{Z}/\ell^r \mathbb{Z} \rtimes \mathbb{Z}/e\mathbb{Z}$ with $e\mid \ell-1$}
\label{fig:brauernormaldefect}
\end{figure}
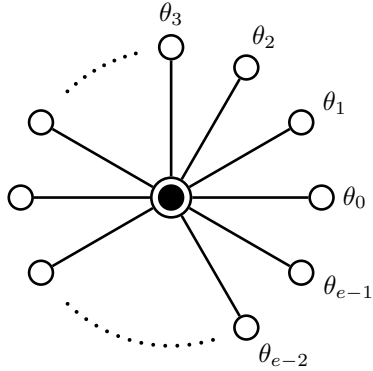

\smallskip

\noindent
(c) Let $G = \mathfrak{S}_\ell$ and $b$ be the principal $\ell$-block (with defect a Sylow subgroup $\mathbb{Z}/\ell\mathbb{Z}$ of $\mathfrak{S}_\ell$). The irreducible ordinary characters of $G$ are labeled by partitions of $\ell$. The characters in $b$ correspond to partitions which are $\ell$-hooks 
$$\mathrm{Irr}_K b = \{1_G = \chi_{(\ell)}, \chi_{(\ell-1,1)}, \chi_{(\ell-2,1^2)}, \ldots, \chi_{(1^\ell)} = \varepsilon\}.$$ 
Here $\varepsilon$ denotes the sign characters. The subgroup $\mathfrak{S}_{\ell-1}$ of $G$ is an $\ell'$-group, therefore every irreducible character is the character of a projective module. Consequently, the same holds for the induction of any representation from $\mathfrak{S}_{\ell-1}$ to $\mathfrak{S}_{\ell}$. Using the branching rules for induction we have
$$ \mathrm{Ind}_{\mathfrak{S}_{\ell-1}}^{\mathfrak{S}_\ell} \Yvcentermath1 \Yboxdim{7pt} \gyoung(;_2\hdts;,|2\vdts,;) \ = \ \tikzmark{projchar}\underbrace{\gyoung(;_2\hdts;,|2\vdts,;,;) \ + \ \gyoung(;_2\hdts;;,|2\vdts,;)} \ +\ \tikzmark{notinb}\left(\gyoung(;_2\hdts;,|2\vdts;/2,;)\right)$$
\begin{tikzpicture}[remember picture,overlay]
\draw[->,>=latex]
  ([shift={(43pt,-27pt)}]pic cs:projchar) |- 
  ++(-10pt,-10pt) 
  node[left,text width=2.8cm] 
    {\footnotesize projective character};
\draw[->,>=latex]
  ([shift={(23pt,-22pt)}]pic cs:notinb) |- 
  ++(10pt,-15pt) 
  node[right,text width=4cm] 
    {\footnotesize not in $b$};
\end{tikzpicture}
\vskip 5mm
\noindent This shows that the Brauer graph $\Gamma_b$ is a line as shown in Figure \ref{fig:brauersn}. Note that here any vertex can be chosen to be the exceptional vertex.
\begin{figure}[h]
\begin{center}
\begin{pspicture}(10,0.5)
  \psset{linewidth=1pt}
  \cnode(0,0){5pt}{A}
  \cnode(2,0){5pt}{B}
  \cnode(4,0){5pt}{C}
  \cnode(8,0){5pt}{D}
  \cnode(10,0){5pt}{E}
  \ncline[nodesep=0pt]{A}{B}\naput[npos=-0.1]{$\vphantom{\big)}(\ell)$}
  \ncline[nodesep=0pt]{B}{C}\naput[npos=-0.1]{$\vphantom{\big)}(\ell-1,1)$}
  \ncline[nodesep=0pt,linestyle=dashed]{C}{D}\naput[npos=0]{$\vphantom{\big)}(\ell-2,1^2)$}
  \ncline[nodesep=0pt]{D}{E}\naput[npos=-0.1]{$\vphantom{\big)}(2,1^{\ell-2})$}\naput[npos=1.1]{$\vphantom{\big)}(1^\ell)$}
\end{pspicture}
\end{center}
\caption{Brauer graph of the principal $\ell$-block of $\mathfrak{S}_\ell$}
\label{fig:brauersn}
\end{figure}
\end{example}

\begin{theorem}[{\cite[\S 23]{Alp}}] The Brauer graph is a tree, called the \emph{Brauer tree} of $b$.
\end{theorem}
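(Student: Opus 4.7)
The plan is to prove that $\Gamma_b$ is a tree by verifying the two defining properties of finite trees: the graph is connected, and the number of vertices exceeds the number of edges by exactly one.

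First I would observe that $\Gamma_b$ is a simple graph, in the sense that it has no loops and no multiple edges. Loops are excluded by the explicit condition $\chi \neq \chi'$ in the definition of the PIM characters. For multiple edges, the point is that the map $e : K_0(kG\sfproj) \longrightarrow K_0(KG\sfmod)$ is injective (it is the transpose of the surjective decomposition map, see Proposition \ref{prop:brauerrep}), so distinct PIMs have distinct characters and therefore define distinct edges.

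Next I would count: by construction the edges of $\Gamma_b$ are in bijection with the simple $kb$-modules, so $\#\text{edges} = \ell(b) =: e$. For the vertex count I would invoke the structural description of cyclic defect blocks recalled at the start of the section (Brauer--Dade--Green theory, \cite{Bra41,Dad66,Gre77}): one has $k(b) = e + m$ where $m = |\mathrm{Exc}_K b|$, so that $\#\text{vertices} = (k(b) - m) + 1 = e + 1 = \#\text{edges} + 1$.

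It then remains to establish connectedness. Suppose for contradiction that $\Gamma_b$ is disconnected. Then the simple $kb$-modules (equivalently, the PIMs) partition into two non-empty classes $\mathcal{S}_1, \mathcal{S}_2$ such that no vertex of $\Gamma_b$ is incident to edges from both. Equivalently, no ordinary character $\chi \in \mathrm{Irr}_K b$, exceptional or not, has constituents in its $\ell$-reduction from both $\mathcal{S}_1$ and $\mathcal{S}_2$. After reordering, this makes the decomposition matrix of $b$ block-diagonal, and hence also the Cartan matrix. By the usual correspondence between indecomposability of the Cartan matrix and indecomposability of the algebra, this forces $\calO b$ to split as a direct product of two $\calO$-subalgebras, contradicting the fact that $b$ is a block (i.e.\ a primitive central idempotent of $\calO G$). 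A connected graph with $\#\text{vertices} = \#\text{edges} + 1$ is a tree, completing the proof.

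The main obstacle is not combinatorial but structural: the counting identity $k(b) = e + m$ together with the very existence of a well-defined set $\mathrm{Exc}_K b$ of exceptional characters, of cardinality $m$, whose sum $\chi_\text{exc}$ behaves as a single vertex in every PIM character. These facts are precisely the content of the deep classical theorems on cyclic-defect blocks of Brauer, Dade and Green which are being used as a black box here; once they are granted, the proof reduces to the three elementary observations above (no multiple edges/loops, Euler count, algebra indecomposability).
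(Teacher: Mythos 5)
Your proof is sound, but note that the paper itself does not prove this statement: it is cited directly from \cite[\S 23]{Alp}, so there is no in-paper argument to compare against. Your approach is the standard ``soft'' deduction: granted the Brauer--Dade--Green description of cyclic blocks, tree-ness reduces to Euler counting plus connectedness. A few remarks on the individual steps.

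Your simple-graph observation is correct and, in fact, slightly redundant: for a connected \emph{multigraph}, $\#\text{vertices} = \#\text{edges} + 1$ already rules out loops and parallel edges (a loop or double edge is a cycle of length $1$ or $2$). Still, it is clean to note, as you do, that the injectivity of $e$ (transpose of the surjective $d$) forces distinct PIMs to give distinct unordered pairs $\{\chi,\chi'\}$. Your connectedness argument is also correct and is essentially forced: from the PIM structure $e([P_S]) = \chi + \chi'$ and Brauer reciprocity one reads off that $d_{\chi,S} \in \{0,1\}$ and equals $1$ precisely when $\chi$'s vertex is an endpoint of $S$; a partition of $\Gamma_b$ into components then makes the decomposition matrix, hence $C = D^t D$, block-diagonal, contradicting the primitivity of $b$.

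The one genuine import from outside the text is the identity $k(b) = e + m$. The paper's preamble gives you the existence of $\mathrm{Exc}_K b$ and the shape $e([P]) = \chi+\chi'$, but it does \emph{not} state the count; that equality is Dade's theorem and is not deducible from what is written here. You flag this honestly, which is appropriate. By contrast, Alperin's own proof develops the structural theory (Green correspondence, the analysis of $\Omega$-orbits and the resulting constraints on the decomposition matrix) and obtains tree-ness along the way rather than assuming the count; the payoff of that longer route is that nothing is black-boxed, while the payoff of yours is a three-line argument once the structural facts are granted. Both are legitimate; yours is the one usually given in modern treatments once the Brauer--Dade setup is in place.
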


A planar embedding of $\Gamma_b$ is defined by an ordering of the set of edges incident to any given vertex. Planar embedded trees will be drawn according to the anti-clockwise order around a vertex. 

\begin{theorem}[Structure of PIMs {\cite[\S 22]{Alp}}]\label{thm:pimsbrauer}
Let $\Gamma_b$ be the Brauer tree of $b$.
\begin{itemize}
 \item[$\mathrm{(i)}$] There exists a unique ordering around each vertex of $\Gamma_b$ such that if $S$ and $T$ are two simple $kb$-modules labeling edges incident to a given vertex then $T$ follows immediately $S$ if and only if $\mathrm{Ext}_{kG}^1(S,T) \neq 0$.
 \item[$\mathrm{(ii)}$] Given a simple $kb$-module $S$ labeling an edge between non-exceptional vertices as follows
\begin{center}
\begin{pspicture}(8,4.3)

  \psset{linewidth=1pt}
  \psset{unit = 0.8cm}
  \cnode(2,2.5){5pt}{A}
  \cnode(6,2.5){5pt}{B}
  \cnode(3,4.23){5pt}{H}\cnode[linestyle=none](3.5,5.1){0pt}{Hp}
  \cnode(3,0.77){5pt}{I}\cnode[linestyle=none](3.5,-0.1){0pt}{Ip}
  \cnode(1,4.23){5pt}{J}\cnode[linestyle=none](0.5,5.1){0pt}{Jp}
  \cnode(0,2.5){5pt}{K}\cnode[linestyle=none](-1,2.5){0pt}{Kp}
  \cnode(5,0.77){5pt}{H2}\cnode[linestyle=none](4.5,-0.1){0pt}{H2p}
  \cnode(5,4.23){5pt}{I2}\cnode[linestyle=none](4.5,5.1){0pt}{I2p}
  \cnode(7,0.77){5pt}{J2}\cnode[linestyle=none](7.5,-0.1){0pt}{J2p}
  \cnode(8,2.5){5pt}{K2}\cnode[linestyle=none](9,2.5){0pt}{K2p}

  \ncline[nodesep=0pt]{A}{B}\ncput*[npos=0.5]{$S$}
  \ncline[nodesep=0pt]{A}{H}\ncput*[npos=0.5]{$S_1$}
  \ncline[nodesep=0pt]{A}{I}\ncput*[npos=0.5]{$S_a$}
  \ncline[nodesep=0pt]{A}{J}\ncput*[npos=0.5]{$S_2$}
  \ncline[nodesep=0pt]{A}{K}\ncput*[npos=0.5]{$S_3$}
  \ncline[nodesep=0pt]{B}{H2}\ncput*[npos=0.5]{$S_1'$}
  \ncline[nodesep=0pt]{B}{I2}\ncput*[npos=0.5]{$S_b'$}
  \ncline[nodesep=0pt]{B}{J2}\ncput*[npos=0.5]{$S_2'$}
  \ncline[nodesep=0pt]{B}{K2}\ncput*[npos=0.5]{$S_3'$}
  \ncline[nodesep=3pt,linestyle=dashed]{H}{Hp}
  \ncline[nodesep=3pt,linestyle=dashed]{I}{Ip}
  \ncline[nodesep=3pt,linestyle=dashed]{J}{Jp}
  \ncline[nodesep=3pt,linestyle=dashed]{K}{Kp}
  \ncline[nodesep=3pt,linestyle=dashed]{H2}{H2p}
  \ncline[nodesep=3pt,linestyle=dashed]{I2}{I2p}
  \ncline[nodesep=3pt,linestyle=dashed]{J2}{J2p}
  \ncline[nodesep=3pt,linestyle=dashed]{K2}{K2p}   
   
  \psellipticarc[linestyle=dotted,linewidth=1.5pt](2,2.5)(2,2){193}{287}
  \psellipticarc[linewidth=1pt]{->}(2,2.5)(1,1){205}{275}
  \psellipticarc[linestyle=dotted,linewidth=1.5pt](6,2.5)(2,2){13}{107}
  \psellipticarc[linewidth=1pt]{->}(6,2.5)(1,1){25}{95}
\end{pspicture}
\end{center}
the Loewy structure of $P_S$ is given by
$$ P_S = \begin{array}{c} S \\ S_1 \ \ S_1' \\ S_2 \ \ S_2' \\ \vdots \ \ \ \ \vdots \\[3pt] S_a \ \ S_b' \\ S\end{array}$$
\end{itemize}
\end{theorem}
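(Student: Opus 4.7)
The plan is to reduce, via the Brauer correspondence, to a block with normal cyclic defect group, where everything is explicit from Example \ref{ex:trees}(b), and then transport the Loewy and Ext-data back using a splendid Rickard equivalence. Let $H = N_G(D)$ and let $b_0$ be the Brauer correspondent of $b$ in $\calO H$. By a Clifford-theoretic reduction (Fong--Reynolds), $kHb_0$ is Morita equivalent to $k(D\rtimes E)$, where $E = N_H(D)/C_H(D)$ is the inertia quotient, an $\ell'$-group of order $e$ dividing $\ell-1$. The simple modules $T_0,\ldots,T_{e-1}$ are inflated from characters of $E$, and each PIM $P_{T_i}$ is uniserial of length $em+1$ with radical layers cycling through the $T_j$ in the order given by the $E$-action on $D$. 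Hence for $b_0$ the Brauer tree is a star centred at the exceptional vertex, the cyclic order of edges at that vertex coincides with the Ext-ordering, and (ii) reduces to a single column (each edge has the exceptional vertex as one of its endpoints).

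Next I would invoke Rickard's theorem that a cyclic defect block admits a splendid Rickard equivalence with its Brauer correspondent. Such an equivalence sends each simple $kGb$-module $S$, up to a shift and a sign, to a simple $kHb_0$-module $f(S)$ (the bijection agreeing with the Green correspondence at the level of vertices and sources), induces natural isomorphisms
$$\mathrm{Ext}^n_{kG}(S,T) \simeq \mathrm{Ext}^n_{kH}(f(S),f(T))$$
for all simple modules $S,T$ in $b$, and intertwines the Heller operator $\Omega$. Combined with Dade's classical rigidity result that the Morita type of a cyclic defect block is determined by its Brauer tree and its multiplicity, this reduces the theorem to a statement about the associated Brauer tree algebra $A(\Gamma_b, m)$.

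Finally I would extract the Loewy structure of $P_S$ from the Heller translates. On the $H$-side the uniserial structure of $P_{f(S)}$ yields a sequence of heads $\mathrm{hd}\,\Omega^{-i}(f(S))$ running through the $T_j$ in cyclic order. Transporting across the equivalence, the head of $\Omega^{-i}(S)$ splits as a direct sum of one simple from each of the two ``branches'' of the walk around the two endpoints of the edge labelled $S$, and these successive heads stack as the intermediate radical layers of $P_S$. This gives the two columns $S_1,S_2,\ldots,S_a$ and $S_1',S_2',\ldots,S_b'$ of the displayed diagram; the top and bottom copies of $S$ arise respectively as $\mathrm{head}(P_S)$ and $\mathrm{soc}(P_S)$, which coincide because the symmetric algebra $kGb$ satisfies $\mathrm{soc}(P_S)\simeq\mathrm{head}(P_S)$. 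Part (i) then follows immediately, since $\mathrm{Ext}^1_{kG}(S,T) \neq 0$ iff $T$ occurs in $\mathrm{rad}(P_S)/\mathrm{rad}^2(P_S)$. The principal obstacle is verifying that each intermediate radical layer is semisimple of length exactly two, with no extra composition factors; this is precisely the content of the identification of the basic algebra of $kGb$ with $A(\Gamma_b,m)$, which is Dade's theorem on cyclic defect blocks.
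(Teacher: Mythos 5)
Your overall plan — reduce to the normal-defect case in $N_G(D)$, describe it via the star-shaped tree, and transport the structure back — is the right idea and the same one the paper sketches. The difficulty is in \emph{how} you transport. The paper (following Green, Dade, Alperin, Rouquier) builds a \emph{stable} equivalence between $b$ and its Brauer correspondent $c$, obtained from the Green correspondence between $b$ and its correspondent in $N_G(\mathbb{Z}/\ell\mathbb{Z})$ together with a Morita equivalence from there down to $c$ in $N_G(D)$. A stable equivalence of Morita type commutes with $\Omega$ and preserves indecomposability, which is exactly what is needed to carry out the Green walk and deduce the uniserial structure of the two branches of $P_S$.

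Your proof instead invokes a \emph{splendid Rickard equivalence} between $b$ and $b_0$. That step is circular: the existence of a Rickard (derived) equivalence for blocks with cyclic defect group, let alone a splendid one (Rickard \cite{Ric89}, Rouquier \cite{Rou01}), is \emph{proved using} the structure theorem you are trying to establish — the tilting complex is constructed from the Brauer tree, whose existence and shape are precisely the content of Theorem \ref{thm:pimsbrauer}. For the same reason, appealing to ``Dade's classical rigidity result'' and to the identification of the basic algebra with $A(\Gamma_b,m)$ at the end is not a reduction: those statements essentially \emph{are} the theorem. Finally, the claim that the Rickard equivalence ``sends each simple $kGb$-module $S$, up to a shift and a sign, to a simple $kHb_0$-module $f(S)$'' is false in general — even for these specific equivalences, simple modules on one side are typically sent to genuine two-term complexes (the fold around the exceptional vertex), not to shifts of simples. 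What \emph{is} true, and what the standard proof uses, is that a stable equivalence of Morita type sends simple modules to indecomposable (not necessarily simple) modules, and one then walks around the tree via $\Omega$ to control these images. If you replace the splendid Rickard equivalence with the stable equivalence coming from Green correspondence, drop the appeal to Dade's classification, and argue the semisimplicity of the radical layers directly in the normal-defect model as in Example \ref{ex:trees}(b), the argument closes up and matches the intended proof.
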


Together with this planar embedding, the Brauer tree $\Gamma_b$ will be referred to as the \emph{planar embedded Brauer tree} of $b$. This tree encodes the structure of the block $b$. Indeed, if $b'$ is a block of $G'$ with cyclic defect groups, then $kb$ and $kb'$ are Morita equivalent if and only if the planar embedded Brauer trees of $b$ and $b'$ coincide, and the multiplicity of the exceptional vertices are equal.

\begin{rmk}
Note that the structure of $P_S$ can also be described in the case where the edge of $S$ is connected to the exceptional node. In that case one needs to repeat the composition series $S_1/\cdots/S_a/S$ a number of times equal to the multiplicity $m = \langle \chi_\text{exc};\chi_\text{exc}\rangle$ of the exceptional vertex. In other words, one needs to turn around the exceptional vertex $m$ times and consider that all the other vertices have multiplicity one. 
\end{rmk}

The proof of the structure theorem consists in constructing a stable equivalence between the block $b$ of $\calO G$ and its Brauer correspondent $c$ in $N_G(D)$. This equivalence is built from the Green correspondence between $b$ and its Brauer correspondent in $N_G(\mathbb{Z}/\ell \mathbb{Z})$ and a Morita equivalence between the latter block and its Brauer correspondent in $N_G(D)$ (which is $c$). For a block with normal defect group (as in the case of $c$), the Brauer tree is a star, as shown in Example \ref{ex:trees}.b, and the structure theorem is easily proved. For more details see for example \cite{Rou01} or \cite[\S V]{Alp}.

\smallskip

The structure theorem for the PIMs has the following consequence, which we will use in the following section. Let $\chi$ be an irreducible ordinary character of $Kb$ which labels a leaf of $\Gamma_b$ (a vertex with only one incident edge). The $\ell$-reduction of $\chi$ is the simple $kG$-module $S$ which labels the unique edge incident to that leaf. This is the particular case of Theorem \ref{thm:pimsbrauer} where $a = 0$. Then $\Omega S$ is a uniserial module with composition factors $S_1'/\cdots/S_b'/S$. Moreover, it lifts to an ordinary character labeling the unique vertex adjacent to the leaf, say $\chi'$, so that $e([P_S]) = \chi+\chi'$. Similarly, $\Omega^2 S$ is a uniserial module which lifts to an ordinary character $\chi''$ such that $e([P_{S_1'}]) = \chi'+\chi''$. The edge labeled by $S_1'$ is the edge which comes directly after $S$ in the cyclic ordering around $\chi'$. If we iterate this process, we see that $\Omega^i S$ can be obtained by following the edges in a walk around the tree. More precisely, we obtain a sequence of ordinary characters $\chi_i$ and simple $kb$-modules $T_i$ such that:
\begin{itemize}
 \item[$\bullet$] $\Omega^iS$ is a uniserial module with head $T_i$ and socle $T_{i-1}$.
 \item[$\bullet$] $\Omega^iS$ lifts to an $\calO G$-lattice with character $\chi_i$ (which is either $\chi_\text{exc}$ or irreducible, and hence labels a vertex).
 \item[$\bullet$] $e([P_{T_i}]) = \chi_i + \chi_{i+1}$.
 \item[$\bullet$] $T_{i+1}$ labels the edge coming directly after $T_i$ in the cyclic ordering around the vertex labeled by $\chi_{i+1}$.  
\end{itemize}
If $e$ denotes the number of isomorphism classes of simple $kb$-modules, then $\Omega^{2e}S \simeq S$. Each simple $kG$-module appears exactly twice in the sequence $T_0,T_1,\ldots,T_{2e-1}$, called the \emph{Green walk around $\Gamma_b$} \cite{Gre77}. 

\begin{example} We consider the planar embedded tree as shown in Figure \ref{fig:walk}. Then for example $\Omega^3 S \simeq S_2$  lifts to a lattice with character $\rho_3$ whereas $\Omega^8 S$ is uniserial with composition series $S_4/S_1/S_2/S_3$, and it lifts to a lattice with character $\rho_2$. The sequence $T_0,T_1,\ldots,T_{9}$ is $S,S_1,S_2,S_2,S_3,S_3,S_4,S_4,S_1,S$, whereas the sequence $\chi_0,\ldots,\chi_9$ is $\chi,\rho_1,\rho_2,\rho_3,\rho_2,\chi_\text{exc},\rho_2,\rho_4,\rho_2,\rho_1$.
\begin{figure}[h]
\begin{center}
\begin{pspicture}(6,4.6)
  \psset{linewidth=1pt}
  \cnode(0,2.3){8pt}{A}\cnode[fillstyle=solid,fillcolor=black](0,2.3){5pt}{A}
  \cnode(2,2.3){5pt}{B}
  \cnode(4,2.3){5pt}{C}
  \cnode(6,2.3){5pt}{D}
  \cnode(2,4.3){5pt}{B1}
  \cnode(2,0.3){5pt}{B2}

  \ncline[nodesep=0pt]{A}{B}\ncput*[npos=0.5]{$S_3$}
  \ncline[nodesep=0pt]{B}{B1}\ncput*[npos=0.5]{$S_2$}\ncput[npos=1.33]{\vphantom{\big)}$\rho_3$}
  \ncline[nodesep=0pt]{B}{B2}\ncput*[npos=0.5]{$S_4$}\ncput[npos=1.3]{\vphantom{\big)}$\rho_4$}
  \ncline[nodesep=0pt]{B}{C}\ncput*[npos=0.5]{$S_1$}\naput[npos=0.1]{\vphantom{\big)}$\rho_2$}
  \ncline[nodesep=0pt]{C}{D}\ncput*[npos=0.5]{$S$}\naput[npos=-0.1]{\vphantom{\big)}$\rho_1$}\naput[npos=1.1]{\vphantom{\big)}$\chi$}
  
  \pscurve[linestyle=dotted](6.5,2.3)(6,2.9)(5,2.6)(4,2.9)(3,2.6)(2.3,3.3)(2.6,4.3)(2,4.8)(1.4,4.3)(1.7,3.3)(1,2.6)(0,2.9)(-0.6,2.3)(0,1.7)(1,2)(1.7,1.3)(1.4,0.3)(2,-0.3)(2.6,0.3)(2.3,1.3)(3,2)(4,1.7)(5,2)(6,1.7)(6.5,2.3)
  
  \rput(5,2.8){\footnotesize $0$}
  \rput(3,2.8){\footnotesize $1$}
  \rput(2.5,3.3){\footnotesize $2$}
  \rput(1.5,3.3){\footnotesize $3$}
  \rput(1,2.8){\footnotesize $4$}
  \rput(1,1.8){\footnotesize $5$}
  \rput(1.5,1.3){\footnotesize $6$}
  \rput(2.5,1.3){\footnotesize $7$}
  \rput(3,1.8){\footnotesize $8$}
  \rput(5,1.8){\footnotesize $9$}
  
\end{pspicture}
\end{center}
\caption{Walking around the Brauer tree}
\label{fig:walk}
\end{figure}
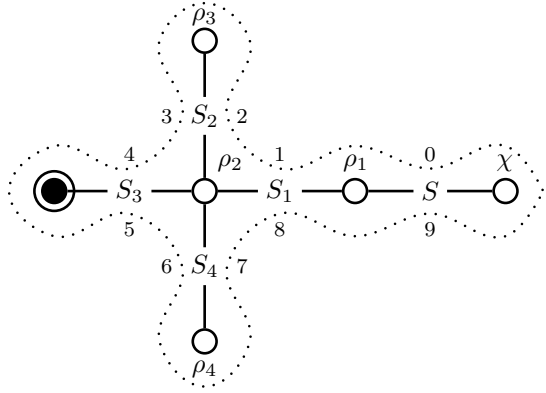
\end{example}

\subsection{The case of unipotent blocks}\label{sec:caseofunipotentblocks}
We now focus on the case of finite reductive groups. As before, $\bfG$ is a connected reductive group over $\overline{\mathbb{F}}_p$ together with a Frobenius endomorphism $F : \bfG \longrightarrow \bfG$ defining an $\mathbb{F}_q$-structure. Throughout this section and until the end of this chapter we will assume for simplicity that $(\bfG,F)$ is \emph{split} (\emph{i.e.} $F$ acts trivially on the Weyl group). 

\smallskip

We denote by $d$ the order of $q$ modulo $\ell$, or equivalently the order of the class of $q$ in $k$. The integer $d$ is minimal for the property that $\ell \mid \Phi_d(q)$, where $\Phi_d$ is the $d$-th cyclotomic polynomial. Recall that when $\ell$ is very good, the unipotent $\ell$-blocks are ``generic'' and parametrized by $d$-cuspidal pairs.
\begin{equation}\label{eq:paramblocks}
 \begin{array}{rcl} 
\left\{\hskip-1.3mm \begin{array}{c} \text{Unipotent $\ell$-blocks} \\ \text{with defect $D$} \end{array} \hskip-1.3mm\right\}
& \longleftrightarrow & 
\left\{\hskip-1.3mm\begin{array}{c} \text{$d$-cuspidal pairs $(\bfL,\rho)$} \\ \text{with $D \simeq (Z(\bfL)^\circ)_\ell^F$} \end{array} \hskip-1.3mm\right\} \hskip-1mm\Big/ G \\[10pt]
b(\bfL,\rho) & \testleftlong & (\bfL,\rho) \end{array}\end{equation}
When $D$ is cyclic, the non-exceptional characters in $b(\bfL,\rho)$ are the unipotent characters in $b(\bfL,\rho)$, which are the irreducible constituents of the virtual character
$$R_\bfL^\bfG(\rho) = \sum_{i \in \mathbb{Z}} (-1)^i [H_c^i(\bfY_\bfV,K)_\rho]$$
where $\bfV = R_u(\bfP)$ for some parabolic subgroup $\bfP$ with Levi complement $\bfL$. The variety $\bfY_\bfV$ is the parabolic Deligne--Lusztig variety attached to $\bfV$ (see \S\ref{sec:hcdl}). Recall that when $\bfL$ is a maximal torus of type $w$ and $\bfP$ is conjugate to the Borel subgroup $\bfB$ by $\dot w$ then $\bfY_\bfV \simeq \bfY(\dot w)$. In that case the non-exceptional characters in the block are the constituents of the Deligne--Lusztig character $R_w$. 

\smallskip

We first list the algebraic methods which can be used to determine the Brauer trees of  unipotent blocks of $G$ with cyclic defect groups. The first three arguments are not specific to finite reductive groups, whereas the last one relies on results by Geck \cite{Gec92}. 

\smallskip

\noindent {\bf(Parity)} If $\chi \trait  \chi'$ then $\chi(1) \equiv - \chi'(1)$ modulo $\ell$ (the dimension of a projective module is divisible by $\ell$). 

\smallskip

\noindent {\bf(Real stem)} If $b$ is real, the real characters form a single connected line containing the exceptional node called the \emph{real stem} of $\Gamma_b$.  The complex conjugation induces a symmetry of $\Gamma_b$ with respect to that line.

\smallskip

\noindent {\bf(Degree)} The dimension of a non-exceptional character $\chi$ equals the sum of the dimensions of the simple modules labeling the edges incident to $\chi$ in $\Gamma_b$.

\smallskip

\noindent {\bf(Hecke)} The Brauer trees of the blocks of the Hecke algebra associated to a given Harish-Chandra series in $b$ (as defined by Geck in \cite{Gec92}) are subtrees of $\Gamma_b$. Each of these subtrees is a line, with dimension (or rather $a$-function) increasing towards the exceptional vertex.
 
\begin{example}\label{ex:g2tree}
Let $G$ be a finite reductive group of type $G_2$. We denote by $s$ and $t$ the two simple reflections of its Weyl group $W$. The degrees of $W$ are $2$ and $6$, therefore the order of the finite group $G$ is
$$ \begin{aligned} 
|G_2(q)| = & \, q^6(q^6-1)(q^2-1) \\
= & \, q^6(q-1)^2(q+1)^2 (q^2+q+1)(q^2-q+1).
\end{aligned}$$
In this decomposition the exponents of the cyclotomic polynomials $\Phi_3(q) =q^2+q+1$ and $\Phi_6(q) =q^2-q+1$
are equal to $1$. Therefore when $\ell > 3$ (when $\ell$ is good) and $\ell$ divides one of these polynomials, the Sylow $\ell$-subgroups of $G$ are cyclic and the principal block has cyclic defect groups. 

\smallskip

Assume that $\ell > 3$ and $\ell \mid  \Phi_6(q) =q^2-q+1$, in which case $q$ has order $6$ modulo $\ell$ (the Coxeter number). A torus of type $w= st$ is a $\Phi_6$-Sylow subgroup since $|\bfT^{wF}| = q^2-q+1$. The trivial character of this torus is $6$-cuspidal, and the corresponding block via \eqref{eq:paramblocks} is the principal block, whose characters are
$$\begin{aligned}
\mathrm{Irr}_K b(\bfT_w,1) &\, = \left\{\hskip-1.3mm\begin{array}{c} \text{constituents of} \\ \text{$R_w = R_w(1)$} \end{array} \hskip-1.3mm\right\} \bigsqcup \left\{\hskip-1.3mm\begin{array}{c} \text{constituents of $R_w(\theta)$} \\ \text{for $\theta \in \mathrm{Irr}_\ell \bfT^{wF}$ and $\theta \neq 1$} \end{array} \hskip-1.3mm\right\}\\
& \, = \big\{\underbrace{1,\mathrm{St},\phi_{2,1}}_{\text{principal series}},\underbrace{G_2[-1],G_2[\theta],G_2[\theta^2]}_{\text{cuspidal characters}}\big\} \bigsqcup \mathrm{Exc}_K b
\end{aligned}$$
since $R_w = 1+\mathrm{St} -\phi_{2,1} + G_2[-1]+G_2[\theta]+G_2[\theta^2]$. As in \S\ref{sec:smallrank}, we use the notation of \cite[\S 13]{Car} for the unipotent characters. In particular $\theta$ is a primitive third root of $1$ in $\calO^\times$. The only non-real unipotent characters are $G_2[\theta]$ and $G_2[\theta^2]$, therefore the real stem is 
\begin{center}
\begin{pspicture}(8,2)
  \psset{linewidth=1pt}
  \psset{unit = 0.9cm}
 
  \cnode[fillstyle=solid,fillcolor=black](2,1.3){5pt}{A2}
  \cnode(2,1.3){8pt}{A}
  \cnode(0,1.3){5pt}{B}
  \cnode(4,1.3){5pt}{C}
  \cnode(6,1.3){5pt}{D}
  \cnode(8,1.3){5pt}{E}

  \ncline[nodesep=0pt]{A}{B}\nbput[npos=1.1]{$\vphantom{\big)} G_2[-1]$}\ncput[npos=1.135]{$+$}
  \ncline[nodesep=0pt]{A}{C}\naput[npos=1.1]{$\vphantom{\big)} \mathrm{St}$}\ncput[npos=1.135]{$+$}
  \ncline[nodesep=0pt]{C}{D}\naput[npos=1.1]{$\vphantom{\big)} \phi_{2,1}$}\ncput[npos=1.13]{$-$}
  \ncline[nodesep=0pt]{D}{E}\naput[npos=1.15]{$\vphantom{\big)} 1$}\ncput[npos=1.13]{$+$}
  \psbrace[nodesepA=-2.1cm,nodesepB=13pt,rot=90,braceWidthInner=5pt,braceWidthOuter=0.05pt,braceWidth=0.5pt](3.85,1.1)(8.15,1.1){\footnotesize Brauer tree of the Hecke algebra}
  
\end{pspicture}
\end{center}
By (Parity) the complex conjugate characters $G_2[\theta]$ and $G_2[\theta^2]$ must be connected to either the vertex labeled by $\phi_{2,1}$ or the exceptional vertex. But $\phi_{2,1}(1)-G_2[\theta](1)-G_2[\theta^2](1) < 0$ if $q>2$. Now $q=2$ would force $\ell = 3$, which is a prime number that we excluded. Therefore (Degree) forces  $\Gamma_b$ to be as in Figure~\ref{G2d6}.
\begin{figure}[h]
\begin{center}
\begin{pspicture}(8,4.5)

  \psset{linewidth=1pt}
  \psset{unit = 0.9cm}
 
  \cnode[fillstyle=solid,fillcolor=black](2,2.5){5pt}{A2}
  \cnode(2,2.5){8pt}{A}
  \cnode(0,2.5){5pt}{B}
  \cnode(4,2.5){5pt}{C}
  \cnode(6,2.5){5pt}{D}
  \cnode(8,2.5){5pt}{E}
  \cnode(2,0.5){5pt}{F}
  \cnode(2,4.5){5pt}{G}

  \ncline[nodesep=0pt]{A}{B}\nbput[npos=1.1]{$\vphantom{\big)} G_2[-1]$}
  \ncline[nodesep=0pt]{A}{C}\naput[npos=1.1]{$\vphantom{\big)} \mathrm{St}$}
  \ncline[nodesep=0pt]{C}{D}\naput[npos=1.1]{$\vphantom{\big)} \phi_{2,1}$}
  \ncline[nodesep=0pt]{D}{E}\naput[npos=1.15]{$\vphantom{\big)} 1$}
  \ncline[nodesep=0pt]{A}{F}\ncput[npos=1.5]{$\vphantom{\big)} G_2[\theta^2]$}
  \ncline[nodesep=0pt]{A}{G}\ncput[npos=1.5]{$\vphantom{\big)} G_2[\theta]$}
\end{pspicture}
\end{center}
\caption{Brauer tree of the principal $\Phi_{6}$-block of $G_2(q)$} 
\label{G2d6}
\end{figure}
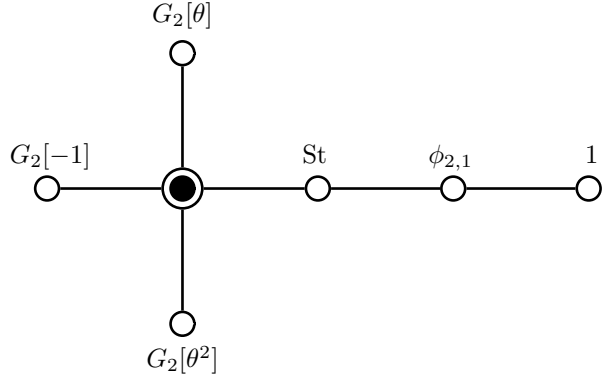
\end{example}

\begin{exercise}
If $\ell \mid q^2+q+1 = \Phi_3(q)$ then $|\bfT^{wF}| = q^2+q+1$, and $\bfT^{wF}$ is a $\Phi_3$-Sylow subgroup for $w = stst = (st)^2$. 
Then the non-exceptional characters of the principal block, with their parity, are 
$$ \mathrm{Irr}_K b \smallsetminus \mathrm{Exc}_K b = \begin{array}[t]{c@{\, }c@{\, }c@{\, }c@{\, }c@{\, }c} \{1,&\mathrm{St},&\phi_{2,2},&
G_2[1],&G_2[\theta],&G_2[\theta^2]\}. \\ + & + & - & + & - & - \end{array}$$
Here the only non-real characters are again the complex conjugate characters $G_2[\theta]$ and $G_2[\theta^2]$. Show that in that case the Brauer tree is given as in Figure \ref{G2d3}.
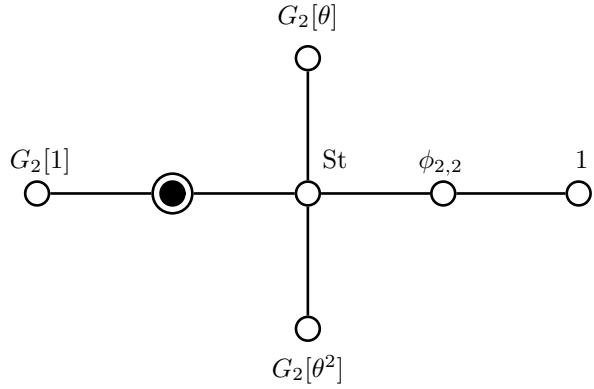
\begin{figure}[h]
\begin{center}
\begin{pspicture}(8,4.5)

  \psset{linewidth=1pt}
  \psset{unit = 0.9cm}
 
  \cnode[fillstyle=solid,fillcolor=black](2,2.5){5pt}{A2}
  \cnode(2,2.5){8pt}{A}
  \cnode(0,2.5){5pt}{B}
  \cnode(4,2.5){5pt}{C}
  \cnode(6,2.5){5pt}{D}
  \cnode(8,2.5){5pt}{E}
  \cnode(4,0.5){5pt}{F}
  \cnode(4,4.5){5pt}{G}

  \ncline[nodesep=0pt]{A}{B}\nbput[npos=1.1]{$\vphantom{\big)} G_2[1]$}
  \ncline[nodesep=0pt]{A}{C}\naput[npos=1.4]{$\vphantom{\big)} \mathrm{St}$}
  \ncline[nodesep=0pt]{C}{D}\naput[npos=1.1]{$\vphantom{\big)} \phi_{2,2}$}
  \ncline[nodesep=0pt]{D}{E}\naput[npos=1.15]{$\vphantom{\big)} 1$}
  \ncline[nodesep=0pt]{C}{F}\ncput[npos=1.5]{$\vphantom{\big)} G_2[\theta^2]$}
  \ncline[nodesep=0pt]{C}{G}\ncput[npos=1.5]{$\vphantom{\big)} G_2[\theta]$}
\end{pspicture}
\end{center}
\caption{Brauer tree of the principal $\Phi_{3}$-block of $G_2(q)$} 
\label{G2d3}
\end{figure}
\end{exercise}

\pagebreak
\begin{theorem} The Brauer trees of unipotent blocks are known for
\begin{itemize}
\item[$\mathrm{(i)}$] $\bfG$ of classical type $A$, $B$, $C$ and $D$ \emph{(Fong--Srinivasan \cite{FonSri84,FonSri90})}.
\item[$\mathrm{(ii)}$] $\bfG$ of exceptional type except $E_7$ and $E_8$.
\end{itemize}
\end{theorem}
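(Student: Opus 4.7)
The plan is to apply the algebraic toolkit \textbf{(Parity)}, \textbf{(Real stem)}, \textbf{(Degree)} and \textbf{(Hecke)} systematically, with a case-by-case treatment driven by the parametrization \eqref{eq:paramblocks} of unipotent blocks by $d$-cuspidal pairs. For each $d$ with $\ell\mid\Phi_d(q)$ one lists those $d$-cuspidal pairs $(\bfL,\rho)$ for which the defect $(Z(\bfL)^\circ)_\ell^F$ is cyclic; this is a finite combinatorial problem. The non-exceptional characters of $b(\bfL,\rho)$ are then the constituents of $R_\bfL^\bfG(\rho)$, i.e.\ the members of the $d$-Harish--Chandra series above $(\bfL,\rho)$, and their degrees and parities are computable from the generic degree formulas.

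First I would treat the classical types $A, B, C, D$ along the lines of Fong--Srinivasan \cite{FonSri84,FonSri90}. For these groups the relative Weyl group $W_\bfG(\bfL,\rho)$ that organizes a $d$-Harish-Chandra series is cyclic whenever the defect is cyclic, and the associated cyclotomic Hecke algebra is a one-parameter algebra whose principal block has a line as Brauer tree. By \textbf{(Hecke)} this line embeds as a subtree of $\Gamma_b$, and the ordering along it is fixed by the $a$-function. Cuspidal unipotent characters of $\bfG$ in the block are rare and can be attached by applying \textbf{(Parity)} and \textbf{(Degree)}; in most instances the Harish-Chandra line is already the whole tree. For exceptional types other than $E_7$ and $E_8$, i.e.\ $G_2, F_4, {}^2\!F_4, {}^3\!D_4, E_6, {}^2\!E_6$, only finitely many triples $(d, \bfL, \rho)$ need to be inspected, and for each one the same recipe applies: identify the constituents of $R_\bfL^\bfG(\rho)$, draw the Hecke backbones (one per Harish-Chandra series intersecting the block), locate the real stem via \textbf{(Real stem)} using complex conjugation, and then place the remaining cuspidal vertices using the numerical constraints \textbf{(Parity)} and \textbf{(Degree)}. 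Examples \ref{ex:g2tree} and the exercise that follows show this mechanism in action for $G_2(q)$.

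The hard part will be the few cases where \textbf{(Parity)}, \textbf{(Real stem)}, \textbf{(Degree)} and \textbf{(Hecke)} leave more than one possibility for the shape or the planar embedding of the tree. In classical types one can appeal to the combinatorics of symbols to resolve these, but in exceptional types such residual ambiguities do occur; they are handled by ad hoc arguments in small rank (for example, using explicit Harish-Chandra induction of PIMs from proper Levi subgroups as in \S\ref{sec:smallrank}, or by comparison with the Brauer correspondent in $N_G(D)$). Exactly these ambiguities become unavoidable for many blocks in $E_7$ and $E_8$, which explains why the statement excludes those two types: the trees there are accessible only through the geometric methods developed later in these notes, namely the analysis of $R\Gamma_c(\bfY(\dot w),\Lambda)$ and its Alvis-Curtis/intersection cohomology counterparts as in Conjecture~\ref{conj:qwproj} and \cite{CraDudRou17}.
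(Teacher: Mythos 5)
This theorem is not proved in the paper; it is a survey statement. The paper's ``proof'' consists entirely of the paragraph following the statement, which lists references (Burkhart for ${}^2B_2$, Shamash for $G_2$, Geck for ${}^3D_4$, Hiss for ${}^2G_2$ and ${}^2F_4$, Wings and Hiss--L\"ubeck for $F_4$, Hiss--L\"ubeck for ${}^2E_6$, Hiss--L\"ubeck--Malle for $E_6$, and Fong--Srinivasan for classical types) and then remarks that, beyond the algebraic arguments illustrated with $G_2(q)$, ``it is often necessary to use partial information on the character table of the group to determine the tree (in order to decompose tensor products of characters).''

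Your proposal is not wrong in spirit --- the toolkit \textbf{(Parity)}, \textbf{(Real stem)}, \textbf{(Degree)}, \textbf{(Hecke)}, the parametrization \eqref{eq:paramblocks} by $d$-cuspidal pairs, and the observation that $E_7$ and $E_8$ resist these methods are all faithful to how the subject actually developed. But as a proof it has two issues. First, it omits the one extra ingredient that the paper singles out as indispensable beyond the four listed arguments, namely using partial character-table data to decompose tensor products of characters; the ``ad hoc arguments'' you invoke (Harish-Chandra induction of PIMs, comparison with the Brauer correspondent) do not suffice on their own in the exceptional cases the paper has in mind, and are not the techniques the paper flags. Second, even granting the strategy, the verification that the listed constraints pin down each tree uniquely is a nontrivial block-by-block computation spread across the cited literature; your outline asserts rather than demonstrates that the residual ambiguities are always resolvable for the types listed. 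In short, what you have written is a correct reconstruction of the methodology behind the cited works, but it is not a self-contained proof, and the paper itself does not claim to provide one --- it delegates the proof to the references. If the goal is to match the paper, a short paragraph citing the relevant papers is what is called for; if the goal is to prove the theorem, one would have to carry out the case analysis in detail, and you would need to add the character-table/tensor-product argument to your toolbox before the exceptional-type cases close.
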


Note that for groups of small rank, the determination of the trees follows from the determination of all the $\ell$-decomposition matrices for various $\ell$, which were more specifically solved by Burkhart \cite{Bur79} for ${}^2B_2$,
Shamash \cite{Sha89} for $G_2$, Geck \cite{Gec91bis} for ${}^3D_4$,
Hiss \cite{His90} for ${}^2G_2$ and ${}^2 F_4$, and Wings \cite{Win95} for $F_4$. The determination of the other trees were obtained by Hiss--L\"ubeck \cite{HisLub98} for $F_4$ and ${}^2E_6$, and Hiss--L\"ubeck--Malle \cite{HisLubMal95} for $E_6$.
In addition to the algebraic arguments used in the example of $G_2(q)$, it is often necessary to use partial information on the character table of the group to determine the tree (in order to decompose tensor products of characters).

\smallskip

The problem for larger exceptional groups such as $E_7$ and $E_8$ comes from the increasing number of cuspidal $kG$-modules. As in the case of decomposition matrices (see Chapter \ref{chap:dec}), these representations resist to algebraic methods which rely on Harish-Chandra induction and restriction, such as (Hecke). The idea developped in \cite{CraDudRou17} by Craven--Rouquier and the author is to use the cohomology complexes of Deligne--Lusztig varieties to produce perfect complexes satisfying the following proposition.

\begin{prop}\label{prop:twodegrees}
Let $C$ be a perfect complex of $kG$-modules. Assume that there exist integers $a < b$ such that 
$H^i(C) = 0$ for $i \neq a,b$. Then 
$$ H^a(C) \simeq \Omega^{b-a+1} H^b(C) \quad \text{in } kG\sfstab.$$
Consequently, the $kG$-modules $H^a(C)$ and $\Omega^{b-a+1} H^b(C)$ differ only by their projective summands.
\end{prop}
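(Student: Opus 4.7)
The plan is to use the two-term decomposition of $C$ furnished by truncation, then pass to the Rickard quotient $\sfD^b(kG\sfmod)/kG\sfperf$ in which perfect complexes vanish.

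First I would apply the machinery of \S\ref{sec:truncation} (Proposition \ref{prop:truncation} together with the distinguished triangles \eqref{eq:dttruncation}): since $H^i(C)=0$ for $i\neq a,b$, the complex $C$ is quasi-isomorphic to $\tau_{\geq a}\tau_{\leq b}(C)$ and fits into a distinguished triangle
$$H^a(C)[-a]\longrightarrow C\longrightarrow H^b(C)[-b]\rightsquigarrow$$
in $\sfD^b(kG\sfmod)$, as explained in the paragraph preceding Example \ref{ex:extandtor} (with the roles of $0$ and $n$ played by $a$ and $b$).

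Next I would invoke the hypothesis that $C$ is perfect: by definition $C\in kG\sfperf$, so $C$ becomes isomorphic to $0$ in the Verdier quotient $\sfD^b(kG\sfmod)/kG\sfperf$. Projecting the triangle above to this quotient therefore yields an isomorphism
$$H^a(C)[-a]\;\simeq\;H^b(C)[-b-1]\qquad\text{in }\sfD^b(kG\sfmod)/kG\sfperf.$$
Now Rickard's Theorem \ref{thm:derstab} provides an equivalence of triangulated categories $kG\sfstab\simto \sfD^b(kG\sfmod)/kG\sfperf$, and the remark following that theorem identifies the shift and the Heller operator via $M[-n]\simeq \Omega^n M$ in the quotient. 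Rewriting both sides of the above isomorphism accordingly gives $\Omega^a H^a(C)\simeq \Omega^{b+1}H^b(C)$ in $kG\sfstab$, and since $\Omega$ is an auto-equivalence of $kG\sfstab$ we may apply $\Omega^{-a}$ to conclude
$$H^a(C)\;\simeq\;\Omega^{b-a+1}H^b(C)\qquad\text{in }kG\sfstab.$$

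The concluding statement about projective summands is then immediate from Proposition \ref{prop:stab}(i), which characterizes isomorphism in $kG\sfstab$ as isomorphism up to addition of finitely generated projective modules. I don't anticipate any serious obstacle: the only subtle point is to apply the identification $M[-n]\simeq \Omega^n M$ with the correct sign conventions so that the exponent $b-a+1$ comes out right, which is simply bookkeeping.
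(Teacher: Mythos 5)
Your proof is correct and follows essentially the same route as the paper's. The only cosmetic difference is that the paper first rotates the triangle to $C \to H^b(C)[-b] \to H^a(C)[-a+1] \rightsquigarrow$ and then passes to $kG\sfstab$, whereas you pass to the Verdier quotient first and read off the isomorphism directly from the vanishing of the middle term $C$; these are the same argument, and your bookkeeping of the shifts and Heller operators via Rickard's identification $M[-n]\simeq\Omega^n M$ is carried out correctly.
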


\begin{proof}
Using \eqref{eq:dttruncation} we get the following distinguished triangle in $\sfD^b(kG\sfmod)$
$$ H^a(C)[-a] \longrightarrow C \longrightarrow H^b(C)[-b] \rightsquigarrow$$
which we can also write as 
$$ C \longrightarrow H^b(C)[-b]  \longrightarrow H^a(C)[-a+1] \rightsquigarrow$$
Now by Theorem \ref{thm:derstab} the image in $kG\sfstab$ is also a distinguished triangle. Since $C$ is perfect, its image in the stable category is zero. Thus we obtain the following distinguished triangle in $kG\sfstab$
 $$ 0 \longrightarrow \Omega^bH^b(C)  \longrightarrow \Omega^{a-1} H^a(C) \rightsquigarrow$$
which yields $ \Omega^bH^b(C)  \simto \Omega^{a-1} H^a(C)$ in $kG\sfstab$.
\end{proof}

Given $\lambda \in \calO^\times$, we can consider the complex $C = bR\Gamma_c(\bfX(w),k)_{\overline{\lambda}}$ obtained from the cohomology complex of $\bfX(w)$ by cutting by the generalized $\overline{\lambda}$-eigenspace of $F$, and by the block $b$. In other words, the complex $C$ is isomorphic to a direct summand of $R\Gamma_c(\bfX(w),k)$ such that $H^i(C) = b H_c^i(\bfX(w),k)_{\overline{\lambda}}$. The requirements to use Proposition \ref{prop:twodegrees} are
\begin{itemize}
 \item[(1)] $C$ must be perfect. This follows from Proposition \ref{prop:ywperfect} if we assume $\ell \nmid |\bfT^{wF}|$.
 \item[(2)] The condition on the vanishing of $H^i(C)$ should already hold over $K$ by the universal coefficient formula (see \S\ref{sec:coxoverk}). This supposes a vanishing of many of the groups $H_c^i(\bfX(w),K)_{\mu}$ for every eigenvalue $\mu$ congruent to $\lambda$ modulo~$\ell$. 
 \item[(3)] The vanishing over $k$ should follow from the vanishing over $K$ and the property that each cohomology group $H_c^i(\bfX(w),\calO)$ is $\calO$-free.
\end{itemize}
In the few examples where $H_c^i(\bfX(w),\calO)$ has been explicitly computed, it is torsion-free whenever $\ell \nmid |\bfT^{wF}|$ (in other words (3) follows from (1)). The reader will find an example of this property in the following chapter, for varieties associated with Coxeter elements.

\begin{example}
Let $G$ be a finite reductive group of type $E_7$. We denote by $s_1,\ldots,s_7$ the simple reflections in $W$. If $q$ has order $14$ modulo $\ell$, in which case $\ell$ divises $\Phi_{14}(q)$ then the principal $\ell$-block has cyclic defect groups. It corresponds to the cuspidal pair $(\bfT_w,1)$ where $w$ is a element of $W$ of order $14$ (such an element can be taken to have length $9$). The fourteen non-exceptional characters in the block are given by the irreducible constituents of the Deligne--Lusztig character $R_w$. They consist of eight unipotent characters in the principal series (including $1$ and $\mathrm{St}$), four unipotent characters in the $D_4$-series and the two cuspidal unipotent characters of $E_7$, namely the complex conjugate characters $E_7[\mathrm{i}]$ and $E_7[-\mathrm{i}]$. The real stem of the Brauer tree is formed by the real characters (all except $E_7[\pm \mathrm{i}]$) with increasing dimension towards the exceptional vertex by (Hecke), see Figure \ref{E7d14real}. 
\begin{figure}[h]
\begin{center}
\begin{pspicture}(12,1.7)
  \psset{linewidth=1pt}
  \psset{unit = 1cm}
 
  \cnode[fillstyle=solid,fillcolor=black](4,1){5pt}{A2}
  \cnode(4,1){8pt}{A}
  \cnode(0,1){5pt}{B}
  \cnode(1,1){5pt}{C}
  \cnode(2,1){5pt}{D}
  \cnode(3,1){5pt}{E}
  \cnode(5,1){5pt}{F}
  \cnode(6,1){5pt}{G}
  \cnode(7,1){5pt}{H}
  \cnode(8,1){5pt}{I}
  \cnode(9,1){5pt}{J}
  \cnode(10,1){5pt}{K}
  \cnode(11,1){5pt}{L}
  \cnode(12,1){5pt}{M}  

  \ncline[nodesep=0pt]{B}{C}
  \ncline[nodesep=0pt]{C}{D}
  \ncline[nodesep=0pt]{D}{E}
  \ncline[nodesep=0pt]{E}{A}
  \ncline[nodesep=0pt]{A}{F}\naput[npos=1.35]{$\vphantom{\big)} \mathrm{St}$}
  \ncline[nodesep=0pt]{F}{G}
  \ncline[nodesep=0pt]{G}{H}  
  \ncline[nodesep=0pt]{H}{I}
  \ncline[nodesep=0pt]{I}{J}
  \ncline[nodesep=0pt]{J}{K}  
  \ncline[nodesep=0pt]{K}{L}
  \ncline[nodesep=0pt]{L}{M}\naput[npos=1.3]{$\vphantom{\big)} 1$}     

  \psbrace[nodesepA=-0.5cm,nodesepB=11pt,rot=90,braceWidthInner=5pt,braceWidthOuter=0.05pt,braceWidth=0.5pt](-0.15,0.8)(3.15,0.8){\footnotesize$D_4$-series}
  \psbrace[nodesepA=-0.9cm,nodesepB=11pt,rot=90,braceWidthInner=5pt,braceWidthOuter=0.05pt,braceWidth=0.5pt](4.85,0.8)(12.15,0.8){\footnotesize principal series}
  
\end{pspicture}
\end{center}
\caption{Real stem of Brauer tree of the principal $\Phi_{14}$-block of $E_7(q)$} 
\label{E7d14real}
\end{figure}
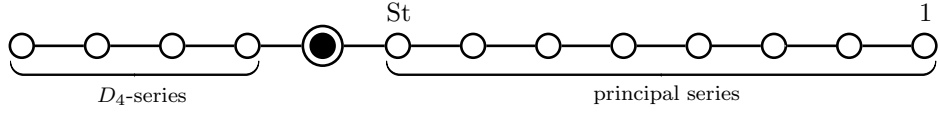

The missing characters in the tree are the complex conjugate characters $E_7[\pm\mathrm{i}]$. Unlike the case of $G_2(q)$ in Example \ref{ex:g2tree}, a combination of the (Degree) and (Parity) arguments is not enough to determine to which vertex they are attached. To remove the ambiguity, we consider the Deligne--Lusztig variety $\bfX(c)$ attached to a Coxeter element $c = s_1 s_2 \cdots s_7$ and the corresponding cohomology complex. Here we will consider the generalized eigenspaces of $F$ corresponding to the eigenvalues $1$ and $-1$. Over $K$, the cohomology of $\bfX(c)$, cut by the block, is
$$ \begin{array}
{r@{\, }c@{\, }c@{\, }l@{\, }l} bH_c^i(\bfX(c),K) = \big(& \mathrm{St} & \oplus \, E_7[\mathrm{i}]\,  \oplus & E_7[-\mathrm{i}] \big)[-7] & \oplus\, 1[-14] \\[5pt]
\text{with eigenvalues of $F$ in $K$} \hphantom{)} = \hphantom{\big(} & 1 & \ \mathrm{i}q^{7/2} & -\mathrm{i}q^{7/2} & \ \, \ q^7 \\
\text{and eigenvalues of $F$ in $k$} \hphantom{)} = \hphantom{\big(} & 1 &-1 &  \ \ \ 1 & \, -1 \end{array}$$
with the convention that $\mathrm{i} \equiv q^{7/2}$ modulo $\ell$ (here $\mathrm{i}^2 =-1$ in $K$). We obtain 
$$ \begin{aligned}
bH_c^\bullet(\bfX(c),K)_{1+\ell \calO} & \, \simeq \big(\mathrm{St} \oplus E_7[-\mathrm{i}]\big) [-7], \\
bH_c^\bullet(\bfX(c),K)_{-1+\ell \calO} & \, \simeq E_7[\mathrm{i}] [-7]  \oplus 1[-14]. 
\end{aligned}$$
If we assume that $H_c^\bullet(\bfX(c),\calO)$ is torsion-free, then the universal coefficient theorem shows that 
$bR\Gamma_c(\bfX(c),k)_{1}$ has only one non-zero cohomology group. By using the truncation functors of Proposition \ref{prop:truncation} it follows that it is quasi-isomorphic to a single projective $kG$-module in degree $7$, which lifts to a lattice of character $\mathrm{St} \oplus E_7[-\mathrm{i}]$. This shows that the vertex labeled by $E_7[-\mathrm{i}]$ is connected to the vertex labeled by $\mathrm{St}$ and gives the Brauer tree of $b$. Even better, using Proposition \ref{prop:twodegrees}  with the complex $bR\Gamma_c(\bfX(c),K)_{-1}$ which has only two non-zero cohomology groups yields
$$ H_c^7(\bfX(c),k)_{-1}  \simeq  \Omega^{14-7+1} H_c^{14}(\bfX(c),k)_{-1} \simeq  \Omega^{8} k$$
in the stable category $kG\sfstab$. This proves that $\Omega^8 \calO$ is an $\calO G$-lattice with character 
$H_c^7(\bfX(c),K)_{-1} = E_7[\mathrm{i}]$ and the planar embedded Brauer tree shown in Figure \ref{E7d14} is obtained from the Green walk.
\begin{figure}[h]
\begin{center}
\begin{pspicture}(12,2.7)
  \psset{linewidth=1pt}
  \psset{unit = 1cm}
 
  \cnode[fillstyle=solid,fillcolor=black](4,1.3){5pt}{A2}
  \cnode(4,1.3){8pt}{A}
  \cnode(0,1.3){5pt}{B}
  \cnode(1,1.3){5pt}{C}
  \cnode(2,1.3){5pt}{D}
  \cnode(3,1.3){5pt}{E}
  \cnode(5,1.3){5pt}{F}
  \cnode(6,1.3){5pt}{G}
  \cnode(7,1.3){5pt}{H}
  \cnode(8,1.3){5pt}{I}
  \cnode(9,1.3){5pt}{J}
  \cnode(10,1.3){5pt}{K}
  \cnode(11,1.3){5pt}{L}
  \cnode(12,1.3){5pt}{M}  
  \cnode(5,0.3){5pt}{F1}  
  \cnode(5,2.3){5pt}{F2}  

  \ncline[nodesep=0pt]{B}{C}
  \ncline[nodesep=0pt]{C}{D}
  \ncline[nodesep=0pt]{D}{E}
  \ncline[nodesep=0pt]{E}{A}
  \ncline[nodesep=0pt]{A}{F}\naput[npos=1.9]{$\vphantom{\big)} \mathrm{St}$}\nbput[npos=0.5]{\scriptsize$L$}
  \ncline[nodesep=0pt]{F}{G}\nbput[npos=0.5]{\scriptsize$S_6$}
  \ncline[nodesep=0pt]{G}{H}\nbput[npos=0.5]{\scriptsize$S_5$}  
  \ncline[nodesep=0pt]{H}{I}\nbput[npos=0.5]{\scriptsize$S_4$}
  \ncline[nodesep=0pt]{I}{J}\nbput[npos=0.5]{\scriptsize$S_3$}
  \ncline[nodesep=0pt]{J}{K}  \nbput[npos=0.5]{\scriptsize$S_2$}
  \ncline[nodesep=0pt]{K}{L}\nbput[npos=0.5]{\scriptsize$S_1$}
  \ncline[nodesep=0pt]{L}{M}\naput[npos=1.3]{$\vphantom{\big)} 1$}\nbput[npos=0.5]{\scriptsize$k$}     
  \ncline[nodesep=0pt]{F}{F1}\ncput[npos=1.9]{$\vphantom{\big)} E_7[-\mathrm{i}]$}
  \ncline[nodesep=0pt]{F}{F2}\ncput[npos=1.9]{$\vphantom{\big)} E_7[\mathrm{i}]$}
 
  \psellipticarc[linewidth=1.5pt]{->}(5,1.3)(0.6,0.6){100}{170}
\end{pspicture}
\end{center}
\caption{Brauer tree of the principal $\Phi_{14}$-block of $E_7(q)$} 
\label{E7d14}
\end{figure}
\end{example}

\begin{exercise} Show that the perfect complex $R\Gamma_c(\bfX(c),k)_{-1}$ is homotopy equivalent to 
$$ 0 \longrightarrow \begin{array}{c} \textcolor{violet}{E_7[\mathrm{i}]} \\ \textcolor{violet}{L} \\ \textcolor{violet}{E_7[-\mathrm{i}]} \\ \textcolor{violet}{S_6} \\ \fbox{$E_7[\mathrm{i}]$} \end{array} \longrightarrow \begin{array}{c} \textcolor{purple}{S_6} \\ \textcolor{violet}{E_7[\mathrm{i}]} \hphantom{AAAA} \\ \textcolor{violet}{L} \qquad \textcolor{purple}{S_5} \\ \textcolor{violet}{E_7[-\mathrm{i}]} \hphantom{AAAA} \\ \textcolor{violet}{S_6} \end{array}  \hskip -4mm \longrightarrow \begin{array}{c} \textcolor{violet}{S_5} \\\textcolor{purple}{S_6} \quad \textcolor{violet}{S_4}  \\ \textcolor{purple}{S_5} \end{array} \longrightarrow \begin{array}{c} \textcolor{purple}{S_4} \\ \textcolor{violet}{S_5} \quad \textcolor{purple}{S_3}  \\ \textcolor{violet}{S_4} \end{array}  \longrightarrow \cdots \longrightarrow  \begin{array}{c} \fbox{$k$} \\ \textcolor{violet}{S_1} \\  \textcolor{violet}{k} \end{array} \longrightarrow 0.$$
\end{exercise}

This method has proven very powerful in the case of exceptional groups of type $E_7(q)$ and $E_8(q)$, removing also some ambiguity in the planar embedding of the Ree groups ${}^2F_4(q)$. Luckily, only small-dimensional Deligne--Lusztig varieties were needed to complete the determination of the trees, which made checking the vanishing of the cohomology possible.

\begin{theorem}[Craven--Dudas--Rouquier \cite{CraDudRou17}] All the planar embedded Brauer trees of unipotent blocks of finite reductive groups are explicitly known in terms of Lusztig's parametrization of unipotent characters.
\end{theorem}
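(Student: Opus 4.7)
The proof proceeds case by case across the simple types of $\bfG$. The classical types $A, B, C, D$ being handled by Fong--Srinivasan, and the small-rank exceptional types being handled individually in the literature, the remaining work concentrates on types $E_7$, $E_8$, together with refining the planar embedding for the Ree groups ${}^2F_4(q)$.

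For each $d$-cuspidal pair $(\bfL,\rho)$ giving rise to a unipotent block $b$ with cyclic defect, I would first read off the non-exceptional characters from the irreducible constituents of $R_\bfL^\bfG(\rho)$ via Lusztig's parametrization. Applying the four algebraic tools (Parity), (Real stem), (Degree), (Hecke) pins down a large portion of the Brauer tree: the real stem, ordered by increasing $a$-function toward the exceptional vertex, and the Harish-Chandra subtrees forming lines inside it. The ambiguity that remains in the hard cases concerns the attachment of a small number of (typically cuspidal or complex-conjugate) characters to the real stem, together with the cyclic ordering around each vertex.

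To resolve these ambiguities, I would apply the strategy described in the previous section: construct, for a carefully chosen element $w\in W$ and a scalar $\lambda\in \calO^\times$, the perfect complex $C = bR\Gamma_c(\bfX(w),k)_{\overline{\lambda}}$ obtained by cutting the cohomology of the Deligne--Lusztig variety by the block $b$ and by the generalized $\overline{\lambda}$-eigenspace of $F$. The element $w$ must be chosen so that $\ell \nmid |\bfT^{wF}|$ (ensuring perfectness by Proposition \ref{prop:ywperfect}) and so that the $F$-eigenvalues appearing in $bH_c^\bullet(\bfX(w),K)$ separate into at most two congruence classes modulo $\ell$, reducing the support of $C$ to only two cohomological degrees. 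Proposition \ref{prop:twodegrees} then identifies the two cohomology groups as Heller translates of each other in $kG\sfstab$, which via the Green walk description uniquely fixes the attachment of each cuspidal constituent and the cyclic ordering around the corresponding vertex. In practice, Coxeter-length and short-length elements $w$ should suffice for the missing ambiguities in $E_7$, $E_8$ and ${}^2F_4$, as the $E_7(q)$ example treated above already illustrates.

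The main obstacle will be verifying that the required vanishing in each two-degree complex also holds integrally, which amounts to showing that the $\calO$-modules $H_c^i(\bfX(w),\calO)$ are torsion-free for the varieties $w$ appearing in each argument. This is the technical heart of the approach, and will need to be addressed case by case, either through explicit computation for small-dimensional varieties, or by a representation-theoretic torsion-freeness argument in the spirit of the final chapter of these notes. A secondary subtlety is to choose $w$ of length small enough to make the cohomological computations tractable, while ensuring that the selected eigenspace isolates exactly the cuspidal characters whose placement is still undetermined.
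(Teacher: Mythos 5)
Your sketch follows the same strategy the paper attributes to \cite{CraDudRou17}: reduce to $E_7$, $E_8$, and the planar embedding of ${}^2F_4$ via Fong--Srinivasan and the small-rank literature, exhaust the algebraic tools (Parity), (Real stem), (Degree), (Hecke) to fix most of the tree, then pin down the placement of the remaining cuspidal or complex-conjugate characters and the cyclic ordering by applying Proposition \ref{prop:twodegrees} to $bR\Gamma_c(\bfX(w),k)_{\overline{\lambda}}$ for short elements $w$, feeding the resulting Heller translates into the Green walk. You also correctly flag the technical bottleneck — integral torsion-freeness of $H_c^\bullet(\bfX(w),\calO)$ — which the paper resolves by keeping $w$ short enough that the relevant vanishing can be verified directly. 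This is essentially the paper's own account.
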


\section{The Coxeter variety}

The last chapter of these notes is devoted to the study of Deligne--Lusztig varieties attached to a special class of Weyl group elements, called the Coxeter elements. These varieties were first studied by Lusztig \cite{Lus76bis}. Computing their $\ell$-adic cohomology turned out to be a key ingredient in Lusztig's subsequent work on the classification of ordinary irreducible characters of finite reductive groups. We explain in this chapter how to extend Lusztig's result to the modular setting, building on work of Bonnaf\'e--Rouquier \cite{BonRou08bis} and the author \cite{Dud12,Dud14}. 

\smallskip

Throughout this chapter we will assume for simplicity that $(\bfG,F)$ is \emph{split} (\emph{i.e.} $F$ acts trivially on the Weyl group). All the main results of this chapter can be generalized to the case where a power of $F$ acts trivially (which includes the case of the Ree and Suzuki groups).

 \subsection{Geometry of the Coxeter variety}
Let $\bfT$ be a split maximal torus of $\bfG$, contained in an $F$-stable Borel subgroup $\bfB$ of $\bfG$.
Recall that the simple reflections $s_1,\ldots,s_r$ of the Weyl group $W = N_\bfG(\bfT)/\bfT$ are exactly the elements $s \in W$ such that $\bfB s\bfB/\bfB$ has dimension $1$. 

\smallskip

A \emph{Coxeter element} $c$ of $W$ is a product $c = s_1 \cdots s_r$ of all the simple reflections, in any order. Coxeter elements are conjugate under $W$. The order of any Coxeter element is the \emph{Coxeter number} of $W$, denoted by $h$. A \emph{Coxeter variety} is a Deligne--Lusztig variety $\bfX( c)$ or  $\bfY(\dot c)$ attached to a Coxeter element. 
Since Coxeter elements are the elements of minimal length in their conjugacy class, they are actually conjugate by a sequence of cyclic shifts. Consequently, the cohomology of a Coxeter variety does not depend on the choice of a Coxeter element. For that reason we shall denote these varieties simply by $\bfX$ or $\bfY$. For more details on Coxeter elements see \cite[\S V.6]{Bki} and for Coxeter varieties see \cite{Lus76bis}. 

\begin{example}\label{ex:coxgln}
Let $\bfG = \mathrm{GL}_n(\overline{\mathbb{F}}_p)$. The permutation $c = (1,2)(2,3) \cdots (n-1,n) = (1,2,3,\ldots,n)$ is a Coxeter element of $\mathfrak{S}_n$. It has length $n-1$ and order $h = n$. For the general linear group the flag variety $\bfG/\bfB$ can be identified with the set of flags of vectors spaces $V_\bullet = (\{0\} = V_0 \subset V_1 \subset \cdots \subset V_n = 
\overline{\mathbb{F}}_p^n)$ such that $\dim_{\overline{\mathbb{F}}_p} V_i = i$.  With this description, the Deligne--Lusztig variety attached to an element $w \in \mathfrak{S}_n$ is
$$ \bfX(w) \simeq \{ V_\bullet \in \bfG/\bfB \, \mid \, V_\bullet \text{ and } F(V_\bullet) \text{ are in relative position } w\}.$$
Recall that two flags $V_\bullet$ and $V_\bullet'$ are said to be \emph{in relative position $w$} if there exist $e_1,\ldots,e_n \in\overline{\mathbb{F}}_p^n$ such that $e_1,\ldots,e_i$ is a basis of $V_i$ and $e_{w(1)},\ldots,e_{w(i)}$ is a basis of $V_i'$ for each $i = 0,\ldots,n$. With $w = c = (1,2,\ldots,n)$ we deduce that $V_\bullet \in \bfX$ if and only if $e_1,\ldots,e_i$ is a basis of $V_i$ and $e_2,\ldots,e_{i+1}$ is a basis of $F(V_i)$. This can be written 
$V_{i+1} = V_1 \oplus F(V_i)$, which yields in turn
$$V_{i+1} =  V_1 \oplus F(V_1) \oplus \cdots \oplus F^{i}(V_1).$$
This gives an explicit description of the Coxeter variety in the case of $\mathrm{GL}_n(\overline{\mathbb{F}}_p)$ as
$$\begin{aligned}
\bfX \simeq & \ \{ V_1 \in \mathbb{P}(\overline{\mathbb{F}}_p^n) \, \mid \,  \overline{\mathbb{F}}_p^n = V_1 \oplus F(V_1) \oplus \cdots \oplus F^{n-1}(V_1) \} \\ 
\simeq & \ \Big\{ [x_1:x_2:\cdots:x_n] \in \mathbb{P}_{n-1} \ \Big| \  \begin{array}[t]{|cccc|} 
x_1 & x_1^q & \cdots & x_1^{q^{n-1}} \\ 
x_2 & x_2^q & \cdots & x_2^{q^{n-1}} \\ 
\vdots & \vdots & & \hskip -4mm\vdots \\
x_n & x_n^q & \cdots & x_n^{q^{n-1}} \\ \end{array} \neq 0 \Big\}. \end{aligned}$$
\end{example}
 
If $v < c$ then $v$ is obtained from $c$ by removing some simple reflections. Therefore it is a product of simple reflections lying in a proper subset $I$ of $S$, and as such it is a Coxeter element of the parabolic subgroup $W_I$ of $W$. We shall write $v = c_I$. Consequently,
\begin{equation}\label{eq:compact} 
\overline{\bfX(c)} = \bigsqcup_{v \leq c} \bfX(v) =  \bigsqcup_{I \subset S} \bfX(c_I).
\end{equation}
Let $\bfP_I = \bfB W_I \bfB$ (resp. $\bfL_I$) be the standard parabolic subgroup of $\bfG$ (resp. standard Levi subgroup of $\bfG$) attached to $I$. Its unipotent radical will be denoted by $\bfU_I$. 
We will write $\bfX_I = \bfX_{\bfL_I}(c_I)$ for the Coxeter variety of $\bfL_I$. We have $\bfX(c_I) \simeq G/U_I \times_{L_I} \bfX_I$ and therefore 
\begin{equation}\label{eq:rlgcox}
R\Gamma_c( \bfX(c_I),\Lambda) \simeq R_{L_I}^G\big(R\Gamma_c( \bfX_I,\Lambda)\big).
\end{equation}

\smallskip There are two key properties of the Coxeter variety that are needed to compute its cohomology (and to show that it is torsion-free). The first one is given by \eqref{eq:compact}. The second is a result of Lusztig \cite{Lus76bis} giving the quotient of $\bfX$ by unipotent subgroups in terms of Coxeter varieties of Levi subgroups.

\begin{prop}\label{prop:quotientcox}
Let $I \subset S$. There is a (non-equivariant) isomorphism of varieties
$$ U_I \backslash \bfX \simeq \bfX_I \times (\mathbb{G}_m)^{r-|I|}$$
which induces an $L_I \times\langle F \rangle$-equivariant isomorphism of $\ell$-adic cohomology groups
$$ {}^*R_{L_I}^G \big(H_c^\bullet(\bfX,K)\big) \simeq  H_c^\bullet(\bfX_I,K) \otimes_K  H_c^{\bullet}(\mathbb{G}_m,K)^{\otimes r- |I|}. $$ 
\end{prop}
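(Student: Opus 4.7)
My plan is to first construct the geometric isomorphism and then deduce the cohomology statement from it. The reduction is immediate: since $|U_I|$ is a power of $p$ and hence invertible in $K$, Theorem~\ref{thm:cohomologyinv} yields $H_c^\bullet(\bfX,K)^{U_I} \simeq H_c^\bullet(U_I\backslash \bfX,K)$; combined with the definition ${}^*R_{L_I}^G = (-)^{U_I}$ of Harish-Chandra restriction and the K\"unneth formula (Theorem~\ref{thm:Kunneth}) applied to $\bfX_I \times (\mathbb{G}_m)^{r-|I|}$, this converts the variety isomorphism into the desired tensor product decomposition at the level of graded vector spaces. What remains is the $L_I \times \langle F \rangle$-equivariance, which has to be checked separately on cohomology because the variety isomorphism is only asserted to be non-equivariant.

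To construct the isomorphism I would use the Bruhat cell structure of $\bfB c \bfB$: since $\ell(c) = r$, the product map
\[
\bfU \times \bfT \times \bfU' \;\longrightarrow\; \bfB c \bfB, \qquad (u, t, u') \longmapsto u\, t\, \dot c\, u',
\]
is an isomorphism of varieties, where $\bfU' = \bfU \cap \dot c^{-1} \bfU^- \dot c$ is $r$-dimensional and factors as a product of the $r$ one-dimensional root subgroups attached to the simple reflections appearing in a reduced expression $c = c_I \cdot c_J$ with $J = S \smallsetminus I$ and $c_I$, $c_J$ Coxeter elements of $W_I$, $W_J$. The Lang--Steinberg theorem applied to the connected unipotent group $\bfU_I$ then allows me to rigidify the $\bfU_I$-part of a representative of $g\bfB \in \bfX$ modulo the finite group $U_I$. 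The residual data splits as (a) a ``Levi piece'' whose Lang value lies in the Bruhat cell $\bfB_I c_I \bfB_I$ of $\bfL_I$ (with $\bfB_I := \bfB \cap \bfL_I$), giving a point of $\bfX_I = \bfX_{\bfL_I}(c_I)$; and (b) the component in the $|J| = r-|I|$ simple root subgroups attached to $c_J$. Each of these latter $|J|$ coordinates must be non-zero, for otherwise $g^{-1}F(g)$ would lie in a strictly smaller Bruhat cell $\bfB v \bfB$ with $v<c$, contradicting $g\bfB \in \bfX$; hence this component is naturally a point of $(\mathbb{G}_m)^{r-|I|}$. An inverse map is constructed from the analogous Bruhat parameterization for $\bfL_I$.

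For the cohomological equivariance, the $L_I$-action on $U_I \backslash \bfX$ (well-defined because $L_I$ normalizes $U_I$ in $G$) transports under the isomorphism to an action on $\bfX_I \times (\mathbb{G}_m)^{r-|I|}$ which restricts to the natural $L_I$-action on $\bfX_I$ and acts on each $\mathbb{G}_m$-factor through an algebraic character of $\bfL_I$ (coming from the action of $\bfT \subset \bfL_I$ on the root subgroup $\bfU_{\alpha_j}$, $j \in J$). Since multiplication by any scalar in $\mathbb{G}_m$ acts trivially on $H_c^\bullet(\mathbb{G}_m, K)$, the character action disappears on cohomology, giving the $L_I$-equivariance in spite of the non-equivariance of the variety isomorphism. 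The Frobenius action is inherited directly: $F$ raises each $\mathbb{G}_m$-coordinate to the $q$-th power, possibly composed with the permutation of the $J$-factors induced by $c_J$, producing the expected eigenvalues and Tate twists in $H_c^\bullet(\mathbb{G}_m, K)^{\otimes r-|I|}$.

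The main obstacle lies in carrying out the Lang--Steinberg straightening explicitly enough to identify the Levi piece of a point of $\bfX$ with an actual point of $\bfX_I$ (which requires checking that the corresponding element of $\bfL_I$ has Lang value in $\bfB_I c_I \bfB_I$) and to verify that this assignment descends cleanly to the quotient $U_I \backslash \bfX$. A secondary delicacy is matching the $F$-action on the $\bfU_{\alpha_j}$-factors with the tensor structure on the cohomology side when $c_J$ cyclically permutes simple root subgroups, so that the Frobenius eigenvalues on $H_c^\bullet(\mathbb{G}_m, K)^{\otimes r-|I|}$ are correctly accounted for.
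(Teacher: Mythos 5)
The paper attributes this proposition to Lusztig \cite{Lus76bis} and gives no proof of its own, so there is no paper's argument to compare against. Your reduction of the cohomological isomorphism to the geometric one is correct and efficient: since $U_I$ is a $p$-group, $|U_I|$ is invertible in $K$, $KU_I$ is semisimple, ${}^*R_{L_I}^G(-) = (-)^{U_I}$, and Theorem~\ref{thm:cohomologyinv} combined with the K\"unneth formula gives the tensor decomposition once the variety isomorphism is in hand. Your plan for that isomorphism (Bruhat decomposition of $\bfB c\bfB$ together with Lang--Steinberg straightening by $\bfU_I$) also follows the route Lusztig took.

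There are, however, genuine gaps beyond the one you acknowledge. The root subgroups factoring $\bfU' = \bfU\cap\dot c^{-1}\bfU^-\dot c$ are indexed by $\{\alpha>0:c(\alpha)<0\}$, which is not a subset of the simple roots (for $c=s_1s_2$ in type $A_2$ this set is $\{\alpha_2,\alpha_1+\alpha_2\}$); so the separation of a ``Levi piece'' and a ``$J$-piece in simple root subgroups'' cannot be read off from this parametrization directly. More importantly, your justification of the $\mathbb{G}_m$-constraints --- that a vanishing $J$-coordinate would push $g^{-1}F(g)$ into a strictly smaller Bruhat cell --- fails for the decomposition $\bfU\times\bfT\times\bfU' \simto \bfB c\bfB$ you set up: the whole affine space $\bfU'\simeq\mathbb{A}^r$ maps into the open cell $\bfB c\bfB$, so no vanishing of a $\bfU'$-coordinate leaves the cell. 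The torus directions can only emerge from the explicit Lang--Steinberg normalization, which is precisely the step you identify as the main obstacle and do not carry out. The equivariance argument (algebraic characters acting trivially on $H_c^\bullet(\mathbb{G}_m,K)$, and likewise for a twisted $F$-action) is the right idea but, absent a computation of the actual twist produced by the straightening, it remains a heuristic. In short, the high-level route matches Lusztig's, but the technical heart is left open and the non-vanishing reasoning is incorrect as stated.
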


In particular, with $I$ being the empty set we deduce that $U \backslash \bfX \simeq (\mathbb{G}_m)^{r}$. One can actually refine Lusztig's result as follows  (see \cite[Lem. 4.2]{Dud14}): if $J = S \smallsetminus I$ is the complement of $I$ in $S$ then
\begin{equation}\label{eq:coxquotient}
(U_I \cap U_J) \backslash \bfX \simeq \bfX_I \times \bfX_J. 
\end{equation}
Again, this isomorphism is not equivariant for the action of $P_I$ or $P_J$ in general.

\subsection{Cohomology over $K$}
Using a combination of \eqref{eq:compact}, Proposition \ref{prop:quotientcox} and computations of $\#\bfX^{F^n}$ for $n = 1,\ldots,h$ (in order to use Lefschetz trace formula, see Theorem \ref{thm:lefschetz}) Lusztig gave in \cite{Lus76bis} a complete description of the cohomology of $\bfX$ over $K$, with the action of $G$ and $F$.

\begin{theorem}[Lusztig]\label{thm:coxchar0}\leavevmode
\begin{itemize}
\item[$\mathrm{(i)}$] A cuspidal character $\rho \in \mathrm{Irr}\, G$ occuring in the cohomology of $\bfX$ occurs in the middle degree $H_c^{r}(\bfX,K)$ only.
\item[$\mathrm{(ii)}$] The eigenspaces of $F$ on $H_c^\bullet(\bfX,K)$ give $h$ mutually non-isomorphic irreducible representations of $G$. 
\item[$\mathrm{(iii)}$] The eigenvalues of $F$ on $H_c^\bullet(\bfX,K)$, restricted to a given Harish-Chandra series,  are of the form
$$ \begin{array}{r|ccccccc} i & r & r+1 & \cdots & 2(r-m_\zeta)  \\\hline
H_c^i(\bfX,K)\vphantom{\big)^i} & \zeta q^{m_\zeta} & \zeta q^{m_\zeta+1} & \cdots & \zeta q^{r-m_\zeta} \\
\end{array}$$
for some root of unity $\zeta \in \calO^\times$ and some $m_\zeta \in \frac{1}{2} \mathbb{Z}_{\geq 0}$.  No constituent of $H_c^i(\bfX,K)$ for $i > 2(r-m_\zeta)$ lie in that Harish-Chandra series.
\end{itemize}
\end{theorem}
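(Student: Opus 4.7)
The plan is to proceed by induction on the semisimple rank $r$ of $\bfG$, using Proposition \ref{prop:quotientcox} to handle non-cuspidal contributions via the inductive hypothesis, and combining Poincar\'e--Verdier duality with the Lefschetz trace formula to pin down the cuspidal ones. The base case $r=1$ reduces to the Drinfeld curve (Example \ref{ex:xs}) and is checked by hand.

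For the inductive step, I would first apply Proposition \ref{prop:quotientcox}, the K\"unneth formula (Theorem \ref{thm:Kunneth}) and the known cohomology $H_c^\bullet(\mathbb{G}_m,K)\simeq K[-1]\oplus K[-2](1)$ to compute ${}^*R_{L_I}^G H_c^\bullet(\bfX,K)$ for every $I\subsetneq S$ as a graded $L_I$-module with $F$-action. By induction applied to the smaller Coxeter variety $\bfX_I$, this pins down the multiplicity of every non-cuspidal irreducible constituent of $H_c^\bullet(\bfX,K)$ together with the eigenvalues of $F$ on the corresponding isotypic components. The crucial combinatorial input is that each of the $r-|I|$ factors $H_c^\bullet(\mathbb{G}_m,K)$ contributes one Tate twist, which is precisely what propagates the pattern $\zeta q^{m_\zeta},\ldots,\zeta q^{|I|-m_\zeta}$ on the Levi side to the pattern $\zeta q^{m_\zeta},\ldots,\zeta q^{r-m_\zeta}$ of part (iii) on the group side, and shifts the degrees by $r-|I|$. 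In parallel, the Lefschetz trace formula (Theorem \ref{thm:lefschetz}) applied to every power of $F$, together with an explicit Bruhat-style count of $\bfX^{F^n}$, determines the full virtual character $\sum_i(-1)^i[H_c^i(\bfX,K)]$ as a representation of $G\rtimes\langle F\rangle$, and thereby fixes the contribution of every cuspidal character of $G$ to the cohomology as a virtual graded representation.

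The main obstacle is part (i): showing that each cuspidal constituent appears in a \emph{single} degree. Since Harish-Chandra restriction to proper Levis is blind to cuspidal characters, this has to be established geometrically. I would combine Poincar\'e--Verdier duality (Theorem \ref{thm:poincareverdier}), which gives an $F$-equivariant symmetry $H_c^i(\bfX,K)^*\simeq H^{2r-i}(\bfX,K)(r)$ around the middle degree, with the open--closed triangles (Theorem \ref{thm:openclose}) attached to the stratification \eqref{eq:compact}. By \eqref{eq:rlgcox} every closed stratum $\bfX(c_I)$ with $I\subsetneq S$ has Harish-Chandra induced cohomology and hence carries no cuspidal constituent of $G$, so the cuspidal part of $H_c^\bullet(\bfX,K)$ is isomorphic to the cuspidal part of $H^\bullet(\overline{\bfX},K)$. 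A purity / Hard Lefschetz argument applied to (a smooth resolution of) $\overline{\bfX}$ then forces the cuspidal isotypic components to live in a single degree, which the duality identifies as the middle degree $r$; this also completes (iii) in the cuspidal case with $m_\zeta=r/2$.

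Part (ii) is a bookkeeping consequence once (i) and (iii) are established: the combined information identifies each $F$-eigenspace of $H_c^\bullet(\bfX,K)$ with exactly one irreducible representation of $G$, and the eigenvalues listed in (iii) across all Harish-Chandra series partition into $h$ distinct classes, matching the order of the Coxeter element acting on the cohomology. The delicate technical point I expect to have to address is the purity argument on $\overline{\bfX}$: this closure is typically singular (it is a union of Deligne--Lusztig varieties of various dimensions), so Hard Lefschetz in its naive form does not apply and one must pass either to intersection cohomology (as in Conjecture \ref{conj:qwproj}) or to an equivariant resolution of singularities to make the weight bound rigorous.
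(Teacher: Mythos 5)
Your plan for part (iii) follows the paper's argument closely: both use Proposition \ref{prop:quotientcox} together with the K\"unneth formula for the $(\mathbb{G}_m)^{r-|I|}$ factor, and part (i) applied inductively to the smaller Coxeter varieties $\bfX_I$, to propagate the eigenvalue pattern from the Levi to $G$. That part of your proposal is sound.

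However, the argument you offer for part (i) has a genuine gap, and it is not the gap you flagged. The paper proves (i) by combining two observations: first, that $H_c^\bullet(\bfX(c_I),K) \simeq R_{L_I}^G H_c^\bullet(\bfX_I,K)$ is Harish-Chandra induced for every $I\subsetneq S$ and hence contains no cuspidal character of $G$; and second, that Theorem \ref{thm:mainBR} of Bonnaf\'e--Rouquier then forces any cuspidal constituent to occur only in the minimal (middle) degree. You replace this second step by ``purity / Hard Lefschetz'' on $\overline{\bfX}$. The difficulty is that Hard Lefschetz never forces an irreducible representation to occur in a \emph{single} cohomology degree: it only produces a symmetry $H^{d-k}\simeq H^{d+k}(k)$ around the middle, so a constituent in degree $r-k$ is reflected to degree $r+k$, not killed. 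The trivial representation occurring in both $H^0(\mathbb{P}_1)$ and $H^2(\mathbb{P}_1)$ already shows that equivariant Hard Lefschetz on a smooth projective variety does not confine an isotypic component to one degree. The singularity of $\overline{\bfX}$, which you worry about, is therefore a secondary issue; even granting intersection cohomology or an equivariant resolution, the mechanism you invoke does not close the argument. What is actually needed is either the homotopy-level statement of Theorem \ref{thm:mainBR} (the route the paper takes), or, in Lusztig's original approach, the specific geometric input that $\bfX^{F^n}=\emptyset$ for $0<n<h$ together with a delicate weight argument; your sketch contains neither.

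Finally, part (ii) is not a ``bookkeeping consequence'' of (i) and (iii). Knowing the Frobenius eigenvalues attached to each Harish-Chandra series and the vanishing pattern of cuspidal constituents does not determine the multiplicities of the constituents within each eigenspace; irreducibility of the eigenspaces requires controlling $\mathrm{End}_{KG}(H_c^\bullet(\bfX,K))$, and the paper explicitly declines to sketch this step for precisely that reason.
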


\begin{proof}[Sketch of proof]
If $v < c$ then $v = c_I$ for some proper subset $I$ of $S$ and from \eqref{eq:rlgcox} we get $H_c^\bullet( \bfX(v),K) \simeq R_{L_I}^G\big(H_c^\bullet( \bfX_I,K)\big)$. Therefore $H_c^\bullet( \bfX(v),K)$ contains no cuspidal character. By Theorem \ref{thm:mainBR}, it follows that $\rho$ cannot be a constituent of $H_c^i(\bfX,K)$ for $i \neq \ell(w) = r$, which proves (i). In addition, one can show using the purity of $\overline{\bfX}$ that
$F$ has eigenvalue $\zeta q^{r/2}$ with $|\zeta| = 1$ on the $\rho$-isotypic part of $H_c^r(\bfX,K)$. Furthermore, it is a general property of the cohomology of Deligne--Lusztig varieties that $\zeta$ is actually a root of unity. 

\smallskip
Let $\rho$ be a cuspidal character of $L_I$ for some $I \subset S$ and set $m = |I|$. The eigenvalue of $F$ on the $\rho$-isotypic part of $H_c^{m}(\bfX_I,K)$ is of the form $\zeta q^{m/2}$ for some root of unity $\zeta \in \calO^\times$. If $\chi$ is an irreducible character of $G$ lying in the Harish-Chandra series of $(L_I,\rho)$ then ${}^*R_{L_I}^G(\chi)$ is a non-zero multiple of $\rho$. In particular, the eigenvalues of $F$ on the Harish-Chandra series of $(L_I,\rho)$ correspond to the eigenvalues of $F$ on the $\rho$-isotypic part of 
${}^*R_{L_I}^G \big(H_c^\bullet(\bfX,K)\big)$, which by Proposition \ref{prop:quotientcox} and (i) are $\zeta q^{m/2}$ times the eigenvalues of $F$ on the cohomology of $(\mathbb{G}_m)^{r-m}$. Assertion (iii) follows.
 
\smallskip
The proof of (ii) requires to compute the endomorphism algebra of $H_c^\bullet(\bfX)$, which would go beyond the scope of these notes. 
\end{proof}

\begin{example}
(a) Let $\bfG = \mathrm{GL}_n(\overline{\mathbb{F}}_p)$. Recall that the unipotent characters of $G$ are parametrized by partitions of $n$. We will represent them by their Young diagram. Then the cohomology of the Coxeter variety (given in Example \ref{ex:coxgln}) with the eigenvalues of $F$ is
$$ \begin{array}{r|cccccc} i & n-1 & n & n+1 & \cdots & 2n-3 & 2n-2\\\hline
H_c^i(\bfX) \vphantom{\Big)}& \Yboxdim{7pt} \gyoung(;,|2\vdts,;) \ (1) & \Yboxdim{7pt} \gyoung(;;,|2\vdts,;) \ (q) &  \Yboxdim{7pt}\gyoung(;;;,|2\vdts,;)\ (q^2) & \cdots &  \Yboxdim{7pt} \gyoung(;_2\hdts;,;) \ (q^{n-2}) &  \Yboxdim{7pt} \gyoung(;_2\hdts;)  \ (q^{n-1}) 
\end{array}$$

\noindent 
(b) Let $\bfG$ a group of type $F_4$. Using the notation in \cite[\S 13]{Car} for the unipotent characters of $G$ (in particular $\theta$ is a primitive third root of $1$ and $\mathrm{i}$ a primitive fourth root of $1$), the cohomology of $\bfX$ is given by
$$ \begin{array}{r|ccccc}
i & 4 & 5 & 6 & 7 & 8 \\\hline
H_c^i(\bfX) \vphantom{\Big)}& \mathrm{St} \, (1) & \phi_{4,13} \, (q) & \phi_{6,6}'' \, (q^2) & \phi_{4,1} \, (q^3) & 1 \, (q^4)\\[5pt]
& B_{2,\varepsilon} \, (-q) & B_{2,r} \, (-q^2) & B_{2,1} \, (-q^3) \tikzmark{item-b2}\\[5pt]
& F_4[\pm \mathrm{i}] \,  (\pm \mathrm{i}q^2) \tikzmark{item-cusp}\\[5pt]
& F_4[\theta] \, (\theta q^2)\\[5pt]
& F_4[\theta^2] \,  (\theta^2 q^2) \\ \end{array}$$
\begin{tikzpicture}[remember picture,overlay]
\draw [decoration={brace,amplitude=0.5em},decorate,black]
  ([shift={(3pt,7pt)}]pic cs:item-cusp) |- 
  ++(5pt,-40pt);
\draw ([shift={(8pt,-15pt)}]pic cs:item-cusp) node[right,text width=6cm] 
    {\footnotesize cuspidal characters
    }
;
\draw [decoration={brace,amplitude=0.3em},decorate,black]
  ([shift={(3pt,9pt)}]pic cs:item-b2) |- 
  ++(5pt,-10pt);
\draw ([shift={(6pt,2pt)}]pic cs:item-b2) node[right,text width=6cm] 
    {\footnotesize series above $B_2$
    }
;
\end{tikzpicture}
\end{example}

\subsection{Cohomology over $k$}\label{sec:coxoverk}
Since $k$ is not flat over $\calO$, the mod-$\ell$ cohomology of a variety is not the $\ell$-reduction of the cohomology over $\calO$, but there is still an explicit relation, called the \emph{universal coefficient theorem}, given by the following exact sequence (see for example \cite[\S 2.7]{Ben})
$$ 0 \longrightarrow k \otimes_\calO H_c^i(\bfX,\calO) \longrightarrow H_c^i(\bfX,k) \longrightarrow 
\mathrm{Tor}_1^\calO(H_c^{i+1}(\bfX,\calO),k) \longrightarrow 0.$$
We will use it in the following particular case, for which we can give a direct proof.

\begin{prop}\label{prop:middle-torsionfree}
The middle cohomology group $H_c^r(\bfX,\calO)$ of the Coxeter variety $\bfX$ is torsion-free.
\end{prop}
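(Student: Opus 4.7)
The plan is to deduce the statement from the universal coefficient theorem and the affineness of the Coxeter variety, both of which have been recalled in the text.

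First I would fix a representative of $R\Gamma_c(\bfX,\calO)$ by a bounded complex with $\calO$-free finitely generated terms, so that the universal coefficient sequence
$$0 \longrightarrow k \otimes_\calO H_c^i(\bfX,\calO) \longrightarrow H_c^i(\bfX,k) \longrightarrow \mathrm{Tor}_1^\calO(H_c^{i+1}(\bfX,\calO),k) \longrightarrow 0$$
is available in every degree. I would then invoke the fact (stated in Section~\ref{sec:cohodef}) that for an affine variety of pure dimension~$d$ the compactly supported cohomology vanishes below~$d$. Since $\bfX = \bfX(c)$ is affine of pure dimension $r = \ell(c)$ (one of the two key geometric features listed at the start of this chapter, and true unconditionally for Coxeter elements by Lusztig), this applies here: $H_c^{r-1}(\bfX,k) = 0$.

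Specializing the universal coefficient sequence to $i = r-1$, the left-hand term $k\otimes_\calO H_c^{r-1}(\bfX,\calO)$ is also zero (again by the affineness of $\bfX$ over~$\calO$, for the same reason), so the sequence forces
$$\mathrm{Tor}_1^\calO\bigl(H_c^{r}(\bfX,\calO),k\bigr) = 0.$$
Because $\calO$ is a discrete valuation ring with maximal ideal $(\ell)$ and $H_c^r(\bfX,\calO)$ is finitely generated over $\calO$, the vanishing of this $\mathrm{Tor}_1$ is exactly the vanishing of the $\ell$-torsion of $H_c^r(\bfX,\calO)$, which in turn is equivalent to being torsion-free over $\calO$.

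Thus the entire argument reduces to the vanishing of compactly supported cohomology in degrees strictly below the dimension, and there is no genuine obstacle: the only point worth emphasizing is that one really does need both the vanishing of $H_c^{r-1}(\bfX,k)$ \emph{and} the vanishing of $H_c^{r-1}(\bfX,\calO)$ (equivalently $k\otimes_\calO H_c^{r-1}$), which together imply that the shift in the universal coefficient sequence hits only the torsion of the middle cohomology. No information from the explicit computation in Theorem~\ref{thm:coxchar0} is needed; the result is a completely general consequence of affineness of the Coxeter variety.
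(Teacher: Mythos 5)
Your argument is correct and is essentially the paper's own proof: both deduce $\mathrm{Tor}_1^\calO(H_c^r(\bfX,\calO),k)=0$ from $H_c^{r-1}(\bfX,k)=0$, the paper simply unwinding the universal coefficient sequence by hand from a truncated $\calO$-free representative rather than quoting it directly. A small remark: the vanishing of $k\otimes_\calO H_c^{r-1}(\bfX,\calO)$ need not be established separately (it is automatic once $H_c^{r-1}(\bfX,k)=0$, since the map onto $\mathrm{Tor}_1$ is already surjective), and the vanishing below middle degree is most safely cited from Lusztig's theorem in Chapter~\ref{chap:dlvarieties}, which holds unconditionally for all $\bfX(w)$, rather than from affineness, which for general $w$ is only conjectural (though true for the Coxeter variety).
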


\begin{proof}
We consider an $\calO$-free resolution of $R\Gamma_c(\bfX,\calO)$ which we truncate using $\widetilde \tau_{\geq r}$ (see \S\ref{sec:truncation} for the definition of the truncation). This yields quasi-isomorphisms
$$ \begin{aligned}  R\Gamma_c(\bfX,\calO) \simeq \,& (0 \longrightarrow C_{r-1} \mathop{\longrightarrow}\limits^d C_r \mathop{\longrightarrow}\limits^{d'} \cdots) \phantom{\longrightarrow\longrightarrow}\\ 
R\Gamma_c(\bfX,k) \simeq k \, {\mathop{\otimes}\limits^L}_\calO\, R\Gamma_c(\bfX,\calO) \simeq\,  & (0 \longrightarrow kC_{r-1} \mathop{\longrightarrow}\limits^{\overline d} kC_r \mathop{\longrightarrow}\limits^{\overline d'} \cdots) \end{aligned}$$
where each $C_i$ is $\calO$-free. Furthermore, since $H_c^{r-1}(\bfX,k)=0$ the map $\overline{d}$ is injective (note that $d$ is injective by definition). On the other hand, since $C_r$ and therefore $\mathrm{Ker}\, d'$ is $\calO$-free the exact sequence $0 \longrightarrow C_{r-1} \longrightarrow \mathrm{Ker}\, d' \longrightarrow H_c^r(\bfX,\calO) \longrightarrow 0$ tensored with $k$ yields an exact sequence 
$$ 0 \longrightarrow \mathrm{Tor}_1^\calO(H_c^r(\bfX,\calO),k) \longrightarrow kC_{r-1} \mathop{\longrightarrow}\limits^{\overline d} k\mathrm{Ker}\, d' \longrightarrow kH_c^r(\bfX,\calO) \longrightarrow 0$$ 
which forces $\mathrm{Tor}_1^\calO(H_c^r(\bfX,\calO),k) = 0$.
\end{proof}

\begin{theorem}\label{thm:coxlbig}
Assume that $\ell \nmid |G|$. Then $H_c^\bullet(\bfX,\calO)$ is torsion-free.
\end{theorem}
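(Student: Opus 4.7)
The plan is to reduce torsion-freeness to the numerical equality $\dim_k H_c^i(\bfX,k) = \dim_K H_c^i(\bfX,K)$ for every $i$, and then to prove this equality by rerunning Lusztig's argument (Theorem \ref{thm:coxchar0}) over $k$. Under the hypothesis $\ell\nmid|G|$, every point stabilizer has order invertible in $\calO$, so Corollary \ref{cor:perfect} ensures that $R\Gamma_c(\bfX,\calO)$ is a perfect complex of $\calO G$-modules, and Maschke's theorem ensures that $kG$ is semisimple. The universal coefficient short exact sequence
$$0 \to k\otimes_\calO H_c^i(\bfX,\calO) \to H_c^i(\bfX,k) \to \mathrm{Tor}_1^\calO\bigl(H_c^{i+1}(\bfX,\calO),k\bigr) \to 0$$
yields, after writing $t_i\ge 0$ for the minimal number of generators of the $\calO$-torsion part of $H_c^i(\bfX,\calO)$, the relation $\dim_k H_c^i(\bfX,k) - \dim_K H_c^i(\bfX,K) = t_i + t_{i+1}$. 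Since all these terms are non-negative, an equality of dimensions for every $i$ forces $t_i=0$ for every $i$, which is exactly the torsion-freeness of $H_c^\bullet(\bfX,\calO)$.

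To establish the dimension equality, I would rerun Lusztig's proof of Theorem \ref{thm:coxchar0} verbatim, with $k$ in place of $K$. Its two main ingredients extend without change: Theorem \ref{thm:mainBR} (Bonnaf\'e--Rouquier) is stated for any of $K,\calO,k$, while the geometric isomorphism $U_I\backslash\bfX\simeq \bfX_I\times(\mathbb{G}_m)^{r-|I|}$ of Proposition \ref{prop:quotientcox} is characteristic-free. Combined with the K\"unneth formula (Theorem \ref{thm:Kunneth}) and with Theorem \ref{thm:cohomologyinv} applied to the $p$-group $U_I$ (whose order is invertible in $k$ since $\ell\neq p$), these produce the $L_I\times\langle F\rangle$-equivariant identification
$${}^*R_{L_I}^G\bigl(H_c^\bullet(\bfX,k)\bigr) \simeq H_c^\bullet(\bfX_I,k)\otimes_k H_c^\bullet(\mathbb{G}_m,k)^{\otimes r-|I|}.$$
Because $\ell\nmid|G|$, the decomposition map is a dimension-preserving bijection between irreducible $KG$-modules and irreducible $kG$-modules, and cuspidality is preserved. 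Lusztig's induction on the rank — using that a cuspidal simple $kG$-module can only appear in the middle cohomology of $\bfX$, combined with the formula above to compute each Harish-Chandra series — then runs over $k$ exactly as over $K$, recovering the $k$-reductions of all the constituents listed in Theorem \ref{thm:coxchar0}. In particular, $[H_c^i(\bfX,k)] = d\bigl([H_c^i(\bfX,K)]\bigr)$ in $K_0(kG\sfmod)$ for every $i$, and taking dimensions closes the argument.

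The main delicate point lies in Lusztig's purity and Frobenius-eigenvalue analysis: over $K$ the eigenvalues of $F$ are of the form $\zeta q^{m/2}$ with $\zeta$ a root of unity, and distinct eigenvalues may well collapse after reducing modulo $\ell$. This turns out to be harmless for our purposes, since the proof of Theorem \ref{thm:coxchar0} only uses the decomposition of $H_c^\bullet$ into Harish-Chandra series — which is preserved by the decomposition map — and not the pairwise distinctness of the $F$-eigenvalues in $k$. Combining the two steps gives $\dim_k H_c^i(\bfX,k) = \dim_K H_c^i(\bfX,K)$ for every $i$, hence $t_i=0$ for every $i$, and $H_c^\bullet(\bfX,\calO)$ is $\calO$-torsion-free.
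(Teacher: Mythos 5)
Your reduction of torsion-freeness to the numerical identity $\dim_k H_c^i(\bfX,k)=\dim_K H_c^i(\bfX,K)$ (via $\dim_k H_c^i(\bfX,k)-\dim_K H_c^i(\bfX,K)=t_i+t_{i+1}\ge 0$) is correct and a nice reformulation, and the geometric inputs you list — the isomorphism $U_I\backslash\bfX\simeq\bfX_I\times(\mathbb{G}_m)^{r-|I|}$, K\"unneth, Theorem \ref{thm:cohomologyinv}, Theorem \ref{thm:mainBR} — are indeed the ones that matter. However, the second half of your argument, ``rerun Lusztig over $k$'', is where a genuine gap opens. Lusztig's determination of $H_c^\bullet(\bfX,K)$ in Theorem \ref{thm:coxchar0} is driven by $\ell$-adic data that has no direct analogue over $k$: the Lefschetz trace formula (Theorem \ref{thm:lefschetz}) computes alternating sums of $K$-traces, and the assignment of each constituent to its degree in (iii) rests on purity and the exact value $\zeta q^{m/2}$ of the $F$-eigenvalue. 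You acknowledge the collapse of eigenvalues mod $\ell$ but dismiss it; in fact that collapse, together with the absence of a trace formula over $k$, is precisely why one cannot ``recover the $k$-reductions of all the constituents'' without an additional input. There is also a subtler issue: Proposition \ref{prop:quotientcox} asserts $L_I$-equivariance only for the $\ell$-adic cohomology, so the $L_I$-module structure of $H_c^\bullet(U_I\backslash\bfX,k)$ is not a priori the one you need to separate the Harish-Chandra series over $k$.

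The paper sidesteps both difficulties by not attempting to compute $H_c^\bullet(\bfX,k)$ at all. It uses the non-equivariant variety isomorphism only to conclude that the torsion of $H_c^\bullet(\bfX,\calO)$ dies under every ${}^*R_{L_I}^G$ (for this, an isomorphism of $\calO$-modules suffices), hence is cuspidal. By Theorem \ref{thm:mainBR} combined with \eqref{eq:rlgcox}, a cuspidal $kG$-module can occur in $H_c^\bullet(\bfX,k)$ only in degree $r$; since $k\otimes T_i$ is a cuspidal direct summand of $H_c^i(\bfX,k)$, this already kills $T_i$ for $i\neq r$. The remaining case $i=r$ is disposed of by Proposition \ref{prop:middle-torsionfree}, which you do not invoke and without which your argument cannot close: even granting everything else, nothing in your proposal pins down the cuspidal part of $H_c^r(\bfX,k)$, and this is exactly where the middle-degree information is needed. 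In short, your step (3) amounts to asserting the conclusion, and the missing ingredients are precisely Theorem \ref{thm:mainBR} used to localize cuspidal constituents in a single degree, and Proposition \ref{prop:middle-torsionfree} to handle that degree.
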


\begin{proof}
We proceed by induction on the semisimple rank of $\bfG$. If $\bfG$ is a torus, then $\bfX$ is a point and the result holds. Otherwise, if $S$ is non-empty, one can consider a proper subset $I$ of $S$. Using the isomorphism of varieties $U_I \backslash \bfX \simeq \bfX_I \times (\mathbb{G}_m)^{r-|I|}$ given in Proposition \ref{prop:quotientcox} we get an isomorphism of $\calO$-modules 
$$ {}^*R_{L_I}^G \big(H_c^\bullet(\bfX,\calO)\big) \simeq  H_c^\bullet(\bfX_I,\calO) \otimes_K  H_c^{\bullet}(\mathbb{G}_m,\calO)^{\otimes r- |I|}. $$
Note that we do not assume this isomorphism to be $L_I$-equivariant. By induction, the cohomology of $\bfX_I$ (a Coxeter variety for the Levi subgroup $\bfL_I$) is torsion-free. This shows that the torsion part of $H_c^\bullet(\bfX,\calO)$ is killed under Harish-Chandra restriction, and hence it is cuspidal. 

\smallskip

By the universal coefficient formula, a cuspidal $\calO G$-submodule of $H_c^i(\bfX,\calO)$ yields a subquotient of $H_c^{i}(\bfX,k)$. Let $m$ be the largest degree of $H_c^\bullet(\bfX,k)$ which has a cuspidal subquotient $M$ (or equivalently since $kG$ is semisimple, a direct summand). If $m > r$ then Theorem \ref{thm:mainBR} forces the $kG$-module $M$ to occur in the cohomology of a Deligne--Lusztig variety $\bfX(v)$ for $v<w$. But this is impossible by \eqref{eq:rlgcox}. This, together with Proposition \ref{prop:middle-torsionfree}, shows that the cohomology of $\bfX$ is free over $\calO$.
\end{proof}

\noindent {\bf Question.} Does the result hold for other Deligne--Lusztig varieties $\bfX(w)$? If so, can we replace the condition $\ell \nmid |G|$ by $\ell \nmid |\bfT^{\dot w F}|$?

\smallskip

From now on we assume that $\ell$ divides $\Phi_h(q)$, the $h$-th cyclotomic polynomial evaluated at $q$. We will also assume that $\ell \nmid h$ so that $h$ is actually the order of $q$ modulo $\ell$. We first observe from the explicit values of the eigenvalues of $F$ given in \cite[Table 7.3]{Lus76bis} that:
\begin{itemize}
\item[$\mathrm{(i)}$] The classes in $k$ of the $h$ eigenvalues of $F$ on $H_c^\bullet(\bfX,K)$ are exactly the $h$-th roots of unity in $k$.
\end{itemize}

\noindent
Under the assumption on $\ell$, all the proper standard Levi subgroups of $G$ are $\ell'$-groups. Consequently, the proof of Theorem \ref{thm:coxlbig} shows that
\begin{itemize}
\item[$\mathrm{(ii)}$] The torsion-part of $H_c^i(\bfX,\calO)$ is a cuspidal $\calO G$-module.
\end{itemize}

\noindent 
Following Theorem \ref{thm:coxchar0}.iii, let $\lambda_\zeta = \zeta q^{m_\zeta}$ (resp. $\mu_\zeta = \zeta^{-1} q^{r-m_\zeta}$) be the smallest 
(resp. largest) eigenvalue of $F$ on $H_c^\bullet(\bfX,K)$ within the Harish-Chandra series corresponding to $\zeta$ (resp. to $\zeta^{-1}$). By Theorems \ref{thm:coxchar0} and \ref{thm:coxlbig} for $\bfX_I$, together with (i) we deduce the following property

\begin{itemize}
\item[$\mathrm{(iii)}$] Let $I$ be a proper subset of $S$. Then the generalized eigenspaces of $F$ on the cohomology of $\bfX_I$ for the eigenvalues $\overline{\mu}_\zeta$ and $\overline{\lambda}_\zeta$ satisfy $H_c^\bullet(\bfX_I,k)_{\overline{\mu}_\zeta} = 0$ and $H_c^i(\bfX_I,k)_{\overline{\lambda}_\zeta} = 0$ for $i \neq |I|$.
\end{itemize}

\noindent Using this observation, we get, for $i>r$
$$ H_c^i(\bfX,k)_{\overline{\lambda}_\zeta} \simeq \tikzmark{iii} H_c^i(\overline{\bfX},k)_{\overline{\lambda}_\zeta}
\simeq\tikzmark{verdier} \big(H_c^{2r-i}(\overline{\bfX},k)_{\overline{\mu}_\zeta}\big)^* \simeq \tikzmark{iiibis} \big(H_c^{2r-i}(\bfX,k)_{\overline{\mu}_\zeta}\big)^*$$
\begin{tikzpicture}[remember picture,overlay]
\draw[->,>=latex]
  ([shift={(-6pt,-7pt)}]pic cs:iii) |- 
  ++(-10pt,-13pt) 
  node[left,text width=1cm] 
    {\footnotesize by (iii)};
\draw[->,>=latex]
  ([shift={(-6pt,-7pt)}]pic cs:verdier) |- 
  ++(10pt,-13pt) 
  node[right,text width=4cm] 
    {\footnotesize by Poincar\'e duality};
\draw[->,>=latex]
  ([shift={(-7pt,7pt)}]pic cs:iiibis) |- 
  ++(10pt,+13pt) 
  node[right,text width=2cm] 
    {\footnotesize by (iii)};
\end{tikzpicture}
\vskip2mm
\noindent 
which is zero since $2r-i < r$. This proves that
\begin{equation}\label{eq:xc}
R\Gamma_c(\bfX,k)_{\overline{\lambda}_\zeta} \simeq H_c^r(\bfX,k)_{\overline{\lambda}_\zeta}[-r] \quad \text{in } \sfD^b(kG\sfmod).
\end{equation}
In addition, the universal coefficient formula shows that $H_c^r(\bfX,k)_{\overline{\lambda}_\zeta}$ is the mod-$\ell$ reduction of the $KG$-module $H_c^r(\bfX,K)_{\lambda_\zeta}$. This information can be used in combination with the following result, which holds for a more general class of Deligne--Lusztig varieties.

\begin{theorem}[Dudas--Rouquier \cite{DudRou14}]\label{thm:xwinstab}
Let $m \in \mathbb{Z}$ and $\overline{q}$ be the class of $q$ in $k$. Then 
$$R\Gamma_c(\bfX,k)_{\overline{q}^m} \simeq \Omega^{2m} k$$
in $kG\sfstab$. 
\end{theorem}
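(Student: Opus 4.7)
The plan is to work in $kG\sfstab \simeq \sfD^b(kG\sfmod)/kG\sfperf$ and proceed by induction on $m$, using the identification $\Omega^{2m} k \simeq k[-2m]$ in the stable category (from the remark after Theorem \ref{thm:derstab}) so that the goal becomes $R\Gamma_c(\bfX, k)_{\overline{q}^m} \simeq k[-2m]$ modulo perfect complexes. First I would reduce the computation to the closure $\overline{\bfX}$: under the hypotheses $\ell \mid \Phi_h(q)$ and $\ell \nmid h$, every proper standard Levi subgroup $L_I$ (for $I \subsetneq S$) has order coprime to $\ell$, so by \eqref{eq:rlgcox} each boundary complex $R\Gamma_c(\bfX(c_I), k) \simeq R_{L_I}^G R\Gamma_c(\bfX_I, k)$ is a perfect complex of $kG$-modules. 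Iterating the open-closed triangle (Theorem \ref{thm:openclose}) along the stratification $\overline{\bfX} = \bigsqcup_{I \subseteq S} \bfX(c_I)$ then gives $R\Gamma_c(\bfX, k)_{\overline{q}^m} \simeq R\Gamma(\overline{\bfX}, k)_{\overline{q}^m}$ in $kG\sfstab$, compatibly with the generalized $F$-eigenspace decomposition.

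For the base case $m = 0$, observation (i) shows the unique $K$-eigenvalue of $F$ on $H_c^\bullet(\bfX, K)$ reducing to $1$ modulo $\ell$ is $1$ itself, which by Theorem \ref{thm:coxchar0} lies in the principal series, in middle degree $r$, and is realized by the Steinberg character. Combined with \eqref{eq:xc} and Proposition \ref{prop:middle-torsionfree}, this yields $R\Gamma_c(\bfX, k)_1 \simeq \mathrm{St}_k[-r]$; identifying $\mathrm{St}_k[-r]$ with $k$ in $kG\sfstab$ then reduces to the fact that $\mathrm{St}_k \simeq \Omega^{-r} k$ in the principal $\ell$-block under our hypotheses, which is a Green-walk computation on the Brauer tree (whose shape in the Coxeter-type situation is controlled by the constructions of \S\ref{chap:brauer}).

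For the inductive step $m \to m+1$, I would exploit the $\bfT^{\dot c F}$-torsor structure $\bfY(\dot c) \to \bfX$. Since $\ell \mid \Phi_h(q) \mid |\bfT^{\dot c F}|$, one can pick an element $z \in \bfT^{\dot c F}$ of order $\ell$; the interplay between the $F$-action on $\bfY(\dot c)$ (which restricts to multiplication by $q$ on the character group of $\bfT^{\dot c F}$) and the unipotent action of $z$ on cohomology produces, via a filtration argument on the nilpotent operator $z-1$ and the spectral sequence computing $R\Gamma_c(\bfX, k)$ from $R\Gamma_c(\bfY(\dot c), k)$, a distinguished triangle whose residual in $kG\sfstab$ collapses to an isomorphism $R\Gamma_c(\bfX, k)_{\overline{q}^{m+1}} \simeq \Omega^2 R\Gamma_c(\bfX, k)_{\overline{q}^m}$.

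The hard part will be making this inductive step precise. Controlling the perfect summands that arise in the triangles coming from the open-closed decomposition and from the cover $\bfY(\dot c) \to \bfX$ requires Theorem \ref{thm:mainBR}, which restricts where projective covers of simple modules can appear in the cohomology of Deligne--Lusztig varieties, combined with observation (ii) on the cuspidality of the torsion in $H_c^\bullet(\bfX, \calO)$. Only after such perfect summands are discarded does the multiplicative $q$-shift of Frobenius eigenvalues translate cleanly into the cohomological $\Omega^2$-shift claimed by the theorem.
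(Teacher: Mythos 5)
The paper's proof follows a quite different route from yours: it identifies the closed $G$-stable subvariety $\bfX_\ell \subset \bfX$ of points whose stabilizer has order divisible by $\ell$, notes that $\bfX \smallsetminus \bfX_\ell$ has all stabilizers of order prime to $\ell$ so that its cohomology complex is perfect (Corollary \ref{cor:perfect}), and concludes from the open-closed triangle (Theorem \ref{thm:openclose}) that $R\Gamma_c(\bfX,k) \simeq R\Gamma_c(\bfX_\ell,k)$ in $kG\sfstab$. The whole content of the argument in \cite{DudRou14} is then the \emph{explicit computation} of $\bfX_\ell$ and of the cohomology of $\bfX_\ell$ in each generalized $F$-eigenspace. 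Your proposal avoids this geometric computation and instead attempts an induction on $m$. Unfortunately this runs into two problems.

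First, the inductive step, as you describe it, does not work. The generalized $\overline{q}^m$- and $\overline{q}^{m+1}$-eigenspaces of $F$ on $R\Gamma_c(\bfX,k)$ are complementary idempotent summands of the same complex; there is no natural map between them, so no ``distinguished triangle whose residual collapses'' to an isomorphism $R\Gamma_c(\bfX,k)_{\overline{q}^{m+1}} \simeq \Omega^2\,R\Gamma_c(\bfX,k)_{\overline{q}^m}$ can be produced from the torsor $\bfY(\dot c)\to\bfX$. The $z-1$ filtration on $R\Gamma_c(\bfY,k)$ (for $z$ of order $\ell$ in $\bfT^{\dot c F}$) lives entirely on the $\bfT^{\dot c F}$-side and is compatible with the $F$-eigenspace decomposition; it therefore cannot mix different eigenspaces. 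What you want to prove — that as the Frobenius eigenvalue is multiplied by $q$ the stable object is shifted by $\Omega^2$ — is precisely the content of the theorem, and it is equivalent (via Theorem~\ref{thm:coxchar0}.iii, the torsion-freeness, and the remark after Theorem~\ref{thm:derstab}) to the nontrivial assertion that $H_c^{n+1}(\bfX,k)_{\overline{\lambda q}}\simeq\Omega\,H_c^{n}(\bfX,k)_{\overline\lambda}$ in $kG\sfstab$, including across the boundary between two Harish-Chandra series. In \cite{DudRou14} this is obtained \emph{globally} from the explicit description of $\bfX_\ell$, not one step at a time.

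Second, the base case is not self-contained. You reduce it to ``$\mathrm{St}_k\simeq\Omega^{-r}k$, a Green-walk computation on the Brauer tree.'' But the shape and planar embedding of that tree are determined in \S\ref{chap:brauer} precisely by feeding in results such as Theorem~\ref{thm:xwinstab}; used unqualified, this is circular. There is also a sign subtlety here that your phrasing glosses over: the Solomon--Tits complex of parabolics (projective under $\ell\mid\Phi_h(q)$, $\ell\nmid h$) gives $\Omega^{r}k$ as the \emph{submodule} lattice $\ker(kG/B\to k)$, while $H_c^r(\bfX,\calO)_1$ is the \emph{dual} (cokernel-type) lattice, which reduces to $\Omega^{-r}k$. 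Establishing this cleanly requires comparing the two lattices — for instance via the open-closed sequence for $\bfX\hookrightarrow\overline{\bfX}$ or via Poincaré--Verdier duality on $\overline{\bfX}$ — and not merely citing the Brauer-tree picture. Your remark about passing to $\overline{\bfX}$ is correct and could be a useful ingredient, but as written it only moves the problem: one still has to compute $R\Gamma(\overline{\bfX},k)$, which is not substantially easier than computing $R\Gamma_c(\bfX,k)$.
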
 

\begin{proof}[Idea of proof] One can compute explicitly the closed subvariety $\bfX_\ell$ of $\bfX$ consisting of the points $x \in \bfX$ such that $\ell$ divides the order of $\mathrm{Stab}_G(x)$. Then the cohomology complexes of $\bfX_\ell$ and $\bfX$ are isomorphic in $kG\sfstab$.
\end{proof}

Choose $m \in \mathbb{Z}$ such that $\lambda_\zeta \equiv q^m$ modulo $\ell$. Then Theorem \ref{thm:xwinstab} and \eqref{eq:xc} show that $H_c^r(\bfX,k)_{\overline{\lambda}_\zeta} \simeq \Omega^{2m-r}k$ up to projective summands. This proves that $\Omega^{2m-r}k$ lifts to an $\calO G$-lattice with character $H_c^r(\bfX,K)_{\lambda_\zeta}$, yielding information on the Green walk around the Brauer tree of the principal $\ell$-block when $\ell \mid \Phi_h(q)$. 

\begin{example}
Let $\bfG$ be a group of type $F_4$, so that $h=12$. Assume that $q$ has order $12$ modulo $\ell$. We choose $\theta$ (resp. $\mathrm{i}$) to be congruent to $q^4$ (resp. $q^3$) modulo $\ell$. The various data attached to the representations occurring in the cohomology group of $\bfX$ in middle degree are listed in the following table.
$$\begin{array}{c|cccccc}
\zeta & 1 & -1 & \mathrm{i} & -\mathrm{i} & \theta & \theta^2\\
\lambda_\zeta & 1 & -q & \mathrm{i}q^2 & -\mathrm{i} q^2 & \theta q^2 & \theta^2 q^2 \\
q^m & q^0 & q^7 & q^5 & q^{11} & q^{6} & q^{10} \\
2m-r & -4 & 10 & 6 & 18 & 8 & 16 \\
H_c^r(\bfX)_\lambda & \mathrm{St} & B_{2,\varepsilon} & F_4[\mathrm{i}] & F_4[-\mathrm{i}] & 
F_4[\theta] & F_4[\theta^2] \\
\end{array}$$
We get therefore $[\Omega^{-4} \calO] = \mathrm{St}$,  $[\Omega^{10} \calO] = B_{2,\varepsilon}$,
 $[\Omega^{6} \calO] = F_4[\mathrm{i}]$, $[\Omega^{8} \calO] = F_4[\theta]$ and the planar embedded Brauer tree is 
given in Figure~\ref{F4d12}.
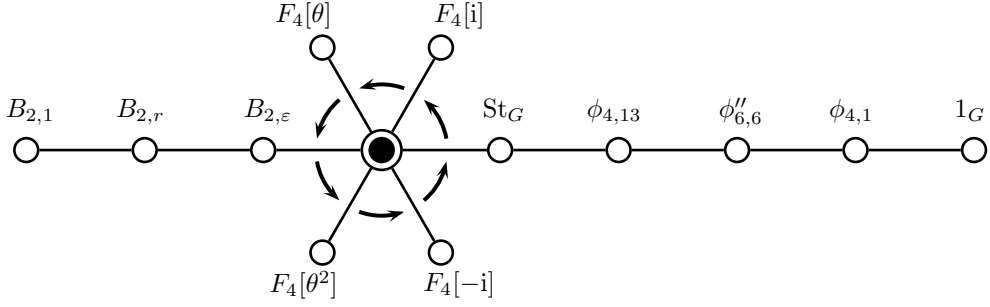
\begin{figure}[h] 
\begin{center}
\begin{pspicture}(14.1,4)

  \psset{linewidth=1pt}
  \psset{unit = 0.9cm}
 
  \cnode[fillstyle=solid,fillcolor=black](5.25,2){5pt}{A2}
  \cnode(5.25,2){8pt}{A}
  \cnode(7,2){5pt}{B}
  \cnode(8.75,2){5pt}{C}
  \cnode(10.5,2){5pt}{D}
  \cnode(12.25,2){5pt}{E}
  \cnode(14,2){5pt}{F}
  \cnode(3.5,2){5pt}{G}
  \cnode(1.75,2){5pt}{H}
  \cnode(0,2){5pt}{I}
  \cnode(6.125,3.515){5pt}{M}
  \cnode(6.125,0.485){5pt}{N}
  \cnode(4.375,3.515){5pt}{O}
  \cnode(4.375,0.485){5pt}{P}

  \ncline[nodesep=0pt]{A}{B}
  \ncline[nodesep=0pt]{B}{C}\naput[npos=-0.1]{$\vphantom{\Big(} \mathrm{St}_G$}\naput[npos=1.1]{$\vphantom{\Big(} \phi_{4,13}$}
  \ncline[nodesep=0pt]{C}{D}
  \ncline[nodesep=0pt]{D}{E}\naput[npos=-0.1]{$\vphantom{\Big(} \phi_{6,6}''$}\naput[npos=1.1]{$\vphantom{\Big(} \phi_{4,1}$}
  \ncline[nodesep=0pt]{E}{F}\naput[npos=1.1]{$\vphantom{\Big(} 1_G$}  
  \ncline[nodesep=0pt]{I}{H}\naput[npos=-0.1]{$\vphantom{\Big(} B_{2,1}$}\naput[npos=1.1]{$\vphantom{\Big(} B_{2,r}$}
  \ncline[nodesep=0pt]{H}{G}
  \ncline[nodesep=0pt]{G}{A}\naput[npos=-0.1]{$\vphantom{\Big(} B_{2,\varepsilon}$}
  \ncline[nodesep=0pt]{M}{A}\ncput[npos=-0.6]{$F_4[\mathrm{i}]$}
  \ncline[nodesep=0pt]{N}{A}\ncput[npos=-0.6]{$F_4[-\mathrm{i}]$}
  \ncline[nodesep=0pt]{O}{A}\ncput[npos=-0.6]{$F_4[\theta]$}
  \ncline[nodesep=0pt]{P}{A}\ncput[npos=-0.6]{$F_4[\theta^2]$}

  \psellipticarc[linewidth=1.5pt]{->}(5.25,2)(1,1){10}{50}
  \psellipticarc[linewidth=1.5pt]{->}(5.25,2)(1,1){70}{110}
  \psellipticarc[linewidth=1.5pt]{->}(5.25,2)(1,1){130}{170}
  \psellipticarc[linewidth=1.5pt]{->}(5.25,2)(1,1){190}{230}
  \psellipticarc[linewidth=1.5pt]{->}(5.25,2)(1,1){250}{290}
  \psellipticarc[linewidth=1.5pt]{->}(5.25,2)(1,1){310}{350}

\end{pspicture}
\end{center}
\caption{Brauer tree of the principal $\Phi_{12}$-block of $F_4(q)$} 
\label{F4d12}
\end{figure}
\end{example}

Knowing only the characters of the PIMs (in other words, the Brauer tree without the planar embedding) we can finally show that the cohomology of $\bfX$ over $\calO$ is torsion-free when $\ell \mid \Phi_h(q)$. 

\begin{theorem}\label{thm:coxcharl}
Assume $\ell \mid \Phi_h(q)$ and $\ell \nmid h$, so that $q$ has order $h$ modulo $\ell$. Then 
$H_c^\bullet(\bfX,\calO)$ is torsion-free.
\end{theorem}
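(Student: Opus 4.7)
The plan is to reduce the torsion-freeness to a concentration statement for each generalized eigenspace of Frobenius on $R\Gamma_c(\bfX,k)$, and to establish that concentration by combining Theorem \ref{thm:xwinstab} with the Brauer tree of the principal $\Phi_h$-block determined in Chapter \ref{chap:brauer}.

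First I would decompose $R\Gamma_c(\bfX,\calO) = \bigoplus_{\overline{\lambda}} C_{(\lambda)}$ into generalized $F$-eigenspaces, indexed by the $h$-th roots of unity $\overline{\lambda}=\overline{q}^m$ in $k$ (using observation (i)). By Theorem \ref{thm:coxchar0} each summand $C_{(\lambda)} \otimes_\calO K$ is concentrated in a single cohomological degree $d_\lambda$ with character a single irreducible $\chi_\lambda \in \mathrm{Irr}_K G$. Since a finitely generated torsion-free $\calO$-module of rank zero is trivial, $H_c^\bullet(\bfX,\calO)$ is torsion-free if and only if each $H_c^i(\bfX,\calO)_{(\lambda)}$ vanishes for $i\neq d_\lambda$.

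A routine induction along the finite range of nontrivial degrees, based on the universal coefficient exact sequence
\begin{equation*}
0 \to H_c^i(\bfX,\calO)_{(\lambda)} \otimes_\calO k \to H_c^i(\bfX,k)_{\overline{\lambda}} \to \mathrm{Tor}_1^\calO(H_c^{i+1}(\bfX,\calO)_{(\lambda)}, k) \to 0,
\end{equation*}
and using Proposition \ref{prop:middle-torsionfree} as the base case, then reduces this to showing that for every $\overline{\lambda}$ the complex $R\Gamma_c(\bfX,k)_{\overline{\lambda}}$ has cohomology concentrated in degree $d_\lambda$. For the extreme eigenvalues $\overline{\lambda}=\overline{\lambda}_\zeta$ this concentration in degree $r$ was already established by \eqref{eq:xc}, using only properties (i)--(iii).

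For a general eigenvalue I would then exploit Theorem \ref{thm:xwinstab}: in $kG\sfstab$ one has $R\Gamma_c(\bfX,k)_{\overline{\lambda}} \simeq \Omega^{2m}k$, an indecomposable uniserial module whose composition factors and whose lift to an $\calO G$-lattice with an irreducible character in the block are read off from the Green walk around the known Brauer tree. Combining this with the Lefschetz identity $[R\Gamma_c(\bfX,k)_{\overline{\lambda}}] = (-1)^{d_\lambda}d(\chi_\lambda)$ in $K_0(kG)$ and the fact that every decomposition number in a cyclic defect block is $0$ or $1$, the two computations can be matched only if the cohomology of $R\Gamma_c(\bfX,k)_{\overline{\lambda}}$ is a single indecomposable module sitting in degree $d_\lambda$.

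The main obstacle will be this last step: promoting the isomorphism in $kG\sfstab$ (which holds only modulo projective summands) into a genuine concentration statement in $\sfD^b(kG\sfmod)$. A priori one could have paired cuspidal subquotients in distinct high cohomological degrees that cancel in the stable Grothendieck group, and ruling this out requires carefully exploiting the explicit Loewy structure of PIMs in a cyclic block provided by Theorem \ref{thm:pimsbrauer}, together with the fact (observation (ii)) that any such extra torsion would have to be cuspidal.
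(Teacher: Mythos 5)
Your reduction is sound: decomposing by the generalized $F$-eigenspaces (using observation (i)), noting that each $R\Gamma_c(\bfX,\calO)_{(\lambda)}\otimes K$ is concentrated in a single degree $d_\lambda$ by Theorem \ref{thm:coxchar0}, and appealing to the universal coefficient sequence plus Nakayama does indeed reduce torsion-freeness to the statement that $R\Gamma_c(\bfX,k)_{\overline{\lambda}}$ is concentrated in degree $d_\lambda$ for every $h$-th root of unity $\overline{\lambda}$; you do not even need Proposition \ref{prop:middle-torsionfree} as a separate base case. The treatment of the extreme eigenvalues $\overline{\lambda}_\zeta$ by \eqref{eq:xc} is also correct. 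But the step you flag as ``the main obstacle'' is a genuine gap and not a point of detail. The isomorphism $R\Gamma_c(\bfX,k)_{\overline{q}^m}\simeq\Omega^{2m}k$ of Theorem \ref{thm:xwinstab} holds only in $kG\sfstab\simeq\sfD^b(kG\sfmod)/kG\sfperf$; it is blind to any perfect summand of $R\Gamma_c(\bfX,k)_{\overline{\lambda}}$, and such a summand can distribute non-zero cohomology over several degrees. Matching classes in $K_0(kG\sfmod)$ does not repair this, since contributions in degrees of opposite parity cancel. Moreover, the Green walk around the planar embedded Brauer tree that you call on is itself extracted, in the examples preceding the theorem, from the very torsion-freeness of the relevant eigenspaces that you are trying to establish, so the route through Theorem \ref{thm:xwinstab} and the embedded tree carries a real risk of circularity that would have to be carefully disentangled.

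The paper's proof takes a more direct route that needs neither Theorem \ref{thm:xwinstab} nor the planar embedding. By observation (ii) it suffices to show that for every cuspidal simple $kG$-module $M$, the complex $R\mathrm{Hom}_{kG}(P_M,R\Gamma_c(\bfX,k))$ has cohomology only in the middle degree $r$; Proposition \ref{prop:middle-torsionfree} then finishes. For this one only needs the \emph{shape} of the Brauer tree: a cuspidal $M$ labels an edge incident to the exceptional vertex, so $e([P_M])=\chi_{\mathrm{exc}}+\chi$. When $\chi$ is cuspidal, $P_M$ cannot occur in any $R\Gamma_c(\bfX(v),k)$ for $v<c$, and the Bonnaf\'e--Rouquier comparison $R\Gamma_c(\bfX,k)\to R\Gamma(\bfX,k)$ underlying Theorem \ref{thm:mainBR} forces concentration in degree $r$. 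When $\chi$ lies in a proper Harish-Chandra series, one realizes $P_M$ as a summand of $\mathrm{Ind}_U^G\mathrm{Inf}(Q)$ for a suitable $Q$, and the refined quotient \eqref{eq:coxquotient} together with adjunction and Theorem \ref{thm:coxlbig} for the Levi subgroups $L_I$, $L_J$ reduces to a tensor product of torsion-free cohomologies. These two inputs --- Bonnaf\'e--Rouquier's theorem and the refined unipotent quotient formula --- are the tools your proposal is missing.
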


\begin{proof}[Idea of proof] 
Since the torsion part of the cohomology of $\bfX$ over $\calO$ is cuspidal (see (ii) above), it is enough to show that for every simple cuspidal $kG$-module $M$, the complex $R\mathrm{Hom}_{kG}(P_M,R\Gamma_c(\bfX,k))$ has zero cohomology outside the middle degree $r = \ell(c)$. Indeed, by the universal coefficient formula this shows that the torsion-part of $R\Gamma_c(\bfX,\calO)$ is in degree $r$ only, and we can invoke Proposition \ref{prop:middle-torsionfree} to conclude.

\smallskip
The computation of $R\mathrm{Hom}_{kG}(P_M,R\Gamma_c(\bfX,k))$ is achieved by using the explicit character of $P_M$ as follows. The shape of the tree, as conjectured in \cite{HisLubMal95} and proved in \cite{DudRou14} ensures that $M$ labels an edge incident to the exceptional vertex. In other words, we have $e([P_M]) = \chi_\text{exc} + \chi$. 

\smallskip
We distinguish two cases : assume first that $\chi$ is cuspidal, then $P_M$ does not occur in any of the cohomology complexes $R\Gamma_c(\bfX(v),k)$ for $v < c$ since otherwise $\chi$ would occur in $H_c^\bullet(\bfX(v),K)$ (recall that $H_c^\bullet(\bfX(v),\calO)$ is torsion-free).  Consequently, the map
$R\Gamma_c(\bfX,k) \longrightarrow R\Gamma(\bfX,k)$ induces an isomorphism
$$ R\mathrm{Hom}_{kG}(P_M,R\Gamma_c(\bfX,k)) \simto R\mathrm{Hom}_{kG}(P_M,R\Gamma(\bfX,k))$$
 which proves that the cohomology of  this complex vanishes in degrees higher than $\dim \bfX = r$ and lower than 
 $\dim \bfX$.
 
\smallskip Assume now that $\chi$ lies in a Harish-Chandra series above a proper Levi subgroup $L_I$ of $G$. Writing $S = I \sqcup J$, one shows that $P_M$ is a direct summand of $R = \mathrm{Ind}_U^G \mathrm{Inf}_{U/U_I\cap U_J}^U (Q)$ for some (projective) $kU/(U_I\cap U_J)$-module $Q$. Now, with $\widetilde{Q}$ (resp. $\widetilde R$) being a lattice lifting $Q$, we obtain the following isomorphism using adjunction and \eqref{eq:coxquotient}
 $$\begin{aligned}
  R\mathrm{Hom}_{\calO G}(\widetilde{R},R\Gamma_c(\bfX,\calO))
 & \, \simeq  R\mathrm{Hom}_{\calO U/U_I\cap U_J}(\widetilde{Q},R\Gamma_c(\bfX,\calO)^{U_I\cap U_J})\\
 & \, \simeq  R\mathrm{Hom}_{\calO U/U_I\cap U_J}(\widetilde{Q},R\Gamma_c(\bfX_I,\calO)\otimes R\Gamma_c(\bfX_J,\calO)). \end{aligned}$$
The cohomology of this complex is torsion-free by Theorem \ref{thm:coxlbig}. Therefore the same holds for $R\mathrm{Hom}_{kG}(\widetilde{P}_M,R\Gamma_c(\bfX,\calO))$. Now its cohomology over $K$ vanishes outside of the degree $r$ since $\chi$ occurs in the cohomology of $\bfX$ in middle degree only, and by the universal coefficient theorem the same holds over $k$.  
\end{proof}

\subsection{Applications} 
Brou\'e's abelian defect group conjecture \cite{Bro90} predicts the existence of a derived equivalence between a block of a finite group with abelian defect and its Brauer correspondent. In the case of finite reductive groups, defect groups of unipotent blocks are generic. When $\ell$ is large enough and $d$ is the order of $q$ modulo $\ell$, they correspond to the $\ell$-part of $\Phi_d$-tori in $\bfG$, and their centralizers are $d$-Levi subgroups (see \S\ref{sec:caseofunipotentblocks}). Brou\'e suggested in \cite{Bro90} that in this case the derived equivalence should be induced by the cohomology complex of a Deligne--Lusztig variety associated with such a $d$-Levi subgroup. This was proven by Bonnaf\'e--Rouquier and the author in the case when $d=h$ is the Coxeter number.

\begin{theorem}[Bonnaf\'e--Rouquier \cite{BonRou08bis}, Dudas \cite{Dud12}]\label{thm:derivedeq}
Assume that $q$ has order $h$ modulo $\ell$. The action of $\bfT^{\dot cF}$ on $R\Gamma_c(\bfY,\calO)$ can be extended to an action of $N_{\bfG^{\dot c F}}(\bfT)$ such that the functor
$$R\Gamma_c(\bfY,\calO) \, {\mathop{\otimes}\limits^L}_{\calO N_{\bfG^{\dot c F}}(\bfT)} - : \sfD^b(\calO N_{\bfG^{\dot c F}}(\bfT)\sfmod) \longrightarrow \sfD^b(\calO G\sfmod)$$
induces a derived equivalence between the principal blocks of $N_{\bfG^{\dot c F}}(\bfT)$ and $G$.
\end{theorem}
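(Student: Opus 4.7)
The plan is to apply Rickard's criterion to the complex $C := R\Gamma_c(\bfY,\calO) \cdot b_G$, viewed as a bimodule between the principal blocks of $\calO G$ and $\calO N$, where $N = N_{\bfG^{\dot c F}}(\bfT)$.

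The first task is the extension of the action. Since $N/\bfT^{\dot c F}$ is cyclic of order $h$, generated by the Lang image of $\dot c$, I would enlarge the Coxeter variety to a variety $\widehat{\bfY}$ (a parabolic Deligne--Lusztig variety attached to a unipotent radical stabilized by $N$) carrying a natural left $G$-action and right $N$-action, together with a projection $\widehat{\bfY} \to \bfY$ whose fibers have trivial $\calO$-cohomology. The pullback induces a quasi-isomorphism $R\Gamma_c(\bfY,\calO) \simeq R\Gamma_c(\widehat{\bfY},\calO)$ in $\sfD^b(\calO G \otimes (\calO \bfT^{\dot c F})^{\mathrm{op}})$, so that the right-hand side acquires the desired $N$-action. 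Cutting by $b_G$ and $b_N$ then produces the complex $C$ of bimodules.

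The next step is to verify the hypotheses of Rickard's criterion: (a) $C$ is perfect as a left $\calO G b_G$-module and as a right $\calO N b_N$-module, (b) the structure morphism $\calO N b_N \to R\mathrm{End}_{\calO G b_G}(C)$ is a quasi-isomorphism, and (c) $C$ generates $\sfD^b(\calO G b_G\sfmod)$ as a thick subcategory. Assertion (a) follows from Proposition \ref{prop:ywperfect}, since $\bfT^{\dot c F}$ acts freely on $\bfY$, combined with the observation that the Sylow $\ell$-subgroup of $\bfT^{\dot c F}$ is cyclic and that $[N:\bfT^{\dot c F}] = h$ is coprime to $\ell$. Assertion (c) follows from Corollary \ref{cor:generation} and \eqref{eq:rlgcox}: for $v < c$ the complex $R\Gamma_c(\bfX(v),\calO)$ is Harish--Chandra induced from a proper Levi subgroup $\bfL_I$ whose order is prime to $\ell$ under our assumption on $\ell$, so its principal-block component is already a summand of iterated images of $C$ and lies in the thick subcategory it generates.

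The main obstacle is assertion (b). By adjunction, $R\mathrm{End}_{\calO G b_G}(C)$ is computed by the cohomology of the fiber product $\bfY \times_{\bfG} \bfY$, cut by $b_G$. Over $K$, Lusztig's Theorem \ref{thm:coxchar0} provides the crucial input: each irreducible constituent of $H_c^\bullet(\bfY,K)\cdot b_G$ appears in a single cohomological degree and is distinguished by its $F$-eigenvalue, which forces $R\mathrm{End}_{KGb_G}(KC)$ to be concentrated in degree $0$ and of $K$-dimension $\dim_K KNb_N$; the identification with $KNb_N$ comes from matching the $h$ eigenlines of $F$ with the $h$ characters of $N/\bfT^{\dot c F}$. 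To lift this computation to $\calO$, I would invoke Theorems \ref{thm:coxlbig} and \ref{thm:coxcharl}, which ensure that $H_c^\bullet(\bfY,\calO)$ is torsion-free, together with the universal coefficient formula: this forces $R\mathrm{End}_{\calO G b_G}(C)$ to be $\calO$-free and concentrated in degree $0$, and reduction modulo $\ell$ combined with the $K$-computation identifies it with $\calO Nb_N$. Rickard's criterion then yields the derived equivalence.
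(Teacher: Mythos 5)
Your overall plan — reduce to showing that $R\Gamma_c(\bfY,\calO)$ is a two-sided tilting complex, compute the derived endomorphism algebra over $K$ from Lusztig's cohomology results, and then transfer to $\calO$ via torsion-freeness of $H_c^\bullet(\bfY,\calO)$ — is in the spirit of the actual proof, and several of your ingredients (Proposition~\ref{prop:ywperfect}, Theorems~\ref{thm:coxlbig} and~\ref{thm:coxcharl}, Lusztig's Theorem~\ref{thm:coxchar0}) are exactly the ones used. The paper also works with the criterion of \cite{Rou02} that reduces the whole problem to verifying the single vanishing statement \eqref{eq:disjoint} over $k$, rather than a general Rickard criterion with the separate axioms (a)--(c) you list; in the principal-block situation that criterion packages your (a) and (c) into the character-theoretic set-up and leaves only (b) as the real content.

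However, the crucial step has a genuine gap. You claim that torsion-freeness of $H_c^\bullet(\bfY,\calO)$, together with the universal coefficient formula, \emph{forces} $R\mathrm{End}_{\calO G b_G}(C)$ to be $\calO$-free and concentrated in degree~$0$. This does not follow: knowing that the cohomology modules of the complex are $\calO$-free does not determine the higher self-extensions, since two perfect complexes with the same (torsion-free) cohomology can have different $\mathrm{Ext}$-groups — the complex also depends on the connecting maps in the Postnikov tower. Concretely, even when $R\mathrm{End}_{KGb_G}(KC)$ is concentrated in degree~$0$, the groups $H^n(R\mathrm{End}_{\calO G b_G}(C))$ for $n\ne 0$ could be nonzero torsion modules, and these do contribute to $H^\bullet$ over $k$ via $\mathrm{Tor}_1$. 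The actual proof, as summarized in the paper and carried out in \cite{BonRou08bis,Dud12,Dud14}, is much more delicate: it pins down an explicit representative of each $F$-eigenspace summand $R\Gamma_c(\bfY_\ell,k)_{\overline\lambda}$ as the truncated projective resolution $0\to P_{S_r}\to\cdots\to P_{S_{n_\lambda}}\to 0$ (the proposition after the theorem), using the $\Omega$-relation from Proposition~\ref{prop:twodegrees}, the fact that the relevant $\mathrm{Ext}$-group is one-dimensional, and the combinatorics of the Brauer tree. This identifies the complex with Rickard's tilting complex for Brauer tree algebras, for which \eqref{eq:disjoint} is a theorem of \cite{Ric89}. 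Your argument has none of this, and as stated it would not establish the required $\mathrm{Ext}$-vanishing.

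Two smaller points. First, the paper extends the $\bfT^{\dot cF}$-action to $N_{\bfG^{\dot cF}}(\bfT)$ by twisting by the Frobenius $F$, which already generates the cyclic quotient $N/\bfT^{\dot cF}$; your suggestion of passing through an auxiliary variety $\widehat{\bfY}$ is closer to the braid-operator approach mentioned for general blocks but is not what is used here, and you would need to construct $\widehat{\bfY}$ explicitly and verify the claimed vanishing on fibers. Second, your justification of generation (c) via Corollary~\ref{cor:generation} and ``iterated images of $C$'' is vague; in the actual argument the contribution of $R\Gamma_c(\bfY(\dot v),\Lambda)$ for $v<c$ is handled by noting that these complexes are Harish--Chandra induced from $\ell'$-Levi subgroups and so split into projective modules, but the precise generation statement has to be argued.
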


The extension of the action of $\bfT^{\dot cF}$ to $N_{\bfG^{\dot c F}}(\bfT)$ is essentially given by twisting the action of the Frobenius endomorphism, since here  $N_{\bfG^{\dot c F}}(\bfT)/\bfT^{\dot cF}$ is a cyclic group generated by the image of $F$. For more general blocks, it is expected that the braid operators constructed in \cite{BroMic,DigMic14} should provide the extension of the action of the finite torus (see \cite{BroMal}). 

\smallskip

Once we extended the action, the key point is to prove that $R\Gamma_c(\bfY,\calO)$ is a tilting complex, that is that 
$$ R\mathrm{End}_{\calO G}(R\Gamma_c(\bfY,\calO)) \simeq \calO N_{\bfG^{\dot c F}}(\bfT) [0].$$
Rouquier proved in \cite{Rou02} that it is enough to show the vanishing of the cohomology groups of that complex over $k$, that is to show that 
\begin{equation}\label{eq:disjoint}
R\mathrm{Hom}_{k G}(R\Gamma_c(\bfY,k),R\Gamma_c(\bfY,k)[n]) \simeq 0 \quad \text{if } n \neq 0.\end{equation}
The solution to this problem given in \cite{BonRou08} and \cite{Dud12,Dud14} relies on showing first that the cohomology groups $H_c^\bullet(\bfY,\calO)$ are torsion-free and then to use the cohomology over $K$ (computed by Lusztig) to determine $H_c^\bullet(\bfY,k)$ and find an explicit representative for $R\Gamma_c(\bfY,k)$. 

\smallskip

\begin{prop}  Let $\chi_\lambda$ be the unipotent character corresponding to the generalized $\lambda$-eigenspace of $F$ on $H_c^{n_\lambda}(\bfX,K)$ for some $n_\lambda \geq r$. With the following notation for the subtree of the Brauer tree of the principal $\ell$-block of $kG$ corresponding to the Harish-Chandra series of $\chi_\lambda$
\begin{center}
\begin{pspicture}(11,1)
  \psset{linewidth=1pt}
  \cnode(0,0.5){8pt}{A}  \cnode[fillcolor=black,fillstyle=solid](0,0.5){5pt}{A2}
  \cnode(2,0.5){5pt}{B}
  \cnode(4,0.5){5pt}{C}
  \cnode(8,0.5){5pt}{D}
  \cnode(10,0.5){5pt}{E}
  \cnode[linestyle=none](11,0.5){0pt}{F}
  
  \ncline[nodesep=0pt]{A}{B}\ncput*[npos=0.5]{$S_r$}
  \ncline[nodesep=0pt]{B}{C}\ncput*[npos=0.5]{$S_{r+1}$}
  \ncline[nodesep=0pt,linestyle=dashed]{C}{D}
  \ncline[nodesep=0pt]{D}{E}\ncput*[npos=0.5]{$S_{n_\lambda}$}\naput[npos=1.1]{$\vphantom{\big)}\chi_{\lambda}$}
  \ncline[nodesep=0pt, linestyle=dashed]{E}{F}
\end{pspicture}
\end{center}
the complex $R\Gamma_c(\bfY,k)_{\overline{\lambda}}$ is isomorphic to
$$ 0 \longrightarrow P_{S_r} \longrightarrow P_{S_{r-1}} \longrightarrow \cdots \longrightarrow P_{S_{n_{\lambda}}} \longrightarrow 0.$$
\end{prop}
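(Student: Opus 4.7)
The plan is to identify $R\Gamma_c(\bfY,k)_{\overline{\lambda}}$ by first pinning down its cohomology using a torsion-freeness result together with Lusztig's character formulas, then reconstructing the complex term by term via Theorem~\ref{thm:mainBR} and the Green walk around the Brauer tree.

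First, I would prove the analogue of Theorem~\ref{thm:coxcharl} for $\bfY$, namely that $H_c^\bullet(\bfY,\calO)_{\overline\lambda}$ is torsion-free. The argument mirrors that for $\bfX$: an analogue of Proposition~\ref{prop:quotientcox} for $\bfY$ shows that Harish-Chandra restriction to any proper Levi kills the torsion (using Theorem~\ref{thm:coxlbig} on those Levi subgroups), so the torsion is cuspidal; Theorem~\ref{thm:mainBR} together with Proposition~\ref{prop:middle-torsionfree} then forces it to vanish. With torsion-freeness in hand, the universal coefficient formula identifies each $H_c^i(\bfY,k)_{\overline\lambda}$ with the mod-$\ell$ reduction of a specific $\calO$-lattice read off from Lusztig's description. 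Since $q$ has order $h$ modulo $\ell$ and the $F$-eigenvalues in each Lusztig series are of the form $\zeta q^j$ with $j$ in a bounded range, the class $\overline\lambda$ singles out exactly one irreducible character in each degree $i \in \{r,r+1,\ldots,n_\lambda\}$, namely the one labeling the vertex of $\Gamma_b$ at distance $n_\lambda-i$ from $\chi_\lambda$ along the branch; the Green walk (after Theorem~\ref{thm:pimsbrauer}) then gives each such mod-$\ell$ reduction its expected uniserial Loewy structure, with head/socle involving the simples $S_i$.

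Next, since $R\Gamma_c(\bfY,k)_{\overline\lambda}$ is perfect (Proposition~\ref{prop:ywperfect}), it has a reduced projective representative. No character in the Harish-Chandra series of $\chi_\lambda$ appears in the $K$-cohomology of $\bfY(\dot v)$ for $v<c$ (by the cuspidality/minimality used in Theorem~\ref{thm:coxchar0}), and by torsion-freeness the same holds over $k$ on the $\overline\lambda$-eigenspace. Applying Theorem~\ref{thm:mainBR} to each simple $S_i$ ($r\leq i\leq n_\lambda$) therefore forces $P_{S_i}$ to occur in a single ``middle-type'' degree of a minimal representative; matching with the cohomology pattern from Step~1 fixes this degree to be exactly $i$. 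Minimality of the representative then identifies the terms as $P_{S_i}$ in degree $i$ for $r\leq i\leq n_\lambda$.

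Finally, each differential $P_{S_i}\to P_{S_{i+1}}$ must be non-zero: otherwise, applying Proposition~\ref{prop:twodegrees} to the corresponding truncation (via Proposition~\ref{prop:truncation}) would contradict the uniserial Loewy structure of the cohomology modules established above. The non-vanishing, together with the known Loewy structures of source and target, determines each differential up to scalar and yields the claimed complex. The main obstacle will be the eigenvalue bookkeeping in the first step: one must simultaneously handle the unipotent characters and the exceptional characters arising from $R_c(\theta)$ for $\theta$ a non-trivial $\ell$-character of $\bfT^{\dot c F}$, and verify that $\overline\lambda$ isolates exactly one principal-block character per degree, with no accidental coincidences across distinct Harish-Chandra series.
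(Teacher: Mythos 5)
The central step of your proposal --- identifying $H_c^i(\bfY,k)_{\overline\lambda}$ as a nonzero uniserial module for \emph{every} degree $i \in \{r,\ldots,n_\lambda\}$, one vertex of the Brauer tree per degree --- is incorrect, and the rest of the argument leans on it. After passing to the quotient $\bfY_\ell := \bfY/(\bfT^{\dot cF})_{\ell'}$ (a reduction you do not mention, but which is needed to suppress the contributions of characters of $\bfT^{\dot cF}$ outside its principal $\ell$-block), the $K$-cohomology in the generalized $\lambda$-eigenspace is concentrated in exactly \emph{two} degrees: every nontrivial $\ell$-character $\theta$ of $\bfT^{\dot cF}$ is in general position, so $H_c^i(\bfY,K)_\theta = 0$ unless $i = r$, and the unipotent contribution in the $\lambda$-eigenspace is $\chi_\lambda$ in degree $n_\lambda$ alone. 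Thus $H_c^\bullet(\bfY_\ell,K)_\lambda = \chi_{\mathrm{exc}}[-r]\oplus\chi_\lambda[-n_\lambda]$, and torsion-freeness gives $H_c^i(\bfY_\ell,k)_{\overline\lambda}=0$ for $r < i < n_\lambda$. There is no degree-by-degree list of uniserial cohomology modules to match against; the complex has $n_\lambda-r+1$ projective terms but only two nonzero cohomology groups.

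This breaks the later step where you ``match the cohomology pattern'' with Theorem~\ref{thm:mainBR} to place $P_{S_i}$ in degree $i$; note also that Theorem~\ref{thm:mainBR} only controls the position of $P_{S_i}$ when the Coxeter element $c$ is minimal for $S_i$, which holds for the cuspidal simples but not for the others, so the non-cuspidal PIMs would need a separate argument anyway. The route the paper takes instead is to exploit the fact that a perfect complex with cohomology in only two degrees gives, via Proposition~\ref{prop:twodegrees}, a stable isomorphism $H_c^r(\bfY_\ell,k)_{\overline\lambda}\simeq\Omega^{n_\lambda-r+1}H_c^{n_\lambda}(\bfY_\ell,k)_{\overline\lambda}$; both sides lift to $\calO G$-lattices with characters $\chi_{\mathrm{exc}}$ and $\chi_\lambda$, hence have no projective summands, and the isomorphism holds in $kG\sfmod$. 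The truncation triangle of \eqref{eq:dttruncation} then exhibits the complex as the cone of a map classified by $\mathrm{Ext}_{kG}^{n_\lambda-r+1}\bigl(H_c^{n_\lambda}(\bfY_\ell,k)_{\overline\lambda},H_c^r(\bfY_\ell,k)_{\overline\lambda}\bigr)$, which by Proposition~\ref{prop:stab} and the stable isomorphism is one-dimensional. The unique nontrivial such extension yields precisely the truncated minimal projective resolution, and Theorem~\ref{thm:pimsbrauer} together with the Green walk identifies its terms as $P_{S_r},\dots,P_{S_{n_\lambda}}$.
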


\begin{proof}[Sketch of proof]
Lusztig's result on the quotient of $\bfX$ (Proposition \ref{prop:quotientcox}) can be generalized to $\bfY$ as follows. Let us decompose the torus $\bfT^{\dot cF}$ as $\bfT^{\dot cF} = (\bfT^{\dot cF})_\ell \times (\bfT^{\dot cF})_{\ell'}$ as a product of an $\ell$-group and an $\ell'$-group. We define $\bfY_\ell \simeq \bfY/(\bfT^{\dot c F})_{\ell'}$. It is an intermediate quotient between $\bfY$ and $\bfX \simeq \bfY/\bfT^{\dot cF}$ whose cohomology contains only the principal $\ell$-block of $\bfT^{\dot cF}$. Then one shows that for every proper subset $I$ of $S$ there is an isomorphism of $\calO$-modules 
$$ {}^*R_{L_I}^G \big(H_c^\bullet(\bfY_{\ell},\calO)\big) \simeq  H_c^\bullet(\bfX_I,\calO) \otimes_\calO  H_c^{\bullet}(\mathbb{G}_m,\calO)^{\otimes r- |I|}. $$ 
In particular, the torsion part of the cohomology of $\bfY_{\ell}$ is cuspidal. As in the case of $\bfX$, it is enough to show that cuspidal modules occur in the middle degree only. This was proven for $\bfX$ along the way to Theorem \ref{thm:coxcharl}. The same property holds for $\bfY$ since $R\Gamma_c(\bfY_{\ell},k) \simeq R\Gamma_c(\bfX,k(\bfT^{\dot cF})_\ell )$ is built from successive extensions of 
$R\Gamma_c(\bfX,k)$ in the same way that $k(\bfT^{\dot cF})_\ell$ is built from extensions of the trivial representation.

\smallskip

By definition, the cohomology groups $H_c^i(\bfY_\ell,K)$ are the sum of the cohomology groups $H_c^i(\bfY,K)_\theta$ where $\theta$ runs over the irreducible $\ell$-characters of $\bfT^{\dot c F}$. When $\theta =1$,  $H_c^i(\bfY,K)_\theta = H_c^i(\bfX,K)$ which we know explicitly. When $\theta$ is non-trivial, the assumption on $\ell$ forces $\theta$ to be in general position and  $H_c^i(\bfY,K)_\theta = 0$ except when $i = r = \ell(c)$ in which case it equals the exceptional character $\pm R_w(\theta)$. Consequently, a given eigenspace of $F$ on $H_c^i(\bfY_{\ell},K)$ in non-zero in at most two degrees, one corresponding to the eigenspace on  
$H_c^i(\bfX,K)$, and the other being the middle degree. Since $R\Gamma_c(\bfY_\ell,\calO)$ is a direct summand of the perfect complex $R\Gamma_c(\bfY,\calO)$ (recall that it corresponds to the principal block of $\bfT^{\dot cF}$) then $R\Gamma_c(\bfY_\ell,\calO)$ is also perfect and therefore $[R\Gamma_c(\bfY_\ell,K)]$ is the character of a virtual projective module. This forces each generalized $\lambda$-eigenspace of $F$ to be of the following form
\begin{equation}\label{eq:cohY}
H_c^\bullet(\bfY_\ell,K)_\lambda = \chi_\mathrm{exc}[-r] \oplus \chi_\lambda[-n_\lambda]
\end{equation}
where $n_\lambda$ is the unique degree of the cohomology of $\bfX$ on which $F$ acts by $\lambda$ and $\chi_\lambda$ is the corresponding unipotent character (we assume here that $\lambda$ is one of the eigenvalues listed in Theorem \ref{thm:coxchar0}.iii). 

\smallskip

From  the shape of the Brauer tree we observe that $n_\lambda -r+1$ is exactly the distance between the node labeling $\chi_{\mathrm{exc}}$ and the node labeling $\chi_\lambda$. On the other hand, since the cohomology of $\bfY_\ell$ is torsion-free we deduce from \eqref{eq:cohY} and Proposition \ref{prop:twodegrees} that 
\begin{equation}
\label{eq:isostabY} H_c^r(\bfY_\ell,k)_{\overline{\lambda}} \simeq \Omega^{n_\lambda -r+1} H_c^{n_\lambda}(\bfY_\ell,k)_{\overline{\lambda}}
\end{equation}
in the stable category. Both cohomology groups lift to $\calO G$-lattices with characters $\chi_\text{exc}$ and $\chi_{\lambda}$, therefore they have no projective summands and the previous isomorphism holds in $kG\sfmod$. 

\smallskip

Recall that we have a distinguished triangle in $\sfD^b(kG\sfmod)$   
$$   H_c^r(\bfY_\ell,k)_{\overline{\lambda}} [-r] \longrightarrow R\Gamma_c(\bfY_\ell,k)_{\overline{\lambda}} \longrightarrow H_c^{n_\lambda}(\bfY_\ell,k)_{\overline{\lambda}} [-n_\lambda] \rightsquigarrow $$
which is determined by a map $H_c^{n_\lambda}(\bfY_\ell,k)_{\overline{\lambda}} [-n_\lambda] \longrightarrow H_c^r(\bfY_\ell,k)_{\overline{\lambda}} [-r+1]$, which is in turn determined by an element of
$\mathrm{Ext}_{kG}^{n_\lambda-r+1} (H_c^{n_\lambda}(\bfY_\ell,k)_{\overline{\lambda}}, H_c^r(\bfY_\ell,k)_{\overline{\lambda}})$. By Proposition \ref{prop:stab} and the isomorphism \eqref{eq:isostabY}, this group of extensions is isomorphic to $\mathrm{End}_{kG}(H_c^{n_\lambda}(\bfY_\ell,k)_{\overline{\lambda}})$ and hence it is one-dimensional. Therefore up to isomorphism there is a unique non-zero map  $H_c^{n_\lambda}(\bfY_\ell,k)_{\overline{\lambda}} [-n_\lambda] \longrightarrow H_c^r(\bfY_\ell,k)_{\overline{\lambda}} [-r+1]$, and the mapping cone of this map can be obtained from a truncated projective resolution of $H_c^{n_\lambda}(\bfY_\ell,k)_{\overline{\lambda}}$, which is exactly the complex given in the theorem. 
\end{proof}

Now this representative is exactly the one given by Rickard in \cite{Ric89} to construct a tilting complex for Brauer trees algebras. In particular, it satisfies \eqref{eq:disjoint} and Theorem \ref{thm:derivedeq} follows.

\end{document}